\documentclass[12pt]{amsart}
\usepackage[utf8]{inputenc}
 
\title{Proximity to $\ell_p$ and $c_0$ in Banach spaces}
\author[RM Causey]{Ryan M Causey}
\address{Department of Mathematics \\ University of South Carolina \\
Columbia, SC 29208}
\email{CAUSEYRM@mailbox.sc.edu}

\usepackage{amssymb}
\usepackage{bm}
\usepackage{graphicx}
\usepackage[centertags]{amsmath}
\usepackage{amsfonts}
\usepackage{amsthm}
\linespread{1.18}
\usepackage{enumerate}
\usepackage{tocvsec2}
\usepackage{xcolor}
\usepackage[margin=1in]{geometry}

\newtheorem{theorem}{Theorem}[section]
\newtheorem*{theorem*}{Theorem}

\newtheorem{corollary}[theorem]{Corollary}
\newtheorem{lemma}[theorem]{Lemma}

\newtheorem{proposition}[theorem]{Proposition}

\theoremstyle{definition}

\newcommand{\fin}{[\mathbb{N}]^{<\omega}}
\newcommand{\infin}{[\mathbb{N}]}

\newcommand{\ord}{\textbf{Ord}}

%------------Variables------------%

\newcommand{\rr}{\mathbb{R}}
\newcommand{\nn}{\mathbb{N}}

%-------------Trees---------------%

%------------Letters--------------%

\newcommand{\aaa}{\mathcal{A}}

\newcommand{\ddd}{\mathcal{D}}
\newcommand{\ccc}{\mathcal{C}}
\newcommand{\eee}{\mathcal{E}}
\newcommand{\fff}{\mathcal{F}}
\newcommand{\gggg}{\mathcal{G}}

\newcommand{\kkk}{\mathcal{K}}

\newcommand{\mt}{\mathcal{MT}}
\newcommand{\ttt}{\mathcal{T}}
\newcommand{\sss}{\mathcal{S}}
\newcommand{\www}{\mathcal{W}}

\newcommand{\ssm}{\mathcal{SM}}
\newcommand{\iii}{\mathcal{I}}
\newcommand{\uuu}{\mathcal{U}}
\newcommand{\jjj}{\mathcal{J}}
\newcommand{\ssj}{\mathcal{SJ}}

%-------------DST-----------------%

%-------------BST-----------------%

\newcommand{\supp}{\mathrm{supp}}

\DeclareMathOperator{\sgn}{sgn}

\DeclareMathOperator{\ran}{ran}

\begin{document}

\begin{abstract} We construct a class of minimal trees and use these trees to establish a number of coloring theorems on general trees.  Among the applications of these trees and coloring theorems are quantification of the Bourgain $\ell_p$ and $c_0$ indices, dualization of the Bourgain $c_0$ index, establishing sharp positive and negative results for constant reduction, and estimating the Bourgain $\ell_p$ index of an arbitrary Banach space $X$ in terms of a subspace $Y$ and the quotient $X/Y$.

\end{abstract} 

\subjclass[2010]{Primary 46B03; Secondary 46B28}

\keywords{Bourgain $\ell_p$ index, Ramsey theory}

\maketitle

\tableofcontents

\addtocontents{toc}{\setcounter{tocdepth}{1}}

\section{Introduction}

Withint the class of Banach spaces, the sequence spaces $\ell_p$ and $c_0$ play a central role.  James \cite{James} famously showed that every Banach space which contains isomorphic copies of either $\ell_1$ or $c_0$ must contain almost isometric copies of these spaces.  With Odell and Schlumprecht's solution of the distortion problem \cite{OS}, it was shown that the corresponding result for $\ell_p$, $1<p<\infty$, is false.   However, it follows from Krivine's theorem \cite{K} that if $\ell_p$, $1<p<\infty$, is crudely finitely representable in $X$, then $\ell_p$ is finitely representable in $X$. The corresponding result for $\ell_1$ and $c_0$ was also shown by James in \cite{James}.   Thus we see a difference between the weakest form of admitting subspaces of a Banach space admitting $\ell_p$ structure, finite representability of $\ell_p$ in $X$, and the strongest notion, admitting isomorphic copies of $\ell_p$ as a subspace.  The Bourgain $\ell_p$ index \cite{Bo} allows a quantification which bridges the gap between these two forms of admitting $\ell_p$ structures within a given Banach space.  In \cite{JO}, transfinite versions of the argument of James were given for countable ordinals which allow a unified approach to the two previously mentioned results of James.  There, they also gave an argument that the transfinite versions of the argument must fail for the $\ell_p$ index when $1<p<\infty$ must break down somewhere between the positive results from Krivine, which represent the case of an index at least $\omega$, the smallest infinite ordinal, and the negative results which follow from Odell and Schlumprecht, and they asked for a quantification of this result.

Another well-known fact is that if $X, Y$ are Banach spaces admitting no subspace isomorphic to $\ell_p$, then $X\oplus Y$ admits no subspace isomorphic to $\ell_p$.  Dodos \cite{D} prove that, in the sense of the Bourgain $\ell_p$ index, this result is uniform.  That is, there exists a function $\phi_p:[1, \omega_1)^2\to [1, \omega_1)$ so that if $X, Y$ have $\ell_p$ indices $\xi, \zeta$, respectively, then the Bourgain $\ell_p$ index of $X\oplus Y$ cannot exceed $\phi_p(\xi, \zeta)$.  Dodos asked for an explicit estimate on the function $\phi_p$.  

This paper is arranged as follows.  In Section $2$, we discuss trees, derivations, and orders and establish several standard methods that we will use throughout to witness estimates of the Bourgain $\ell_p$ index of a Banach space.  In Section $3$, we establish a number of coloring lemmas of independent interest which yield certain dichotomies on trees.  In Section $4$, we define the different types of $\ell_p$ and $c_0$ structures we wish to quantify and the classes of Banach spaces which will be our primary objects of study.  In Section $5$, we discuss the dualization of the Bourgain $\ell_p$ indices.  In Sections $6$ and $7$, we discuss positive and negative results concerning the transfinite analogues of the equivalence of containment of crude $\ell_p$ and almost isometric $\ell_p$ structures mentioned above.  More precisely, in Section $6$ we given precise positive results for $\ell_1$ and $c_0$ structures and establish the sharpness of these results.  In Section $7$, we discuss the implicitions of Krivine's theorem in producing positive results for $1<p<\infty$, and quantify the Odell-Schlumprecht distortion of $\ell_p$ to answer the question raise in \cite{JO} of quantifying where the positive results may fail.  Finally, in Section $8$, we provide explicit estimates on the function $\phi_p$ of Dodos.  More generally, we provide estimates of the Bourgain $\ell_p$ index of an arbitrary Banach space in terms of the Bourgain $\ell_p$ indices of a subspace and the corresponding quotient.  We use this result to estimate the Bourgain $\ell_p$ index of infinite direct sums of Banach spaces.

\section{Trees, derivations, and order}

Given a set $S$, we let $S^{<\omega}$ denote the finite sequences in $S$.  We include in $S^{<\omega}$ the empty sequence, denoted $\varnothing$.  Given $s\in S^{<\omega}$, we let $|s|$ denote the length of $s$.  If $s=(x_i)_{i=1}^k$ and $0\leqslant n\leqslant k$, we let $s|_n=(x_i)_{i=1}^n$.  If $s,t\in S^{<\omega}$, we let $s\verb!^!t$ denote the concatenation of $s$ with $t$.   We order $S^{<\omega}$ with the partial order $\prec$ by letting $s\prec t$ if and only if $s$ is a proper initial segment of $t$.  In this case, we say $s$ is a \emph{predecessor} of $t$ and that $t$ is a \emph{successor} or \emph{extension} of $s$.

We say a subset $T\subset S^{<\omega}$ is a \emph{tree on} $S$ (or simply a \emph{tree}) provided it is downward closed with respect to the order $\prec$.  We say a tree is \emph{hereditary} provided it contains all subsequences of its members. If $S$ is a topological space, we say $T$ is a \emph{closed} tree if for each $n\in \nn$, $T\cap S^n$ is a closed set. Here, $S^n$ is endowed with the product topology.   If $A$ is a subset of a vector space, and if $1\leqslant p\leqslant \infty$, we say $x$ is a $p$-\emph{absolutely convex combination} of $A$ if there exists $(x_i)_{i=1}^n\in A^{<\omega}$ and $(a_i)_{i=1}^n\in S_{\ell_p^n}$ so that $x=\sum_{i=1}^n a_ix_i$.  The $p$-\emph{absolutely convex hull of} $A$ is the set of all $p$-absolutely convex combinations of $A$ and is denoted by $\text{co}_p(A)$.  We say $(y_i)_{i=1}^m$ is a $p$-\emph{absolutely convex block} of $(x_i)_{i=1}^n$ if there exist $0=k_0<\ldots<k_m=n$ so that for each $1\leqslant i\leqslant m$, $y_i\in \text{co}_p (x_j: k_{i-1}<j\leqslant k_i)$.  We say the tree $T$ is $p$-\emph{absolutely convex} provided it contains all $p$-absolutely convex blocks of its members.

If $T\subset S^{<\omega}$ and if $s\in S^{<\omega}$, we let $T(s)=\{t\in S^{<\omega}: s\verb!^!t\in T\}$.  If $T$ is a tree, $T(s)$ is as well.  If $U\subset S^{<\omega}$, we let $C(U)$ denote all finite, non-empty subsets of $U\setminus \{\varnothing\}$ which are linearly ordered with respect to the order $\prec$, and we call the members of $C(U)$ the \emph{segments} of $U$.  We order $C(U)$ by letting $c<c'$ if $s\prec t$ for all $s\in c$ and $t\in c'$.  We also order $C(U)\cup \{\varnothing\}$ by declaring $\varnothing <c$ for each $c\in C(U)$. If $P, Q$ are partially ordered sets with orders $<_P$, $<_Q$, respectively, we say a function $f:P\to Q$ is \emph{order preserving} if for each $x,y\in P$ with $x<_P y$, $f(x)<_Q f(y)$.  We emphasize that this is not an if and only if.  That is, if $x,y$ are incomparable in $P$, $f(x), f(y)$ need not be incomparable in $Q$ in order for $f$ to be order preserving. Of course, the composition of order preserving maps is order preserving.  If $i:\ttt\to \www$ is order preserving, then the image of a segment under $i$ is also a segment.  Thus for any $U\subset\ttt$, $i$ induces a map, which we also call $i$, between $C(U)$ and $C(i(U))$.  That is, for $c\in C(U)$, $i(c)=\{i(t): t\in c\}$.   We say $f:P\to Q$ is an \emph{embedding} if for each $x,y\in P$, $x<_P y$ if and only if $f(x)<_Q f(y)$.  

We let $MAX(T)$ denote the maximal members of $T$ with respect to the order $\prec$.  We let $T'=T\setminus MAX(T)$, and note that if $T$ is a tree, $T'$ is a tree as well. For $\xi\in \ord$, the class of ordinal numbers, we define the transfinite derived trees $d^\xi(T)$ of $T$ by $$d^0(T)=T,$$ $$d^{\xi+1}(T)=(d^\xi(T))',$$ and if $\xi$ is a limit ordinal, $$d^\xi(T)=\bigcap_{\zeta<\xi} d^\zeta(T).$$  Note that for any set $S$ and any tree $T$ on $S$, there must exist $\xi\in \ord$ so that $d^{\xi+1}(T)=d^\xi(T)$.  We say that $T$ is \emph{well-founded} if $d^\xi(T)=\varnothing$ for some $\xi$, and \emph{ill-founded} otherwise. We let $$o(T)=\min \{\xi\in \ord: d^\xi(T)=\varnothing\},$$ where, as a matter of convenience, we write $\min \varnothing = \infty$.  That is, $o(T)=\infty$ means that $T$ is ill-founded.  Also as a matter of convenience, we declare that $\xi<\infty$ for any $\xi\in \ord$, and that $\xi\infty = \infty = \infty \xi$, and $\infty+\xi=\infty=\xi+\infty$.  Note that if $o(T)<\infty$, $o(T)$ must be a successor, since $\varnothing \in d^\xi(T)$ whenever $d^\xi(T)$ is non-empty.  We observe that $T$ is ill-founded if and only if there exists an infinite sequence $(x_i)\subset S$ so that for all $n\in \nn$, $(x_i)_{i=1}^n\in T$.  

We say $T\subset S^{<\omega}\setminus \{\varnothing\}$ is a $B$-\emph{tree} provided $T\cup \{\varnothing\}$ is a tree.  For a number of our purposes, it will be convenient to avoid having to include the empty sequence in our considerations.  Note that all notions above concerning trees have obvious analogous definitions for $B$-trees.

The following items are either trivial or easily shown by induction.  

\begin{proposition} Let $S$ be a set and let $T$ be a tree on $S$.  \begin{enumerate}[(i)]\item For any $\xi\in \emph{\ord}$ and $s\in S^{<\omega}$, $d^\xi(T)(s)=d^\xi(T(s))$. \item For any $\xi, \zeta\in \emph{\ord}$, $d^\zeta(d^\xi(T))=d^{\xi+\zeta}(T)$. \item For any $s\in S^{<\omega}$ and $\xi\in \emph{\ord}$, $\xi<o(T(s))$ if and only if $s\in d^\xi(T)$.  In particular, if $T(s)$ is well-founded, $o(T(s))=\max\{\xi\in \emph{\ord}: s\in d^\xi(T)\}+1$.  

\end{enumerate}

Moreover, (i) and (ii) hold if $T$ is a $B$-tree, and (iii) does as well as long as $s\neq \varnothing$.

\label{derivation prop}

\end{proposition}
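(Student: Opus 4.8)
The plan is to prove all three items by transfinite induction on $\xi$, handling successor and limit stages separately, and to observe that the $B$-tree versions follow verbatim from the tree versions once one checks the base mechanics. For (i), the key observation at the base case is the identity $T'(s) = (T(s))'$: a sequence $t$ lies in $T'(s)$ iff $s^{\verb!^!}t \in T$ and $s^{\verb!^!}t \notin MAX(T)$, and since $s^{\verb!^!}t$ has a proper extension in $T$ iff $t$ has a proper extension in $T(s)$, this is exactly $t \in (T(s))'$. Then at a successor stage I would write $d^{\xi+1}(T)(s) = (d^\xi(T))'(s) = (d^\xi(T)(s))' = (d^\xi(T(s)))' = d^{\xi+1}(T(s))$, using the base identity and the inductive hypothesis. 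At a limit stage $\xi$, $d^\xi(T)(s) = \bigl(\bigcap_{\zeta < \xi} d^\zeta(T)\bigr)(s) = \bigcap_{\zeta < \xi} d^\zeta(T)(s) = \bigcap_{\zeta<\xi} d^\zeta(T(s)) = d^\xi(T(s))$, where the second equality is because the operation $U \mapsto U(s)$ commutes with arbitrary intersections (it is just $\{t : s^{\verb!^!}t \in U\}$).

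For (ii), I would fix $\xi$ and induct on $\zeta$. The base case $\zeta = 0$ is the definition $d^0(d^\xi(T)) = d^\xi(T) = d^{\xi+0}(T)$. For the successor step, $d^{\zeta+1}(d^\xi(T)) = (d^\zeta(d^\xi(T)))' = (d^{\xi+\zeta}(T))' = d^{\xi+\zeta+1}(T) = d^{\xi+(\zeta+1)}(T)$, using the inductive hypothesis and ordinal arithmetic $(\xi+\zeta)+1 = \xi+(\zeta+1)$. For a limit $\zeta$, $d^\zeta(d^\xi(T)) = \bigcap_{\eta<\zeta} d^\eta(d^\xi(T)) = \bigcap_{\eta<\zeta} d^{\xi+\eta}(T)$; since $\xi + \zeta$ is a limit ordinal and $\{\xi+\eta : \eta < \zeta\}$ is cofinal in $\xi+\zeta$, and the derived trees are decreasing, this intersection equals $\bigcap_{\beta < \xi+\zeta} d^\beta(T) = d^{\xi+\zeta}(T)$.

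For (iii), the cleanest route is induction on $\xi$ proving the equivalence $\xi < o(T(s)) \iff s \in d^\xi(T)$. First note $s \in d^\xi(T) \iff \varnothing \in d^\xi(T)(s) \iff \varnothing \in d^\xi(T(s))$ by (i), so it suffices to show $\varnothing \in d^\xi(U) \iff \xi < o(U)$ for an arbitrary tree $U$; but $\varnothing \in d^\xi(U)$ is equivalent to $d^\xi(U) \neq \varnothing$ (the remark in the excerpt that $\varnothing$ belongs to any nonempty derived tree), which by definition of $o(U)$ as $\min\{\zeta : d^\zeta(U) = \varnothing\}$ and monotonicity of $\zeta \mapsto d^\zeta(U)$ is exactly $\xi < o(U)$. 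The "in particular" clause then follows: if $T(s)$ is well-founded, the set $\{\xi : s \in d^\xi(T)\} = \{\xi : \xi < o(T(s))\}$ has maximum $o(T(s)) - 1$, recalling from the excerpt that a finite (nonzero) ordinal index $o(T(s))$ must be a successor.

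The only genuinely delicate point is the limit-stage cofinality argument in (ii)—one must confirm that $\{\xi + \eta : \eta < \zeta\}$ is cofinal in $\xi + \zeta$ when $\zeta$ is a limit, and that passing to a cofinal subfamily does not change the intersection because the family $(d^\beta(T))_\beta$ is nonincreasing in $\beta$; everything else is bookkeeping. For the $B$-tree versions, the same arguments apply since $MAX$, the derivative, and $U \mapsto U(s)$ are defined identically for $B$-trees, with the sole caveat in (iii) that $s \neq \varnothing$ is needed so that $T(s)$ is again a $B$-tree and the reduction $s \in d^\xi(T) \iff \varnothing \in d^\xi(T(s))$ stays inside the $B$-tree setting.
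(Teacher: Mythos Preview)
Your proof is correct and is exactly the transfinite induction the paper has in mind; the paper itself does not supply a proof, stating only that the items ``are either trivial or easily shown by induction.''

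One small correction in your final paragraph: when $T$ is a $B$-tree and $s\neq\varnothing$ with $s\in T$, the set $T(s)$ contains $\varnothing$ and is therefore a \emph{tree}, not a $B$-tree as you wrote. This is in fact what makes your reduction go through, since the step $\varnothing\in d^\xi(T(s))\iff d^\xi(T(s))\neq\varnothing$ relies on $T(s)$ being a tree. The restriction $s\neq\varnothing$ is needed precisely because for $s=\varnothing$ one has $T(s)=T$, still a $B$-tree, and then $\varnothing\notin d^\xi(T)$ regardless of whether $d^\xi(T)$ is empty, so the equivalence in (iii) breaks.
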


We will also want standard ways to construct new $B$-trees with prescribed orders from given $B$-trees of specified orders.  The first method will be to construct a tree of order $\xi+\zeta$ by placing a ``copy'' of a tree of order $\xi$ beneath every maximal member of a $B$-tree of order $\zeta$.  The second method will be to take a totally incomparable union of $B$-trees with orders having supremum $\xi$.

\begin{proposition} \begin{enumerate}[(i)]\item Let $A, B$ be disjoint sets and let $\xi, \zeta\in \emph{\ord}$.  Suppose that $T_\varnothing$ is a $B$-tree on $A$ of order $\zeta$ and that for each $t\in MAX(T_\varnothing)$, $T_t$ is a $B$-tree on $B$ of order $\xi$.  Then $$T=T_\varnothing \cup \{t\verb!^!s: t\in MAX(T_\varnothing), s\in T_t\}$$ is a $B$-tree on $A\cup B$ of order $\xi+\zeta$. 

\item Fix a limit ordinal $\xi$ and a subset $A\subset [0, \xi)$ such that $\sup A=\xi$.  Suppose that for each $\zeta\in A$, there exists $A_\zeta\subset [0,\xi)$ so that the sets $(A_\zeta)_{\zeta\in A}$ are pairwise disjoint.  Suppose that for each $\zeta\in A$, $T_\zeta$ is a $B$-tree on $A_\zeta$ with $o(T_\zeta)=\zeta$.  Then $$T=\bigcup_{\zeta\in A}T_\zeta$$ is a $B$-tree on $[0,\xi)$ with order $\xi$.  

\item Fix a limit ordinal $\xi$ and a subset $A\subset [0, \xi)$ such that $\sup A=\xi$. Suppose that for each $\zeta\in A$, $T_\zeta\subset [0, \zeta)$ is a $B$-tree of order $\zeta$. Then $$T=\bigcup_{\zeta\in A}\{(\zeta), (\zeta)\verb!^!t: t\in T_\zeta\}$$ is a $B$-tree on $[ 0, \xi)$ with order $\xi$. \end{enumerate}

\label{new from old}
\end{proposition}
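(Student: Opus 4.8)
The plan is to handle the three parts by a common template: in each case $T$ is built by gluing together ``pieces'' of known order, so I would first verify that $T\cup\{\varnothing\}$ is a tree, then show that the derivation $d^\alpha$ acts on $T$ piece-by-piece, and finally read off $o(T)$. Checking downward closedness of $T\cup\{\varnothing\}$ is routine in all three parts, since an initial segment of a sequence of the prescribed form is again of the prescribed form: in (i) an initial segment of $t\con s$ (with $t\in MAX(T_\varnothing)$, $s\in T_t$) is either an initial segment of $t$ — hence in $T_\varnothing\cup\{\varnothing\}$ — or $t$ itself, or $t\con s'$ with $s'\prec s$, hence $s'\in T_t$; in (ii) and (iii) an initial segment of a member of one piece stays in that piece.

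For (i), the main step is to prove by transfinite induction on $\alpha\le\xi$ that
\[
d^\alpha(T)=T_\varnothing\cup\{t\con s:\ t\in MAX(T_\varnothing),\ s\in d^\alpha(T_t)\}.
\]
In the successor step one notes that for $\alpha<\xi$ every $t\in MAX(T_\varnothing)$ still has a proper extension in $d^\alpha(T)$ (because $d^\alpha(T_t)\neq\varnothing$, as $o(T_t)=\xi$), every non-maximal member of $T_\varnothing$ has a proper extension inside $T_\varnothing\subseteq d^\alpha(T)$, and each $t\con s$ with $s$ non-maximal in $d^\alpha(T_t)$ has a proper extension $t\con s'$ in $d^\alpha(T)$; hence $MAX(d^\alpha(T))=\{t\con s:\ t\in MAX(T_\varnothing),\ s\in MAX(d^\alpha(T_t))\}$, and deleting this set yields the formula at $\alpha+1$. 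The limit step uses that the representation $t\con s$ with $t\in MAX(T_\varnothing)$ is unique (two maximal members of $T_\varnothing$ that are both initial segments of one sequence must coincide), so the intersection distributes over the union and over the pieces $T_t$. Setting $\alpha=\xi$ gives $d^\xi(T)=T_\varnothing$, and then Proposition~\ref{derivation prop}(ii) yields $d^{\xi+\gamma}(T)=d^\gamma(T_\varnothing)$, which is empty exactly when $\gamma\ge o(T_\varnothing)=\zeta$, while $d^\gamma(T)\supseteq T_\varnothing\neq\varnothing$ for $\gamma<\xi$; the degenerate cases $\xi=0$ and $\zeta=0$ are immediate. Hence $o(T)=\xi+\zeta$.

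For (ii) and (iii) I would write $T=\bigcup_{\zeta\in A}U_\zeta$ with $U_\zeta=T_\zeta$ in (ii) and $U_\zeta=\{(\zeta)\}\cup\{(\zeta)\con t:\ t\in T_\zeta\}$ in (iii), and observe that the $U_\zeta$ form a \emph{totally incomparable} family: every nonempty $s\in T$ lies in exactly one $U_\zeta$ — the one recognized by which $A_\zeta$ the first entry of $s$ lies in (case (ii)), or by the value of the first entry itself (case (iii)) — and no member of $U_\zeta$ is $\prec$-comparable to a member of $U_{\zeta'}$ when $\zeta\neq\zeta'$. From this, a transfinite induction (the limit step again invoking uniqueness of the piece containing a given sequence) gives $d^\alpha(T)=\bigcup_{\zeta\in A}d^\alpha(U_\zeta)$ for all $\alpha$, whence $d^\alpha(T)=\varnothing$ iff $\alpha\ge\sup_{\zeta\in A}o(U_\zeta)$, i.e. $o(T)=\sup_{\zeta\in A}o(U_\zeta)$. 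In (ii) this is $\sup A=\xi$. In (iii) the piece $U_\zeta$ is a single root $(\zeta)$ with a copy of $T_\zeta$ attached below it, and using $U_\zeta((\zeta))=T_\zeta\cup\{\varnothing\}$ together with Proposition~\ref{derivation prop}(i),(iii) one gets $o(U_\zeta)=o(T_\zeta)+1=\zeta+1$; since $\xi$ is a limit ordinal, $\sup_{\zeta\in A}(\zeta+1)=\sup A=\xi$ as well.

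The part I expect to require the most care is not any single argument but the bookkeeping around the order of a $B$-tree versus the order of the associated tree containing $\varnothing$ — precisely the identity $o(T_\zeta\cup\{\varnothing\})=o(T_\zeta)+1$ used in (iii), which is where the construction genuinely differs from a naive specialization of (ii) and where the hypothesis that $\xi$ is a limit ordinal is essential. Everything else reduces to a transfinite induction on the derivation, and the disjointness and incomparability hypotheses are exactly what make these inductions go through without incident.
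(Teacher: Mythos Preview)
Your proposal is correct and follows essentially the same approach as the paper: for (i) you establish $d^\xi(T)=T_\varnothing$ (the paper does this via Proposition~\ref{derivation prop}(i), noting $T(t)=T_t\cup\{\varnothing\}$, whereas you write out the transfinite induction explicitly), and for (ii) and (iii) you use total incomparability of the pieces to distribute the derivation over the union, exactly as the paper does. Your version simply spells out more of the routine bookkeeping that the paper leaves implicit.
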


\begin{proof}(i) Note that for each $t\in MAX(T_\varnothing)$, $T(t)=T_t\cup \{\varnothing\}$.  Therefore using the correspondence $s\leftrightarrow t\verb!^!s$, we deduce $d^\xi(T(t))=\{\varnothing\}$, so that $t\in MAX(d^\xi(T))$.  Thus $d^\xi(T)=T_\varnothing$, whence $o(T)=\xi+\zeta$.  

(ii) Note that the derived set of the union is simply the union of the derived sets, so that for each $\eta<\xi$, $$d^\eta(T)=\bigcup_{\zeta\in A} d^\eta(T_\zeta).$$

(iii) Note that the $B$-trees $S_\zeta=\{(\zeta), (\zeta)\verb!^!t: t\in T_\zeta\}$ are totally incomparable for distinct $\zeta$.  This means $d^\eta(T)=\cup_{\zeta\in A}d^\eta(S_\zeta)$ for each $\eta<\xi$.  It is clear, or follows from (i), that $o(S_\zeta)=o(T_\zeta)+1=\zeta+1<\xi$.  Thus for any $\eta<\xi$, there exists $\zeta\in A$ with $\eta<\zeta$, and so $d^\eta(T)\supset d^\eta(S_\zeta)\neq \varnothing$.  But $d^\xi(T)=\cup_{\zeta\in A} d^\xi(S_\zeta)=\varnothing$.

\end{proof}

For convenience, we will define for each $\xi\in \ord$ a tree $\mt_\xi$ on $[1, \xi]$ which will be minimal in a sense which is made clear in the proposition below.  More precisely, we construct $\mt_\xi$ by induction on $\xi$ so that $\mt_\xi$ is a tree on $[1, \xi]$ consisting of strictly decreasing sequences in $[1, \xi]$.  These trees can be compared to the trees constructed in \cite{JO}.  There a different definition of tree was used.  The sets they define are partially ordered by set inclusion, rather than our order $\prec$.  

We let $$\mt_0=\{\varnothing\},$$ $$\mt_{\xi+1}=\{\varnothing\}\cup \{(\xi+1)\verb!^!t: t\in \mt_\xi\},$$ and if $\xi\in \ord$ is a limit ordinal, $$\mt_\xi=\bigcup_{\zeta<\xi}\mt_{\zeta+1}.$$  For each $\xi\in \ord$, we let $\ttt_\xi=\mt_\xi\setminus \{\varnothing\}$.  Note that for a limit ordinal $\xi$, $\ttt_\xi=\cup_{\zeta<\xi}\ttt_{\zeta+1}$, and that the $B$-trees in this union are totally incomparable.  This is because every member of $\ttt_{\zeta+1}$ is an extension of $(\zeta+1)$.

\begin{proposition} Let $\xi\in \emph{\ord}$. Let $S$ be a set and let $T$ be a $B$-tree on $S$.    \begin{enumerate}[(i)]\item $\ttt_\xi$ is a $B$-tree on $[1, \xi]$ with $o(\ttt_\xi)=\xi$. \item If there exists $f:\ttt\to T$ which is order preserving, $o(T)\geqslant o(\ttt)$.  \item $o(T)\geqslant \xi$ if and only if there exists an order preserving $f:\ttt_\xi\to T$.  \item $o(T)\geqslant \xi$ if and only if there exists a function $f:\ttt_\xi\to S$ so that for each $t\in \ttt_\xi$, $(f(t|_i))_{i=1}^{|t|}\in T$.  
\end{enumerate}

\label{minimal trees}
\end{proposition}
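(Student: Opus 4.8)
The four parts are proved in order, parts (i) and (ii) providing the engine for the equivalences in (iii) and (iv); I begin with (i), arguing by transfinite induction on $\xi$. The base case is vacuous, since $\ttt_0=\varnothing$ and $o(\varnothing)=0$. For the successor step the definition unwinds to $\ttt_{\xi+1}=\{(\xi+1)\}\cup\{(\xi+1)\verb!^!t:t\in\ttt_\xi\}$, which is precisely the $B$-tree produced by Proposition \ref{new from old}(i) with stem the one-point $B$-tree $\{(\xi+1)\}$ (of order $1$, living on the singleton $\{\xi+1\}$, which is disjoint from $[1,\xi]$) and with $\ttt_\xi$ hung below its unique maximal node; since $o(\ttt_\xi)=\xi$, Proposition \ref{new from old}(i) gives that $\ttt_{\xi+1}$ is a $B$-tree on $[1,\xi+1]$ with $o(\ttt_{\xi+1})=\xi+1$. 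For limit $\xi$ we have $\ttt_\xi=\bigcup_{\zeta<\xi}\ttt_{\zeta+1}$, where (as noted just before the proposition) the pieces $\ttt_{\zeta+1}$ are pairwise disjoint and totally incomparable, so $d^\eta(\ttt_\xi)=\bigcup_{\zeta<\xi}d^\eta(\ttt_{\zeta+1})$ for every ordinal $\eta$; by the inductive hypothesis $o(\ttt_{\zeta+1})=\zeta+1$, so this derived tree is non-empty for $\eta<\xi$ (take $\zeta=\eta$) and empty for $\eta=\xi$, whence $o(\ttt_\xi)=\xi$.

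For (ii), the key point — proved by transfinite induction on $\eta$ — is that an order preserving $f:\ttt\to T$ satisfies $f(d^\eta(\ttt))\subseteq d^\eta(T)$ for every ordinal $\eta$: at a successor stage, any $t\in d^{\eta+1}(\ttt)$ has a proper extension $s\in d^\eta(\ttt)$, so $f(t)\prec f(s)$ with $f(t),f(s)\in d^\eta(T)$, hence $f(t)\notin MAX(d^\eta(T))$, i.e.\ $f(t)\in d^{\eta+1}(T)$; limit stages are intersections. Thus $d^\eta(\ttt)\neq\varnothing$ forces $d^\eta(T)\neq\varnothing$, which gives $o(T)\geqslant o(\ttt)$. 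The implication $(\Leftarrow)$ of (iii) is then immediate from (ii) and (i), namely $o(T)\geqslant o(\ttt_\xi)=\xi$; and the same reasoning yields $(\Leftarrow)$ of (iv), since if $f$ is as in (iv) then $t\mapsto(f(t|_i))_{i=1}^{|t|}$ is an order preserving map of $\ttt_\xi$ into $T$: when $s\prec t$ we have $s=t|_{|s|}$, so the image of $s$ is the length-$|s|$ initial segment of the image of $t$.

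The content of the proposition is the remaining implication: if $o(T)\geqslant\xi$ then there is $f:\ttt_\xi\to S$ with $(f(t|_i))_{i=1}^{|t|}\in T$ for all $t\in\ttt_\xi$ — this is $(\Rightarrow)$ of (iv), from which $(\Rightarrow)$ of (iii) follows via the same recipe $t\mapsto(f(t|_i))_{i=1}^{|t|}$. I would prove it by transfinite induction on $\xi$, uniformly over all $B$-trees $T$. The base case is trivial. For limit $\xi$, $\ttt_\xi=\bigcup_{\zeta<\xi}\ttt_{\zeta+1}$ is a disjoint union, and for each $\zeta<\xi$ the inductive hypothesis applied to $T$ itself with the ordinal $\zeta+1<\xi\leqslant o(T)$ supplies an $f_\zeta:\ttt_{\zeta+1}\to S$ with the required property; pasting the $f_\zeta$ together gives $f$, with no conflict since the $\ttt_{\zeta+1}$ are disjoint and closed under initial segments. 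For $\xi=\beta+1$: since $o(T)>\beta$ we have $d^\beta(T)\neq\varnothing$, so $d^\beta(T)$ contains a length-one node $(a)$, which by Proposition \ref{derivation prop}(iii) means $\beta<o(T((a)))$; writing $R=T((a))\setminus\{\varnothing\}$ and using the elementary identity $o(T((a)))=o(R)+1$ (adjoining $\varnothing$ to a $B$-tree raises its order by $1$), we obtain $o(R)\geqslant\beta$, so the inductive hypothesis applied to the $B$-tree $R$ gives $h:\ttt_\beta\to S$ with $(h(t|_i))_{i=1}^{|t|}\in R$ for all $t\in\ttt_\beta$. One then sets $f((\beta+1))=a$ and $f((\beta+1)\verb!^!s)=h(s)$ for $s\in\ttt_\beta$; this works because for $t'=(\beta+1)\verb!^!t$ with $t\in\ttt_\beta$, the sequence $(f(t'|_i))_{i=1}^{|t'|}$ equals $(a)\verb!^!(h(t|_i))_{i=1}^{|t|}$, which lies in $T$ exactly because $(h(t|_i))_{i=1}^{|t|}\in R$.

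The only step demanding real care is this successor case: one must keep straight that $T((a))$ is a tree (it contains $\varnothing$, since $(a)\in T$) whereas the object recursed on, $R=T((a))\setminus\{\varnothing\}$, must be a $B$-tree, and one needs the identity $o(T((a)))=o(R)+1$, after which $\beta<o(T((a)))$ immediately gives $o(R)\geqslant\beta$. Everything else — the pasting in the limit step, the verifications that the constructed maps have the claimed properties, and the passage between the level-preserving function $f$ of (iv) and the order preserving map of (iii) — is routine.
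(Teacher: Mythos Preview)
Your proof is correct and follows essentially the same approach as the paper's: part (i) by induction via Proposition~\ref{new from old}, part (ii) by showing $f(d^\eta(\ttt))\subseteq d^\eta(T)$ by induction on $\eta$, and parts (iii)--(iv) together by transfinite induction on $\xi$, picking a length-one node $(a)\in d^\beta(T)$ in the successor step and recursing on $T((a))$. If anything, you are more careful than the paper in the successor case, explicitly passing to the $B$-tree $R=T((a))\setminus\{\varnothing\}$ and invoking $o(T((a)))=o(R)+1$ before applying the inductive hypothesis, whereas the paper applies it directly to $T((a))$ without comment.
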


\begin{proof}(i) That $\ttt_\xi$ is a $B$-tree on $[1,\xi]$ is trivial.  That $o(\ttt_\xi)=\xi$ follows from Proposition \ref{new from old}.

(ii) We claim that for each $\zeta<o(\ttt)$ and each $t\in d^\zeta(\ttt)$, $f(t)\in d^\zeta(T)$.  The proof is by induction on $\zeta$ with the base case and limit ordinal case trivial.  If the claim holds for $\zeta$ and if $\zeta+1<o(\ttt)$, then for $t\in d^{\zeta+1}(\ttt)$ we can choose $t\prec s\in d^\zeta(\ttt)$.  Then $f(t)\prec f(s)\in d^\zeta(T)$ and $f(t)\in d^{\zeta+1}(T)$.  

We prove (iii) and (iv) together. First, if $f:\ttt_\xi\to S$ is such that for each $t\in \ttt_\xi$, $(f(t|_i))_{i=1}^{|t|}\in T$, then $g(t)=(f(t|_i))_{i=1}^{|t|}$ defines an order preserving map of $\ttt_\xi$ into $T$, so that $o(T)\geqslant o(\ttt_\xi)=\xi$.  It remains only to show that if $o(T)\geqslant \xi$, there exists such an $f$.  If $\xi=0$, there is nothing to prove.  Suppose we have established the result for some $\xi$ and that $o(T)\geqslant \xi+1$.  Then this means $d^\xi(T)$ cannot be empty, and therefore that there exists some $x\in S$ so that $(x)\in d^\xi(T)$.  But by Proposition \ref{derivation prop}, $o(T(x))\geqslant \xi$.  This means there exists $f':\ttt_\xi\to S$ so that for each $t\in \ttt_\xi$, $(f'(t|_i))_{i=1}^{|t|}\in T(x)$. Define $f:\ttt_{\xi+1}\to S$ by letting $f((\xi+1))=x$ and $f((\xi+1)\verb!^!t)=f'(t)$ for $t\in \ttt_\xi$.  This map is clearly seen to satisfy the requirements. 

Last, suppose the result holds for every $\zeta<\xi$, $\xi$ a limit ordinal, and suppose $o(T)\geqslant \xi$.  Then $o(T)\geqslant \zeta+1$ for every $\zeta<\xi$, so there exists $f_\zeta:\ttt_{\zeta+1}\to S$ so that for each $t\in \ttt_{\zeta+1}$, $(f_\zeta(t|_i))_{i=1}^{|t|}\in T$.  Define $f$ on $\ttt_\xi$ by letting $f|_{\ttt_{\zeta+1}}=f_\zeta$.

\end{proof}

\subsection{Regular families}

If $M$ is any subset of $\nn$, we let $[M]$ (resp. $[M]^{<\omega}$) denote the infinite (resp. finite) subsets of $M$.  Throughout, we identify subsets of $\nn$ with strictly increasing sequences in $\nn$ in the obvious way.  In this way, we can identify $\fin$ with a subset of $\nn^{<\omega}$, and each of the definitions from the previous section concerning trees and $B$-trees can be applied to $\fin$ with this identification.  In particular, $\fff\subset \fin$ is hereditary if it contains all subsets of its members.  For $\fff\subset \fin$, we let $\widehat{\fff}=\fff\setminus \{\varnothing\}$.  

We also identify $\fin$ with a subset of the Cantor set $2^\nn$ by identifying sets with their indicator functions.  We endow $\fin$ with the topology it inherits from this identification, and we say $\fff\subset \fin$ is compact if it is compact with respect to this topology.  

If $(j_i)_{i=1}^l, (k_i)_{i=1}^l\in \fin$ are such that $j_i\leqslant k_i$ for each $1\leqslant i\leqslant l$, then we say $(k_i)_{i=1}^l$ is a \emph{spread} of $(j_i)_{i=1}^l$.  We say $\fff \subset \fin$ is \emph{spreading} provided it contains all spreads of its members.  We say $\fff\subset \fin$ is \emph{regular} provided it is compact, spreading, and hereditary.  

We note that if $\fff$ is spreading, $MAX(\fff)$ is the set of isolated points in $\fff$, and $\fff'$ is the Cantor-Bendixson derivative of $\fff$. Note that $\fff'$ is also regular.   For $\fff$ regular, we define $$\iota(\fff)=\min \{\xi<\omega_1: d^\xi(\fff)\subset \{\varnothing\}\}.$$  Note that since $\fff$ is countable compact Hausdorff, $\fff$ must have countable Cantor-Bendixson index.  If $\varnothing \neq \fff$, then $d^{\iota(\fff)}(\fff)=\{\varnothing\}$ and $\iota(\fff)+1$ is the Cantor-Bendixson index of $\fff$.  

We write $E<F$ provided $\max E<\min F$.  We write $n<E$ (resp. $n\leqslant E$) provided $n<\min E$ (resp. $n\leqslant \min E$).  By convention, $\varnothing <E<\varnothing$ for all $E\in \fin$.   If $E<F$, we write $E\verb!^!F$ in place of $E\cup F$.  We say $(E_i)_{i=1}^n$ is $\fff$-admissible provided $E_1<\ldots <E_n$ and $(\min E_i)_{i=1}^n\in \fff$.  

If $M\in \infin$, there is a natural bijection between $2^\nn$ and $2^M$, which we also denote by $M$.  That is, if $M=(m_n)_{n\in \nn}$, $M(E):=(m_n: n\in E)$.  We let $\fff(M)=\{M(E): E\in \fff\}$.

If $(\gggg_i)_{i=1}^n$ are regular, then $$(\gggg_1, \ldots, \gggg_n):=\{E_1\verb!^! \ldots \verb!^!E_n: E_i\in \gggg_i \text{\ }\forall 1\leqslant i\leqslant n\}$$ is regular.  For $\gggg$ regular and $n\in \nn$, we let $(\gggg)_n=\gggg$ if $n=1$ and $(\gggg)_n=(\gggg, (\gggg)_{n-1})$ if $n>1$.  Note that $(\gggg_1, (\gggg_2, \gggg_3))=(\gggg_1, \gggg_2, \gggg_3)=((\gggg_1, \gggg_2), \gggg_3)$ for any $\gggg_1, \gggg_2, \gggg_3$ regular. Therefore for $\gggg$ regular and $m,n\in \nn$, $((\gggg)_m, (\gggg)_n)=(\gggg)_{m+n}$. It is easy to see that $\iota((\fff, \gggg))=\iota(\gggg)+\iota(\fff)$.  We think of $(\fff, \gggg)$ as being the sum of $\fff$ and $\gggg$.  

If $\fff, \gggg$ are regular, we let $$\fff[\gggg]=\Bigl\{\bigcup_{i=1}^s E_i: (E_i)_{i=1}^s\subset \gggg \text{\ is\ }\fff\text{-admissible}\Bigr\}.$$ This is easily seen to be regular.  If $\gggg$ is regular, we can define $[\gggg]^n=\gggg$ if $n=1$ and $[\gggg]^n=\gggg\bigl[[\gggg]^{n-1}\bigr]$ if $n>1$. We observe that $\gggg_1\bigl[\gggg_2[\gggg_3]\bigr]=(\gggg_1[\gggg_2])[\gggg_3]$ if $\gggg_1, \gggg_2, \gggg_3$ are regular, so that for all regular families $\gggg$ and all $m,n\in \nn$, $[\gggg]^m\bigl[[\gggg]^n\bigr] = [\gggg]^{m+n}$.  We note that $\iota(\fff[\gggg])=\iota(\gggg)\iota(\fff)$.  We think of $\fff$ and $\gggg$ be the product of $\fff$ and $\gggg$.  

If $\fff_n$ is a sequence of regular families with $\iota(\fff_n)\uparrow \xi$, then we can construct a diagonalization of the sequence $$\fff=\{E: \exists n\leqslant n\in \fff_n\}.$$  Then $\fff$ is regular with $\iota(\fff)=\xi$.

We are now in a position to define certain regular families which we will use throughout.  We define $\aaa_n=\{E\in \fin: |E|\leqslant n\}$.  We let $\sss=\{E: n\leqslant E\in \aaa_n\}$.  We also define $\sss_0=\aaa_1$, $\sss_{\xi+1}=\sss[\sss_\xi]$, and if $\xi<\omega_1$ is a limit ordinal with $\sss_\zeta$ defined for each $\zeta<\xi$, we fix $\xi_n\uparrow \xi$ and let $$\sss_\xi=\{E: \exists n\leqslant E\in \sss_{\xi_n+1}\}.$$  We remark that it is known that, in this case, we can choose the ordinals $\xi_n\uparrow \xi$ so that $\sss_{\xi_n+1}\subset \sss_{\xi_{n+1}}$ for all $n\in \nn$ (this was shown in \cite{Causey}).  We observe that the families resulting from this process are not uniquely determined, since we made choices of $\xi_n\uparrow \xi$ at limit ordinals, but this is unimportant for our purposes as long as we have the containment indicated above.  Moreover, it is easy to see that $\iota(\sss_\xi)=\omega^\xi$.  Also, for convenience, we will let $\sss_{\omega_1}=\fin$.  The families $(\sss_\xi)_{\xi<\omega_1}$ are called the \emph{Schreier families}.

\subsection{Cantor normal form and Hessenberg sums}

We collect here definitions and facts about ordinals which will be used throughout. The following facts can be found in \cite{Monk}.  

\begin{proposition} Fix $\alpha, \beta, \xi\in \emph{\ord}$.   \begin{enumerate}[(i)]\item If  $\alpha<\omega^\xi$, $\alpha+\omega^\xi= \omega^\xi$.  \item If $\alpha, \beta<\omega^\xi$, $\alpha+\beta<\omega^\xi$. \item If $0<\alpha<\omega^{\omega^\xi}$, $\alpha\omega^{\omega^\xi}=\omega^{\omega^\xi}$.  \item If $\alpha, \beta<\omega^{\omega^\xi}$, $\alpha\beta<\omega^{\omega^\xi}$. \item If $\alpha<\beta$ and $0<\xi$, $\xi\alpha<\xi\beta$.  \item If $\alpha<\beta$, $\omega^\alpha<\omega^\beta$.  \item If $\xi\geqslant \omega$, then there exists $\zeta\in \emph{\ord}$ so that $\xi=\omega^\zeta$ if and only if for every limit ordinal $\lambda<\xi$, $\lambda 2<\xi$.  \item The function $\zeta\mapsto \xi+\zeta$ is an order preserving bijection between $[\alpha, \beta)$ and $[\xi+\alpha, \xi+\beta)$.  \item Either $\xi=0$ or there exist unique $k\in \nn$, $\alpha_1>\ldots>\alpha_k$, $\alpha_i\in \emph{\ord}$ and $n_i\in \nn$ so that $\xi=\omega^{\alpha_1}n_1+\ldots + \omega^{\alpha_k}n_k$.  This is called the \emph{Cantor normal form} of $\xi$.  

\end{enumerate}

\label{ordinals}
\end{proposition}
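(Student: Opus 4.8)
The plan is to treat the nine items as a package of classical facts about ordinal arithmetic (all of which can be found in \cite{Monk}); the only substantive work is to isolate two ``engines'' --- the additive indecomposability of powers of $\omega$ in (ii), and the Cantor normal form in (ix) --- and then read the rest off from these together with the routine facts that left addition $\gamma\mapsto\alpha+\gamma$, left multiplication $\gamma\mapsto\alpha\gamma$ (for $\alpha>0$), and the base-$\omega$ exponential $\gamma\mapsto\omega^\gamma$ are strictly increasing and continuous (normal functions). In particular (v) and (vi) are precisely the strict monotonicity of the latter two, and (viii) is the standard left-subtraction fact that $\gamma\mapsto\xi+\gamma$ is an order-isomorphism of $\ord$ onto $[\xi,\infty)$, here restricted to the interval $[\alpha,\beta)$.

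For (ii) I would induct on $\xi$. The base case $\xi=0$ is vacuous; at a limit $\xi$ one has $\omega^\xi=\sup_{\eta<\xi}\omega^\eta$, so any $\alpha,\beta<\omega^\xi$ already lie below a common $\omega^\eta$ with $\eta<\xi$ and the inductive hypothesis applies; and at $\xi=\eta+1$ one has $\omega^{\eta+1}=\sup_m\omega^\eta m$, so choosing $m,n$ with $\alpha<\omega^\eta m$ and $\beta<\omega^\eta n$ gives $\alpha+\beta<\alpha+\omega^\eta n\leqslant\omega^\eta m+\omega^\eta n=\omega^\eta(m+n)<\omega^{\eta+1}$. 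Item (i) then follows: for $\xi>0$ the ordinal $\omega^\xi$ is a limit, so by continuity of left addition $\alpha+\omega^\xi=\sup_{\gamma<\omega^\xi}(\alpha+\gamma)\leqslant\omega^\xi$ by (ii), while the reverse inequality is trivial. Items (iii) and (iv) are the multiplicative analogues. For (iv) (assuming $\alpha>0$, else there is nothing to prove) write $\alpha<\omega^\gamma$ and $\beta<\omega^\delta$ with $\gamma,\delta<\omega^\xi$, possible since $\omega^{\omega^\xi}=\sup_{\gamma<\omega^\xi}\omega^\gamma$ when $\xi>0$; then $\alpha\beta<\alpha(\beta+1)\leqslant\alpha\omega^\delta\leqslant\omega^\gamma\omega^\delta=\omega^{\gamma+\delta}<\omega^{\omega^\xi}$, the last inequality by (ii) and (vi). Item (iii) drops out of (iv) and continuity of left multiplication exactly as (i) dropped out of (ii), and the degenerate case $\xi=0$ of (iii)--(iv) is just closure of the finite ordinals under multiplication.

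Item (ix), Cantor normal form, I would prove by induction on $\xi$: for $\xi>0$ the set $\{\gamma:\omega^\gamma\leqslant\xi\}$ is nonempty and bounded above by $\xi$, hence has a maximum $\alpha_1$; dividing gives $\xi=\omega^{\alpha_1}n_1+\rho$ with $n_1\in\nn$ and $\rho<\omega^{\alpha_1}$, and one recurses on $\rho<\xi$ (the new leading exponent being $<\alpha_1$), with uniqueness following by comparison of leading terms using (ii). Finally, (vii) is the step I expect to cost the most, and it is where the normal form earns its keep. One direction is immediate from (ii): if $\xi=\omega^\zeta$ then any limit $\lambda<\xi$ satisfies $\lambda 2=\lambda+\lambda<\omega^\zeta=\xi$. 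For the converse I would argue contrapositively via the normal form $\xi=\omega^{\alpha_1}n_1+\cdots+\omega^{\alpha_k}n_k$, where $\xi\geqslant\omega$ forces $\alpha_1\geqslant1$, so $\omega^{\alpha_1}m$ is a limit ordinal for every $m\geqslant1$. If $\xi$ is not a power of $\omega$ then either $k=1$ and $n_1\geqslant2$, in which case $\lambda=\omega^{\alpha_1}(n_1-1)$ is a limit below $\xi$ with $\lambda 2=\omega^{\alpha_1}(2n_1-2)\geqslant\omega^{\alpha_1}n_1=\xi$; or $k\geqslant2$, in which case $\lambda=\omega^{\alpha_1}n_1$ is a limit below $\xi$ (the remainder being positive) with $\lambda 2=\omega^{\alpha_1}(2n_1)\geqslant\omega^{\alpha_1}(n_1+1)>\xi$, the last step because the remainder is $<\omega^{\alpha_1}$ by additive indecomposability. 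In either case the stated condition fails, which is what we need.
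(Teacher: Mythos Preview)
Your proposal is correct and matches the paper's approach. The paper cites Monk for items (i)--(vi), (viii), (ix) and only argues (vii) explicitly (the text says ``(v)'' but this is a typo --- the argument given is for (vii)); its proof of (vii) is the same contrapositive via Cantor normal form that you give, with only a cosmetic difference in the $k\geqslant 2$ case (the paper takes $\lambda=\omega^{\alpha_1}n_1+\cdots+\omega^{\alpha_{k-1}}n_{k-1}$ rather than your $\lambda=\omega^{\alpha_1}n_1$).
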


We note that $\omega^\zeta> \omega^{\alpha_1}n_1+\ldots + \omega^{\alpha_k}n_k$ if and only if $\zeta>\alpha_1$. Ordinals of the form $\omega^\xi$ are called \emph{gamma numbers}, while ordinals of the form $\omega^{\omega^\xi}$ are called \emph{delta numbers}.  Both gamma numbers and delta numbers will be important in this work.  The only statement above which is not explicitly stated in Monk is (v), so we discuss it.  Suppose $\xi=\omega^{\alpha_1}n_1+\ldots+\omega^{\alpha_k}n_k$ is the Cantor normal form of $\xi$.  First, if $k>1$, then $\lambda=\omega^{\alpha_1}n_1+\ldots+\omega^{\alpha_{k-1}}n_{k-1}<\xi$ is a limit ordinal and $\lambda 2>\lambda + \omega^{\alpha_k}n_k=\xi$.  If $k=1$, then $\alpha_k>0$, or else $\xi$ is finite.  If $n_1>1$, $\lambda=\omega^{\alpha_1}(n_1-1)<\xi$ is a limit and $\lambda 2 = \omega^{\alpha_1}(2n_1-2)\geqslant \xi$.  Thus if $\xi\geqslant \omega$ has the property that for each limit ordinal $\lambda <\xi$, $\lambda 2<\xi$, $\xi$ is a gamma number.

Note that if $f:S\to S_0$ is a bijection, $(x_i)_{i=1}^n\mapsto ((f(x_i))_{i=1}^n$ is a bijection between $S^{<\omega}$ and $S_0^{<\omega}$ taking trees (resp. $B$-trees) to trees (resp. $B$-trees) of the same order.  In particular, for any $\xi\in \ord$ and $n\in \nn$, $\zeta\mapsto \omega^\xi n+ \zeta$ maps $[1, \omega^\xi)$ bijectively onto $[\omega^\xi n+1, \omega^\xi (n+1))$.  Moreover, for any $\xi\in \ord$, $\zeta\mapsto \omega^\xi+\zeta$ takes $[1, \omega^{\xi+1})$ bijectively onto $[\omega^\xi+1, \omega^\xi+\omega^{\xi+1})=[\omega^\xi+1, \omega^{\xi+1})$.  We will use this fact to take given $B$-trees on the same set and transfer them in a natural way to $B$-trees on disjoint sets while preserving the orders of the trees.

If $\xi, \zeta\in \ord$, by allowing $m_i,n_i$ to be zero, we can find $\alpha_1>\ldots >\alpha_k$ and $m_i, n_i\in \nn$ so that $$\xi=\omega^{\alpha_1}m_1+\ldots + \omega^{\alpha_k} m_k, \hspace{5mm} \zeta= \omega^{\alpha_1}n_1+\ldots + \omega^{\alpha_k}n_k.$$  Then we define the \emph{Hessenberg sum}, or \emph{natural sum}, of $\xi$ and $\zeta $ by $$\xi\oplus \zeta= \omega^{\alpha_1}(m_1+n_1)+\ldots + \omega^{\alpha_k}(m_k+n_k).$$  
 
 We observe that for every $\xi\in \ord$, the set $\{(\alpha, \beta): \alpha\oplus \beta=\xi\}$ is finite, and if $\alpha\oplus \beta=\omega^\xi$, either $\alpha=\omega^\xi$ or $\beta=\omega^\xi$.  From this, it is easy to see that if $(\alpha_i, \beta_i)_{i\in I}$ is such that $\sup_{i\in I} \alpha_i \oplus \beta_i=\omega^\xi$, then either $\sup_{i\in I} \alpha_i=\omega^\xi$ or $\sup_{i\in I} \beta_i= \omega^\xi$.  If it were not so, say $\alpha= \sup_{i\in I}\alpha_i, \beta=\sup_{i\in I}\beta_i<\omega^\xi$, then $$\sup_{i\in I} \alpha_i\oplus \beta_i\leqslant \alpha\oplus \beta < \omega^\xi.$$

\begin{proposition} Let $\xi$ be a limit ordinal and suppose that for each $\zeta<\xi$, we have ordinals $\xi_{0, \zeta}$, $\xi_{1,\zeta}$ so that $\xi_{0,\zeta}\oplus \xi_{1, \zeta}=\zeta+1$.  Then there exist a set $A\subset [0,\xi)$, ordinals $\xi_0, \xi_1$ so that $\xi_0\oplus \xi_1=\xi$, and $j\in \{0,1\}$ so that $\xi_j$ is a limit ordinal and $$\sup_{\zeta\in A} \xi_{j,\zeta}= \xi_j \text{\ and\ }\min_{\zeta\in A}\xi_{1-j, \zeta} \geqslant \xi_{1-j}.$$  

\label{splitting technique}

\end{proposition}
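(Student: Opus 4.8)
The plan is to reduce the statement to the dichotomy already recorded just above, namely that if $\sup_{i\in I}\alpha_i\oplus\beta_i$ is a gamma number $\omega^\gamma$, then $\sup_{i\in I}\alpha_i=\omega^\gamma$ or $\sup_{i\in I}\beta_i=\omega^\gamma$. First I would pass to the Cantor normal form $\xi=\omega^{\gamma_1}n_1+\ldots+\omega^{\gamma_m}n_m$; since $\xi$ is a limit ordinal, $\gamma_m\geqslant 1$. Setting $\mu=\omega^{\gamma_1}n_1+\ldots+\omega^{\gamma_m}(n_m-1)$ (with the last term absent when $n_m=1$), one has $\mu<\xi$ and $\xi=\mu+\omega^{\gamma_m}=\mu\oplus\omega^{\gamma_m}$. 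I would then only consider $\zeta\in[\mu,\xi)$: writing $\zeta=\mu+\eta$ with $\eta<\omega^{\gamma_m}$ and putting $\theta_\zeta:=\eta+1\in(0,\omega^{\gamma_m})$ (legitimate since $\omega^{\gamma_m}$ is a limit ordinal), we get $\zeta+1=\mu+\theta_\zeta$, and because $\xi$ is a limit ordinal one checks $\sup_{\zeta\in[\mu,\xi)}\theta_\zeta=\omega^{\gamma_m}$.

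The key device is to split each $\xi_{i,\zeta}$ at the exponent $\gamma_m$: for an ordinal $\eta$ write $\eta=\eta'+\eta''$ where $\eta'$ collects the Cantor normal form terms with exponent $\geqslant\gamma_m$ and $\eta''$ those with exponent $<\gamma_m$. Writing two ordinals over a common exponent list shows at once that this truncation commutes with the Hessenberg sum, i.e. $(\alpha\oplus\beta)'=\alpha'\oplus\beta'$ and $(\alpha\oplus\beta)''=\alpha''\oplus\beta''$. Since every exponent of $\mu$ is $\geqslant\gamma_m$ while every exponent of $\theta_\zeta$ is $<\gamma_m$, we have $(\mu+\theta_\zeta)'=\mu$ and $(\mu+\theta_\zeta)''=\theta_\zeta$, and therefore, applying the truncation identity to $\xi_{0,\zeta}\oplus\xi_{1,\zeta}=\mu+\theta_\zeta$, I obtain for every $\zeta\in[\mu,\xi)$ that $\xi_{0,\zeta}'\oplus\xi_{1,\zeta}'=\mu$ and $\xi_{0,\zeta}''\oplus\xi_{1,\zeta}''=\theta_\zeta$.

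To finish, I would use that the set of pairs $(\alpha,\beta)$ with $\alpha\oplus\beta=\mu$ is finite, so by pigeonhole some $A\subset[\mu,\xi)$ with $\sup A=\xi$ has $(\xi_{0,\zeta}',\xi_{1,\zeta}')$ equal to one fixed pair $(\rho_0,\rho_1)$ for all $\zeta\in A$ (a finite union one of whose pieces must be cofinal). On $A$ we then have $\xi_{i,\zeta}=\rho_i+\xi_{i,\zeta}''$ for $i=0,1$ (pure concatenation, as the exponents of $\xi_{i,\zeta}''$ are strictly below those of $\rho_i$), and still $\sup_{\zeta\in A}\theta_\zeta=\omega^{\gamma_m}$ since $\sup A=\xi$. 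Now apply the gamma-number dichotomy to $\omega^{\gamma_m}$ and the pairs $(\xi_{0,\zeta}'',\xi_{1,\zeta}'')_{\zeta\in A}$ to get $j\in\{0,1\}$ with $\sup_{\zeta\in A}\xi_{j,\zeta}''=\omega^{\gamma_m}$. Set $\xi_j=\rho_j+\omega^{\gamma_m}$ and $\xi_{1-j}=\rho_{1-j}$. Then $\xi_0\oplus\xi_1=\rho_j\oplus\omega^{\gamma_m}\oplus\rho_{1-j}=\mu\oplus\omega^{\gamma_m}=\xi$ (using $\rho_j+\omega^{\gamma_m}=\rho_j\oplus\omega^{\gamma_m}$ since the exponents of $\rho_j$ are $\geqslant\gamma_m$); $\xi_j$ is a limit ordinal because every exponent appearing in it is $\geqslant\gamma_m\geqslant 1$; $\sup_{\zeta\in A}\xi_{j,\zeta}=\rho_j+\sup_{\zeta\in A}\xi_{j,\zeta}''=\rho_j+\omega^{\gamma_m}=\xi_j$ by continuity of $\rho_j+(\cdot\,)$; and $\xi_{1-j,\zeta}=\rho_{1-j}+\xi_{1-j,\zeta}''\geqslant\rho_{1-j}=\xi_{1-j}$ for all $\zeta\in A$, so $\min_{\zeta\in A}\xi_{1-j,\zeta}\geqslant\xi_{1-j}$. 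These $A$, $\xi_0$, $\xi_1$, $j$ satisfy all the requirements.

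I expect the only genuine obstacle to be bookkeeping: checking carefully that truncation of Cantor normal forms at the exponent $\gamma_m$ is compatible with $\oplus$ and with the decompositions $\zeta+1=\mu+\theta_\zeta$, and confirming that passing to the cofinal set $A$ on which the ``heads'' $(\xi_{0,\zeta}',\xi_{1,\zeta}')$ stabilize preserves both the cofinality of $A$ in $\xi$ and the identity $\sup_{\zeta\in A}\theta_\zeta=\omega^{\gamma_m}$ for the ``tails''. Once these are in hand the argument is routine ordinal arithmetic together with a single invocation of the gamma-number dichotomy established earlier in the section.
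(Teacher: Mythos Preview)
Your proposal is correct and follows essentially the same route as the paper's proof: both write $\xi=\mu+\omega^{\gamma_m}$ via Cantor normal form, split each $\xi_{i,\zeta}$ into a ``head'' with exponents $\geqslant\gamma_m$ and a ``tail'' with exponents $<\gamma_m$, use finiteness of Hessenberg decompositions of $\mu$ to stabilize the heads on a cofinal set, and then invoke the gamma-number dichotomy on the tails. Your write-up is in fact more explicit about the bookkeeping (compatibility of truncation with $\oplus$, continuity of $\rho_j+(\cdot)$, why $\xi_j$ is a limit) than the paper's, which leaves these verifications to the reader.
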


\begin{proof} Write $\xi$ in Cantor normal form as $$\xi=\omega^{\alpha_1}n_1+\ldots +\omega^{\alpha_k}n_k,$$ noting that $\alpha_k>0$.  Let $\xi'=\omega^{\alpha_1}n_1+\ldots + \omega^{\alpha_k}(n_k-1)$ and $\xi''= \omega^{\alpha_k}$.  For each $\zeta\in [0, \xi'')$, $$\xi_{0, \xi'+\zeta}\oplus \xi_{1, \xi'+\zeta}= \xi'+\zeta+1\in (\xi', \xi),$$  where $\xi'>\zeta+1$ or $\xi'=0$.  If we denote by $\gamma_{0, \zeta}$ the sum of the terms in the Cantor normal form of $\xi_{0, \xi'+\zeta}$ involving $\omega^{\alpha_1}, \ldots, \omega^{\alpha_k}$ and denote by $\delta_{0, \zeta}$ the sum of the remaining, smaller terms, we obtain an expression $\xi_{0, \xi'+\zeta}= \gamma_{0,\zeta}+\delta_{0,\zeta}$.  If we perform a similar decomposition with $\xi_{1, \xi'+\zeta}= \gamma_{1, \zeta}+\delta_{1, \zeta}$, we deduce that $\xi'=\gamma_{0,\zeta}\oplus \gamma_{1, \zeta}$ and $\zeta+1= \delta_{0,\zeta}\oplus \delta_{1, \zeta}$.  

Since $\xi'= \gamma_{0, \zeta}\oplus \gamma_{1, \zeta}$, and since there are only finitely many pairs $(\gamma_0, \gamma_1)$ satisfying $\gamma_0\oplus \gamma_1=\xi'$, there exists a subset $B\subset [0, \xi'')$ with $\sup B=\xi''$ and ordinals $\gamma_0, \gamma_1$ so that $\gamma_0\oplus \gamma_1=\xi'$ and for each $\zeta\in B$, $\gamma_{0, \zeta}=\gamma_0$ and $\gamma_{1, \zeta}=\gamma_1$.  Moreover, since $\zeta+1=\delta_{0,\zeta}\oplus \delta_{1, \zeta}$ for each $\zeta\in B$, we deduce $\sup_{\zeta\in B} \delta_{0, \zeta}\oplus \delta_{1, \zeta}=\xi''$.  But by our remarks above, this means there exists $j\in \{0,1\}$ so that $\sup_{\zeta\in B} \delta_{j, \zeta}=\xi''$.  Let $\xi_j= \gamma_j+\xi''$ and $\xi_{1-j}= \gamma_{1-j}$.  We finish by letting $A= \{\xi'+\zeta: \zeta\in B\}$.

\end{proof}

\section{Coloring lemmas}

In this section, we prove several coloring lemmas for trees.  The first few can be thought of as coloring segments of trees and searching for subsets of our trees which retain tree structure and have monochromatically colored segments.  We have different versions of the same principle because we will apply these arguments to multiple kinds of structures in our Banach spaces (local trees, asymptotic trees, and sequences).  The principle behind the first collection of lemmas is that one of our structures of order $\zeta\xi$ can be thought of as a structure of order $\xi$ built from structures of order $\zeta$. For this reason, the first  lemmas give product estimates.  The remaining lemmas are again two versions of the same principle, but instead of coloring segments of trees, we have maximal members of trees being colored by their predecessors.  This principle results in sum estimates.

\subsection{Product estimates}

Suppose $0<\zeta\in \ord$ is fixed.  If $\ttt$ is a $B$-tree and $f:C(\ttt)\to \rr$ is a function such that for any order preserving $i:\ttt_\zeta\to \ttt$, $$\inf \{f\circ i(c): c\in C(\ttt_\zeta)\}\leqslant 0,$$ then we say $f$ is $\zeta$ \emph{small}. We say $f$ is \emph{strictly} $\zeta$ \emph{small} if this infimum is a minimum.  We note that if $f:C(\ttt)\to \rr$ is $\zeta$ small (resp. strictly $\zeta$ small), $\www$ is a $B$-tree, and $i':\www\to \ttt$ is order preserving, then $f\circ i':C(\www)\to \rr$ is also $\zeta$ small (resp. strictly $\zeta$ small).  This is because if $i:\ttt_\zeta\to \www$ is order preserving, $i'\circ i:\ttt_\zeta\to \ttt$ is order preserving.  For $\varepsilon>0$, there exists $c\in C(\ttt_\zeta)$ with $f\circ i'(i(c))=f(i'\circ i(c))<\varepsilon$ (respectively, $f\circ i'(i(c))=f(i'\circ i(c))\leqslant 0)$.  

\begin{lemma} Fix $0<\zeta$.  Let $\ttt$ be a $B$-tree and $f:C(\ttt)\to \rr$ be $\zeta$ small.  Then if $o(\ttt)\geqslant \zeta\xi$, for any $(\varepsilon_n)\subset (0,1)$, there exists an order preserving $j:\ttt_\xi\to C(\ttt)$ so that for each $t\in \ttt_\xi$, $$f\circ j(t)\leqslant \varepsilon_{|t|}.$$  
Moreover, if $f$ is strictly $\zeta$ small, the conclusion holds with $\varepsilon_n=0$ for all $n\in \nn$.

\label{local coloring}

\end{lemma}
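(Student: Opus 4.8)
The natural approach is transfinite induction on $\xi$, using the characterization of $o(\ttt) \geqslant \eta$ in terms of order preserving maps from $\ttt_\eta$ (Proposition \ref{minimal trees}) together with the fact that $d^\eta(d^\zeta(T)) = d^{\zeta + \eta}(T)$ and the shift formula $d^\zeta(T)(s) = d^\zeta(T(s))$ (Proposition \ref{derivation prop}). The intuition the author flagged is that a tree of order $\zeta\xi$ is a "tree of order $\xi$ whose nodes are themselves trees of order $\zeta$"; I want to peel off one copy of $\ttt_\zeta$ at a time, and at each peeling step use $\zeta$ smallness to find a segment in that copy on which $f$ is below the relevant $\varepsilon$.

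First I would handle the base case $\xi = 0$ (where $\ttt_0 = \varnothing$, nothing to prove) and set up the successor step. Suppose the result holds for $\xi$ and $o(\ttt) \geqslant \zeta(\xi+1) = \zeta\xi + \zeta$. By Proposition \ref{derivation prop}(ii), $o(d^{\zeta\xi}(\ttt)) \geqslant \zeta$, so there is an order preserving $i : \ttt_\zeta \to d^{\zeta\xi}(\ttt) \subset \ttt$; since $f$ is $\zeta$ small, for $\varepsilon = \varepsilon_1$ there is a segment $c \in C(\ttt_\zeta)$ with $f(i(c)) < \varepsilon_1$ (or $\leqslant 0$ in the strict case). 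Let $t_0$ be the $\prec$-maximal element of $i(c)$; then $t_0 \in d^{\zeta\xi}(\ttt)$, so by Proposition \ref{derivation prop}(i)--(iii), $o(\ttt(t_0)) \geqslant \zeta\xi$. Now the function $g : C(\ttt(t_0)) \to \rr$ obtained by restricting $f$ along the embedding $s \mapsto t_0 {}^\frown s$ is again $\zeta$ small (by the remark preceding the lemma, applied to the order preserving map $\www \to \ttt$, $s \mapsto t_0{}^\frown s$), so the inductive hypothesis applied to $\ttt(t_0)$ with the shifted sequence $(\varepsilon_{n+1})_n$ gives an order preserving $j' : \ttt_\xi \to C(\ttt(t_0))$ with $f(t_0{}^\frown j'(t)) \leqslant \varepsilon_{|t|+1}$ for all $t \in \ttt_\xi$. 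I then define $j : \ttt_{\xi+1} \to C(\ttt)$ on $\ttt_{\xi+1} = \{(\xi+1)\} \cup \{(\xi+1){}^\frown t : t \in \ttt_\xi\}$ by $j((\xi+1)) = i(c)$ and $j((\xi+1){}^\frown t) = \{t_0\} \cup (t_0{}^\frown j'(t))$ — i.e. prepend the segment $i(c)$ (which ends at $t_0$) to the segment $j'(t)$ sitting above $t_0$, obtaining a genuine segment of $\ttt$ — and check order preservation and the bound $f \circ j((\xi+1){}^\frown t) \leqslant \varepsilon_{|t|+1} = \varepsilon_{|(\xi+1){}^\frown t|}$.

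For the limit case, when $\xi$ is a limit ordinal $o(\ttt) \geqslant \zeta\xi$ forces $o(\ttt) \geqslant \zeta(\zeta'+1)$ for every $\zeta' < \xi$ (since $\zeta\xi = \sup_{\zeta' < \xi} \zeta(\zeta'+1)$ by Proposition \ref{ordinals}(v) and continuity of the product), so the successor-case construction yields for each $\zeta' < \xi$ an order preserving $j_{\zeta'} : \ttt_{\zeta'+1} \to C(\ttt)$ with the desired bound. Since $\ttt_\xi = \bigcup_{\zeta' < \xi} \ttt_{\zeta'+1}$ and the $B$-trees $\ttt_{\zeta'+1}$ are pairwise totally incomparable, I simply glue: $j|_{\ttt_{\zeta'+1}} = j_{\zeta'}$. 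Total incomparability makes order preservation automatic, and the bound holds piecewise. The strictly $\zeta$ small case is the same argument with every "$< \varepsilon_n$" replaced by "$\leqslant 0$".

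**Main obstacle.** I expect the only real subtlety is bookkeeping at the successor step: making sure that concatenating the segment $i(c)$ with the image of $j'$ actually produces a well-defined segment of $\ttt$ (one needs $t_0$, the max of $i(c)$, to be a genuine $\prec$-predecessor of everything in $t_0{}^\frown j'(t)$, which holds because $j'$ lands in $C(\ttt(t_0))$ and segments there correspond to segments of $\ttt$ strictly above $t_0$), and that this prepending is order preserving on $\ttt_{\xi+1}$ — here the point is that $(\xi+1) \prec (\xi+1){}^\frown t$ maps to $i(c) < \{t_0\} \cup (t_0{}^\frown j'(t))$ in the order on $C(\ttt)$ because every node of $i(c)$ is $\preceq t_0 \prec$ every node above. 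Everything else is a routine transfinite induction driven by Propositions \ref{derivation prop} and \ref{minimal trees}.
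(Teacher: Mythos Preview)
Your approach is essentially the paper's: transfinite induction on $\xi$, at the successor step locate a copy of $\ttt_\zeta$ inside $d^{\zeta\xi}(\ttt)$, use $\zeta$-smallness to pick a good segment there, and recurse above its maximum; at limits, glue the pairwise incomparable pieces $\ttt_{\zeta'+1}$. The paper first reduces to $\ttt = \ttt_{\zeta\xi}$ via an order preserving map and then runs the induction, whereas you induct directly on general $\ttt$ --- a harmless variation.

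There is, however, one concrete slip in your successor-step formula. You set
\[
j((\xi+1){}^\frown t) \;=\; \{t_0\} \cup \bigl(t_0{}^\frown j'(t)\bigr)
\]
and describe this as ``prepending $i(c)$''. Neither reading works. Since $t_0 = \max i(c)$ already belongs to $j((\xi+1)) = i(c)$, including $t_0$ (let alone all of $i(c)$) in $j((\xi+1){}^\frown t)$ destroys the segment inequality $i(c) < j((\xi+1){}^\frown t)$ required for order preservation: you would be demanding $t_0 \prec t_0$. It also breaks the $f$-bound, because your inductive hypothesis only controls $f$ on the translated segment $t_0{}^\frown j'(t)$, not on any enlargement of it. The correct definition --- and the one the paper uses --- is simply
\[
j((\xi+1){}^\frown t) \;=\; t_0{}^\frown j'(t) \;=\; \{\, t_0{}^\frown s : s \in j'(t)\,\},
\]
with nothing prepended; then $i(c) < t_0{}^\frown j'(t)$ holds because every node of $i(c)$ is $\preceq t_0 \prec t_0{}^\frown s$, and the bound $f\bigl(t_0{}^\frown j'(t)\bigr) \leqslant \varepsilon_{|t|+1}$ is exactly what the inductive hypothesis delivers. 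With this correction your argument is complete.
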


\begin{proof} First, we note that it is sufficient to consider $\ttt=\ttt_{\zeta\xi}$.  This is because we can fix an order preserving $i:\ttt_{\zeta\xi}\to \ttt$ and let $f'=f\circ i:C(\ttt_{\zeta\xi})\to \rr$.  Then if we have the result for $\ttt_{\zeta\xi}$, for $(\varepsilon_n)\subset (0,1)$, we can find $j':\ttt_\xi\to C(\ttt_{\zeta\xi})$ order preserving so that $f'\circ j'(t)\leqslant \varepsilon_{|t|}$.  Then with $j:\ttt_\xi\to C(\ttt)$ defined by $j(t)=i(j'(t))$, $f\circ j(t)= f\circ i\circ j'(t)= f'\circ j'(t)\leqslant \varepsilon_{|t|}$.  We also note that the last part of the lemma follows from a trivial modification of the first part.  

We prove the result on $\ttt_{\zeta\xi}$ by induction on $\xi$.  The $\xi=0$ case is trivial.  Suppose we have the result for $\xi$ and suppose $f:\ttt_{\zeta(\xi+1)}\to \rr$ is $\zeta$ small.  Note that by Proposition \ref{derivation prop}, $o(d^{\zeta\xi}(\ttt_{\zeta(\xi+1)}))=\zeta$.  Fix an order preserving $i':\ttt_\zeta\to d^{\zeta\xi}(\ttt_{\zeta(\xi+1)})\subset \ttt_{\zeta(\xi+1)}$.  There exists $c_1\in C(\ttt_\zeta)$ so that $f(i'(c_1))\leqslant \varepsilon_1$. If $\xi=0$, we are done with $j((1))=i'(c_1)$. Assume $\xi>0$. Let $t_0=\max i'(c_1)\in d^{\zeta\xi}(\ttt_{\zeta(\xi+1)})$.  Again, by Proposition \ref{derivation prop}, $o(\ttt_{\zeta(\xi+1)}(t_0))>\zeta\xi$, whence the $B$-tree $\ttt_{\zeta(\xi+1)}(t_0)\setminus \{\varnothing\}$ has order at least $\zeta\xi$.  Fix $g:\ttt_{\zeta\xi}\to \ttt_{\zeta(\xi+1)}(t_0)\setminus \{\varnothing\}$ order preserving and define $i'':\ttt_{\zeta\xi}\to \ttt_{\zeta(\xi+1)}$ by $i''(t)=t_0\verb!^!g(t)$.  Note that for each $t\in \ttt_{\zeta\xi}$, $t_0\prec i''(t)$, which means that for $c\in C(\ttt_{\zeta\xi})$, $i'(c_1)< i''(c)$.  Note that $f''= f\circ i'':C(\ttt_{\zeta\xi})\to \rr$ is also $\zeta$ small.  Let $\varepsilon''_n= \varepsilon_{n+1}$, and note that the inductive hypothesis guarantees the existence of some $j'':\ttt_\xi\to C(\ttt_{\zeta\xi})$ so that $f''\circ j''(t)\leqslant \varepsilon''_n$.  Define $j:\ttt_{\xi+1}\to C(\ttt_{\zeta(\xi+1)})$ by $j((\xi+1))=i'(c_1)$ and for $t\in \ttt_\xi$, $j((\xi+1)\verb!^!t)= i''\circ j''(t)$. Since $i'(c_1)<i''(c)$ for all $c\in C(\ttt_{\zeta\xi})$, $j$ is order preserving.   Moreover $j((\xi+1))\leqslant \varepsilon_1=\varepsilon_{|(\xi+1)|}$ and for $t\in \ttt_\xi$, $$f\circ j((\xi+1)\verb!^!t)= f\circ i''\circ j''(t)=f''\circ j''(t)\leqslant \varepsilon''_{|t|}=\varepsilon_{|(\xi+1)\verb!^!t|}.$$

Suppose we have the result for every $\eta<\xi$, $\xi$ a limit ordinal. Assume $f:C(\ttt_{\zeta\xi})\to \rr$ is $\zeta$ small.  Note that for each $\eta<\xi$, $\eta+1<\xi$, which means $\zeta(\eta+1)<\zeta\xi$.  For each $\eta<\xi$, fix an order preserving $i_\eta:\ttt_{\zeta(\eta+1)}\to \ttt_{\zeta\xi}$ and let $f_\eta=f\circ i_\eta:C(\ttt_{\zeta(\eta+1)})\to \rr$.  Then $f_\eta$ is $\zeta$ small, which means that for any $(\varepsilon_n)\subset (0,1)$, there exists $j_\eta:\ttt_{\eta+1}\to C(\ttt_{\zeta(\eta+1)})$ so that for each $t\in \ttt_{\eta+1}$, $f_\eta\circ j_\eta(t)\leqslant \varepsilon_{|t|}$.  Define $j:\ttt_\xi\to C(\ttt_{\zeta\xi})$ by letting $j|_{\ttt_{\eta+1}}=i_\eta\circ j_\eta$.

\end{proof}

We also state the following strengthening of the previous result which follows from a simple modification of the preceding proof.  In what follows, we have a similar scenario as above, except we do not have a single function $f$, but rather a collection of functions. The difference lies in the fact that we want the order preserving map to choose $j(t)$ to take a small value on functions which are determined by $j(s)$, where $s\prec t$.

\begin{lemma} Fix $0<\zeta\in \emph{\ord}$, $\xi\in \emph{\ord}$.  Suppose that for each $c\in C(\ttt_{\zeta\xi})\cup \{\varnothing\}$, $f_c:C(\ttt_{\zeta\xi})\to \rr$ is $\zeta$ small and for each $c<c'$, $c,c'\in C(\ttt_{\zeta\xi})\cup \{\varnothing\}$, and $c_1\in C(\ttt_{\zeta\xi})$, $f_c(c_1)\leqslant f_{c'}(c_1)$.  Then for any $(\varepsilon_n)\subset (0,1)$, there exists an order preserving $j:\mt_\xi\to C(\ttt_{\zeta\xi})\cup \{\varnothing\}$ so that $j(\varnothing)=\varnothing$ and so that for each $s\prec t\in \ttt_\xi$, $$f_{j(s)}(j(t))\leqslant \varepsilon_{|t|}.$$  
Moreover, if each $f_c$ is strictly $\zeta$ small, the result holds with $\varepsilon_n=0$ for all $n\in \nn$.

\label{strong coloring}

\end{lemma}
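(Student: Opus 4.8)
The plan is to establish the lemma by transfinite induction on $\xi$, running in parallel to the proof of Lemma \ref{local coloring} but dragging along the whole family $(f_c)_c$ rather than a single function, and using the monotonicity hypothesis to control $f_{j(s)}(j(t))$ simultaneously for all predecessors $s$ of $t$. The base case $\xi=0$ is vacuous, since $\mt_0=\{\varnothing\}$ and $\ttt_0=\varnothing$, so $j(\varnothing)=\varnothing$ suffices. Throughout, the ``moreover'' clause concerning strictly $\zeta$ small families and $\varepsilon_n=0$ will be obtained by the same argument with the obvious changes, exactly as in Lemma \ref{local coloring}.

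For the successor step, assume the statement for $\xi$ and let $(f_c)_{c\in C(\ttt_{\zeta(\xi+1)})\cup\{\varnothing\}}$ and $(\varepsilon_n)\subset(0,1)$ be given. By Proposition \ref{derivation prop}, $o(d^{\zeta\xi}(\ttt_{\zeta(\xi+1)}))=\zeta$, so there is an order preserving $i':\ttt_\zeta\to d^{\zeta\xi}(\ttt_{\zeta(\xi+1)})$; since $f_\varnothing\circ i'$ is $\zeta$ small, I choose $c_1\in C(\ttt_\zeta)$ with $f_\varnothing(i'(c_1))\leqslant\varepsilon_1$ and set $j(\varnothing)=\varnothing$, $j((\xi+1))=i'(c_1)$. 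Writing $t_0=\max i'(c_1)\in d^{\zeta\xi}(\ttt_{\zeta(\xi+1)})$, the $B$-tree $\ttt_{\zeta(\xi+1)}(t_0)\setminus\{\varnothing\}$ has order at least $\zeta\xi$ (Proposition \ref{derivation prop}), so Proposition \ref{minimal trees} provides an order preserving $g:\ttt_{\zeta\xi}\to\ttt_{\zeta(\xi+1)}(t_0)\setminus\{\varnothing\}$; put $i''(t)=t_0\con g(t)$. The structural facts I will use are that $i'(c_1)<i''(c)$ for every $c\in C(\ttt_{\zeta\xi})$, since every node of $i''(c)$ properly extends $t_0=\max i'(c_1)$, and that $i''$ carries the order on segments to the order on segments. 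I then transport the family to $C(\ttt_{\zeta\xi})$ by $\tilde f_\varnothing=f_{i'(c_1)}\circ i''$ and $\tilde f_c=f_{i''(c)}\circ i''$ for $c\in C(\ttt_{\zeta\xi})$; each $\tilde f_c$ is $\zeta$ small, being a composition with the order preserving $i''$, and $(\tilde f_c)$ satisfies the monotonicity hypothesis, precisely because of $i'(c_1)<i''(c)$ and the monotonicity of $(f_c)$. Applying the inductive hypothesis to $(\tilde f_c)$ with $\varepsilon''_n=\varepsilon_{n+1}$ yields an order preserving $j'':\mt_\xi\to C(\ttt_{\zeta\xi})\cup\{\varnothing\}$ with $j''(\varnothing)=\varnothing$ and $\tilde f_{j''(s)}(j''(t))\leqslant\varepsilon_{|t|+1}$ for all $s\prec t\in\ttt_\xi$. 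Setting $j((\xi+1)\con t)=i''(j''(t))$ for $t\in\ttt_\xi$ completes the definition of $j$ on $\mt_{\xi+1}$, and one checks it is order preserving, using again $i'(c_1)<i''(c)$. For $s\prec t\in\ttt_{\xi+1}$ the index $j(s)$ is $\varnothing$, $i'(c_1)$, or $i''(j''(v))$ for some $v\prec u$, where $t=(\xi+1)\con u$; the bound $f_{j(s)}(j(t))\leqslant\varepsilon_{|t|}$ then follows from the inductive bound on $\tilde f$ together with a single appeal to monotonicity to dominate $f_\varnothing$ by $f_{i'(c_1)}$ when $s=\varnothing$ (the case $t=(\xi+1)$ being the choice of $c_1$).

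At a limit ordinal $\xi$, I write $\ttt_\xi=\bigcup_{\eta<\xi}\ttt_{\eta+1}$, a union of totally incomparable $B$-trees, fix for each $\eta<\xi$ an order preserving $i_\eta:\ttt_{\zeta(\eta+1)}\to\ttt_{\zeta\xi}$ (available since $\zeta(\eta+1)<\zeta\xi=o(\ttt_{\zeta\xi})$), and transport the family by $f^\eta_\varnothing=f_\varnothing\circ i_\eta$ and $f^\eta_c=f_{i_\eta(c)}\circ i_\eta$; $\zeta$-smallness and monotonicity survive because $i_\eta$ is order preserving. The inductive hypothesis gives order preserving $j_\eta:\mt_{\eta+1}\to C(\ttt_{\zeta(\eta+1)})\cup\{\varnothing\}$ with $j_\eta(\varnothing)=\varnothing$ and $f^\eta_{j_\eta(s)}(j_\eta(t))\leqslant\varepsilon_{|t|}$; I glue by $j(\varnothing)=\varnothing$ and $j|_{\ttt_{\eta+1}}=i_\eta\circ j_\eta|_{\ttt_{\eta+1}}$, the verification of the bound for $s\prec t$ splitting into $s=\varnothing$, handled by the $f^\eta_\varnothing$ part, and $s\neq\varnothing$, in which case $s\in\ttt_{\eta+1}$ since $\mt_{\eta+1}$ is $\prec$-downward closed. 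The step I expect to require the most care is the successor step: one must arrange that the already-fixed initial segment $j((\xi+1))=i'(c_1)$ is absorbed into the $\varnothing$-indexed function of the auxiliary family, and then check, via monotonicity and the inequality $i'(c_1)<i''(c)$, that the resulting map controls $f_{j(s)}(j(t))$ for \emph{every} predecessor $s$ of $t$ and not merely for those lying in the sub-tree above $t_0$ handled by the recursion.
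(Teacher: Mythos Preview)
Your proposal is correct and is precisely the ``simple modification of the preceding proof'' that the paper alludes to rather than spelling out: you rerun the transfinite induction of Lemma \ref{local coloring}, transport the entire family $(f_c)_c$ through the order preserving maps $i''$ (successor step) and $i_\eta$ (limit step), and use the monotonicity hypothesis together with $i'(c_1)<i''(c)$ to handle the predecessor $s=\varnothing$ and the fixed first segment $j((\xi+1))=i'(c_1)$. The case analysis you outline for $s\prec t$ in the successor step is exactly what is needed, and the limit step is the obvious glueing.
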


The following is an inessential modification of a result from \cite{Causey}.  The previous lemmas were the local version of this principle, while what follows is the asymptotic version.

\begin{lemma} Suppose $\varnothing \neq \fff, \gggg$ are regular families.  Suppose that for each $c\in  C(\fff)\cup \{\varnothing\}$, $f_c:\fff[\gggg]\to \rr$ is such that \begin{enumerate}[(i)]\item for $c<c'\in C(\fff)\cup \{\varnothing\}$ and any $c_1\in C(\fff[\gggg])$, $f_c(c_1)\leqslant f_{c'}(c_1)$, \item for any $E\in \fff$ and any embedding $i:\widehat{\gggg}\to \fff[\gggg]$, $$\inf\{f_E\circ i(c): c\in C(\gggg)\}=0.$$  \end{enumerate} Then for any sequence $(\varepsilon_n)\subset (0,1)$, there exists an embedding $j:\fff\to C(\fff[\gggg])\cup\{\varnothing\}$ so that $j(\varnothing)=\varnothing$ and for each $ E\prec F\in \fff$, $f_{ j(E)}\circ j(F)\leqslant \varepsilon_{|F|}$.  

If the infimum above is a minimum, then the conclusion holds with $\varepsilon_n=0$ for all $n\in \nn$.  

\label{asymptotic coloring}
\end{lemma}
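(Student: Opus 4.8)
The plan is to mimic the proof of Lemma~\ref{local coloring}/\ref{strong coloring}, but in the ``asymptotic'' setting where the role of $\ttt_{\zeta\xi}$ is played by the regular family $\fff[\gggg]$ and the role of $d^{\zeta\eta}(\ttt_{\zeta\xi})$ is played by a suitable ``tail'' of $\fff[\gggg]$ lying above a fixed $E\in\fff$. The induction will not be on an ordinal but on the family $\fff$ via its Cantor--Bendixson structure; more precisely, I would induct on $\iota(\fff)$, or — what amounts to the same thing and is cleaner here — argue directly by a transfinite construction that follows the derived sets $d^\eta(\fff)$. The key structural fact to exploit is that $\fff[\gggg]$ decomposes: every $F\in\fff[\gggg]$ is of the form $\bigcup_{i=1}^s E_i$ with $(E_i)_{i=1}^s\subset\gggg$ being $\fff$-admissible, so $F$ ``begins'' with a block living over the first coordinate of $\fff$ and the remainder lives over a shifted copy of $\fff(M)[\gggg]$ for an appropriate $M$.

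First I would reduce to building $j$ level by level along $\fff$ regarded as a $B$-tree. Fix the root: set $j(\varnothing)=\varnothing$. For the first level, hypothesis (ii) applied to $E=\varnothing$ (or to a minimal nonempty $E\in\fff$) and to the identity embedding $i=\mathrm{id}:\widehat\gggg\to\fff[\gggg]$ produces, for each $\varepsilon>0$, a segment $c_1\in C(\gggg)\subset C(\fff[\gggg])$ with $f_\varnothing(c_1)\leqslant\varepsilon$; choose it with $\varepsilon=\varepsilon_1$ and set $j(F)=c_1$ for the relevant one-element $F\in\fff$. Now, exactly as in Lemma~\ref{local coloring}, let $t_0=\max c_1$ and pass to the part of $\fff[\gggg]$ lying strictly above $t_0$: this is order-isomorphic (as a $B$-tree, via a spreading map induced by some $M\in\infin$) to $\fff(M)(s)[\gggg(M)]$ where $s$ is the first coordinate; crucially $\fff(M)(s)$ is again regular and has $\iota(\fff(M)(s))<\iota(\fff)$, while $\gggg$ is essentially unchanged. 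Restricting each $f_c$ to this subfamily and composing with the embedding keeps properties (i) and (ii) intact (monotonicity in $c$ is inherited; the ``$\inf=0$'' condition is inherited because composing the ambient embedding with one from $\widehat\gggg$ gives another embedding into $\fff[\gggg]$). Apply the inductive hypothesis with the shifted sequence $\varepsilon''_n=\varepsilon_{n+1}$ to get an embedding $j''$ on $\fff(M)(s)$, then glue: $j(\{s\}\cup G)=c_1\cup(\text{image of }j''(G))$, transported back. Monotonicity of the segments (everything chosen above $t_0$) guarantees $j$ is an embedding, and the inequality $f_{j(E)}(j(F))\leqslant\varepsilon_{|F|}$ propagates because $f_{j(E)}\leqslant f_{j(E')}$ whenever $E\prec E'$ by (i), so along any branch we only ever compare against an $f_c$ with $c$ already chosen.

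For the limit stage of the induction — when $\iota(\fff)$ is a limit ordinal, which for Schreier-type families happens at $\fff'$ being a limit or via the diagonalization $\fff=\{E:\exists n\leqslant E\in\fff_n\}$ — I would use the totally-incomparable-union structure: $\widehat\fff$ splits into pieces $\widehat{\fff_n}$ (or the pieces indexed by the first element $\min E=n$) that are pairwise incomparable, each with strictly smaller CB index, and one applies the inductive hypothesis to each piece separately with the full sequence $(\varepsilon_n)$, then takes the union of the resulting embeddings; incomparability makes the union an embedding automatically. The ``moreover'' clause (infimum a minimum, take $\varepsilon_n=0$) requires no change beyond replacing each choice of $c_1$ by the minimizing segment. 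The main obstacle I anticipate is purely bookkeeping: making precise the spreading isomorphism between ``$\fff[\gggg]$ above $t_0$'' and ``$\fff(M)(s)[\gggg(M)]$'' and checking that it carries $C(\fff[\gggg])$-segments to $C(\fff(M)(s)[\gggg(M)])$-segments so that properties (i)--(ii) transfer verbatim — this is the analogue of the routine ``$T(t_0)\setminus\{\varnothing\}$ has the right order'' step in Lemma~\ref{local coloring}, but regular families require a little care because ``above $t_0$'' must be read through the identification of $\fin$ with increasing sequences, and one needs $\gggg$'s spreading property to see $\gggg(M)\supset$ enough of $\gggg$ for hypothesis (ii) to survive. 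Since the lemma is billed as ``an inessential modification of a result from~\cite{Causey},'' I expect the cited argument handles exactly this bookkeeping and I would invoke it for the transfer step rather than redo it in full.
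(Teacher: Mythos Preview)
The paper itself gives no proof of this lemma; it merely cites \cite{Causey} and calls the result ``an inessential modification'' of what is proved there. Your sketch is exactly the expected adaptation of Lemmas~\ref{local coloring} and~\ref{strong coloring} to the regular-family setting, and the overall strategy---induct on $\iota(\fff)$, at each stage pick a small segment in the $\gggg$-block using hypothesis~(ii), then recurse above it---is correct.

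There is one point that needs more care than you give it. Your sentence ``incomparability makes the union an embedding automatically'' is not right as written: incomparability of the \emph{domain} pieces $\{E\in\fff:\min E=n\}$ does not by itself force incomparability of their \emph{images} in $C(\fff[\gggg])$, and an embedding must preserve incomparability. The fix is the standard one you allude to under ``bookkeeping'': when handling the piece with first element $n$, choose the initial segment $c_1^{(n)}$ (and everything built above it) to live in an interval of $\nn$ disjoint from those used for pieces $m<n$. Concretely, one constructs the images recursively in $n$, using the spreading of $\gggg$ (and of $\fff[\gggg]$) to push $c_1^{(n)}$ far enough to the right; hypothesis~(ii) applies to \emph{any} embedding $i:\widehat\gggg\to\fff[\gggg]$, so in particular to shifted ones $E\mapsto M(E)$ with $\min M$ as large as needed. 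Two sets drawn from disjoint intervals of $\nn$ are $\prec$-incomparable, so the union is then genuinely an embedding. This is precisely the content of the cited argument, and once you make this recursive disjointness explicit your proof goes through. Also note that for regular families the split by first element always produces infinitely many pieces regardless of whether $\iota(\fff)$ is a successor or a limit, so your ``successor'' and ``limit'' paragraphs are really one case, not two.
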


We also have a mixed version of these results which combines local and asymptotic structures.   

\begin{lemma} Suppose $0<\zeta, \xi<\omega_1$.  Suppose $f_n:C(\ttt_{\zeta\xi})\to \rr$ is a sequence of functions so that \begin{enumerate}[(i)] \item for each $c\in C(\ttt_{\zeta\xi})$ and each $n\in \nn$, $f_n(c)\leqslant f_{n+1}(c)$, \item for each order preserving $i:\ttt_\zeta\to \ttt_{\zeta \xi}$ and each $n\in \nn$, $$\inf\{f_n\circ i(c): c\in C(\ttt_\zeta)\}=0.$$ \end{enumerate} Then for all $\varepsilon_n\downarrow 0$, there exists a regular family $\fff$ with $\iota(\fff)=\xi$ and an order preserving $j:\widehat{\fff}\to C(\ttt_{\zeta\xi})$ so that for all $E\in \widehat{\fff}$, $f_{\max E}(j(E)) \leqslant \varepsilon_{\max E}.$  

\label{mixed coloring}

\end{lemma}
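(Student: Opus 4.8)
The statement is a hybrid of the two previous coloring lemmas: the ``local'' direction (coloring by segments of $\ttt_{\zeta\xi}$, as in Lemma~\ref{local coloring}) on the $\zeta$ side, and the ``asymptotic'' direction (coloring indexed by a regular family) on the $\xi$ side. The natural strategy is therefore to build the regular family $\fff$ and the map $j$ simultaneously by transfinite induction on $\xi$, recycling the internal mechanics of Lemma~\ref{local coloring} at each successor step. I would first reduce, exactly as in the proof of Lemma~\ref{local coloring}, to an arbitrary $B$-tree with the understanding that working on $\ttt_{\zeta\xi}$ loses nothing; the monotonicity hypothesis (i) is what lets the inductive bookkeeping go through when we pass to subtrees, since it guarantees that any function $f_n$ restricted via an order-preserving map remains ``small'' in the appropriate sense.

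\textbf{Successor step.} Suppose the result holds for $\xi$, and we are given the sequence $(f_n)$ on $C(\ttt_{\zeta(\xi+1)})$. By Proposition~\ref{derivation prop}, $o(d^{\zeta\xi}(\ttt_{\zeta(\xi+1)}))=\zeta$, so hypothesis (ii) applied to an order-preserving copy of $\ttt_\zeta$ sitting inside $d^{\zeta\xi}(\ttt_{\zeta(\xi+1)})$ produces, for the single function $f_1$, a segment $c_1$ with $f_1$-value at most $\varepsilon_1$; set $t_0=\max c_1$. Then $o(\ttt_{\zeta(\xi+1)}(t_0))\geqslant \zeta\xi$, so the shifted functions $\tilde f_n(c)=f_{n+1}(t_0{}^\frown c)$ (or rather $f_{n+1}$ composed with the order-preserving inclusion $c\mapsto t_0{}^\frown c$) again satisfy (i) and (ii), now on a tree of order $\zeta\xi$. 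The inductive hypothesis yields a regular family $\fff'$ with $\iota(\fff')=\xi$ and an order-preserving $j':\widehat{\fff'}\to C(\ttt_{\zeta\xi})$ with $\tilde f_{\max E}(j'(E))\leqslant \varepsilon_{\max E+1}$. One then splices: $\fff=\{\varnothing\}\cup\{1\}\cup\{\{1\}{}^\frown E: E\in \widehat{\fff'}\}$ — or, more carefully, a family of the form $\aaa_1[\fff']$-flavored so as to remain regular with $\iota(\fff)=\iota(\fff')=\xi$ — and $j(\{1\})=c_1$, $j(\{1\}{}^\frown E)=t_0{}^\frown j'(E)$ (segment concatenation). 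Order preservation of $j$ follows because $c_1<t_0{}^\frown c$ for every segment $c$ below $t_0$, and the value estimate is immediate from the construction, using that $\max(\{1\}{}^\frown E)=\max E$ and reindexing the $\varepsilon_n$.

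\textbf{Limit step.} For $\xi$ a limit ordinal, fix $\xi_n\uparrow\xi$. For each $n$ we have $\zeta(\xi_n+1)<\zeta\xi$, so fix an order-preserving $i_n:\ttt_{\zeta(\xi_n+1)}\to\ttt_{\zeta\xi}$ and apply the inductive hypothesis to the sequence $f_m\circ i_n$ (which inherits (i) and (ii)) to get a regular family $\fff_n$ with $\iota(\fff_n)=\xi_n+1$... here I would instead aim directly for $\iota(\fff_n)=\xi_n$ by shrinking, and an order-preserving $j_n:\widehat{\fff_n}\to C(\ttt_{\zeta(\xi_n+1)})$ with the value bound. The family $\fff=\{E:\exists\, n\leqslant E\in \fff_n\}$ (the diagonalization described in the regular-families subsection) is regular with $\iota(\fff)=\sup_n\iota(\fff_n)=\xi$, and one defines $j$ on $\widehat\fff$ by $j(E)=i_n\circ j_n(E)$ where $n$ is least with $E\in\fff_n$ and $n\leqslant E$; since for such $E$ we have $n\leqslant\max E$, the value bound $f_{\max E}(j(E))\leqslant\varepsilon_{\max E}$ survives because $f_{\max E}\circ i_n$ dominates $f_n\circ i_n$ pointwise is not needed — we only need the bound from $j_n$ with the correct index, which is $\max E$ computed inside $\fff_n$.

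\textbf{Main obstacle.} The delicate point is not the tree-order arithmetic — that is routine from Propositions~\ref{derivation prop}, \ref{new from old}, \ref{minimal trees} — but rather keeping the \emph{indexing of the functions} $(f_n)$ consistent across the two interleaved inductions, so that at every node $E$ the function actually controlled is $f_{\max E}$ and not some off-by-one or off-by-shift variant. The ``diagonal'' requirement $n\leqslant E$ in the definition of the diagonalized family is exactly the device that reconciles the index $n$ (which function we can control after restricting to $\fff_n$) with $\max E$ (which function the statement demands); I would make sure the successor-step splicing respects this by building $\fff$ at successors so that the first coordinate is $\geqslant 1$ and all later coordinates come from $\fff'$, and at limits via the honest diagonalization. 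Hypothesis (i), monotonicity in $n$, is used precisely to absorb the harmless discrepancy that $\max E$ may exceed the index $n$ at which the bound was originally obtained, since $f_n(c)\leqslant f_{\max E}(c)$ would go the wrong way — so in fact the clean route is to arrange $n=\max E$ outright via the $n\leqslant E$ convention rather than to invoke monotonicity, with (i) reserved for the restriction-stability of condition (ii) under order-preserving maps. I expect the write-up to mirror the proof of Lemma~\ref{local coloring} almost line for line, with ``$C(\ttt_\xi)$'' replaced throughout by ``$\widehat\fff$'' for a regular $\fff$ of the appropriate Cantor–Bendixson index, the only genuinely new ingredient being the diagonalization at limit stages.
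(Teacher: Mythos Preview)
Your successor step has a genuine gap. You pick a \emph{single} segment $c_1$ with $f_1(c_1)\leqslant\varepsilon_1$, set $t_0=\max c_1$, and then try to splice a family of the shape $\{\varnothing,\{1\}\}\cup\{\{1\}\verb!^!E:E\in\widehat{\fff'}\}$. But this set is not spreading (it contains $\{1\}$ and not $\{2\}$), so it is not regular; and even waving at ``$\aaa_1[\fff']$-flavored'' does not help, since $\aaa_1[\fff']=\fff'$ and has index $\xi$, not $\xi+1$, which is what you need at this step. More seriously, any honest regular family of index $\xi+1$ must contain every singleton $\{n\}$, and the lemma then demands $f_n(j(\{n\}))\leqslant\varepsilon_n$ for \emph{every} $n$: a single $c_1$ cannot serve all of these. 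Your index shift $\tilde f_n=f_{n+1}$ is also misplaced; below each base node the functions are not shifted but simply restricted.

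The paper fixes this by using hypothesis (ii) once for each $n$: choose an order-preserving $i:\ttt_\zeta\to d^{\zeta\xi}(\ttt_{\zeta(\xi+1)})$ and segments $c_1,c_2,\ldots\in C(\ttt_\zeta)$ with $f_n(i(c_n))\leqslant\varepsilon_n$. Set $t_n=\max i(c_n)$ and apply the inductive hypothesis below each $t_n$ (to the functions $f_k$ composed with the inclusion, for all $k$) to obtain families $\fff_n$ of index $\xi$ and maps $j_n$. After noting that any two regular families of the same index are interchangeable via some $M\in\infin$, one may assume $\fff_n=\fff_1$ for all $n$, take $\fff=(\aaa_1,\fff_1)$ (index $\xi+1$), and set $j(\{n\})=i(c_n)$, $j(\{n\}\verb!^!E)=\{t_n\verb!^!s:s\in j_n(E)\}$. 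This also clarifies the base case $\xi=1$, which you omit: there the family is $\aaa_1$ and one simply picks $c_n$ for each $n$. Your limit step is essentially right in outline, but you need to first arrange $\fff_1\subset\fff_2\subset\ldots$ (by recursively choosing $M_n\in\infin$) before diagonalizing; otherwise $\{E:\exists\,n\leqslant E\in\fff_n\}$ need not be hereditary.
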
 \begin{proof} First, we note that if there exists some regular family $\fff$ and an order preserving $j:\widehat{\fff}\to C(\ttt_{\zeta\xi})$ satisfying the conclusions, then for every regular family $\gggg$ with $\iota(\gggg)=\xi$, there exists an order preserving $j':\widehat{\gggg}\to C(\ttt_{\zeta\xi})$ satisfying the conclusions.  This is because if $\iota(\gggg)=\xi=\iota(\fff)$, there exists $M\in \infin$ so that $\gggg(M)\subset \fff$.  Then we can define $j':\widehat{\gggg}\to C(\ttt_{\zeta\xi})$ by $j'(E)= j(M(E))$.  Then for $E\in \widehat{\gggg}$, $$f_{\max E}(j'(E))= f_{\max E}(j(M(E)))\leqslant f_{\max M(E)} (j(M(E)))\leqslant \varepsilon_{\max M(E)}\leqslant \varepsilon_{\max E}.$$  

Of course, we prove the result again by induction on $\xi$ with $\zeta$ held fixed.  If $\xi= 1$, letting $i:\ttt_\zeta\to \ttt_\zeta$ be the identity, the hypotheses guarantees we can find $c_1, c_2, \ldots$ so that $c_n\in C(\ttt_{\zeta})$ and $f_n(c_n)\leqslant \varepsilon_n$.  We define $j:\aaa_1\to C(\ttt_{\zeta\xi})$ by $j((n))= c_n$.  

Next, suppose the result holds for $\xi$.  We can find an order preserving $i:\ttt_\zeta\to d^{\zeta\xi}(\ttt_{\zeta(\xi+1)})$ and $c_1, c_2, \ldots$, $c_n\in C(\ttt_\zeta)$ so that $f_n\circ i(c_n)\leqslant \varepsilon_n$.   Letting $t_n=\max  c_n$, as in the proof of Lemma \ref{local coloring}, we find for each $n$ an order preserving $i_n:\ttt_{\zeta\xi}\to \ttt_{\zeta(\xi+1)}(t_n)\setminus \{\varnothing\}$ and use these maps to define functions $f^n_k$, $k\in \nn$, on $\ttt_{\zeta\xi}$ which satisfy the hypotheses.  Then by the inductive hypothesis, for each $n\in \nn$, we can find a regular family $\fff_n$ with $\iota(\fff_n)= \xi$ and an order preserving $j_n:\widehat{\fff}_n\to C(\ttt_{\zeta\xi})\to C(\ttt_{\zeta(\xi+1)})$, where the second function takes the segment $c\in C(\ttt_{\zeta\xi})$ to $\{t_n\verb!^!i_n(t):t\in c\}$.  But by our first remark of the proof, we can assume that $\fff_n= \fff_1$ for all $n\in \nn$.  We let $\fff= (\aaa_1, \fff_1)$ and define $j:\widehat{\fff}\to C(\ttt_{\zeta(\xi+1)})$ by letting $j((n))=i(c_n)$ and $j(n\verb!^!E)= \{t_n\verb!^!t: t\in j_n(E)\}$.  One easily checks that this function satisfies the requirements.  Since $\iota(\fff)=\iota(\fff_1)+\iota(\aaa_1)=\xi+1$, this finishes the successor case.  

Assume the result holds for each ordinal less than $\xi$, where $\xi$ is a countable limit ordinal.  Choose $\xi_n\uparrow \xi$ arbitrary.  For each $n\in \nn$, we can define an order preserving $i_n:\ttt_{\zeta\xi_n}\to \ttt_{\zeta\xi}$ and use these maps to define functions $f^n_k:C(\ttt_{\zeta\xi_n})\to \rr$, $k\in \nn$ which also satisfy the hypotheses.  Then for each $n\in \nn$, we can find a regular family $\fff_n$ with $\iota(\fff_n)=\xi_n$ and an order preserving $j_n:\widehat{\fff}_n\to C(\ttt_{\zeta\xi_n})\to C(\ttt_{\zeta\xi})$ satisfying the requirements.  We can recursively choose $M_1\supset M_2\supset\ldots$, $M_n\in \infin$ so that with $\gggg_n=\{E\in \fin: M_n(E)\in \fff_n\}$, $\gggg_1\subset \gggg_2\subset \ldots$.  This is because if $\fff$ is any regular family and $M\in \infin$,  $\gggg=\{E\in \fin: M(E)\in \fff\}$ is regular with $\iota(\fff)=\iota(\gggg)$ \cite{Causey}.  We let $M_1=\nn$ so $\gggg_1= \fff_1$.  Since $\iota(\fff_1)<\iota(\fff_2)$, we can find $M_2\in [M_1]$ so that $\fff_1(M_2)\subset \fff_2$.  We then let $\gggg_2= \{E\in \fin: M_2(E)\in \fff_2\}$, so $\gggg_1\subset \gggg_2$.  Next, since $ \iota(\gggg_2)< \iota(\fff_3)$, we can find $M_3\in \infin$ so that $\gggg_2(M_3)\subset \fff_3$.  If $\gggg_3=\{E\in \fin: M_3(E)\in \fff_3\}$, $\gggg_2\subset \gggg_3$, and so on.  Thus by renaming, we can assume without loss of generality that $\fff_1\subset \fff_2\subset \ldots$.  We let $\fff=\{E\in \fin: \exists n\leqslant E\in \fff_n\}=\{E\in \fin: E\in \fff_{\min E}\}$.  Note that $\iota(\fff)=\sup \xi_n=\xi$.  We define $j:\widehat{\fff}\to C(\ttt_{\zeta\xi})$ by letting $j(E)= j_{\min E}(E)$.  This function is easily seen to satisfy the requirements.

\end{proof}

Finally, we have the simplest version of these results, which we will used on structures determined sequences rather than trees.  

\begin{lemma} Fix $\xi<\omega_1$.  If $f:\widehat{\sss}_{\omega^\xi}\to \{0,1\}$ is any function, then either there exists $M\in \infin$ so that for all $E\in \widehat{\sss}_{\omega^\xi}$, $f(M(E))=1$, or there exists a sequence $E_1<E_2<\ldots$ so that $f(E_i)=0$ for all $i\in \nn$ and for all $E\in \sss_{\omega^\xi}$, $\cup_{i\in E} E_i\in \sss_{\omega^\xi}$.

\label{easy coloring}
\end{lemma}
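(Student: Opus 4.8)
The plan is to argue by induction on $\xi$, preceded by a Ramsey-theoretic stabilization of $f$ and a case split. First I would apply the infinite Ramsey theorem level by level: choose nested infinite sets $N_1\supseteq N_2\supseteq\ldots$ with $N_k$ homogeneous for $k$-element sets under the $2$-colouring of $[\nn]^k$ given by $f$ on members of $\widehat{\sss}_{\omega^\xi}$ and (say) by $0$ off the family, let $M^*$ be a diagonal set taken sparse (so $M^*\setminus N_k$ is finite for each $k$), and record colours $c_k\in\{0,1\}$ with $f(E)=c_k$ whenever $E\in\widehat{\sss}_{\omega^\xi}$, $|E|=k$ and $E\subseteq M^*\cap N_k$. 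If $c_k=0$ for some $k$, the second alternative follows at once: since $\iota(\aaa_k)=k$ and $\iota(\sss_{\omega^\xi})=\omega^{\omega^\xi}$ absorbs finite multiplication on the left (Proposition \ref{ordinals}), we get $\iota(\sss_{\omega^\xi}[\aaa_k])=k\cdot\omega^{\omega^\xi}=\iota(\sss_{\omega^\xi})$, so by the refinement property for regular families of equal index there is $M=\{m_1<m_2<\ldots\}\subseteq M^*$ with $M(F)\in\sss_{\omega^\xi}$ for every $F\in\sss_{\omega^\xi}[\aaa_k]$; setting $E_i=\{m_{ik+1},\ldots,m_{ik+k}\}$ (and discarding finitely many initial $m_j$ so that every $E_i\subseteq N_k$) we get $f(E_i)=c_k=0$, while for $A\in\sss_{\omega^\xi}$ the index set $\bigcup_{i\in A}\{ik+1,\ldots,ik+k\}$ is a union of $k$-intervals whose minima form a spread of $A$, hence lies in $\sss_{\omega^\xi}[\aaa_k]$, so $\bigcup_{i\in A}E_i$ is its $M$-image and lies in $\sss_{\omega^\xi}$.

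The remaining case, $c_k=1$ for all $k$, is where the first alternative must be produced, and it is the heart of the matter. When $\xi=0$ it is immediate: each $E\in\sss_1$ has $|E|\leq\min E$, so for $E_0\in\widehat{\sss}_1$ the set $M^*(E_0)$ has $|M^*(E_0)|=|E_0|\leq\min E_0$, which forces $M^*(E_0)\subseteq N_{|E_0|}$ and hence $f(M^*(E_0))=c_{|E_0|}=1$; thus $M:=M^*$ works. For $\xi=\eta+1$ I would use that, after passing to an infinite subset, $\sss_{\omega^{\eta+1}}$ is a diagonalization of the families $[\sss_{\omega^\eta}]^n$ (of index $\iota([\sss_{\omega^\eta}]^n)=(\omega^{\omega^\eta})^n=\omega^{\omega^\eta\cdot n}\uparrow\omega^{\omega^{\eta+1}}$), so that each $E\in\sss_{\omega^{\eta+1}}$ breaks, through its admissible decomposition, into successive pieces drawn from $\sss_{\omega^\eta}$; I would transfer $f$ to colourings of $\sss_{\omega^\eta}$ determined by these pieces, apply the inductive hypothesis (to $\sss_{\omega^\eta}$, iterated and then diagonalized over $n$), and reassemble to obtain either the block sequence of the second alternative or the set $M$ of the first. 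The limit case is treated in the same way using the diagonalization defining $\sss_{\omega^\xi}$. Throughout, the absorption rules for $\omega^{\omega^\xi}$ under $+$ and under finite $\cdot$ from Proposition \ref{ordinals} are what keep the indices of all the auxiliary families aligned with $\iota(\sss_{\omega^\xi})$.

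The hardest step, which I expect to be the main obstacle, is exactly the case $c_k=1$ for all $k$ when $\xi\geq 1$. The naive attempt — to let $M$ be a spread copy $\{M(E):E\in\sss_{\omega^\xi}\}$ of the stabilized set $M^*$ — breaks down, because for $\xi\geq 1$ the family $\sss_{\omega^\xi}$ contains sets of small minimum but arbitrarily large cardinality, so the stabilization threshold for $k$-element sets cannot be made uniform in $k$ and the sets $M(E_0)$ may fall below it. Getting around this is precisely where one uses the self-similarity of $\sss_{\omega^\xi}$ coming from $\omega^\xi$ being a $\gamma$-number: $\sss_{\omega^\xi}$ reproduces itself, up to $\iota$-index and hence up to spread refinement, under $\fff\mapsto\sss[\fff]$ and $\fff\mapsto\fff[\aaa_k]$, and it is this self-similarity — channelled through the inductive hypothesis for the smaller families $\sss_{\omega^\eta}$ (respectively $\sss_{\omega^{\xi_n}}$) — that replaces the failed uniform-Ramsey argument and produces the witness $M$.
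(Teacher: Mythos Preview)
Your approach has a genuine gap in precisely the case you flag as the crux: all $c_k=1$ with $\xi\geq 1$. You propose an induction on $\xi$ that would ``transfer $f$ to colourings of $\sss_{\omega^\eta}$ determined by these pieces'' and then ``reassemble,'' but you never say what those derived colourings are, how the dichotomy the inductive hypothesis returns for them feeds back to the dichotomy for $f$ on $\sss_{\omega^\xi}$, or how the infinitely many applications (one per level of the admissible decomposition, then diagonalized over $n$) are to be made coherent. This is not a routine detail: the entire content of the lemma for $\xi\geq 1$ lives here. Note also that since your inductive step already allows either alternative to emerge, the preliminary cardinality-Ramsey stabilization is doing no real work beyond the $\xi=0$ base case.

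The paper's argument is direct and avoids induction on $\xi$ entirely. The case split is keyed not to cardinality but to the defining sequence $\zeta_n\uparrow\omega^\xi$ of $\sss_{\omega^\xi}$ (replaced by $\aaa_n$ when $\xi=0$): either (Case 1) for every $n$ and every $N\in\infin$ one can pass to $N'\in[N]$ with $f(N'(E))=1$ for all $n\leq E\in\sss_{\zeta_n}$, or (Case 2) some $n,N$ admit no such refinement. These are logical complements, so no work is needed to establish the split. In Case 1 a standard diagonalization over nested $N_n$ yields $M$; the point is that every $E\in\sss_{\omega^\xi}$ lies in $\sss_{\zeta_{\min E}}$ \emph{by definition} of the diagonalized family, so $M(E)$ is a spread of something already handled at stage $\min E$. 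In Case 2 one passes to $L\in[N]$ with $\sss_{\omega^\xi}[\sss_{\zeta_n}](L)\subset\sss_{\omega^\xi}$ (possible since $\zeta_n+\omega^\xi=\omega^\xi$), then recursively picks $F_1<F_2<\ldots$ in $\sss_{\zeta_n}$ with $f(L(F_i))=0$ --- Case 2 guarantees such $F_i$ exist in every tail of $L$ --- and sets $E_i=L(F_i)$. Your Ramsey stabilization over $\aaa_k$ is the wrong granularity for $\xi\geq 1$; the right one is over the $\sss_{\zeta_n}$, because those are exactly the families $\sss_{\omega^\xi}$ is diagonalized from, and that alignment is what makes the first alternative drop out without any induction.
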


\begin{proof} Let $\zeta_n=\xi_n+1\uparrow \omega^\xi$ be the sequence used to define $\sss_{\omega^\xi}$ (we replace $\sss_{\zeta_n}$ with $\aaa_n$ in the case that $\xi=0$).  We consider two cases.  In the first case, for each $n\in \nn$ and for each $N\in \infin$, there exists $N'\in [N]$ so that for each $n\leqslant E\in \sss_{\zeta_n}$, $f(N'(E))=1$.  In this case, let $N_0=\nn$ and choose $N_1, N_2, \ldots$ so that $N_{n+1}\in [N_n]$ and so that for each $n\leqslant E\in \sss_{\zeta_n}$, $f(N_n(E))=1$.  Write $N_n=(m^n_i)$ and let $m_n= m^n_n$, $M=(m_n)$.  Then $M$ is easily seen to satisfy the conclusions.  This is because for $n\leqslant E\in \sss_{\zeta_n}$, $M(E)= N_n(F)$ for some spread $F$ of $E$.  Thus $n\leqslant F\in \sss_{\zeta_n}$ and $f(M(E))=f(N_n(F))= 1$.  

In the second case, there exist $n\in \nn$ and $N\in \infin$ so that for each $N'\in [N]$, there exists $n\leqslant E\in \sss_{\zeta_n}$ with $f(N'(E))=0$.  First, choose $L\in [N]$ so that $\sss_{\omega^\xi}[\sss_{\zeta_n}](L)\subset \sss_{\zeta_n+\omega^\xi}= \sss_{\omega^\xi}$.  Note that for any $s\in \nn$, there exists $s\leqslant F\in \sss_{\zeta_n}$ so that $f(L(F))=0$.  To see this, let $L=(l_k)$ and let $L'=(l_k')=(l_{s+k})\in [N]$.  By hypothesis, there exists $n\leqslant E\in \sss_{\zeta_n}$ so that $f(L'(E))= 0$.  But $$L'(E)= (l'_k: k\in E)= (l_{k+s}: k\in E)= (l_k: k\in F)=L(F),$$ where $F=(k+s: k\in E)$.  Note that $s\leqslant F\in \sss_{\zeta_n}$, since $F$ is a spread of $E$.  This means we can choose $F_1<F_2<\ldots$, $F_i\in \sss_{\zeta_n}$ so that $f(L(F_i))=0$.  Let $E_i=L(F_i)$.  For $E\in \sss_{\omega^\xi}$, $(\min F_i)_{i\in E}$ is a spread of $E$, so that $(\min F_i)_{i\in E}\in \sss_{\omega^\xi}$.  This means $$\cup_{i\in E} E_i = L\bigl(\cup_{i\in E}F_i\bigr)\in \sss_{\omega^\xi}[\sss_{\zeta_n}](L)\subset \sss_{\omega^\xi}.$$

\end{proof}

\subsection{Sum estimates}

Given a well-founded $B$-tree $\ttt$ and $t\in \ttt$, we let $\eee_t(\ttt)=\{s\in MAX(\ttt): t\preceq s\}$.  We say $(\ccc^0_t, \ccc^1_t)_{t\in\ttt}$ is a \emph{coloring} of $\ttt$ provided that for each $t\in \ttt$, $\eee_t(\ttt)=\ccc^0_t\cup \ccc^1_t$.  For $j\in \{0,1\}$, we say a coloring is \emph{monochromatically} $j$ provided that for each $t\in MAX(\ttt)$, $$t\in \bigcap_{i=1}^{|t|} \ccc^j_{t|_i}.$$  For well-founded $B$-trees $\ttt, \www$, we say a pair $(i,e)$, $i:\ttt\to\www$ and $e:MAX(\ttt)\to MAX(\www)$, is an \emph{extended order preserving map} if $i$ is order preserving and for each $t\in MAX(\ttt)$, $i(t)\preceq e(t)$.  We say an extended order preserving map $(i,e)$ is an \emph{extended embedding} if $i$ is an embedding.  If $(\ccc^0_w, \ccc^1_w)_{w\in \www}$ is a coloring of $\www$ and $(i,e)$ is an extended order preserving map, then $$\ddd^j_t=\{s\in MAX(\ttt): e(s)\in \ccc^j_{i(j)}\}$$  defines a coloring $(\ddd^0_t, \ddd^1_t)_{t\in \ttt}$, which we call the \emph{induced coloring}.  Of course, this coloring depends on $i$, $e$, and $(\ccc^0_w, \ccc^1_w)$, but we will omit reference to the coloring and extended order preserving map inducing the coloring when no confusion will arise.  It is clear that if $(\ccc^0_w, \ccc^1_w)$ is monochromatically $j$, then any coloring induced by this coloring is also monochromatically $j$. 

We also note that if $i:\ttt\to \www$ is any order preserving map (resp. embedding), and if $\www$ is well-founded, there exists $e:MAX(\ttt)\to MAX(\www)$ so that $(i,e)$ is an extended order preserving map (resp. extended embedding).  

\begin{lemma} Suppose $0<\xi\in \emph{\ord}$.  Suppose $(\ccc^0, \ccc^1)\subset MAX(\ttt_\xi)$ is such that $\ccc^0\cup \ccc^1=MAX(\ttt_\xi)$.  Then there exists an extended order preserving map $(i,e)$ of $\ttt_\xi$ into $\ttt_\xi$ and $j\in \{0,1\}$ so that $e(\ttt_\xi)\subset \ccc^j$.    

\label{successor step}

\end{lemma}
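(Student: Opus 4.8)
The plan is to induct transfinitely on $\xi$, exploiting the recursive description of the trees $\ttt_\xi$. The case $\xi=1$ is immediate: $\ttt_1=\{(1)\}$ and $MAX(\ttt_1)=\{(1)\}$, so $\ccc^0\cup\ccc^1=\{(1)\}$ forces $(1)$ into some $\ccc^j$, and we take $i=e$ to be the identity. For a successor $\xi=\eta+1$ with $\eta\geqslant1$, I would use that $MAX(\ttt_{\eta+1})=\{(\eta+1)\con s:s\in MAX(\ttt_\eta)\}$. Pulling the coloring back along $s\mapsto(\eta+1)\con s$ yields a $2$-coloring of $MAX(\ttt_\eta)$, so by the inductive hypothesis there are an extended order preserving map $(i',e')$ of $\ttt_\eta$ into itself and a color $j$ with $e'(MAX(\ttt_\eta))$ contained in the corresponding class. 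One then lifts this by prepending the symbol $\eta+1$: put $i((\eta+1))=(\eta+1)$, $i((\eta+1)\con t)=(\eta+1)\con i'(t)$ for $t\in\ttt_\eta$, and $e((\eta+1)\con s)=(\eta+1)\con e'(s)$ for $s\in MAX(\ttt_\eta)$. That $(i,e)$ is then an extended order preserving map of $\ttt_{\eta+1}$ into itself with $e(MAX(\ttt_{\eta+1}))\subset\ccc^j$ is a direct check.

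The limit case $\xi$ carries the real content. Here $\ttt_\xi=\bigcup_{\zeta<\xi}\ttt_{\zeta+1}$ is a totally incomparable union, and $MAX(\ttt_{\zeta+1})\subset MAX(\ttt_\xi)$ for each $\zeta$. Applying the inductive hypothesis to the restriction of the coloring to $MAX(\ttt_{\zeta+1})$ gives, for every $\zeta<\xi$, an extended order preserving map $(i_\zeta,e_\zeta)$ of $\ttt_{\zeta+1}$ into itself and a color $j_\zeta\in\{0,1\}$ with $e_\zeta(MAX(\ttt_{\zeta+1}))\subset\ccc^{j_\zeta}$. Since $\xi$ is a limit, one of the two colors, say $j$, occurs as $j_\zeta$ for all $\zeta$ in a cofinal set $A\subset[0,\xi)$. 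The obstacle is that this inductive data only controls individual components, and only the components indexed by $A$ carry the right color. To resolve this I would, for each $\zeta<\xi$, choose $\rho(\zeta)\in A$ with $\rho(\zeta)\geqslant\zeta$ (with $\rho(\zeta)=\zeta$ when $\zeta\in A$); since $o(\ttt_{\rho(\zeta)+1})=\rho(\zeta)+1\geqslant\zeta+1$, Proposition~\ref{minimal trees} supplies an order preserving map $\phi_\zeta:\ttt_{\zeta+1}\to\ttt_{\rho(\zeta)+1}$, which extends to an extended order preserving map $(\phi_\zeta,\bar\phi_\zeta)$ by the remark preceding this lemma. Setting $i|_{\ttt_{\zeta+1}}=i_{\rho(\zeta)}\circ\phi_\zeta$ and $e|_{MAX(\ttt_{\zeta+1})}=e_{\rho(\zeta)}\circ\bar\phi_\zeta$, total incomparability forces any comparable pair of nodes into a common component, so the globally defined $i$ is order preserving; a composition of extended order preserving maps is again one; and $e$ sends $MAX(\ttt_{\zeta+1})$ into $e_{\rho(\zeta)}(MAX(\ttt_{\rho(\zeta)+1}))\subset\ccc^{j_{\rho(\zeta)}}=\ccc^j$. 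Hence $e(MAX(\ttt_\xi))\subset\ccc^j$, finishing the induction.

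So the main obstacle I anticipate is the assembly in the limit step: turning the per-component solutions into a single extended order preserving self-map of $\ttt_\xi$ with one global color. The two devices that make it work are a pigeonhole choice of a cofinal set $A$ on which the good color is constant, and the minimality of the trees $\ttt_{\zeta+1}$ (Proposition~\ref{minimal trees}), which lets me reroute each component into a good one without disturbing order or color. The base case, the successor prepending trick, and the bookkeeping that composed maps stay (extended) order preserving are all routine.
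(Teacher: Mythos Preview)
Your proposal is correct and follows essentially the same approach as the paper's proof: transfinite induction with the trivial base case $\xi=1$, the successor step by prepending $(\eta+1)$ after pulling back the coloring to $MAX(\ttt_\eta)$, and the limit step by pigeonholing a cofinal monochromatic set $A$ of components and rerouting each $\ttt_{\zeta+1}$ into some $\ttt_{\rho(\zeta)+1}$ with $\rho(\zeta)\in A$ via the minimality of the trees. The only cosmetic difference is that the paper takes $\rho(\zeta)>\zeta$ strictly while you allow $\rho(\zeta)\geqslant\zeta$; both choices work.
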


This lemma should be compared with the result found in \cite{PR} which states that if $\fff$ is regular, and if we color the maximal members of $\fff$ with two colors, there exists $M\in \infin$ so that $\fff\cap [M]^{<\omega}$ is monochromatic. In fact, they prove a stronger result where the set of maximal members of a regular family can be replaced by any family $\fff\subset \fin$ so that no member of $\fff$ is a subset of another member of $\fff$.  Such families are called \emph{thin}.

\begin{lemma} If $\ttt$ is any well-founded $B$ tree on any set, and if $(\ccc_t^0, \ccc_t^1)_{t\in \ttt}$ is any coloring of $\ttt$, then for $j\in \{0,1\}$, there exist an ordinal $\xi_j$ and an extended order preserving map $(i_j, e_j)$ of $\ttt_{\xi_j}$ into $\ttt$ so that the induced coloring on $\ttt_{\xi_j}$ is monochromatically $j$.  Moreover, $\xi_0, \xi_1$ can be chosen so that $\xi_0\oplus \xi_1=o(\ttt)$.  

\label{coloring sums}

\end{lemma}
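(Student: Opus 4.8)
The plan is to prove the statement by transfinite induction on $o(\ttt)$, using Lemma~\ref{successor step} to handle the successor step and Proposition~\ref{splitting technique} to handle the limit step. The base case $o(\ttt)=1$ is essentially trivial: then $\ttt$ has a single maximal member at each minimal node, the coloring assigns each such maximal element to $\ccc^0$ or $\ccc^1$, and we can simply take $\xi_0=1, \xi_1=0$ (or vice versa) by restricting to a single branch whose maximal element lies in a fixed color; one of $\xi_0, \xi_1$ will be $0$, corresponding to the trivial $B$-tree, and $\xi_0\oplus\xi_1=1=o(\ttt)$.

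For the successor step, suppose $o(\ttt)=\eta+1$. First I would fix, for each $x\in S$ with $(x)\in\ttt$, the subtree $\ttt(x)$, noting $o(\ttt(x))\leqslant\eta$ by Proposition~\ref{derivation prop}, and apply the inductive hypothesis to each $\ttt(x)$ with the coloring inherited from $(\ccc^0_t,\ccc^1_t)$ restricted to $\ttt(x)$ (shifting indices by the root $x$). This produces, for each such $x$ and each $j\in\{0,1\}$, ordinals $\xi_0^x, \xi_1^x$ with $\xi_0^x\oplus\xi_1^x=o(\ttt(x))$ and extended order preserving maps realizing monochromatically-$j$ induced colorings on $\ttt_{\xi_j^x}$. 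The subtlety is that we also need to account for the top color $(\ccc^0_\varnothing,\ccc^1_\varnothing)$ (or, for $B$-trees, the colors at the roots $(x)$): here is where Lemma~\ref{successor step} enters — it lets us pass to a large sub-copy of $\ttt_\eta$ inside the appropriate derived tree on which the root-level coloring is monochromatic, at the cost of possibly incrementing one of the $\xi_j$ by $1$. Combining: one color, say $j$, will satisfy $\xi_j = (\text{something}) + 1$ and the other $\xi_{1-j}$ stays as inherited, and one checks $\xi_0\oplus\xi_1 = o(\ttt) = \eta+1$ by tracking how the Hessenberg sum behaves when a successor is taken.

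For the limit step, suppose $o(\ttt)=\xi$ is a limit ordinal. For each $\zeta<\xi$, the set $d^\zeta(\ttt)$ is nonempty, so there is $x_\zeta$ with $(x_\zeta)\in d^\zeta(\ttt)$, giving $o(\ttt(x_\zeta))>\zeta$, and I would apply the inductive hypothesis to $\ttt(x_\zeta)$ (truncated appropriately to have order exactly $\zeta+1$, which is $<\xi$) to get ordinals $\xi_{0,\zeta}, \xi_{1,\zeta}$ with $\xi_{0,\zeta}\oplus\xi_{1,\zeta}=\zeta+1$ and extended order preserving maps realizing monochromatically-$j$ colorings for each $j$. Now Proposition~\ref{splitting technique} applies verbatim: it produces a set $A\subset[0,\xi)$, ordinals $\xi_0,\xi_1$ with $\xi_0\oplus\xi_1=\xi$, and an index $j\in\{0,1\}$ with $\xi_j$ a limit ordinal, $\sup_{\zeta\in A}\xi_{j,\zeta}=\xi_j$, and $\xi_{1-j,\zeta}\geqslant\xi_{1-j}$ for all $\zeta\in A$. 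Using the totally-incomparable-union construction from Proposition~\ref{new from old}(ii) (or (iii)), I would assemble the maps $(i_\zeta, e_\zeta)$ for $\zeta\in A$ into a single extended order preserving map of $\ttt_{\xi_j} = \bigcup_{\zeta\in A}\ttt_{\xi_{j,\zeta}+1}$-type object into $\ttt$; the condition $\xi_{1-j,\zeta}\geqslant\xi_{1-j}$ lets us also restrict each piece so that the $(1-j)$-colored extended embedding of $\ttt_{\xi_{1-j}}$ sits inside $\ttt_{\xi_{1-j,\zeta}}$, giving a uniform monochromatically-$(1-j)$ map of order $\xi_{1-j}$.

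The main obstacle I anticipate is the bookkeeping in the successor step: one must be careful about whether the increment by $1$ coming from Lemma~\ref{successor step} lands on the color index whose ordinal is already maximal in the Cantor normal form, so that $\xi_0\oplus\xi_1$ comes out to be exactly $o(\ttt)=\eta+1$ rather than something larger or smaller. Concretely, since $\eta = \xi_0^x\oplus\xi_1^x$ for the relevant $x$ and $\eta+1$ means adding $1$ to the finite part of the Cantor normal form, one of the two ordinals (the one with a nonzero finite term, or $0$ if $\eta$ is a limit) must absorb the $+1$; matching this with the color that Lemma~\ref{successor step} hands us — and, if necessary, passing between the two colors — is the delicate point. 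Everything else (composing extended order preserving maps, checking that induced colorings of monochromatic colorings remain monochromatic, which was already observed in the text) is routine.
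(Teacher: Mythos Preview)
Your proposal is correct and follows the same approach as the paper: transfinite induction on $o(\ttt)$, invoking Lemma~\ref{successor step} at successor stages and Proposition~\ref{splitting technique} at limit stages. Two remarks: the paper first reduces (implicitly) to the case $\ttt=\ttt_\xi$ by pulling back the coloring along an extended order-preserving map $\ttt_\xi\to\ttt$, which makes the successor step cleaner since $\ttt_{\xi+1}$ has the single root $(\xi+1)$ and the limit step cleaner since $\ttt_\xi=\bigcup_{\zeta<\xi}\ttt_{\zeta+1}$ is already a totally incomparable union; and your anticipated obstacle is not one, since $(\alpha+1)\oplus\beta=(\alpha\oplus\beta)+1$ holds for all ordinals $\alpha,\beta$ (immediate from Cantor normal form), so whichever color Lemma~\ref{successor step} selects simply absorbs the $+1$ with no further matching required.
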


Here, it should be understood that if $\xi_j=0$, we take $i_j$ and $e_j$ to be the empty maps, and that the empty map induces a monochromatically $j$ coloring on the empty set for both $j=0$ and $1$.

The corresponding result for regular families was shown in \cite{Causey}.  

\begin{lemma} If $\fff$ is a regular family, and if $(\ccc^0_E, \ccc^1_E)_{E\in \fff}$ is any coloring of $\fff$, then for $j\in \{0,1\}$, there exist an ordinal $\xi_j<\omega_1$, a regular family $\fff_j$ with $\iota(\fff_j)=\xi_j$, and an extended embedding $(i_j, e_j)$ of $\fff_j$ into $\fff$ so that the induced coloring on $\fff_j$ is monochromatically $j$.  Moreover, $\xi_0, \xi_1$ can be chosen so that $\xi_0\oplus \xi_1=\iota(\fff)$.

\end{lemma}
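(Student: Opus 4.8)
The plan is to mimic the proof of Lemma~\ref{coloring sums} (the tree version), but carried out in the category of regular families rather than the trees $\ttt_\xi$, using Lemma~\ref{successor step}'s analogue from \cite{PR} (the thin-set Ramsey theorem for regular families) in place of Lemma~\ref{successor step}. The argument goes by transfinite induction on $\iota(\fff)$. The base case $\iota(\fff)=0$, i.e. $\fff\subset\{\varnothing\}$, is trivial: take $\xi_0=\xi_1=0$ and the empty maps. For the inductive step, the key structural fact is that $\fff$ decomposes over its ``first level'': writing $\fff_{(n)}=\{E: (n)\verb!^!E\in\fff\}$ for each $n$ with $(n)\in\fff$, each $\fff_{(n)}$ is regular with $\iota(\fff_{(n)})<\iota(\fff)$, and for limit $\iota(\fff)$ one has $\sup_n \iota(\fff_{(n)}) = \iota(\fff)$ (for successor $\iota(\fff)$, $\iota(\fff_{(n)})$ is eventually constant equal to $\iota(\fff)-1$ after a spread). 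A coloring $(\ccc^0_E,\ccc^1_E)_{E\in\fff}$ restricts, for each $n$, to a coloring of $\fff_{(n)}$ via $E\mapsto(\ccc^0_{(n)\verb!^!E},\ccc^1_{(n)\verb!^!E})$, except that the ``root'' color $(\ccc^0_\varnothing,\ccc^1_\varnothing)$ must be handled separately.

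First I would dispose of the root split. Apply the thin-set Ramsey theorem of \cite{PR} to the partition $MAX(\fff)=\ccc^0_\varnothing\cup\ccc^1_\varnothing$: there is $M\in\infin$ and $j\in\{0,1\}$ so that every maximal member of $\fff$ lying in $[M]^{<\omega}$ belongs to $\ccc^j_\varnothing$. Replacing $\fff$ by $\{E: M(E)\in\fff\}$ (which is regular with the same index, by the fact cited from \cite{Causey}) and pulling the coloring back along $M$, we may assume $MAX(\fff)=\ccc^j_\varnothing$ for one fixed $j$; then for that $j$ the color of the root is automatically correct, and for $1-j$ we will only worry about the colors along the strictly positive-length initial segments. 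Now apply the inductive hypothesis to each $\fff_{(n)}$ with its restricted coloring: for each $n$ and each color $k\in\{0,1\}$ we obtain $\xi_{k,n}$, a regular $\gggg_{k,n}$ with $\iota(\gggg_{k,n})=\xi_{k,n}$, and an extended embedding $(i_{k,n},e_{k,n})$ of $\gggg_{k,n}$ into $\fff_{(n)}$ inducing a monochromatically-$k$ coloring, with $\xi_{0,n}\oplus\xi_{1,n}=\iota(\fff_{(n)})=:\zeta_n+1$ or $\zeta_n$ as appropriate.

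Next comes the combinatorial heart, where I would invoke Proposition~\ref{splitting technique} (for limit $\iota(\fff)$) exactly as Lemma~\ref{coloring sums} presumably does in the tree setting. Given the pairs $(\xi_{0,n},\xi_{1,n})$ with $\xi_{0,n}\oplus\xi_{1,n}=\zeta_n+1$ and $\sup_n\zeta_n=\iota(\fff)$ (assume $\iota(\fff)$ limit; the successor case is easier, one just uses a single $n$ after passing to a spread), Proposition~\ref{splitting technique} yields an infinite $A$, ordinals $\xi_0\oplus\xi_1=\iota(\fff)$, and $j'\in\{0,1\}$ with $\xi_{j'}$ limit, $\sup_{n\in A}\xi_{j',n}=\xi_{j'}$, and $\xi_{1-j',n}\geqslant\xi_{1-j'}$ for all $n\in A$. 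Build the color-$j'$ family $\fff_{j'}$ by the totally-incomparable-union construction of Proposition~\ref{new from old}(ii): diagonalize the families $(n)\verb!^!\gggg_{j',n}$ over $n\in A$ (after passing to a common subsequence so that $\gggg_{j',n}$ are increasing, via \cite{Causey} as in Lemma~\ref{mixed coloring}), getting a regular family of index $\sup_{n\in A}(\xi_{j',n}+1)=\xi_{j'}$ (using that $\xi_{j'}$ is limit and indices add: $\iota((\aaa_1,\gggg))=\iota(\gggg)+1$), with the extended embedding assembled from $n\mapsto(i_{j',n},e_{j',n})$ shifted under $E\mapsto(n)\verb!^!E$; this preserves monochromaticity-$j'$ and the root is fine by the reduction above. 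For the color $1-j'$, for each $n\in A$ one has $\iota(\gggg_{1-j',n})=\xi_{1-j',n}\geqslant\xi_{1-j'}$, so by the fact from \cite{Causey} there is a spread realizing a regular subfamily of index exactly $\xi_{1-j'}$; fixing one such $n$ and restricting gives $\fff_{1-j'}$ with index $\xi_{1-j'}$ and an extended embedding into $\fff$ inducing a monochromatically-$(1-j')$ coloring. Then $\xi_0\oplus\xi_1=\iota(\fff)$ as required.

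The main obstacle I anticipate is the bookkeeping at limit stages: ensuring that the diagonal union of the $\gggg_{j',n}$ over $n\in A$ is genuinely regular (compactness and spreading), has index exactly $\xi_{j'}$ rather than something larger, and that the assembled maps are honest extended embeddings (in particular that $i$ is injective and that $e$ lands in $MAX$ coherently after the concatenation $E\mapsto(n)\verb!^!E$). This is handled by the standard trick, already used in Lemma~\ref{mixed coloring}, of recursively choosing nested infinite sets $M_1\supset M_2\supset\cdots$ so that the pulled-back families $\{E: M_n(E)\in\gggg_{j',n}\}$ become nested, after which the diagonalization $\{E:\exists n\leqslant E,\ E\in\gggg_{j',n}\}$ is transparently regular of the correct index. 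Everything else is a routine transcription of the $\ttt_\xi$-argument, with Proposition~\ref{splitting technique} doing the ordinal arithmetic and \cite{PR}, \cite{Causey} supplying the regular-family analogues of the tree lemmas.
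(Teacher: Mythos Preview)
The paper does not actually prove this lemma; it states it and attributes the proof to \cite{Causey}. So there is no proof in the paper to compare against, and I can only assess your sketch on its own merits.

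Your overall architecture---induction on $\iota(\fff)$, decomposing over the first level $\fff_{(n)}$, invoking the Pudl\'ak--R\"odl theorem in place of Lemma~\ref{successor step}, and using Proposition~\ref{splitting technique} for the ordinal arithmetic at limit stages---is the right one, and matches how Lemma~\ref{coloring sums} is proved for the trees $\ttt_\xi$. However, there is a genuine gap in your handling of the ``root'' color.

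You apply PR to the partition $MAX(\fff)=\ccc^0_\varnothing\cup\ccc^1_\varnothing$, but the color at $\varnothing$ is irrelevant: the monochromaticity condition $e(F)\in\bigcap_{k=1}^{|F|}\ccc^j_{i(F|_k)}$ starts at $k=1$, so $\ccc^j_\varnothing$ never appears. What \emph{does} appear, and what your argument fails to control, is the color at the singleton level. When you lift the monochromatic embedding $\gggg_{j,n}\to\fff_{(n)}$ to an embedding into $\fff$ via $E\mapsto (n)\verb!^!E$, a maximal element $(n)\verb!^!G$ must lie in $\ccc^j_{(n)}$ (this is the $k=1$ requirement), and nothing in the inductive hypothesis on $\fff_{(n)}$ guarantees that---the restricted coloring on $\fff_{(n)}$ assigns $(\ccc^0_{(n)},\ccc^1_{(n)})$ to the empty set of $\fff_{(n)}$, and as you yourself note, that root color is not tested by monochromaticity.

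The fix is to apply PR at the correct level: for each $n$, before invoking the inductive hypothesis, apply PR inside $\fff_{(n)}$ to the partition of $MAX(\fff_{(n)})$ induced by $\ccc^0_{(n)}$ versus $\ccc^1_{(n)}$, obtaining $M_n\in\infin$ and $j_n\in\{0,1\}$ with the maximal elements in $[M_n]^{<\omega}$ monochromatic for $\ccc^{j_n}_{(n)}$. Pass to the spread, then apply induction. This is exactly the analogue of how Lemma~\ref{successor step} is used in the proof of Lemma~\ref{coloring sums}: it disposes of the first-level color before the inductive step handles the deeper levels. You then have an extra parameter $j_n$ to track through the diagonalization, but this merges cleanly with the $\xi_{0,n},\xi_{1,n}$ bookkeeping (one of the two colors gets its index bumped by $1$ when you prepend the singleton, exactly as in the tree successor case). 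A minor further point: Proposition~\ref{splitting technique} is stated for data indexed by \emph{all} $\zeta<\xi$, whereas you only have data at the values $\iota(\fff_{(n)})$; you should check (it is true) that the proof of that proposition only uses a cofinal set of $\zeta$.
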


\begin{proof}[Proof of Lemma \ref{successor step}] For $\xi=1$, the result is trivial.  Suppose we have the result for $\xi$.  Suppose $\ccc^0\cup \ccc^1=MAX(\ttt_{\xi+1})$.  Define $\ddd^j=\{t\in \ttt_\xi: (\xi+1)\verb!^!t\in \ccc^j\}$.  Then $MAX(\ttt_\xi)= \ddd^0\cup \ddd^1$.  Take an extended embedding $(i',e')$ of $\ttt_\xi$ into $\ttt_\xi$ and $j\in \{0,1\}$ so that $e'(MAX(\ttt_\xi))\subset \ddd^j$.  Define $i((\xi+1))=(\xi+1)$, $i((\xi+1)\verb!^!t)= (\xi+1)\verb!^!i'(t)$ for $t\in \ttt_\xi$, and  $e((\xi+1)\verb!^!t)=(\xi+1)\verb!^!e'(t)$ for $t\in MAX(\ttt_\xi)$.  

Assume we have the result for every $\zeta<\xi$, $\xi$ a limit ordinal. For $\zeta<\xi$ and $j\in \{0,1\}$, let $\ccc^j_\zeta=\{t\in MAX(\ttt_{\zeta+1}): t\in \ccc^j\}$.  Find an extended order preserving map $(i_\zeta, e_\zeta)$ of $\ttt_{\zeta+1}$ into itself and $j_\zeta\in \{0,1\}$ so that $e_\zeta(MAX(\ttt_{\zeta+1}))\subset \ccc^{j_\zeta}_\zeta$.  For $j\in \{0,1\}$, let $A_j=\{\zeta<\xi: j_\zeta=j\}$.  Since $[0,\xi)=A_0\cup A_1$, there is $j\in\{0,1\}$ so that $\sup A_j= \xi$.  Thus we can choose $\phi:[0,\xi)\to A_j$ so that $ \zeta<\phi(\zeta)$ for all $\zeta<\xi$.  Then for each $\zeta<\xi$, we can find an extended order preserving map $(g_\zeta, h_\zeta)$ of $\ttt_{\zeta+1}$ into $\ttt_{\phi(\zeta)+1}$.  Define $i$ on $\ttt_{\zeta+1}$ and $e$ on $MAX(\ttt_{\zeta+1})$ by letting $$i|_{\ttt_{\zeta+1}}=  i_{\phi(\zeta)}\circ g_\zeta, \hspace{5mm} e|_{MAX(\ttt_{\zeta+1})} = e_{\phi(\zeta)}\circ h_\zeta.$$

\end{proof}

\begin{proof}[Proof of Lemma \ref{coloring sums}] If $\xi=1$, the assertion is trivial.  

Assume we have the result for an ordinal $\xi$ and suppose first that $MAX(\ttt_{\xi+1})\subset \mathcal{C}^0_{(\xi+1)}$.  We then apply the inductive hypothesis to the coloring $(\ccc^0_{(\xi+1)\verb!^!t}, \ccc^1_{(\xi+1)\verb!^!t})_{t\in \ttt_\xi}$ to deduce the existence of $\xi_j$, $(i'_j, e'_j):\ttt_{\xi_j}\to \ttt_\xi\to \ttt_{\xi+1}$, where the map from $\ttt_\xi$ to $\ttt_{\xi+1}$ is given by $t\mapsto (\xi+1)\verb!^!t$, so that the induced coloring on $\ttt_{\xi_j}$ is monochromatically $j$ and so that $\xi_0\oplus \xi_1= \xi$.  We define $(i_1, e_1):\ttt_{\xi_1}\to \ttt_{\xi+1}$ by simply letting $i_1=i_1'$ and $e_1=e_1'$.  We define $(i_0, e_0):\ttt_{\xi_0+1}\to \ttt_{\xi+1}$ by $$i_0((\xi_0+1))=(\xi+1),$$ $$i_0((\xi_0+1)\verb!^!t)= i_0'(t),$$ $$ e_0((\xi_0+1)\verb!^!t)=e_0'(t).$$  It is easy to see that these define extended order preserving maps, and that the coloring induced by $(i_1, e_1)$ is monochromatically $1$.  To see that the coloring on $\ttt_{\xi_0+1}$ induced by $(i_0, e_0)$ is monochromatically $0$, take $s\in MAX(\ttt_{\xi_0+1})$ and write $s= (\xi_0+1)\verb!^!t$.  Then by the properties of $(i_0', e_0')$, $$e_0(s)=e_0((\xi_0+1)\verb!^!t)=e_0'(t)\in \bigcap_{k=1}^{|t|} \ccc^0_{i_0'(t|_k)}=\bigcap_{k=2}^{|s|} \ccc^0_{i_0(s|_k)}.$$  But $e_0(s)\in MAX(\ttt_{\xi+1})\subset \ccc^0_{(\xi+1)}= \ccc^0_{i_0(s|_1)}$, so that $e_0(s)\in \bigcap_{k=1}^{|s|} \ccc^0_{i_0(s|_k)}$.  Since $(\xi_0+1)\oplus \xi_1=\xi_0\oplus \xi_1+1= \xi+1$, this finishes the proof in this special case.  How to complete the proof in the special case that $MAX(\ttt_{\xi+1})\subset \ccc^1_{(\xi+1)}$ is similar.  

For the general successor case, we simply reduce to one of these special cases by Lemma \ref{successor step}. With $\ccc^0= \ccc^0_{(\xi+1)}$ and $\ccc^1=\ccc_{(\xi+1)}^1$, we can find an extended order preserving map $(i', e'):\ttt_{\xi+1}\to \ttt_{\xi+1}$ so that $e'(MAX(\ttt_{\xi+1}))\subset \ccc^j$ for either $j=0$ or $j=1$.  We let $(\ddd^0_t, \ddd^1_t)_{t\in \ttt_{\xi+1}}$ be the coloring induced by $(i',e')$ and $(\ccc^0_t, \ccc^1_t)_{t\in \ttt_{\xi+1}}$.  Then the coloring $(\ddd^0_t, \ddd^1_t)_{t\in \ttt_\xi}$ is one of the special cases above.  We find $\xi_0, \xi_1$, and $(i''_j, e''_j):\ttt_{\xi_j}\to \ttt_{\xi+1}$ to satisfy the requirements with respect to $(\ddd^0_t, \ddd^1_t)_{t\in \ttt_{\xi+1}}$, and then define $i_j= i'\circ i''_j$, $e_j=e'\circ e''_j$.

Suppose we have the result for every ordinal less than the limit ordinal $\xi$.  If $(\ccc^0_t, \ccc^1_t)_{t\in \ttt_\xi}$ is a coloring, then for each $\zeta<\xi$, $(\ccc^0_t, \ccc^1_t)_{t\in \ttt_{\zeta+1}}$ is a coloring of $\ttt_{\zeta+1}$.  Then we can find $\xi_{j, \zeta}$ and extended order preserving maps $(i_{j, \zeta}, e_{j, \zeta}):\ttt_{\xi_{j, \zeta}}\to \ttt_{\zeta+1}\to \ttt_\xi$ so that the coloring induced on $\ttt_{\xi_{j, \zeta}}$ is monochromatically $j$, and so that $\xi_{0, \zeta}\oplus \xi_{1, \zeta}=\zeta+1$.  Then by Proposition \ref{splitting technique}, there exist a set $A\subset [0, \xi)$, ordinals $\xi_0, \xi_1$ so that $\xi_0\oplus \xi_1=\xi$, and $j\in \{0,1\}$ so that $\xi_j$ is a limit ordinal and $$\sup_{\zeta\in A} \xi_{j, \zeta}=\xi_j, \hspace{5mm} \min_{\zeta\in A}\xi_{1-j, \zeta}\geqslant \xi_{1-j}.$$  Without loss of generality, assume $j=0$.  

Fix any $\phi:[0, \xi_0)\to A$ so that $\xi_{0,\phi(\zeta)}>\zeta+1$.  Then for each $\zeta<\xi_0$, we can define an extended order preserving map $(i'_\zeta, e'_\zeta):\ttt_{\zeta+1}\to \ttt_{\xi_{0,\phi(\zeta)}}$.  Then $(i_{0, \phi(\zeta)}\circ i'_\zeta, e_{0, \phi(\zeta)}\circ e'_\zeta):\ttt_{\zeta+1}\to \ttt_\xi$ defines an extended order preserving map which is monochromatically $0$.  We define $i_0$ and $e_0$ on $\ttt_{\xi_0}$ by letting the restriction to $\ttt_{\zeta+1}$ be these compositions, which defines an extended order presering $(i_0, e_0):\ttt_{\xi_0}\to \ttt_\xi$ so that the induced coloring is monochromatically $0$.  

Choose any $\zeta\in A$ and choose an extended order preserving map $(i', e'):\ttt_{\xi_1}\to \ttt_{\xi_{1,\zeta}}$.  Defining $(i_1, e_1)= ( i_{1, \zeta}\circ i', e_{1, \zeta}\circ e'):\ttt_{\xi_1}\to \ttt_\xi$ gives an extended order preserving map from $\ttt_{\xi_1}\to \ttt_\xi$ such that the induced coloring is monochromatically $1$.  Since $\xi_0\oplus \xi_1=\xi$, this finishes the proof.

\end{proof}

Of course, we can now apply these results to colorings with any finite number of colors rather than simply two colors.  Moreover, if we have any finite coloring $(\ccc^0_t, \ldots, \ccc^n_t)_{t\in \ttt_{\omega^\xi}}$, we obtain $\xi_0, \ldots, \xi_n$ so that $\xi_0\oplus\ldots \oplus \xi_n= \omega^\xi$ and extended order preserving maps inducing monochromatic colorings.  But as we have already discussed, if $\xi_0\oplus \ldots \oplus \xi_n= \omega^\xi$, there exists $0\leqslant i\leqslant n$ so that $\xi_i=\omega^\xi$, thus in this case we obtain a monochromatic structure of the same size.  The same holds with colorings of regular families $\fff$ with $\iota(\fff)=\omega^\xi$ for some $\xi$, and in particular for the Schreier families.  

We wish to discuss a particular kind of coloring, which is that in which every member $t$ of $\ttt_\xi$ colors all of the members of $\eee_t(\ttt_\xi)$ with the same color.  This is simply the case of a function $f:\ttt_\xi\to \{0, \ldots, n\}$.  In this case, $\ccc^i_t= \varnothing$ if $i\neq f(t)$ and $\ccc^{f(t)}_t= \eee_t(\ttt_\xi)$.  In this case, the function $e$ plays no real part in the result.  That is, if we have $(i_j, e_j):\ttt_{\xi_j}\to \ttt$ inducing a monochromatically $j$ coloring on $\ttt_{\xi_j}$, and if $e_j':MAX(\ttt_{\xi_j})\to MAX(\ttt)$ is any function so that $(i_j, e_j')$ is also an extended order preserving map, $(i_j, e_j')$ also induced a monochromatically $j$ coloring.

\begin{corollary} If $S$ is any finite set, $\xi\in \emph{\ord}$ and if $f:\ttt_{\omega^\xi}\to S$ is any function, then there exists an order preserving $i:\ttt_{\omega^\xi}\to \ttt_{\omega^\xi}$ so that $f\circ i$ is constant.  The same holds if $\xi<\omega_1$ and if we replace $\ttt_{\omega^\xi}$ with $\sss_\xi$ and order preserving with embedding.  

\label{sum corollary}

\end{corollary}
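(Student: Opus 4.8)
The plan is to reinterpret $f$ as a coloring in the sense of the subsection on sum estimates, feed it to the many-colored form of Lemma~\ref{coloring sums}, and then use that $\omega^\xi$ is additively indecomposable. Fix a bijection identifying $S$ with $\{0,1,\dots,n\}$. Given $f\colon\ttt_{\omega^\xi}\to\{0,\dots,n\}$, I would form the coloring $(\ccc^0_t,\dots,\ccc^n_t)_{t\in\ttt_{\omega^\xi}}$ with $\ccc^{f(t)}_t=\eee_t(\ttt_{\omega^\xi})$ and $\ccc^s_t=\varnothing$ for $s\neq f(t)$; this is exactly the ``function coloring'' described just before the corollary, so the companion maps $e$ will play no role. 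Applying the finite-color version of Lemma~\ref{coloring sums} recorded in the discussion preceding the corollary yields ordinals $\xi_0,\dots,\xi_n$ with $\xi_0\oplus\dots\oplus\xi_n=o(\ttt_{\omega^\xi})=\omega^\xi$ together with extended order preserving maps $(i_s,e_s)\colon\ttt_{\xi_s}\to\ttt_{\omega^\xi}$ inducing a monochromatically $s$ coloring on $\ttt_{\xi_s}$. Since $\omega^\xi$ is a gamma number, the observation following the definition of the Hessenberg sum supplies an index $s_0$ with $\xi_{s_0}=\omega^\xi$.

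I would then take $i:=i_{s_0}\colon\ttt_{\omega^\xi}\to\ttt_{\omega^\xi}$. That the induced coloring is monochromatically $s_0$ means that for every $w\in MAX(\ttt_{\omega^\xi})$ and every $1\leqslant k\leqslant|w|$ one has $e_{s_0}(w)\in\ccc^{s_0}_{i(w|_k)}$; since this set is then nonempty, $f(i(w|_k))=s_0$. As $\ttt_{\omega^\xi}$ is well-founded, every element of $\ttt_{\omega^\xi}$ is an initial segment of some maximal member, so $f\circ i$ is identically $s_0$, and $i$ is order preserving being the first coordinate of an extended order preserving map. For the Schreier version I would repeat this verbatim with the regular-family counterpart of Lemma~\ref{coloring sums} in place of Lemma~\ref{coloring sums}, using $\iota(\sss_\xi)=\omega^\xi$: this produces a regular family $\fff$ with $\iota(\fff)=\omega^\xi$ and an extended embedding $(i,e)\colon\fff\to\sss_\xi$ with $f\circ i$ constant on $\fff$ (again because each nonempty member of the compact family $\fff$ lies below a maximal one). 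Since $\iota(\fff)=\omega^\xi=\iota(\sss_\xi)$, there is $M\in\infin$ with $\sss_\xi(M)\subset\fff$, so $E\mapsto M(E)$ embeds $\sss_\xi$ into $\fff$, and $E\mapsto i(M(E))$ is the desired embedding of $\sss_\xi$ into itself on which $f$ is constant.

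I do not expect a serious obstacle, since this is genuinely a corollary of Lemma~\ref{coloring sums} and its regular-family analog. The two steps needing a little care are: first, checking that monochromaticity of the coloring induced by $f$ forces $f\circ i$ to be constant on \emph{all} of $\ttt_{\omega^\xi}$ (resp.\ $\fff$), which rests on the fact that every node of a well-founded tree — and every nonempty member of a compact regular family — lies below a maximal element; and second, in the Schreier case, passing from the embedding of the auxiliary family $\fff$ to an embedding of $\sss_\xi$ itself by means of a spread $M$ with $\sss_\xi(M)\subset\fff$.
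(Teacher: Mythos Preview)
Your proposal is correct and is precisely the argument the paper intends: the corollary is stated without proof because the preceding two paragraphs already spell out the mechanism you describe --- interpret $f$ as the ``function coloring'' $\ccc^{f(t)}_t=\eee_t(\ttt_{\omega^\xi})$, apply the finite-color form of Lemma~\ref{coloring sums} (respectively its regular-family analogue), and use additive indecomposability of $\omega^\xi$ under $\oplus$ to pick out a color $s_0$ with $\xi_{s_0}=\omega^\xi$. Your extra step in the Schreier case --- composing with a spread $M$ so that $\sss_\xi(M)\subset\fff_{s_0}$ to return from the auxiliary family to $\sss_\xi$ itself --- is exactly the standard maneuver the paper uses elsewhere, and your justification that monochromaticity forces $f\circ i$ constant (via every node lying below a maximal one) fills in the one detail the paper leaves implicit.
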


\begin{corollary} If $0<a<b$ and if $f:\ttt_{\omega^\xi}\to [a,b]$ is any function, then for any $\delta>0$, there exists $\theta\in [a,b]$ and an order preserving $i:\ttt_{\omega^\xi}\to \ttt_{\omega^\xi}$ so that for all $t\in \ttt_{\omega^\xi}$, $$\theta\leqslant f\circ i(t)\leqslant (1+\delta)\theta.$$  

\label{approximate monochromatic}

\end{corollary}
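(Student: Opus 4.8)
The plan is to discretize the interval $[a,b]$ and apply Corollary \ref{sum corollary}. First I would fix $\delta>0$ and choose a finite partition $a=\theta_0<\theta_1<\ldots<\theta_m=b$ of $[a,b]$ so that consecutive points satisfy $\theta_{r+1}\leqslant (1+\delta)\theta_r$; this is possible since $a>0$, for instance by taking $\theta_r=a(1+\delta)^r$ truncated at $b$, which requires only $m=\lceil \log_{1+\delta}(b/a)\rceil$ points. Define $g:\ttt_{\omega^\xi}\to\{0,1,\ldots,m-1\}$ by letting $g(t)$ be the least index $r$ with $f(t)\leqslant \theta_{r+1}$ (equivalently, the index of the subinterval $[\theta_r,\theta_{r+1}]$ containing $f(t)$, with a fixed tie-breaking rule).

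Next I would apply Corollary \ref{sum corollary} with $S=\{0,1,\ldots,m-1\}$ to the function $g$: there is an order preserving $i:\ttt_{\omega^\xi}\to\ttt_{\omega^\xi}$ so that $g\circ i$ is constant, say equal to $r$. Then for every $t\in\ttt_{\omega^\xi}$ we have $f\circ i(t)\in[\theta_r,\theta_{r+1}]\subset[\theta_r,(1+\delta)\theta_r]$, so setting $\theta=\theta_r\in[a,b]$ gives the conclusion $\theta\leqslant f\circ i(t)\leqslant (1+\delta)\theta$.

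There is essentially no obstacle here: the entire content is packaged in Corollary \ref{sum corollary} (finite-valued colorings of $\ttt_{\omega^\xi}$ admit a constant order-preserving restriction, which in turn rests on Lemma \ref{coloring sums} together with the fact that $\xi_0\oplus\cdots\oplus\xi_n=\omega^\xi$ forces some $\xi_i=\omega^\xi$). The only point requiring a word of care is the choice of partition, where positivity of $a$ is used to pass from an additive mesh to a multiplicative one; a purely additive mesh of size $\delta a$ would also work but the geometric choice makes the bound cleaner. The same argument applies verbatim with $\ttt_{\omega^\xi}$ replaced by $\sss_\xi$ and "order preserving" by "embedding", using the second assertion of Corollary \ref{sum corollary}, though this variant is not stated in the corollary.
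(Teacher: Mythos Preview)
Your proof is correct and follows essentially the same approach as the paper: discretize $[a,b]$ into finitely many geometric subintervals $[a(1+\delta)^{r-1},a(1+\delta)^r]$, color each $t$ by the index of the subinterval containing $f(t)$, and apply Corollary \ref{sum corollary} to obtain a monochromatic order-preserving copy of $\ttt_{\omega^\xi}$. The only cosmetic difference is that the paper phrases the choice of the number of pieces as picking $n$ with $(b/a)^{1/n}<1+\delta$, which is exactly your $m=\lceil\log_{1+\delta}(b/a)\rceil$.
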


\begin{proof} Choose $n\in \nn$ so that $(b/a)^{1/n}<1+\delta$.  For $t\in \ttt_{\omega^\xi}$, let $g(t)=j$ if $$a(1+\delta)^{j-1} \leqslant f(t)< a(1+\delta)^j.$$  This defines a finite coloring of $\ttt_{\omega^\xi}$.  If $i:\ttt_{\omega^\xi}\to \ttt_{\omega^\xi}$ is order preserving so that $g\circ i\equiv j$, then taking $\theta= a(1+\delta)^{j-1}$ gives the result.

\end{proof}

\section{Structures in Banach spaces}

\subsection{Coordinate systems}

We let $\textbf{Ban}$ denote the class of Banach spaces, and $\textbf{SB}$ the class of separable Banach spaces.  If $X$ is a Banach space, we let $S_X$, $B_X$ denote the unit sphere and unit ball of $X$, respectively.  If $A\subset X$, we let $[A]$ denote the closed span of $A$.  If $X$ is a Banach space, a sequence of finite dimensional subspaces $E=(E_n)$ of $X$ is called a \emph{finite dimensional decomposition} (or FDD) for $X$ provided that for each $x\in X$, there exists a unique sequence $(x_n)\subset X$ so that $x_n\in E_n$ for all $n\in \nn$ and so that $x=\sum x_n$.  In this case, the map defined by $x=\sum x_n\mapsto x_m$ is a well-defined, bounded, linear operator, called the $m^{th}$ \emph{canonical projection} and denoted $P^E_m$.  If $A\in \fin$, we let $P^E_A= \sum_{n\in A}P^E_n$.  We note that the principle of uniform boundedness implies that the \emph{projection constant of} $E$ \emph{in} $X$, defined by $\sup_{m\leqslant n}\|P^E_{[m,n]}\|$, is finite.  If the projection constant of $E$ in $X$ is $1$, we say $E$ is \emph{bimonotone} in $X$.  It is known that if $E$ is an FDD for $X$, one can equivalently renorm $X$ to make $E$ bimonotone in $X$ with respect to the new norm.  

If $x\in X$ and $E$ is an FDD for $X$, we define the \emph{support} of $x$, denoted $\supp_E(x)$, to be $\{n\in \nn: P^E_nx\neq 0\}$.  We define the \emph{range} of $x$ to be the smallest interval in $\nn$ containing $\supp_E(x)$, and denote the range of $x$ by $\ran_E(x)$.  We let $c_{00}(E)$ denote all vectors in $X$ having finite support. We say a sequence $(x_n)\subset c_{00}(E)$ of non-zero vectors in $X$ is a \emph{block sequence with respect to} $E$ provided $\supp_E(x_n)<\supp_E(x_{n+1})$ for all $n\in \nn$.  We let $\Sigma(E,X)$ denote the finite block sequences in $X$ with respect to $E$.  

We note that for each $n\in \nn$, $E_n^*$ can be embedded into $X^*$ by $(P_n^E)^*$, but this embedding need not be isometric unless $E$ is bimonotone.  We will consider $E_n^*$ as a subspace of $X^*$ and let $E^*=(E_n^*)$.  We let $[E^*]=[E_n^*:n\in \nn]$.  We note that $E^*$ is always an FDD for $[E^*]$ with projection constant in $[E^*]$ not exceeding the projection constant of $E$ in $X$.  We say $E$ is \emph{shrinking} if $[E^*]=X^*$.  We say $E$ is \emph{boundedly complete} if $E^*$ is a shrinking FDD for $[E^*]$, in which case $X=[E^*]^*$.  These facts can be found in \cite{FHHMZ}.

If $(e_i)$, $(f_i)$ are sequences of the same length in (possibly different) Banach spaces, we say $(f_i)$ $C$-dominates $(e_i)$, or that $(e_i)$ is $C$-dominated by $(f_i)$, if for all $(a_i)\in c_{00}$, $$\Bigl\|\sum a_ie_i\Bigr\| \leqslant C\Bigl\|\sum a_i f_i\Bigr\|.$$  We say that two sequences $(e_i), (f_i)$ are $C$-equivalent if there exist $a,b>0$ so that $ab\leqslant C$ and $(e_i)$ $a$-dominates and is $b$-dominated by $(f_i)$.  We say the sequence $(e_i)$ is $b$-\emph{basic} if for all $1\leqslant m\leqslant n$ and $(a_i)_{i=1}^n$, $$\bigl\|\sum_{i=1}^m a_ie_i\bigr\|\leqslant b\bigl\|\sum_{i=1}^n a_ie_i\bigr\|.$$  We say a sequence is \emph{basic} if it is $b$-basic for some $b\geqslant 1$, and the smallest $b$ for which $(e_i)$ is $b$-basic is called the \emph{basis constant} of $(e_i)$.

Let $X$ be a Banach space and $K\geqslant 1$.  For $1\leqslant p\leqslant \infty$, we let $$T_p(X, K)=\Bigl\{(x_i)_{i=1}^n\in B_X^{<\omega}: \forall (a_i)_{i=1}^n \in S_{\ell_p^n}, K^{-1}\leqslant \bigl\|\sum_{i=1}^n a_ix_i\bigr\|\leqslant 1\Bigr\}.$$  We note that this is a closed, $p$-absolutely convex tree on $X$ which is ill-founded if and only if $X$ admits a sequence which is $K$-equivalent to the unit vector basis of $\ell_p$ (resp. $c_0$ if $p=\infty$).  If $E$ is an FDD for $X$, we let $T_p(X, E, K)=T_p(X, K)\cap \Sigma(E, X)$. We note that this tree is not closed, but it is $p$-absolutely convex.  We also define $$W(X,K)=\Bigl\{(x_i)_{i=1}^n\in X^{<\omega}: \|x_i\|\geqslant 1, \forall (a_i)_{i=1}^n\in S_{\ell_\infty^n}, \bigl\|\sum_{i=1}^n a_ix_i\bigr\|\leqslant K\Bigr\},$$  $W(X, E, K)=W(X,K)\cap \Sigma(E, X)$.  

In the case that $E$ has an FDD, we define a second type of derivation on trees $T$ by $$d_E(T)=\Bigl\{(x_i)_{i=1}^n\in T: \exists (y_i), i\leqslant \supp_E(y_i), (x_1, \ldots, x_n, y_i)\in T \text{\ \ }\forall i\in \nn\Bigr\}.$$  We let $$d_E^0(T)=T,$$ $$d_E^{\xi+1}(T)=d_E(d^\xi_E(T)),$$ and if $\xi$ is a limit ordinal, we let $$d^\xi_E(T)=\bigcap_{\zeta<\xi} d^\zeta_E(T).$$  We let $o_E(T)=\min \{\xi\in \ord: d_E^\xi(T)=\varnothing\}$.  We think of this derivation as being an asymptotic derivation, while the usual derivation is a local one.  

We make the following definitions: \begin{enumerate}[(i)]\item $I_p(X,K)=o(T_p(X,K))$, \item $I_p(X,E,K)=o(T_p(X,E,K))$, \item $I^a_p(X,E,K)=o_E(T_p(X,E,K))$, \item $J(X,K)=o(W(X,K))$, \item $J(X,E,K)=o(W(X,E,K))$, \item $J^a(X, E, K)=o_E(W(X, E, K))$. \end{enumerate}

We let $I_p(X)=\sup_{K\geqslant 1}I_p(X,K)$, and define $I_p(X,E)$, $I^a_p(X,E)$, $J(X)$, $J(X,E)$, and $J^a(X,E)$ similarly.  We note that $I_p(\cdot)$ is originally due to Bourgain \cite{Bo}, and is called the Bourgain $\ell_p$ index of $X$.  The block indices using a basis were defined in \cite{JO}, and the asymptotic block derivative $d_E$ was define in \cite{OSZ}. We remark that $I_p(X)>\omega$ if and only if $\ell_p$ (resp. $c_0$) is finitely representable in $X$, and $I_p(X, E)>\omega$ if and only if $\ell_p$ (resp. $c_0$) is block finitely representable in $E$. Also, $I^a_p(X,E)>\omega$ if and only if for each $n\in \nn$, $\ell_p^n$ is in the $n^{th}$ asymptotic structure of $X$ determined by the filter of tail subspaces $[E_n:n\geqslant m]$, $m\in \nn$, of $X$.  For definitions regarding asymptotic structures, we refer the reader to \cite{MMT}.  

We remark that there are other coordinate systems of interest, some of which need not be ordered or countable.  For example, Markushevich finite dimensional decompositions, unconditional FDDs, or skipped block decompositions.  One can define skipped block trees or trees consisting of vectors with finite, disjoint support and formulate many of the results here for such coordinate systems.  However, since the difference betwee $I_p$ and the asymptotic index $I^a_p$, which has as an analogue the pointwise null index in the case of an unordered coordinate system, can never be very different, we do not explicitly state the results for each possible coordinate system.

We remark that by Proposition 5 of \cite{OSZ}, for any $\xi<\omega_1$, and any Banach space $X$ with FDD $E$, $I^a_p(X,E,K)>\omega^\xi$ if and only if there exists $(x_F)_{F\in \widehat{\sss}_\xi}\subset B_X$ so that \begin{enumerate}[(i)]\item for each $F\in \widehat{\sss}_\xi$, $(x_{F|_i})_{i=1}^{|F|}\in T_p(X,E,K)$, and \item for $F\in \sss_\xi'$ and $i>F$, $i\leqslant \supp_E(x_{F\verb!^!i})$. \end{enumerate}

More generally, if $\fff$ is a regular family and $(x_F)_{F\in \widehat{\fff}}$ is such that $(x_{F|_i})_{i=1}^{|F|}\in T_p(X,E,K)$ for each $F\in \widehat{\fff}$ and if $m_i\leqslant \supp_E(x_{F\verb!^!m_i})$ for each $F\in \fff'$, where $(m_i)_{i\in \nn}=(m\in \nn: F\verb!^!m\in \nn)$, then $I^a_p(X,E,K)>\iota(\fff)$.  Moreover, there exists a collection called the \emph{fine Schreier families} \cite{OSZ}, so that the $\iota$ index of the $\xi^{th}$ family is $\xi$ and if  $I^a_p(X, E, K)>\xi$, we can find a collection $(x_F)_{F\in \widehat{\fff}}\subset X$ satisfying (i) and (ii) above where $\fff$ is the $\xi^{th}$ fine Schreier family.  Since for any regular $\fff, \gggg$ with $\iota(\fff)=\iota(\gggg)$, there exists $M\in \infin$ so that $\fff(M)\subset \gggg$ and $\gggg(M)\subset \fff$, the exact regular family used as the index set is not important, only its Cantor-Bendixson index.  For this reason, we do not discuss the fine Schreier families.  

Note that by standard pruning arguments, such as those detailed in \cite{AJO}, if $I^a_p(X, E, K)>\xi$, we can find a regular family $\fff$ with $\iota(\fff)=\xi$ and a collection $(x_F)_{F\in \widehat{\fff}}\subset X$ so that for each $F\in \widehat{\fff}$, $(x_{F|_i})_{i=1}^{|F|}\in T_p(X,E,K)$, $x_F\in c_{00}(E)$, and so that for $F\in \fff'$, $(x_{F\verb!^!i})_{F<i}$ is a block sequence with respect to $E$. We will call such a collection an \emph{asymptotic block tree}. Note that all of the remarks here apply as well to the index $J^a$.

We also define higher order $\ell_p$ and $c_0$ spreading models.  For a regular family $\fff\subset \fin$ and $1\leqslant p<\infty$, we say $(x_n)$ is a $K$-$\ell_p^\fff$ spreading model provided that for each $E\in \fff$, $(x_n)_{n\in E}\in T_p(X,K)$. Note that the usual definition of a $K$-$\ell_p^\fff$ spreading model does not require that the sequence $(x_i)_{i\in E}$ $K$-dominates and is $1$-dominated by the $\ell_p^{|E|}$ basis for each $E\in \fff$.  Typically one requires the existence of constants $a,b>0$ with $ab\leqslant K$ so that for each $E\in \fff$, $(x_i)_{i\in E}$ $a$-dominates and is $b$-dominated by the $\ell_p^{|E|}$ basis.  Our definition is simply the usual definition with a normalization. The notion of $K$-$c_0^\fff$ spreading model is defined similarly.  If $\fff=\sss_\xi$, we write $\ell_p^\xi$ (resp. $c_0^\xi$) in place of $\ell_p^{\sss_\xi}$ (resp. $c_0^{\sss_\xi})$.  Note that, with our convention that $\sss_{\omega_1}=\fin$, $(x_n)$ is a $K$-$\ell_p^{\omega_1}$ spreading model if and only if it is $1$-dominated by and $K$-dominates the $\ell_p$ basis.

We also define a notion related to $c_0^\fff$ spreading models which will be used later.  We say $(x_n)\subset X$ is a $K$-$c_0^\fff$ \emph{special sequence} provided that for each $E\in \fff$, $(x_n)_{n\in E}\in W(X, K)$.  As with spreading models, we write $K$-$c_0^\xi$ special sequence in place of $K$-$c_0^{\sss_\xi}$ special sequence.  As we dicuss below, if $\fff$ contains all singletons and $\iota(\fff)\geqslant \omega$, any $K$-$c_0^\fff$ special sequence $(x_n)$ must be seminormalized and weakly null.  Therefore some subsequence of $(x_n)$ is $3/2$-basic, which means that this subsequence $3$-dominates the $c_0$ basis.  Therefore with these assumptions on $\fff$, any $K$-$c_0^\xi$ special sequence $(x_n)$ admits a subsequence $(y_n)$ so that $(K^{-1}y_n)$ is a $3K$-$c_0^\fff$ spreading model.

We define the following indices associated with containment of higher order spreading models.  For $1\leqslant p< \infty$ and $K\geqslant 1$, we let $$\ssm_p(X,K)=\min \{\xi\leqslant \omega_1: X\text{\ admits no\ }K\text{-}\ell_p^\xi \text{\ spreading model}\},$$  and $\ssm_p(X)=\sup_{K\geqslant 1}\ssm_p(X,K)$. For $p=\infty$, we replace $\ell_p$ spreading models with $c_0$ spreading models. Similarly, we let $\ssj(X,K)$ be the minimum ordinal $\xi$ not exceeding $\omega_1$ such that $X$ fails to contain a $K$-$c_0^\xi$ special sequence, and let $\ssj(X)=\sup_{K\geqslant 1}\ssj(X, K)$.  Note that by our conventions, $\ssm_p(X,K)=\infty$ if and only if $X$ admits a sequence $K$-equivalent to the $\ell_p$ (resp. $c_0$) basis.  

We make a few simple observations concerning higher order spreading models.  Note that if $\fff, \gggg$ are regular families and if $(x_n)\subset X$ is a $K$-$\ell_p^{\fff[\gggg]}$ spreading model, then any $p$-absolutely convex block $(y_n)$ of $(x_n)$ with $y_n=\sum_{j\in E_n} a_jx_j$, $E_n\in \gggg$ for all $n\in \nn$, must be a $K$-$\ell_p^\fff$ spreading model.  

If $\iota(\fff)\geqslant \omega$, then $\fff$ contains sets of arbitrarily large cardinality. Since $\fff$ is hereditary, we can fix $E_1<E_2<\ldots$, $E_n\in \fff$ with $|E_n|=n$.   If $(x_n)\subset X$ is a $K$-$\ell_p^\fff$ spreading model for $1<p\leqslant \infty$, and if $M\in \infin$, then $$\Bigl\|\frac{1}{n}\sum_{i\in E_n} x_{m_i}\Bigr\|\leqslant n^{1/p}/n\to 0,$$ whence $(x_n)$ must be a weakly null sequence. Since $(x_n)$ must have a seminormalized subsequence (and must be seminormalized itself if $\fff$ contains all singletons), some subsequence of $(x_n)$ is a $(1+\varepsilon)$-basic $K$-$\ell_p^\fff$ spreading model. Moreover, if $X$ has an FDD and admits a $K$-$\ell_p^\fff$ spreading model for some $K\geqslant 1$, $1<p\leqslant \infty$, and $\fff$ with $\iota(\fff)\geqslant \omega$, then $X$ admits a $(K+\varepsilon)$-$\ell_p^\fff$ spreading model which is a block sequence with respect to the FDD.  

Suppose $(x_n)\subset X$ is a $K$-$\ell_1^\fff$ spreading model.  By Rosenthal's $\ell_1$ theorem \cite{Rosenthal}, either some subsequence of $(x_n)$ is equivalent to the $\ell_1$ basis, or some subsequence is weakly Cauchy.  Assume $(x_n)$ itself is weakly Cauchy.  If $\iota(\fff)$ is a limit ordinal, $\iota(\fff[\aaa_2])=2\iota(\fff)=\iota(\fff)$, and there must exist $M\in \infin$ so that $\fff[\aaa_2](M)\subset \fff$.  Then with $M=(m_n)$, $(x_{m_n})$ is a $K$-$\ell_1^{\fff[\aaa_2]}$ spreading model and therefore $2y_n=x_{m_{2n}}-x_{m_{2n-1}}$ is such that $(y_n)$ is a weakly null $K$-$\ell_1^\fff$ spreading model.  Thus for any $1\leqslant \xi< \omega_1$, if $X$ admits a $K$-$\ell_1^\xi$ spreading model, then either every $K$-$\ell_1^\xi$ spreading model in $X$ admits a subsequence equivalent to the $\ell_1$ basis, or $X$ admits a weakly null $K$-$\ell_1^\xi$ spreading model.  

We last recall the definition of an \emph{asymptotic} $\ell_p^\fff$ or $c_0^\fff$ basis.   Given a basis $E$ for a Banach space $X$, we say $E$ is $K$-asymptotic $\ell_p^\fff$ (resp. $c_0^\fff$ when $p=\infty$) provided that each $\fff$-admissible, normalized block sequence $(x_i)_{i=1}^n$ with respect to $E$, and for each $(a_i)_{i=1}^n\in S_{\ell_p^n}$, $$K^{-1}\leqslant \Bigl\|\sum_{i=1}^n a_ix_i\Bigr\|\leqslant K.$$  We say a block sequence $(x_i)_{i=1}^n$ is $\fff$-admissible with respect to $E$ provided $(\supp_E(x_i))_{i=1}^E$ is $\fff$-admissible.  Note that if $E$ is a $K$-asymptotic $\ell_1^\fff$ basis, then every normalized block sequence with respect to $E$ is a $K$-$\ell_1^\fff$ spreading model.  Note that if $E$ is a bimonotone, $K$-asymptotic $c_0^\fff$ basis, then every normalized block sequence is a $K$-$c_0^\fff$ special sequence.

It is clear that for a Banach space $X$ with FDD $E$ and $1\leqslant p\leqslant \infty$, each of the properties below is implied by the successive properties, with the replacement of $\ell_p$ by $c_0$ in the case that $p=\infty$.  

\begin{enumerate}[(i)]\item $\ell_p$ is finitely representable in $X$. \item $\ell_p$ is finitely representable in $X$ on disjoint blocks of $E$.  \item $\ell_p$ is finitely block representable in $E$.  \item $I^a_p(X, E)>\omega$. \item $X$ admits $(1+\varepsilon)$-$\ell_p^{\aaa_n}$ spreading models for all $\varepsilon>0$ and all $n\in \nn$.  \end{enumerate} 

We recall some simple examples to show that no property on the list above implies any of the properties after it, so that the different indices are indeed distinct.  

\begin{enumerate}[(i)]\item Every Banach space is finitely representable in $c_0$, but the basis of $\ell_p$ is not finitely representable on disjoint blocks of the $c_0$ basis.  

\item The $c_0$ basis is finitely representable on disjoint blocks of the basis of Schlumprecht space $S$ \cite{Denka}, which means $\ell_1$ is finitely representable on disjoint blocks of the basis of $S^*$.  Therefore $\ell_p$ is finitely representable on disjoint blocks of the basis of the $p$-convexification of $S^*$.  But only $c_0$ is finitely block representable on this basis.

\item The space $X=\bigl(\sum \ell_p^n\bigr)_{\ell_q}$, where $\infty \neq q\neq p$, has a natural basis $E$ in which $\ell_p$ is block finitely representable, but it is clear that $I^a_p(X, E)=\omega$.  This is because in each asymptotic block tree, some branch will be closely equivalent to the basis of $\ell_q^n$ for some $n\in \nn$.  

\item For $1<q< \infty$ and $1\leqslant p<q$, we define for each $n\in \nn$ a norm on the finitely supported, scalar-valued functions defined on $\widehat{\aaa}_n$.  We let $$\|f\|_n = \sup\Bigl\{\Bigl(\sum_{c\in I}\bigl(\sum_{F\in c}|f(F)|^p\bigr)^{q/p}\Bigr)^{1/p}: I \text{\ is a collection of pairwise disjoint segments in\ }\widehat{\aaa}_n\Bigr\}.$$  We let $X_n$ denote the completion of this set with respect to this norm and note that with $e^n_F= 1_F$ for $F\in \widehat{\aaa}_n$, $(e^n_F)_{F\in \widehat{\aaa}_n}$ is a normalized, $1$-unconditional basis for $X_n$. Moreover, each $X_n$ is isomorphic to $\ell_q$, and the natural basis is equivalent to the $\ell_q$ basis.  Then we let $X=\bigl(\oplus X_n\bigr)_{\ell_q}$.  Then each basis $(e^n_F)_{F\in \widehat{\aaa_n}}$ naturally witnesses the fact that $I^a_p(X, E, 1)>n$.  But as was shown in \cite{Trees and branches}, every normalized, weakly null sequence in this space has a subsequence $(1+\varepsilon)$-equivalent to the unit vector basis of $\ell_q$.  Since $X$ contains no copy of $\ell_1$, if $(x_i)$ is a $K$-$\ell_p^{\aaa_{2n}}$ spreading model, we can pass to a weakly Cauchy subsequence and then to the $p$-absolutely convex block $(2^{-1/p}(x_{2i}-x_{2i-1}))$ to obtain a weakly null $K$-$\ell_p^{\aaa_n}$ spreading model.  By passing to a further subsequence, we can obtain a sequence closely equivalent to the $\ell_q$ basis, which implies a lower estimate on $K$ as a function of $n$ which tends to infinity as $n$ does.  Thus $X$ cannot contain uniform $\ell_p^{\aaa_n}$ spreading models for all $n\in \nn$.  Moreover, if $p=1$, the dual of this space satisfies $I^a_\infty(X^*, E^*,1)>\omega$, while $X^*$ does not admit uniform $c_0^{\aaa_n}$ spreading models.

\end{enumerate}

Next we collect some important facts about these indices to be used throughout.  For $p=1$, a number of the facts below can be found in \cite{JO}. Because our goal is precise quantification, we include stronger quantifications than those established in \cite{JO}.  It is clear that if $X$ is finite dimensional, $I_p(X)=1+\dim X$, and otherwise $I_p(X)\geqslant \omega$. Moreover, if $X$ has an FDD $E$, $I_p(X,E),I_p^a(X, E)\geqslant \omega$.  We collect the following facts concerning these indices for the infinite dimensional case.   

\begin{proposition} Let $X$ be an infinite dimensional Banach space.  Let $1\leqslant p\leqslant \infty$, $1\leqslant K$, and let $\lambda$ be a limit ordinal.  \begin{enumerate}[(i)] \item If $F\subset X^*$ is finite and $Y=\cap_{x^*\in F}\ker(x^*)=F_\perp$, $I_p(X,K)>\lambda$ if and only if $I_p(Y,K)>\lambda$ \item If $E$ is an FDD for $X$ and if $Y=[E_n:n\geqslant m]$ and $F=(E_n)_{n\geqslant m}$, $I_p(X, E, K)>\lambda$ if and only if $I_p(Y,F,K)>\lambda$. \item If $Y=[E_n:n\geqslant m]$ and $F=(E_n)_{n\geqslant m}$, $I_p^a(X,E,K)=I^a_p(Y,F,K)$.  \item Either $I_p(X)=\infty$ or there exists $\xi$ so that $I_p(X)=\omega^\xi$.  \item If $E$ is an FDD for $X$, either $I_p(X)=\infty$ or there exist $\alpha\leqslant \beta\leqslant \gamma\leqslant 1+\alpha$ so that $I_p^a(X, E)=\omega^\alpha$, $I_p(X,E)=\omega^\beta$, and $I_p(X)=\omega^\gamma$.  \item For any $K\geqslant 1$, $I_p(X, K)<I_p(X)$ and, if $E$ is an FDD for $X$, $I_p(X, E, K)<I_p(X,E)$ and $I^a_p(X,E,K)<I_p^a(X,E)$.   \end{enumerate}

\label{index facts}

\end{proposition}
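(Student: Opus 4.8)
The plan is to establish (i)--(iii) directly and then deduce (iv)--(vi) from them by amalgamation. Parts (ii) and (iii) are the cheapest. Writing $Y=[E_n:n\geqslant m]$ and $F=(E_n)_{n\geqslant m}$, the point is that the first $m-1$ coordinates are negligible: a branch $(x_1,\dots,x_{m-1})$ of $T_p(X,E,K)$ is a block sequence of length $m-1$, so $\max\supp_E(x_{m-1})\geqslant m-1$, and every legal extension of it is a block sequence with respect to $F$ lying in $Y$, giving $T_p(X,E,K)\bigl((x_1,\dots,x_{m-1})\bigr)\subseteq T_p(Y,F,K)$. For (ii) I would fix $\zeta<\lambda$, use that $\lambda$ is a limit ordinal to get $o(T_p(X,E,K))>\zeta+m$, and then by Proposition \ref{derivation prop} find a node $s$ of length $m-1$ in $d^\zeta(T_p(X,E,K))$ with $o(T_p(X,E,K)(s))>\zeta$; letting $\zeta\uparrow\lambda$ gives the nontrivial implication, the converse being immediate from $T_p(Y,F,K)\subseteq T_p(X,E,K)$. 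For (iii) one proves by transfinite induction that a finite block sequence supported in $[m,\infty)$ lies in $d_E^\xi(T_p(X,E,K))$ exactly when it lies in $d_E^\xi(T_p(Y,F,K))$ --- the $d_E$-derivation only permits extensions whose supports tend to infinity, so the leading $m-1$ coordinates are invisible to it --- and observes that each derived tree is downward closed, so emptiness is witnessed by $\varnothing$ on both sides; thus $o_E$ agrees exactly and no limit hypothesis is needed.

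Part (i) requires a genuine idea. The inclusion $T_p(Y,K)\subseteq T_p(X,K)$ gives one direction, so only $I_p(X,K)>\lambda\Rightarrow I_p(Y,K)>\lambda$ is at issue. Let $d=\dim(X/Y)$ and let $\Phi\colon X\to\mathbb{R}^d$ be the natural map with $\ker\Phi=Y$. A naive ``project and correct'' perturbation does not preserve the constant $K$ when $p>1$, since a flat combination $\sum a_ix_i$ with $\|(a_i)\|_{\ell_p}=1$ can have $\|(a_i)\|_{\ell_1}$ of order $n^{1-1/p}\to\infty$; one must land in $Y$ \emph{exactly}. Given $d+1$ consecutive vectors on a branch of $T_p(X,K)$, the $d$ linear equations $\sum a_i\Phi(x_i)=0$ admit a nonzero solution, which I normalize in $\ell_p$; since the defining condition of $T_p(X,K)$ ranges over \emph{all} $\ell_p$-unit coefficient vectors (including those with zero entries), the vectors $y_j=\sum a_i^{(j)}x_i\in Y$ obtained from disjoint consecutive $(d+1)$-blocks of a branch automatically form a branch of $T_p(Y,K)$ with the \emph{same} $K$. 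Thus a local $\ell_p$-tree in $X$ of order $\mu$ produces, by grouping branch vectors into blocks of $d+1$, one in $Y$ of order roughly ``$\mu$ divided by $d+1$''. Because $\lambda$ is a limit ordinal, a routine transfinite induction (using Proposition \ref{ordinals} and the minimal-tree formalism of Proposition \ref{minimal trees}) shows the $(d+1)$-fold ``blow-up'' of any $\zeta<\lambda$ remains $<\lambda$, so the order lost to grouping is harmless and $I_p(Y,K)>\lambda$ follows. The main obstacle is recognizing this exact-combination device; the ordinal bookkeeping around it is routine.

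Parts (iv) and (vi) are amalgamation (``doubling'') arguments built on (i)--(iii). For (iv), if $I_p(X)=\mu<\infty$ were not a gamma number then by Proposition \ref{ordinals}(vii) there is a limit ordinal $\nu<\mu$ with $\nu 2\geqslant\mu$. I would choose $K$ with $I_p(X,K)>\nu$, realize a copy of $\ttt_{\nu+1}$ inside $T_p(X,K)$ (Proposition \ref{minimal trees}), and below each of its maximal branches $(x_1,\dots,x_n)$ graft a copy of $\ttt_\nu$ realized in $T_p(Z,K)$, where $Z\subseteq X$ is a finite-codimensional subspace in general position with $[x_1,\dots,x_n]$, so that $\|v+z\|\geqslant\frac{1}{6}\max(\|v\|,\|z\|)$ for $v\in[x_1,\dots,x_n]$ and $z\in Z$ (the usual Hahn--Banach $\varepsilon$-net construction); such $Z$ has $I_p(Z,K)>\nu$ by part (i), as $\nu$ is a limit ordinal. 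A short estimate with $\ell_p$-unit coefficients shows every branch of the grafted tree lies in $T_p(X,CK)$ for an absolute constant $C$, and by Proposition \ref{new from old}(i) the grafted tree has order $\nu+(\nu+1)=\nu 2+1>\mu$, contradicting $I_p(X)=\mu$. For (vi) I may assume $I_p(X,K)<\infty$ and $I_p(X,K)=\eta_0+1$ with $\eta_0\geqslant\omega$ (if $\eta_0<\omega$ the claim is trivial since $I_p(X)\geqslant\omega$); writing $\eta_0=\nu+r$ with $\nu$ the largest limit ordinal $\leqslant\eta_0$ and $r<\omega$, the same grafting of $\ttt_\nu$-copies from $T_p(Z,K)$ below a copy of $\ttt_{\eta_0+1}$ in $T_p(X,K)$ yields order $\nu+(\eta_0+1)=\nu 2+r+1>\eta_0+1$ inside $T_p(X,CK)$, so $I_p(X,K)<I_p(X,CK)\leqslant I_p(X)$. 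The block and asymptotic analogues replace part (i) by (ii)/(iii), the general-position subspace $Z$ by a tail subspace $[E_n:n>N]$, and the Hahn--Banach construction by the FDD projections; concatenation of disjointly supported blocks is easier there. The main obstacle in (iv)/(vi) is the amalgamation estimate itself: in an arbitrary, possibly non-unconditional Banach space, $\ell_p$-blocks need not concatenate, which is why the second piece must be forced into an essentially independent subspace.

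Finally, for (v) the chain $I_p^a(X,E)\leqslant I_p(X,E)\leqslant I_p(X)$ is trivial ($d_E(T)\subseteq T'$ gives $o_E\leqslant o$, and $T_p(X,E,K)\subseteq T_p(X,K)$), and that each of the three is a gamma number or $\infty$ follows from the doubling arguments above (for the block and asymptotic indices one uses (ii)/(iii)); hence $\alpha\leqslant\beta\leqslant\gamma$. The one substantive inequality is $\gamma\leqslant 1+\alpha$, i.e.\ $I_p(X)\leqslant\omega\cdot I_p^a(X,E)$. For this I would argue by contradiction: if $\gamma>1+\alpha$ then $o(T_p(X,K))>\omega\cdot\xi$ for some $K$ and some $\xi>\omega^\alpha$, and applying the product/mixed coloring lemma (Lemma \ref{mixed coloring}, or Lemma \ref{local coloring}) with $\zeta=\omega$ to the function measuring the failure of a branch vector to be a far-out block vector --- after the standard head-and-tail truncations with summable perturbations $(\varepsilon_n)$, which cost only an arbitrarily small increase in the constant --- produces an asymptotic block tree indexed by a regular family of Cantor--Bendixson index $\xi$, whence $I_p^a(X,E)>\omega^\alpha$, a contradiction. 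The hard part here will be arranging this ``$\omega$-small'' function and the perturbation bookkeeping so that the truncated vectors genuinely constitute a block asymptotic tree of the advertised order.
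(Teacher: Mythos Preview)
Your plan is correct and tracks the paper's argument closely for (i), (iv), and (v): the same dimension-counting device to land $p$-absolutely convex combinations exactly in $Y$ (the paper packages this via Lemma~\ref{local coloring} with $\zeta=m>|F|$ and the identity $m\lambda=\lambda$ for limit $\lambda$, rather than your by-hand grouping), the same amalgamation/doubling for (iv), and the same appeal to Lemma~\ref{mixed coloring} for $\gamma\leqslant 1+\alpha$ in (v). Two minor divergences are worth noting. For (ii) your length-counting argument (after $m-1$ block vectors the supports are forced into $[m,\infty)$) is more elementary than the paper's, which simply reruns the (i) machinery with $f(c)=\min\{\|P^E_{[1,m)}x\|:x\in\text{co}_p(x_t:t\in c)\}$; your route avoids the coloring lemma entirely here. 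For (vi) the paper is slicker: once (iv)--(v) establish that $I_p(X)$, $I_p(X,E)$, $I_p^a(X,E)$ are each a gamma number (hence a limit ordinal) or $\infty$, and since each fixed-$K$ index is a successor whenever finite, the strict inequality $I_p(X,K)<I_p(X)$ is immediate---your direct doubling works but is unnecessary.
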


\begin{proof}(i) Clearly $I_p(X, K)\geqslant I_p(Y, K)$.  If $I_p(X, K)>\lambda$, fix $(x_t)_{t\in \ttt_\lambda}$ so that $(x_{t|_i})_{i=1}^{|t|}\in T_p(X,K)$ for all $t\in \ttt_\lambda$ and fix $m>|F|$.  Define a function $f:C(\ttt_\lambda)=C(\ttt_{m\lambda})\to \rr$ by $$f(c)=\min \{\sum_{x^*\in F}|x^*(x)|: x\in \text{co}_p(x_t:t\in c)\}.$$  Then by simply comparing dimensions, for every order preserving $i:\ttt_m\to \ttt_{m\lambda}$, $\min \{f\circ i(c):c\in C(\ttt_m)\}=0$.  Thus by \ref{local coloring} we can find an order preserving $j:\ttt_\lambda\to C(\ttt_\lambda)$ so that $f\circ j=0$.  For each $t\in \ttt_\lambda$, fix $u_t\in \text{co}_p(x_s:s\in j(t))$ with $f(u_t)=0$, which means $u_t\in Y$.  Then since $j$ is order preserving, $(u_t)_{t\in \ttt_\lambda}$ is such that $(u_{t|_i})_{i=1}^{|t|}\in T_p(Y, K)$, since this sequence lies in $Y$ and is a $p$-absolutely convex block of a member of $T_p(X,K)$.  Thus $(u_t)_{t\in \ttt_\lambda}$ witnesses the fact that $I_p(Y,K)>\lambda$.  

(ii) Suppose that $I_p(X,E,K)>\lambda$ and fix $(x_t)_{t\in \ttt_\lambda}$ so that $(x_{t|_i})_{i=1}^{|t|}\in T_p(X,E,K)$ for each $t\in \ttt_\lambda$.  Let $$f(c)= \min\{\|P^E_{[1,m)}x\|: x\in \text{co}_p(x_t:t\in c)\}.$$  We finish as in (i).   

(iii) If $I^a_p(X, E, K)>\lambda$, then there exists a regular family $\fff$ with $\iota(\fff)=\lambda$ and a collection $(x_G)_{G\in \widehat{\fff}}$ so that for each $G\in \widehat{\fff}$, $(x_{G|_i})_{i=1}^{|G|}\in T_p(X, E, K)$ and $\max G\leqslant \supp_E(x_G)$.  Then with $\gggg=\fff\cap [m, \infty)^{<\omega}$, $(x_G)_{G\in \widehat{\gggg}}$ witnesses the fact that $I^a_p(Y, F, K)>\lambda$.

(iv) Assume $I_p(X)<\infty$, which means $I_p(X)$ is an infinite ordinal.  It is sufficient to show that if $\lambda<I_p(X)$, $\lambda 2<I_p(X)$. Assume $\lambda<I_p(X,K)$.  Choose any sequence $(x_i)_{i=1}^m\in T_p(X, K)$.  Fix $\varepsilon>0$ and choose $F\subset X^*$ finite which is $(1+\varepsilon)$-norming for $[x_i:1\leqslant i\leqslant m]$. Note that in the space $[x_i:1\leqslant i\leqslant m]\oplus F_\perp$, $[x_i:1\leqslant i\leqslant m]$ is $(1+\varepsilon)$-complemented, and so $F_\perp$ is $(2+\varepsilon)$-complemented.  If $(u_i)_{i=1}^n\in T_p(F_\perp, K)$, then for any scalars $(a_i)_{i=1}^m$ and $(b_i)_{i=1}^n$, \begin{align*} \Bigl\|\sum_{i=1}^m a_i x_i +\sum_{i=1}^n b_iu_i\Bigr\| & \leqslant \Bigl\|\sum_{i=1}^m a_ix_i\Bigr\|+\Bigl\|\sum_{i=1}^n b_iu_i\Bigr\| \leqslant \Bigl(\sum_{i=1}^m |a_i|^p\Bigr)^{1/p}+\Bigl(\sum_{i=1}^n |b_i|^p\Bigr)^{1/p} \\ & \leqslant 2^{1/q}\Bigl(\sum_{i=1}^m |a_i|^p+\sum_{i=1}^n |b_i|^p\Bigr)^{1/p}, \end{align*} where $q$ is the conjugate exponent to $p$.  Moreover, \begin{align*} \Bigl\|\sum_{i=1}^m a_i x_i +\sum_{i=1}^n b_iu_i\Bigr\| & \geqslant (2+\varepsilon)^{-1}\max\Bigl\{\Bigl\|\sum_{i=1}^m a_ix_i\Bigr\|, \Bigl\|\sum_{i=1}^n b_iu_i\Bigr\|\Bigr\} \\ & \geqslant (2+\varepsilon)^{-1}/K \max\Bigl\{\Bigl(\sum_{i=1}^m |a_i|^p\Bigr)^{1/p}, \Bigl(\sum_{i=1}^n|b_i|^p\Bigr)^{1/p}\Bigr\} \\ & \geqslant 2^{-1/p}(2+\varepsilon)^{-1}/K.\end{align*} Therefore $2^{-1/q}(x_1, \ldots, x_m, u_1, \ldots, u_n)\in T_p(X,2(2+\varepsilon)K)$.  But since this holds for any $(u_1, \ldots, u_n)\in T_p(F_\perp, K)$, and since $I_p(F_\perp, K)>\lambda$, $$\Bigl\{2^{-1/q}(x_1, \ldots, x_m): (x_1, \ldots, x_m)\in T_p(X, K)\Bigr\}\subset d^\lambda(T_p(X, 2(2+\varepsilon)K)).$$  Since the tree on the left also has order exceeding $\lambda$, we deduce that for any $\delta>0$, $I_p(X, 4K+\delta)>\lambda 2$.  
It is easy to see how to modify this proof to see that if $I_p(X, K)>\lambda$, then for any $n\in \nn$ and $\varepsilon>0$, $I_p(X, 2nK+\varepsilon)>\lambda n$.  We simply select $(x^1_i)_{i=1}^{l_1}\in T_p(X, K)$, $F^1\subset X^*$ finite and $(1+\varepsilon)$-norming for $[x_i^1:1\leqslant i\leqslant i_l]$.  Then we choose $(x^2_i)_{i=1}^{l_2}\in T_p(F^1_\perp, K)$ and $F^2\subset X^*$ finite and $(1+\varepsilon)$-norming for $[x_i^j: 1\leqslant j\leqslant 2, 1\leqslant i\leqslant l_j]$.  Choose $(x^3_i)_{i=1}^{l_3}\in T_p(F^2_\perp, K)$, and so on.  Then $$\Bigl\|\sum_{j=1}^n \sum_{i=1}^{l_j} a_{ij}x^j_i\Bigr\|\leqslant n^{1/q}\Bigl(\sum_{j=1}^n\sum_{i=1}^{l_j}|a_{ij}|^p\Bigr)^{1/p}$$ and \begin{align*} \Bigl\|\sum_{j=1}^n\sum_{i=1}^{l_j} a_{ij} x^j_i\Bigr\| & \geqslant K^{-1}(1+\varepsilon)^{-1}(2+\varepsilon)^{-1}\max_{1\leqslant j\leqslant n}\Bigl\{\Bigl(\sum_{i=1}^{l_j}|a_{ij}|^p\Bigr)^{1/p}\Bigr\} \\ & \geqslant n^{-1/p}K^{-1}(1+\varepsilon)^{-1}(2+\varepsilon)^{-1}\Bigl(\sum_{j=1}^n \sum_{i=1}^{l_j} |a_{ij}|^p\Bigr)^{1/p}.\end{align*}

(v) Again, assume $I_p(X)<\infty$.  The proof of the existence of $\alpha$ and $\beta$ is essentially the same as the previous case, except we assume that $(x_i)_{i=1}^m$ is a block sequence so that $\supp_E(x_m)\subset [1,k)$ for some $k$, and then concatenate this with members of $T_p([E_n:n\geqslant k], K)$.  We note that in the case that $E$ is bimonotone, if $(x^1_i)_{i=1}^{l_i}$, \ldots, $(x^n_i)_{i=1}^{l_n}$ are block sequences so that the concatenation is also a block sequence, then $$\Bigl\|\sum_{j=1}^n \sum_{i=1}^{l_j} a_{ij}x^j_i\Bigr\|\geqslant \max_{1\leqslant j\leqslant n}\Bigl\{\Bigl\|\sum_{i=1}^{l_j} a_{ij}x^j_i\Bigr\|\Bigr\},$$ so that in this case we have that if $\lambda<I_p(X, E, K)$, then $\lambda n<I_p(X, E, Kn)$.  We will prove later that this lower estimate on the growth rate of $I_p(X,E,\cdot)$ is sharp.  

Note that at this point, we have established that $I_p(X), I_p(X,E)$, and $I^a_p(X, E)$ are limit ordinals, which gives (vi).  We will use this to prove that $\gamma\leqslant 1+\alpha$.  If $\gamma>1+\alpha$, then $\omega^\gamma> \omega^{1+\alpha}=\omega I^a_p(X,E)>\omega I^a_p(X, E, K)$ for all $K$.  But this means there exists some $K\geqslant 1$ so that $I_p(X, K)>\omega I^a_p(X,E,K)$.  Let $\xi= I^a_p(X,E,K)$ and let $(x_t)_{t\in \ttt_{\omega\xi}}$ be so that $(x_{t|_i})_{i=1}^{|t|}\in T_p(X, K)$ for each $t\in \ttt_{\omega\xi}$. By replacing $K$ with any strictly larger value, we can assume that each $x_t$ has finite support.  For each $n\in \nn$, define $f_n:C(\ttt_{\omega\xi})\to \rr$ by $$f_n(c)=\Bigl\{\sum_{i=1}^n \|P^E_{[1,i]}x\|: x\in \text{co}_p(x_t:t\in c)\Bigr\}.$$  Again, simply by comparing dimensions, for any $n\in \nn$ and any order preserving $i:\ttt_\omega\to \ttt_{\omega\xi}$, $$\min \{f_n\circ i(c):c\in C(\ttt_\omega)\}=0.$$  Note that $f_1\leqslant f_2\leqslant f_3\leqslant \ldots$ pointwise.  Thus by Lemma \ref{mixed coloring} we can find a regular family $\fff$ with $\iota(\fff)=\xi$ and an order preserving $j:\widehat{\fff}\to C(\ttt_{\omega\xi})$ so that for each $G\in \widehat{\fff}$, $$f_{\max G}(j(G))=0.$$ Choose for each $G\in \widehat{\fff}$ some $u_G\in \text{co}_p(x_t:t\in j(G))$ so that $\sum_{i=1}^n \|P_{[1,i]}u_G\|=0$, which means $\max G \leqslant \supp_E(x_G)$.  Then $(u_G)_{G\in \widehat{\fff}}$ witnesses the fact that $I^a_p(X, K)>\xi$, a contradiction.

\end{proof}

Next, we define for each ordinal a class of Banach spaces admitting (i) a crude $\ell_p$ structure of a certain size, and (ii) almost isometric $\ell_p$ structures of a certain size.  These classes will be our primary objects of study.    For $1\leqslant p\leqslant \infty$ and $\xi\in \ord$, we let $$P_1^\vee(\xi, p)=\bigcup_{K>1}\{X\in \textbf{Ban}: I_p(X,K)>\omega^\xi\},$$ $$ P_1^\wedge(\xi, p) = \bigcap_{K>1} \{X\in \textbf{Ban}: I_p(X, K)>\omega^\xi\},$$ $$P_2^\vee(\xi, p)=\bigcup_{K>1}\{(E,X):E\text{\ is an FDD for\ }X\in \textbf{SB}, I_p(X,E,K)>\omega^\xi\},$$ $$ P_2^\wedge(\xi, p) = \bigcap_{K>1} \{(E,X):E \text{\ is an FDD for\ }X\in \textbf{SB}, I_p(X,E, K)>\omega^\xi\}.$$  $$P_3^\vee(\xi, p)=\bigcup_{K>1}\{(E,X):E\text{\ is an FDD for\ }X\in \textbf{SB}, I_p^a(X,E,K)>\omega^\xi\},$$ $$ P_3^\wedge(\xi, p) = \bigcap_{K>1} \{(E,X):E \text{\ is an FDD for\ }X\in \textbf{SB}, I_p^a(X,E, K)>\omega^\xi\},$$ $$P^\vee_4(\xi,p)=\bigcup_{K>1}\{X\in \textbf{Ban}: \ssm_p(X, K)>\xi\},$$ $$P^\wedge_4(\xi, p)=\bigcap_{K>1}\{X\in \textbf{Ban}: \ssm_p(X, K)>\xi\}.$$

Of course, it is well known that $P^\vee_j(1,p)=P^\wedge_j(1,p)$ for all $1\leqslant p\leqslant \infty$.  This is a result of James for $p=1 $ and $p=\infty$ \cite{James}, and a consequence of Krivine's theorem \cite{K} for $1<p<\infty$.  In sections 6 and 7, we discuss when $P^\vee_i(\xi, p)=P^\wedge_i(\xi, p)$ for $i=1,2,3,4$, $1\leqslant p\leqslant \infty$, and $\xi\in \ord$.  In particular, for $i=2,3,4$, we completely solve this question for all ordinals when $1<p<\infty$, and for all countable ordinals when $p=1$ or $\infty$.  Typically the study of these indices has been restricted to the case of separable spaces and countable indices.  We first give a simple construction that shows that for any $\xi\in \ord$ and for any $1\leqslant p\leqslant \infty$, there exists $X$ with $I_p(X)>\xi$ and not containing a copy of $\ell_p$.

\begin{proposition} For any $\xi\in \emph{\ord}$ and any $1\leqslant p\leqslant \infty$, there exists a Banach space $U_{\xi,p}$ with $\omega^\xi<I_p(U_{\xi,p}, 1)$ containing no copy of $\ell_p$ (resp. $c_0$).  More precisely, we can choose $U_{\xi,1}$ to be reflexive with $1$-unconditional (not necessarily countable) basis $E_\xi$ so that $\omega^\xi< I_1(U_{\xi,1}, E_\xi, 1)$, $U_{\xi, \infty}=U_{\xi, 1}^*$, and $U_{\xi, p}$ is the $p$-convexification of $U_{\xi, 1}$.  

\label{high examples}

\end{proposition}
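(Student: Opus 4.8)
The plan is to realise $U_{\xi,1}$ as a tree space on the minimal tree of order $\omega^\xi+1$. Let $\Gamma:=\ttt_{\omega^\xi+1}$ (Proposition~\ref{minimal trees}(i)), fix an enumeration of $\Gamma$ extending $\prec$, let $(e_t)_{t\in\Gamma}$ denote the formal unit vectors of $c_{00}(\Gamma)$, and define
\[
\Bigl\|\sum_{t\in\Gamma}a_t e_t\Bigr\|=\sup\Bigl\{\Bigl(\sum_{i=1}^k\Bigl(\sum_{t\in c_i}|a_t|\Bigr)^2\Bigr)^{1/2}:(c_i)_{i=1}^k\ \text{are pairwise disjoint segments of}\ \Gamma\Bigr\};
\]
let $U_{\xi,1}$ be the completion and $E_\xi=(e_t)_{t\in\Gamma}$. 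I would first record the routine facts: this is a norm (Minkowski), $\|e_t\|=1$, and passing to a subset of the coordinate index set can only shrink the family of admissible disjoint segments, so every coordinate projection has norm one; thus $E_\xi$ is a normalized, $1$-unconditional, monotone basis for $U_{\xi,1}$ (a Schauder basis when $\xi$ is countable, a not-necessarily-countable $1$-unconditional basis in general). Then $U_{\xi,\infty}:=U_{\xi,1}^*$ and, for $1<p<\infty$, $U_{\xi,p}$ is the $p$-convexification of $U_{\xi,1}$, as prescribed.

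Next I would extract the index estimates. If $c=\{t_1\prec\cdots\prec t_m\}$ is a segment, testing with the single family $\{c\}$ gives $\|\sum_i a_ie_{t_i}\|\geqslant\sum_i|a_i|$, while for any admissible family $\sum_j(\sum_{t\in c_j}|a_t|)^2\leqslant(\sum_{i=1}^m|a_i|)^2$ since the coordinates of this vector vanish off $c$; hence $(e_{t_i})_{i=1}^m$ is isometrically equivalent to the $\ell_1^m$-basis, so it lies in $T_1(U_{\xi,1},1)$ and, being disjointly supported in increasing order, in $T_1(U_{\xi,1},E_\xi,1)$. Applying Proposition~\ref{minimal trees}(iv) to $t\mapsto e_t$ yields $I_1(U_{\xi,1},E_\xi,1)\geqslant o(\Gamma)=\omega^\xi+1>\omega^\xi$. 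Dually, testing $\sum_i b_ie_{t_i}^*$ against norm-one vectors supported on a chain $c$ shows $(e_{t_i}^*)_{i=1}^m$ is isometrically the $\ell_\infty^m$-basis in $U_{\xi,1}^*$, so the same argument gives $I_\infty(U_{\xi,1}^*,1)>\omega^\xi$; and since $\|\sum_{t\in c}a_te_t\|_{(p)}=(\sum_{t\in c}|a_t|^p)^{1/p}$ along chains, likewise $I_p(U_{\xi,p},1)>\omega^\xi$ for $1<p<\infty$.

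It remains to show $U_{\xi,1}$ is reflexive; since $E_\xi$ is unconditional, by James's theorem this is equivalent to $U_{\xi,1}$ containing no isomorphic copy of $c_0$ or of $\ell_1$, and then the statements for $U_{\xi,p}$ ($1<p<\infty$) follow because a $p$-convexification of such a space is again reflexive (bounded completeness is preserved, and an $\ell_p$-block sequence in $U_{\xi,p}$ de-convexifies to an $\ell_1$-block sequence in $U_{\xi,1}$), while $U_{\xi,\infty}=U_{\xi,1}^*$ is the dual of a reflexive space, hence reflexive and so contains no copy of $c_0$. That $E_\xi$ is boundedly complete — whence no copy of $c_0$ — I would prove directly: if $\sup_F\|\sum_{t\in F}a_te_t\|=M<\infty$ over finite $F$ while $\sum a_te_t$ diverges, choose $\varepsilon>0$ and pairwise disjoint finite sets $F_1,F_2,\dots$ with $\|\sum_{t\in F_k}a_te_t\|\geqslant\varepsilon$; each norm is witnessed by disjoint segments inside $F_k$, and as the $F_k$ are disjoint the union over $k\leqslant K$ of these families is again a family of disjoint segments, forcing $\|\sum_{t\in F_1\cup\cdots\cup F_K}a_te_t\|\geqslant\varepsilon\sqrt{K/2}\to\infty$, a contradiction. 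The one genuinely delicate point — and where I expect the real work to lie — is that $E_\xi$ has no $\ell_1$-block sequence. The essential feature is that $\Gamma$ is well-founded: along any chain the rank $\rho(t):=\sup\{\eta:t\in d^\eta(\Gamma)\}$ strictly decreases (Proposition~\ref{derivation prop}), so $\Gamma$ has no infinite chains. Given a normalized block sequence $(y_k)$, I would show $\|\sum_{k=1}^N y_k\|=o(N)$: every disjoint segment meets each $\supp(y_k)$ in a sub-chain carrying $\ell_1$-mass at most $\|y_k\|=1$, and because $(y_k)$ is an infinite block sequence in a tree with no infinite chains, only a vanishing proportion of the $y_k$'s can be ``lined up'' along a single chain, so the mass is forced to spread over many pairwise disjoint segments, across which the outer $\ell_2$-sum produces genuine cancellation; a counting argument bounding how many $y_k$'s one chain can cross, together with Cauchy--Schwarz, yields the sublinear bound and rules out an $\ell_1$-block sequence. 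With both block dichotomies in hand, James's theorem gives reflexivity of $U_{\xi,1}$, completing the proof.
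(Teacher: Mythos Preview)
Your construction is genuinely different from the paper's, and the space you build does work, but the argument you give for the key point --- that $E_\xi$ supports no $\ell_1$ block sequence --- is not a proof. You write that ``only a vanishing proportion of the $y_k$'s can be lined up along a single chain'' and that ``a counting argument \ldots\ together with Cauchy--Schwarz yields the sublinear bound'', but none of this is carried out. A single segment $c$ can meet as many as $|c|$ of the $y_k$'s, and since $\ttt_{\omega^\xi+1}$ contains chains of every finite length once $\xi\geqslant 1$, there is no uniform bound; one has to control how the optimal segment family interacts with the block sequence, and your sketch does not do this. Since this is the entire content of reflexivity, the proof as written is incomplete.

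That said, your space \emph{is} reflexive, and the clean way to see it bypasses the block-sequence analysis entirely: prove by induction on $\alpha$ that the tree space on $\ttt_\alpha$ is reflexive. For the successor step, segments in $\ttt_{\alpha+1}$ are segments in $\ttt_\alpha$ possibly prefixed by the root, and at most one segment in a disjoint family contains the root; a short computation gives $\max\{|x(r)|,\|x'\|_{\ttt_\alpha}\}\leqslant \|x\|_{\ttt_{\alpha+1}}\leqslant |x(r)|+\|x'\|_{\ttt_\alpha}$, so the space on $\ttt_{\alpha+1}$ is a one-dimensional extension of the space on $\ttt_\alpha$. For limit $\alpha$, the subtrees $\ttt_{\zeta+1}$ are totally incomparable, so every segment lies in a single subtree and the tree space on $\ttt_\alpha$ is isometrically $\bigl(\oplus_{\zeta<\alpha} U_{\ttt_{\zeta+1}}\bigr)_{\ell_2}$, reflexive by the inductive hypothesis.

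This inductive structure is exactly what the paper exploits directly, without the tree-space packaging. The paper sets $U_{0,1}$ to be the scalar field, $U_{\xi+1,1}=\bigl(\oplus_n V_n\bigr)_{\ell_2}$ with $V_n$ the $n$-fold $\ell_1$-sum of $U_{\xi,1}$, and $U_{\xi,1}=\bigl(\oplus_{\zeta<\xi}U_{\zeta,1}\bigr)_{\ell_2}$ at limits. Reflexivity is then automatic at every stage --- $\ell_2$-sums of reflexive spaces are reflexive --- and the index lower bound follows from the observation that $I_p(X\oplus_p Y,K)>\zeta+\xi$ whenever $I_p(X,K)>\xi$ and $I_p(Y,K)>\zeta$. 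Your construction buys a single closed-form description of the space and its norm; the paper's buys a proof in which nothing delicate happens.
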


Here, since the basis $E_\xi$ is unordered, we interpret $T_p(U_{\xi,1}, E_\xi, 1)$ as being the collection of finite, disjointly supported sequences isometrically equivalent to the $\ell_p^n$ basis, and $I_p(U_{\xi, 1}, E_\xi, 1)$ is the order of this tree.

\begin{proof} First, note that for any $\xi, \zeta\in \ord$, any Banach spaces $X,Y$, any $K\geqslant 1$, and $1\leqslant p\leqslant \infty$, if $\xi<I_p(X,K)$ and $\zeta<I_p(Y, K)$, $\zeta+\xi<I_p(X\oplus_p Y, K)$.  This is because for any $s\in T_p(X, K)$, $T_p(Y, K)\subset T_p(X\oplus_p Y, K)(s)$, so that $s\in d^\zeta(T_p(X\oplus_p Y, K))$.  This means $T_p(X, K)\subset d^\zeta(T_p(X\oplus_p Y, K))$, and $\varnothing \neq d^\xi(T_p(X, K))\subset d^{\zeta+\xi}(T_p(X\oplus_p Y, K)) $. Similarly, if $E$ is an unconditional basis for $X$ and $F$ is an unconditional basis for $Y$, $I_p(X\oplus_p Y, E\cup F, K)>\zeta+\xi$ if $\xi<I_p(X, E, K)$ and $\zeta<I_p(Y, F, K)$.

It is clear that if $E_\xi$ is a $1$-unconditional basis for a reflexive $U_{\xi,1}$ and $\omega^\xi<I_1(U_{\xi,1}, E_\xi, 1)$, then the $p$-convexification $U^{(p)}_{\xi,1}$ with the same basis $E_\xi$ will contain no copy of $\ell_p$ and satisfy $\omega^\xi<I_p(U_{\xi,1}^{(p)}, E_\xi, 1)$.  Thus it is sufficient to treat the $p=1$ case and its dual. 

First, we take $U_{0,1}= \mathbb{R}$ in the real case, $U_{0,1}=\mathbb{C}$ in the complex case. Therefore $U_{0,1}^*=U_{0,1}$ and $I_1(U_{0,1}, E_0,1)=I_\infty(U_{0,1}, E_0,1)=2$.      

If $U_{\xi,1}$ has been defined, we let $V_1= U_{\xi,1}$ and $V_{n+1}= U_{\xi,1}\oplus_1 V_n$.  Then $V_n$ is reflexive with $1$-unconditional basis, say $F_n$, and $V_n^*=\bigl(\oplus U^*_{\xi,1}\bigr)_{\ell_\infty^n}$.  Moreover, for each $n\in \nn$, since $I_1(U_{\xi,1}, E_\xi  ,1)>\omega^\xi$, $I_1(V_n, F_n, 1)>\omega^\xi n$.  Similarly, if $F_n^*$ is the dual basis to $F_n$, $I_\infty(V_n^*, F_n^*, 1)>\omega^\xi n$.  We take $U_{\xi+1,1}=\bigl(\oplus V_n\bigr)_{\ell_2}$.  

Last, suppose $\xi$ is a limit ordinal, and that for each $\zeta<\xi$, $U_{\zeta,1}$ has been defined to have the announced properties.  We take $U_{\xi,1} = \bigl(\oplus U_{\zeta,1}\bigr)_{\ell_2([1, \xi))}$.

\end{proof}

It is not hard to see that these spaces to not admit $\ell_1^1$ spreading models.  We will later see that if $W$ is any non-zero Banach space not admitting an $\ell_1^{\omega^\zeta}$ spreading model, if we repeat the above construction with $W$ in place of $U_{0,1}$, we build up a collection $W_{\xi, 1}$ so that $I_1(W_{\xi,1}, 1)>\omega^\xi$ admitting no $\ell_1^{\omega^\zeta}$ spreading model.  Of course, these spaces contain $\ell_2$ for $\xi>0$.  In \cite{AMP}, remarkable examples were given of Banach spaces $W_\xi$, $\xi<\omega_1$, so that $I_1(X)>\xi$ for every infinite dimensional subspace $X$ of $W_\xi$, while $W_\xi$ admits no $\ell_1^1$ spreading model.

\section{Dualization}

\subsection{Bourgain-Delbaen constructions}

It was asked in \cite{CS} if whenever a Banach space $X$ contains a copy of $\ell_p$, $1<p<\infty$, must $X^*$ contain a copy of $\ell_q$, where $q$ is the conjugate exponent to $p$? It was shown in \cite{BD} that there exists an $\mathcal{L}_\infty$ Banach space $Z$ with dual isomorphic to $\ell_1$ so that every infinite dimensional subspace of $Z$ admits a further infinite dimensional reflexive subspace.  Haydon \cite{H} actually showed that for any $1<p<\infty$, there exists an $\mathcal{L}_\infty$ Banach space $Z_p$ so that $Z_p^*$ is isomorphic to $\ell_1$ and so that $\ell_p$ embeds into $Z_p$.  Later, Freeman, Odell, and Schlumprecht \cite{FOS} showed that if $X$ is any Banach space having separable dual, there exists a $\mathcal{L}_\infty$ Banach space $Z_X$ containing an isomorphic copy of $X$ so that $Z_X^*$ is isomorphic to $\ell_1$.  Thus we see that not only does the question from \cite{CS} have a negative answer, but that the failure is quantitatively essentially as strong as possible.  By this, we mean that for $1<p<\infty$, $\ssm_p(Z_p)=\infty$ while $\ssm_q(Z_p^*)=\ssm_q(\ell_1)=1$ by the Schur property.  Moreover, if $1<p<2$, $I_p(Z_p)=\infty$ while $I_q(Z_p^*)=\omega$, the smallest possible value.  However, since $\ell_q$ is finitely representable in $\ell_1$ for $1\leqslant q\leqslant 2$, if $2\leqslant p<\infty$, $I_p(Z_p)=\infty$ while $I_q(Z_p^*)=\omega^2$.  Moreover, for $\xi<\omega_1$, with $T$ being the Tsirelson space $T(\theta, \xi)$, to be defined later, $I_1(Z_T)\geqslant I_1(T)= \omega^{\xi\omega}$, while $I_\infty(Z_T^*)=\omega$.  Thus we can find separable Banach spaces with arbitrarily large, countable $I_1$ index, while the duals of these spaces have the smallest possible $I_\infty$ index.  Thus the presence of large $\ell_p$ structures in a given space does not imply the presence of a large $\ell_q$ structure in the dual.

\subsection{Dualization for $p=\infty$}

The obvious exception to the above argument is $p=\infty$.  It is tempting to believe that the $\ell_1$ structures in the dual of a Banach space must be at least as large as the $c_0$ indices in the space itself.  For the indices involving an FDD, the result is clearly true.  This is because if $E$ is an FDD for $X$ with projection constant $b$ in $X$, then for any $x\in c_{00}(E)$, we can choose $x^*\in bB_{X^*}\cap c_{00}(E^*)$ with $\ran_{E^*}(x^*)\subset\ran_E(x)$ so that $\|x\|=x^*(x)$.  In this way, we can construct block trees in $B_{[E^*]}$ the branches of which are normed by branches of $T_\infty(X, E, K)$ to verify that the order of $T_1([E^*], E^*, bK)$ must be at least as large as the order of $T_\infty(X, E, K)$.  This gives the general idea for dualization of each index, which is to witness membership of sequences in $T_1(X^*, K)$ by norming them by an $\infty$-absolutely convex combination of a member of $T_\infty(X,K)$ acting biorthogonally or almost biorthogonally on it.  However, unlike the block case, it is not obvious how to choose the functionals.  Of course, if $(x, x_1, \ldots, x_n)\in T_\infty(X, K)$, we can take Hahn-Banach extensions $(x^*, x_1^*, \ldots, x_n^*)$ of the biorthogonal functionals of this sequence.  But say $(x, y_1, \ldots, y_m)\in T_\infty(X, K)$.  We can again obtain a sequence of Hahn-Banach extensions $(y^*, y_1^*, \ldots, y_m^*)$ of the biorthogonals to this sequence, but the functionals $x^*$ and $y^*$ need not be the same.  Thus we must work harder to choose the vectors acting biorthogonally on branches of $T_\infty(X, K)$ in a way that is consistent for vectors occurring in multiple members of $T_\infty(X, K)$.  The argument also works for preduals of $X$ with a slight modification using Helly's theorem \cite{H}.  Recall that Helly's theorem implies that if $X$ is a Banach space, $F\subset X^*$ is finite, and $x^{**}\in X^{**}$, then for any $\varepsilon>0$ there exists $x\in X$ with $\|x\|<\|x^{**}\|+\varepsilon$ so that for each $x^*\in F$, $x^{**}(x^*)= x^*(x)$.

\begin{proposition} Let $X$ be a Banach space.  Let $F\subset X^*$ be finite, $T=d^\alpha(T_\infty(X, K)(t_0))$ for some $\alpha\in \emph{\ord}$ and $t_0\in T_\infty(X,K)$.  For $0<\xi\in \emph{\ord}$ and $n\in \nn$, if $o(T)>\omega^\xi n$, there exist a $B$-tree $\ttt$ on $[1, \omega^\xi n)$ with $o(\ttt)=\omega^\xi n$ and functions $f:\ttt\to T$, $\phi:\ttt\to X$, and $\phi^*:\ttt\to KB_{X^*}$ so that for each $t\in \ttt$, \begin{enumerate}[(i)]\item $f(t|_1)\verb!^!\ldots \verb!^!f(t)\in T$, \item $\phi(t)\in \emph{\text{co}}_\infty(f(t))$, \item for $s\in \ttt$ comparable to $t$, $\phi^*(s)(\phi(t))=\delta_{st}$, \item for $x\in t_0$, $\phi^*(t)(x)=0$, \item for $x^*\in F$, $x^*(\phi(t))=0$. 

\end{enumerate}

Moreover, if $X$ is a dual space, say $X=Y^*$, and if $C>K$, the conclusion holds with $\phi^*$ mapping $\ttt$ into $CB_Y$.

\label{dualize}

\end{proposition}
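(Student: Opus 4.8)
The plan is a transfinite induction on $\xi$, establishing the statement simultaneously for all $n$, all finite $F\subset X^*$, all $t_0\in T_\infty(X,K)$ and all $\alpha$; the dual-space assertion will follow verbatim, only the production of functionals changing. The starting point is a normalization principle: whenever a finite sequence $b=f(t|_1)\verb!^!\ldots\verb!^!f(t)$ lies in $T=d^\alpha(T_\infty(X,K)(t_0))\subseteq T_\infty(X,K)(t_0)$, the concatenation $t_0\verb!^!b$ lies in $T_\infty(X,K)$, and since $T_\infty(X,K)$ is $\infty$-absolutely convex so does every sequence obtained from $t_0\verb!^!b$ by keeping the coordinates of $t_0$ and replacing each block $f(t|_i)$ by some $\phi(t|_i)\in\text{co}_\infty(f(t|_i))$. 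Consequently $\phi(t|_i)$ is at distance $\geqslant K^{-1}$ from the span of $t_0$ together with the $\phi(t|_{i'})$ for $i'<i$, so Hahn--Banach supplies $\phi^*\in KB_{X^*}$ equal to $1$ at $\phi(t|_i)$ and vanishing on that span --- which is exactly conditions (iii)--(v) at the local level. In the dual case $X=Y^*$, given $C>K$, I would instead use Helly's theorem to produce $\phi^*\in CB_Y$ agreeing with such a functional on the finite-dimensional span of the vectors it is tested on; nothing else in the argument changes.

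Next I would isolate the engine, a stacking lemma: if, for every $(T',F',t_0',\alpha')$ of the required form with $o(T')>\rho$, one can produce the desired structure on a $B$-tree of order $\rho$, then the same holds with $\rho$ replaced by $\rho m$ for every $m\in\nn$. This is proved by induction on $m$: using Proposition \ref{new from old}(i), realize the output $B$-tree of order $\rho m$ as a copy $C$ of an order-$\rho$ tree near the root with copies of an order-$\rho(m-1)$ tree hung below each leaf of $C$. Build $C$ by applying the hypothesis to $d^{\rho(m-1)}(T')=d^{\alpha'+\rho(m-1)}(T_\infty(X,K)(t_0'))$, which has order $>\rho$ and is of the required form; a leaf $\ell$ of $C$ then carries a branch $b_\ell\in d^{\rho(m-1)}(T')$, so by Proposition \ref{derivation prop} the tree $T'(b_\ell)=d^{\alpha'}(T_\infty(X,K)(t_0'\verb!^!b_\ell))$ has order $>\rho(m-1)$ and is again of the required form; below $\ell$ build the order-$\rho(m-1)$ structure in $T'(b_\ell)$, by the induction on $m$, feeding it the enlarged finite forbidden set $F'\cup\{\phi^*(s):s\text{ on }b_\ell\}$ and the enlarged base $t_0'\verb!^!b_\ell$. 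Then conditions (iv)--(v) of that lower call give that the new $\phi^*$'s kill $t_0'$ and every earlier $\phi$ (each a block over the coordinates of $t_0'\verb!^!b_\ell$) and that the new $\phi$'s lie in the kernels of $F'$ and of the earlier $\phi^*$'s, while (iii) of the call handles biorthogonality internal to the lower copy and the copies placed still lower are handed the new $\phi^*$'s in turn; thus the branch-wise biorthogonality (iii) is maintained globally, and (i) for $T'$ follows from $b_\ell\verb!^!(\text{lower branch})\in T'$.

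Granting the stacking lemma, the induction on $\xi$ proceeds as follows. The general $n$ reduces, by the stacking lemma with $\rho=\omega^\xi$ and $m=n$, to the case $n=1$. If $\xi$ is a limit ordinal, write the order-$\omega^\xi$ tree as a totally incomparable union (Proposition \ref{new from old}(ii)) of copies of order-$\omega^{\xi_k}$ trees for some $\xi_k\uparrow\xi$, each built by the inductive hypothesis for $\xi_k$ applied to the same $T$ and $F$ (legitimate since $o(T)>\omega^\xi>\omega^{\xi_k}$); total incomparability means (iii)--(v) never couple distinct copies, so no compatibility is needed and the forbidden set does not grow. If $\xi=\eta+1$, write $\omega^{\eta+1}=\sup_k\omega^\eta k$, realize the order-$\omega^\xi$ tree as a totally incomparable union over $k$ of trees of order $\omega^\eta k$, and build each of these by the stacking lemma with $\rho=\omega^\eta$ applied to $T$ (note $o(T)>\omega^{\eta+1}>\omega^\eta k$), whose hypothesis is the inductive statement for $\eta$. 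The base case $\xi=1$ --- which cannot be absorbed into the others, since $\omega^0=1$ is not an admissible index --- is handled by hand: the order-$\omega$ tree is a totally incomparable union of finite chains, and a chain of length $k$ is built one node at a time, at step $j$ picking, in the tree below the branch $b_{j-1}$ built so far (which has order $>\omega$), a continuation long enough to leave the finite order needed for the remaining steps, letting $f_j$ be an initial part of it of length exceeding $|F|+j-1$, extracting by a dimension count a $\phi_j\in\text{co}_\infty(f_j)$ lying in the kernel of $F\cup\{\phi^*_1,\dots,\phi^*_{j-1}\}$, and producing $\phi^*_j$ by the normalization principle; every quantity used is a finite ordinal. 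Finally one relabels the abstractly built $B$-trees to live on $[1,\omega^\xi n)$ via the order-preserving transfers of trees between ordinal intervals recorded just before the statement, and Proposition \ref{new from old} certifies that the order is exactly $\omega^\xi n$.

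The point demanding the most care --- and the reason the proposition is phrased for $\omega^\xi n$ rather than arbitrary ordinals --- is the ordinal bookkeeping. Along a branch of the output tree the set of functionals that later $\phi$'s must avoid grows without bound, so it cannot be bounded in advance. The decomposition above contains this exactly because $\omega^\xi$ is a limit ordinal for $\xi\geqslant 1$ and its canonical decompositions (the suprema of $\omega^{\xi_k}$, of $\omega^\eta k$, and of the finite ordinals) never bottom out at a small finite ordinal: the local pieces where functionals are actually extracted by dimension counts --- the finite chains of the base case --- are always built inside trees of order exceeding $\omega$, so a finite (though unbounded over the tree) number of extra kernel constraints is always affordable, and the index consumed is replenished through the derived-tree hypothesis $T=d^\alpha(T_\infty(X,K)(t_0))$ via $d^\gamma(d^\alpha(\cdot))=d^{\alpha+\gamma}(\cdot)$. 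A secondary check is that each call to the inductive hypothesis genuinely delivers the cross-copy coupling asserted, i.e.\ that passing to the enlarged base and forbidden set keeps (iii) valid globally rather than only inside each copy; this is the parenthetical content of the stacking lemma.
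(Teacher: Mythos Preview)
Your proposal is correct and follows essentially the same transfinite induction as the paper: base case $\xi=1$ built by blocking long branches of $T$ into pieces of length exceeding $|F|$, an inductive step in $n$ (your stacking lemma) that places one piece in a high derived tree and hangs the rest below each leaf with enlarged $F$ and $t_0$, and successor/limit steps in $\xi$ via totally incomparable unions. One minor imprecision to clean up in your base case: the subtree $T(b_{j-1})$ need not have order $>\omega$, but you only need it to have sufficiently large finite order for the remaining steps, which is exactly what the paper arranges by fixing $m>|F|$ and picking a single branch $(x_{ik})_{i=1}^{mk}\in T$ for each $k$.
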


\begin{theorem} Fix $\xi\in \emph{\ord}$ and a Banach space $X$.   \begin{enumerate}[(i)]\item  If $X\in P^\vee_1(\xi, \infty)$, then the dual of $X$ and any predual of $X$ lies in $P^\vee_1(\xi, 1)$.  \item Fix $i\in \{2,3\}$.  Suppose $E$ is an FDD for $X$ such that $(E,X)\in P^\vee_i(\xi, \infty)$. Then $(E^*, [E^*])\in P^\vee_i(\xi, 1)$.  If $Y$ is a Banach space with FDD $F$ so that $F^*=E$ and $[F^*]=X$, then $(F, Y)\in P^\vee_i(\xi, 1)$.  \item If $X\in P^\vee_4(\xi, \infty)$, then the dual of $X$ and any predual of $X$ lies in $P^\vee_4(\xi, 1)$.  \end{enumerate}

\end{theorem}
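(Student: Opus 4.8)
The plan is to treat the three parts separately. Part (i) is the only one that genuinely needs Proposition \ref{dualize} (the ``consistent biorthogonals over a whole tree'' construction); parts (ii) and (iii) reduce to elementary biorthogonality, because there support control — resp.\ passage to a basic subsequence — makes the choice of dualizing functionals automatically consistent. For part (i) one first needs an index-arithmetic device: if $I_\infty(X,K)>\omega^\xi$ for some $K$, then by Proposition \ref{index facts}(vi) $I_\infty(X,K)<I_\infty(X)$, and by Proposition \ref{index facts}(iv) $I_\infty(X)$ is $\infty$ or of the form $\omega^\gamma$; since $\omega^\gamma>\omega^\xi$ forces $\gamma\geqslant\xi+1$, in all cases $I_\infty(X)\geqslant\omega^{\xi+1}=\sup_n\omega^\xi n$. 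As $I_\infty(X)=\sup_{K'}I_\infty(X,K')$, for every $n$ there is $K_n$ with $I_\infty(X,K_n)>\omega^\xi n$; this is what lets us beat the fact that Proposition \ref{dualize} only outputs a tree of order exactly $\omega^\xi n$, whereas the dual tree must have order strictly above $\omega^\xi$.

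\emph{Part (i).} Apply Proposition \ref{dualize} with $T=T_\infty(X,K_2)$ (so $t_0=\varnothing$, $\alpha=0$, $F=\varnothing$) and $n=2$, getting a $B$-tree $\ttt$ with $o(\ttt)=\omega^\xi 2$ and maps $f,\phi,\phi^*$. Put $h(t)=K_2^{-1}\phi^*(t)\in B_{X^*}$. Fix $t\in\ttt$ and $(a_i)_{i=1}^{|t|}\in S_{\ell_1^{|t|}}$: the upper estimate $\|\sum_i a_ih(t|_i)\|\leqslant 1$ is immediate, and for the lower estimate note that by (i)--(ii) of Proposition \ref{dualize} the vectors $(\phi(t|_j))_{j=1}^{|t|}$ form an $\infty$-absolutely convex block of the member $f(t|_1)\verb!^!\cdots\verb!^!f(t)$ of $T_\infty(X,K_2)$, hence lie in $T_\infty(X,K_2)$, so $\|\sum_j\sgn(a_j)\phi(t|_j)\|\leqslant 1$; since the $t|_i,t|_j$ are pairwise comparable, (iii) gives $\big(\sum_i a_i\phi^*(t|_i)\big)\big(\sum_j\sgn(a_j)\phi(t|_j)\big)=\sum_i|a_i|=1$, whence $\|\sum_i a_ih(t|_i)\|\geqslant K_2^{-1}$. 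Thus $(h(t|_i))_{i=1}^{|t|}\in T_1(X^*,K_2)$ for all $t$, the map $t\mapsto(h(t|_i))_{i=1}^{|t|}$ is order preserving, and Proposition \ref{minimal trees}(ii) gives $I_1(X^*,K_2)\geqslant o(\ttt)=\omega^\xi 2>\omega^\xi$, i.e.\ $X^*\in P_1^\vee(\xi,1)$. If $X=Y^*$, use the last assertion of Proposition \ref{dualize} with $C=K_2+1$ so that $\phi^*$ maps into $CB_Y$; the identical computation (now pairing $\phi^*(t)\in Y\subset X^*$ with $\phi(t)\in X=Y^*$) shows $(C^{-1}\phi^*(t|_i))_{i=1}^{|t|}\in T_1(Y,C)$, so $Y\in P_1^\vee(\xi,1)$.

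\emph{Part (ii).} Let $b$ be the projection constant of $E$ in $X$. For a finitely supported block vector $x$ relative to $E$, pick a norming $x^*\in B_{X^*}$ for $x$ and set $\Phi(x)=b^{-1}P^{E^*}_{\ran_E(x)}x^*$, so $\Phi(x)\in B_{[E^*]}\cap c_{00}(E^*)$ is nonzero with $\ran_{E^*}(\Phi(x))\subseteq\ran_E(x)$ and $\Phi(x)(x)=b^{-1}\|x\|$. Using only that a branch $(x_1,\dots,x_k)$ of $T_\infty(X,E,K)$ is a block sequence in $T_\infty(X,K)$ (so $\|\sum_j\sgn(a_j)x_j\|\leqslant 1$) and the disjointness of the ranges $\ran_E(x_i)$, one checks $(x_i)_{i=1}^k\in T_\infty(X,E,K)\Rightarrow(\Phi(x_i))_{i=1}^k\in T_1([E^*],E^*,bK)$. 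Since the coordinatewise map induced by $\Phi$ is order preserving and preserves the tail-support relation defining $d_E$, Proposition \ref{minimal trees}(ii) gives $I_1([E^*],E^*,bK)\geqslant I_\infty(X,E,K)>\omega^\xi$ for $i=2$, while for $i=3$ applying $\Phi$ termwise to an asymptotic block tree witnessing $I^a_\infty(X,E,K)>\omega^\xi$ (from the asymptotic-block-tree description of $I^a_p$ recalled above, after the standard pruning) produces one in $[E^*]$ witnessing $I^a_1([E^*],E^*,bK)>\omega^\xi$. Hence $(E^*,[E^*])\in P^\vee_i(\xi,1)$. For a predual $(F,Y)$ with $F^*=E$, $[F^*]=X$: each $x\in c_{00}(F^*)$ lies in the finite-dimensional $[F^*_n:n\in A]=([F_n:n\in A])^*$ with $A=\ran_{F^*}(x)$, and since $P^F_A$ has norm $\leqslant b$ and $x=x\circ P^F_A$, the norm of $x$ over $[F_n:n\in A]$ is at least $b^{-1}\|x\|_X$; choosing $\Psi(x)\in[F_n:n\in A]\subseteq Y$ with $\|\Psi(x)\|\leqslant 1$, $\supp_F(\Psi(x))\subseteq A$, $x(\Psi(x))\geqslant b^{-1}\|x\|_X$, and repeating the computation with $\Psi$ in place of $\Phi$ yields $(F,Y)\in P^\vee_i(\xi,1)$.

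\emph{Part (iii).} The case $\xi=0$ is trivial, so fix $\xi\geqslant1$, $K$, and a $K$-$c_0^\xi$ spreading model $(x_n)$ in $X$; it is seminormalized and weakly null, so after passing to a subsequence (still a $K$-$c_0^\xi$ spreading model, as $\sss_\xi$ is spreading) we may assume it is $C$-basic, with biorthogonal functionals $(x_n^*)\subset[x_n]^*$, $\|x_n^*\|\leqslant C'$. \emph{Dual:} extend each $x_n^*$ by Hahn--Banach to $\widetilde{x}_n^*\in X^*$; then $\widetilde{x}_n^*(x_m)=\delta_{nm}$ for all $n,m$, so for $E\in\sss_\xi$ and $(a_i)_{i\in E}\in S_{\ell_1^E}$, pairing $\sum_i a_i\widetilde{x}_i^*$ against $\sum_{j\in E}\sgn(a_j)x_j$ (of norm $\leqslant 1$) gives $\|\sum_{i\in E}a_i\widetilde{x}_i^*\|\geqslant 1$, hence $((C')^{-1}\widetilde{x}_n^*)$ is a $C'$-$\ell_1^\xi$ spreading model in $X^*$, i.e.\ $X^*\in P^\vee_4(\xi,1)$. \emph{Predual} $X=Y^*$: one cannot push the $x_n^*$ into $Y$, so build $(y_m)\subseteq Y$ and a subsequence $(x_{n_m})$ recursively --- at step $m$ first choose $n_m$ so large that $|\langle y_i,x_{n_m}\rangle|<\varepsilon 2^{-m}$ for all $i<m$ (possible as $(x_n)$ is weakly null in $X=Y^*$), then use Helly's theorem to get $y_m\in Y$ with $\|y_m\|<C'+\varepsilon$ and $\langle y_m,x_{n_j}\rangle=\delta_{mj}$ for $j\leqslant m$ (the finitely many relevant constraints), where $C'$ now bounds the biorthogonal functionals of the eventual subsequence viewed in $Y^{**}$. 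After relabelling, $\langle y_i,x_j\rangle=\delta_{ij}$ for $j\leqslant i$ and $|\langle y_i,x_j\rangle|<\varepsilon 2^{-j}$ for $j>i$, so the same pairing estimate carries total error $\leqslant\varepsilon$ and $((C'+\varepsilon)^{-1}y_i)$ is a $K'$-$\ell_1^\xi$ spreading model in $Y$ with $K'=(C'+\varepsilon)/(1-\varepsilon)$, giving $Y\in P^\vee_4(\xi,1)$. The step I expect to be the genuine obstacle is part (i), where the functionals must be chosen coherently over the entire tree $T_\infty(X,K)$ (this is precisely the content of Proposition \ref{dualize}); the only real subtlety elsewhere is the Helly-type recursion in the predual case of (iii).
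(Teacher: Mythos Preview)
Your proof is correct and follows essentially the same lines as the paper's. The one noteworthy deviation is in part (i): the paper simply applies Proposition \ref{dualize} with $n=1$ to the given $K$, obtaining a $B$-tree $\ttt$ of order exactly $\omega^\xi$ mapping into $T_1(X^*,K)$; since $I_1(X^*,K)$ is always a successor ordinal while $\omega^\xi$ (for $\xi\geqslant 1$) is a limit, the inequality $I_1(X^*,K)\geqslant\omega^\xi$ is automatically strict. Your route through Proposition \ref{index facts}(iv),(vi) to manufacture a $K_2$ with $I_\infty(X,K_2)>\omega^\xi\cdot 2$ and then invoke $n=2$ works, but is an unnecessary detour --- you can drop the entire first paragraph and just take $n=1$. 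Parts (ii) and (iii) match the paper's argument (the paper simply declares (ii) trivial, referring back to the block-biorthogonal remark preceding Proposition \ref{dualize}, and your Helly-plus-weak-nullity recursion for the predual case of (iii) is a harmless reordering of the paper's ``choose all $y_n$ by Helly, then thin the $x_m$'').
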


\begin{proof} If $X^*$ contains $\ell_1$ (respectively, if $X=Y^*$ and if $Y$ contains $\ell_1)$, the statement is obvious.  If $c_0$ embeds into $X$, then $X^*$ has $\ell_1$ as a quotient, and therefore as a subspace.  Similarly, if $c_0$ embeds into $X=Y^*$, then $\ell_1$ embeds into $Y$.  Thus we can assume $\ell_1$ does not embed into $X^*$ (resp. $Y$), and consequently that $c_0$ does not embed into $X$.  

(i) This follows immediately from Proposition \ref{dualize} by taking $T=T_\infty(X,K)$ for some $K$ such that $o(T_\infty(X,K))>\omega^\xi$.  We obtain a tree $(\phi^*(t))_{t\in \ttt}$ with $o(\ttt)=\omega^\xi$ and $(\phi(t))_{t\in \ttt}$ so that for each $t\in \ttt$, $(\phi(t|_i))_{i=1}^{|t|}\in T_\infty(X, K)$.  Then $(K^{-1}\phi^*(t|_i))_{i=1}^{|t|}\in T_1(X, K)$, since any linear combination of this sequence can be appropriately normed by an $\infty$-absolutely convex combination of $(\phi(t|_i))_{i=1}^{|t|}$.  Thus $X^*\in P^\vee_1(\xi, 1)$.  If $X=Y^*$, showing $Y\in P_1^\vee(\xi, 1)$ is similar.   

(ii) and (iii) are trivial.

(iv) If $X$ contains a $c_0^\xi$ spreading model, it contains a seminormalized basic $c_0^\xi$ spreading model $(x_n)$.  The biorthogonal functionals to this basis form an $\ell_1^\xi$ spreading model in $[x_n]^*$.  Taking Hahn-Banach extensions of these biorthogonals and scaling appropriately to make the sequence lie in $B_{X^*}$ gives an $\ell_1^\xi$ spreading model in $X^*$.  

In the case that $X=Y^*$, let $(x_n^*)\subset X^*$ be a bounded sequence acting biorthogonally on the $K$-$c_0^\xi$ spreading model $(x_n)\subset X$.  Let $C=\sup_n \|x_n^*\|$.  By Helly's theorem, we can choose for each $n\in \nn$ some $y_n\in (C+\varepsilon)B_Y$ so that for each $1\leqslant m\leqslant n$, $x_m(y_n)= x_n^*(x_m)=\delta_{mn}$.  Fix $\varepsilon\in (0,1)$ and choose $(\varepsilon_n)\subset (0,1)$ so that $\sum \varepsilon_n< \varepsilon$.  Since $(x_n)$ is weakly null, we can pass to a subsequence of $(x_n)$ and the corresponding subsequence of $(y_n)$ and assume that for each $1\leqslant n<m$, $|x_m(y_n)|<\varepsilon_m$.  Then for $E\in \sss_\xi$, \begin{align*} \Bigl\|\sum_{n\in E} a_n y_n\Bigr\| & \geqslant \Bigl(\sum_{m\in E} \sgn(a_m)x_m\Bigr)\Bigl(\sum_{n\in E}a_n y_n\Bigr) \\ & \geqslant \sum_{n\in E} |a_n|x_n(y_n)- \sum_{n\in E}\sum_{n>m\in E}|a_n||x_m(y_n)| - \sum_{n\in E}\sum_{n<m\in E}|a_n||x_m(y_n)|\\ & = \sum_{n\in E}|a_n| - \sum_{n\in E}\sum_{n<m\in E}|a_n||x_m(y_n)| \\ & \geqslant \sum_{n\in E}|a_n|- \sum_{n\in E}|a_n|\sum_m \varepsilon_m > (1-\varepsilon)\sum_{n\in E}|a_n|. \end{align*}
Thus $((C+\varepsilon)^{-1}y_n)$ is an $\ell_1^\xi$ spreading model in $Y$.

\end{proof}

We note that we can record the following quantitative versions of the previous result.  For any $X\in \textbf{Ban}$ with FDD $E$ in the appropriate cases, $\xi\in \ord$, and $K\geqslant 1$, \begin{enumerate}[(i)]\item $I_\infty(X, K)>\omega^\xi$ implies $I_1(X^*, K)>\omega^\xi$, \item $\ssm_\infty(X, K)>\xi$ implies $\ssm_1(X^*, 2K+\varepsilon)>\xi$, \item if $E$ has projection constant $b$ in $X$, $I_\infty(X,E,K)>\xi$ implies $I_1([E^*], E^*, bK)>\xi$, \item $I^a_\infty(X,E,K)>\xi$ implies $I^a_1([E^*], E^*, bK)>\xi$. \end{enumerate} Moreover, the analogous statements for preduals hold for (ii), (iii), and (iv), and the analogous statement for preduals holds for (i) if we replace $K$ by $K+\varepsilon$.  We remark that the $2K+\varepsilon$ estimate in (ii) comes from the fact that we can guarantee the $c_0^\xi$ spreading model $(x_n)$ is basic with basis constant $(1+\varepsilon)$, and $K^{-1}\leqslant \|x_n\|$.  In this case, the norms of the biorthogonal functionals are bounded by $(2+2\varepsilon)K$ and satisfy a $1$-$\ell_1$ lower estimate.

\begin{proof}[Proof of Proposition \ref{dualize}] Suppose $o(T)>\omega$.  Fix $m>|F|$.  For each $k\in \nn$, we can choose $(x_{ik})_{i=1}^{mk}\in T$.  Since $m>|F|$, for each $1\leqslant j\leqslant k$, we can choose $$y_{jk}\in \text{co}_\infty(x_{ik}: (j-1)m< i \leqslant jm) \cap \bigcap_{f\in F}\ker (f).$$  Write $t_0=(u_1, \ldots, u_l)$, and note that $(u_1, \ldots, u_l, y_{1k}, \ldots, y_{kk})\in T_\infty(X, K)$.  This means if $f_{1k}, \ldots, f_{kk}$ are defined on $[u_i, y_{jk}: 1\leqslant i\leqslant l, 1\leqslant j\leqslant k]$ by $f_{jk}(y_{ik})=\delta_{ij}$ and $f_{jk}(u_i)=0$ for each $i$, $\|f_{jk}\|\leqslant K$.  Let $x_{jk}^*\in KB_{X^*}$ be a Hahn-Banach extension of $f_{jk}$.  Recall that $$\ttt_\omega=\{(k, k-1, \ldots, j): k\in \nn, 1\leqslant j\leqslant k\}.$$  define $$f((k, k-1, \ldots, j))= (x_{ik}: (j-1)m< i\leqslant jm),$$  $$\phi((k, \ldots, j))=y_{jk},$$ and $$\phi^*((k, \ldots, j))=x^*_{jk}.$$  This gives the $\xi=1$, $n=1$ case. In the case that $X=Y^*$, the only modification we need is to replace $x_{jk}^*$ with some member $v_{jk}\in (K+\varepsilon)B_Y$ so that $v_{jk}$ and $x_{jk}^*$ agree on the vectors $u_1, \ldots, u_l$ and $y_{1k}, \ldots, y_{kk}$, which we can do by Helly's theorem.

Next, suppose we have the result for trees of order exceeding $\omega^\xi k$, $1\leqslant k\leqslant n$.  Suppose $T=d^\alpha(T_\infty(X,K)(t_0))$ is such that $o(T)>\omega^\xi(n+1)$, and let $T_0=d^{\omega^\xi }(T)= d^{\alpha+\omega^\xi}(T_\infty(X, K)(t_0))$.  Note that $o(T_0)>\omega^\xi n$.  Let $\ttt$ be a $B$-tree on $[1, \omega^\xi n)$ with order $\omega^\xi n$ and let $f:\ttt\to T_0$, $\phi:\ttt\to X$, $\phi^*:\ttt\to KB_{X^*}$ satisfy the conclusions.  For each $t\in MAX(\ttt)$, note that $$s_t:=f(t|_1)\verb!^!\ldots\verb!^! f(t)\in T_0 = d^{\omega^\xi}(T)=d^{\alpha+\omega^\xi}(T_\infty(X, K)(t_0)).$$  This means that $T_t:= d^\alpha(T_\infty(X,K)(t_0\verb!^!s_t))$ has $o(T_t)>\omega^\xi$.  Apply the inductive hypothesis to obtain a $B$-tree $\ttt_t$ on $[\omega^\xi n +1, \omega^\xi(n+1))$ with $o(\ttt_t)=\omega^\xi$ and maps $f_t:\ttt_t \to T_t$, $\phi_t:\ttt_t\to X$, and $\phi^*_t:\ttt_t\to KB_{X^*}$ to satisfy the conclusions with $F$ replaced by $F\cup \{\phi^*(t|_i): 1\leqslant i\leqslant |t|\}$ and $t_0$ replaced by $t_0\verb!^!s_t$.  Note that since for a fixed $\zeta$, $\beta\mapsto \zeta+\beta$ is order preserving, we can assume the tree $\ttt_t$ is on $[\omega^\xi n+1, \omega^\xi (n+1))$ rather than $[1, \omega^\xi)$.  We now let $$\overline{\ttt}=\ttt \cup \{t\verb!^!s: t\in MAX(\ttt), s\in \ttt_t\}.$$  Note that $\overline{\ttt}$ is a $B$-tree on $[1, \omega^\xi(n+1))$ with $o(\overline{\ttt})=\omega^\xi(n+1)$.  We extend $f:\ttt\to T$, $\phi:\ttt\to X$, $\phi^*:\ttt\to KB_{X^*}$ to functions on $\overline{\ttt}$ by letting $$f(t\verb!^!s)=f_t(s), \hspace{5mm} \phi(t\verb!^!s)=\phi_t(s), \hspace{5mm} \phi^*(t\verb!^!s)=\phi^*_t(s).$$  One easily checks that the conclusions are satisfied with these definitions.  

Next, for each $n\in \nn$, suppose the result holds for $\omega^\xi n$.  Then if $o(T)>\omega^{\xi+1}$, $o(T)>\omega^\xi n$ for all $n\in \nn$.  Let $s_0=0$ and let $s_n= s_{n-1}+n$.  Then for each $n\in \nn$, there exists a $B$-tree $\ttt_{(n)}$ on $[1, \omega^\xi n)$ of order $\omega^\xi_n$ and maps $f_n, \phi_n, \phi^*_n$ satisfying the conclusions.  Since $[1, \omega^\xi n)$ is order isomorphic to $[\omega^\xi s_{n-1}+1, \omega^\xi s_n)$, we can assume $\ttt_{(n)}$ is a tree on $[\omega^\xi s_{n-1}+1, \omega^\xi s_n)$.  Then we let $\ttt$ be the totally incomparable union of the $\ttt_{(n)}$.  This is a $B$-tree on $[1, \omega^{\xi+1})$ with order $\omega^{\xi+1}$.  We define $f, \phi, \phi^*$ on $\ttt$ by letting the restriction of each to $\ttt_{(n)}$ be equal to $f_n, \phi_n, \phi^*_n$, respectively.  

Last, suppose the result holds for each $\zeta<\xi$, $\xi$ a limit ordinal.  Then for each $\zeta<\xi$, there exists a $B$-tree $\ttt_{(\zeta)}$ on $[1, \omega^{\zeta+1})$ having order $\omega^{\zeta+1}$ and maps $f_\zeta, \phi_\zeta, \phi^*_\zeta$ satisfying the conclusions.  Since $[1, \omega^{\zeta+1})$ is order isomorphic to $[\omega^\zeta+1, \omega^{\zeta+1})$, we can assume $\ttt_{(\zeta)}$ is actually a tree on $[\omega^\zeta +1, \omega^{\zeta+1} )$.  We let $\ttt$ be the totally incomparable union of $\ttt_{(\zeta)}$, $\zeta<\xi$, and let $f, \phi, \phi^*$ be defined by letting the restriction to $\ttt_{(\zeta)}$ be $f_\zeta, \phi_\zeta, \phi^*_\zeta$, respectively.

\end{proof}

\section{Constant reduction for $\ell_1$ and $c_0$}

It is clear that among normalized Schauder bases, the bases of $\ell_1$ and $c_0$ play special roles.  Every normalized Schauder basis is trivially $1$-dominated by the $\ell_1$ basis.  Therefore if we wish to verify that a sequence $(x_i)_{i=1}^n\subset B_X$ is closely equivalent to the $\ell_1^n$ basis, we need only verify that this sequence has tight lower $\ell_1$ estimates.  A famous argument of James \cite{James} went as follows: If $(x_i)_{i=1}^{n^2}\subset B_X$ $C^2$-dominates the $\ell_1^{n^2}$ basis, then there exists a block $(y_i)_{i=1}^n\subset B_X$ of $(x_i)_{i=1}^{n^2}$ which $C$-dominates the $\ell_1^n$ basis.  Either there exists $1\leqslant j\leqslant n$ so that for each $x\in \text{co}_1(x_i: (j-1)n<i\leqslant jn)$, $\|x\|\geqslant 1/C$, in which case we take $y_i=x_{(j-1)n+i}$.  Otherwise we can take $z_j\in \text{co}_1(x_i: (j-1)n<i\leqslant jn)$ with $\|z_j\|<1/C$.  But $(z_j)_{j=1}^n$ also $C^2$-dominates the $\ell_1^n$ basis, since it is a $1$-absolutely convex block of a sequence which does, and so $(y_j)_{j=1}^n=(Cz_j)_{j=1}^n\subset B_X$ $C$-dominates the $\ell_1^n$ basis.  Iterating this argument implies that if $\ell_1$ is crudely finitely representable in $X$, then $\ell_1$ is finitely representable in $X$.  To restate in our language, $P_1^\vee(1,1)=P_1^\wedge(1,1)$.  

In \cite{JO}, a transfinite version of the dichotomy above was formulated to prove an extension of this result.  Much of this section is contained either implicitly or explicitly in \cite{JO}, while much of it is also new.  Many results here, while closely related to those of \cite{JO}, are more precise quantifications, extensions to uncountable ordinals and non-separable spaces, or have simplified proofs using our coloring lemmas.

We note that whereas the $\ell_1$ upper estimates come automatically for sequences in $B_X$, $c_0$ lower estimates do not follow automatically for sequences $(x_i)$ with $\|x_i\|\geqslant 1$.  For this, we need another argument of James, also from \cite{James}.  If $\varepsilon\in [0,1)$ and if $(x_i)_{i=1}^n$ is a sequence of vectors with $\|x_i\|\geqslant 1$ which is $(1+\varepsilon)$-dominated by the $\ell_\infty^n$ basis, then $(x_i)_{i=1}^n$ $(1-\varepsilon)^{-1}$-dominates the $\ell_\infty^n$ basis.  To restate in our language, if $(x_i)_{i=1}^n\in W(X, 1+\varepsilon)$ for $\varepsilon\in [0,1)$, $((1+\varepsilon)^{-1} x_i)_{i=1}^n\in T_\infty(X, (1+\varepsilon)(1-\varepsilon)^{-1})$.  To see this, assume $(a_i)_{i=1}^n\in S_{\ell_\infty^n}$ and $a_j=1$ for some $1\leqslant j\leqslant n$. Let $w=\sum_{i=1}^n a_ix_i$ and $w'=a_jx_j-\sum_{j\neq i=1}^n a_i x_i$, then $$2\leqslant 2\|a_jx_j\| = \|w+w'\|\leqslant \|w\|+\|w'\|\leqslant \|w\|+1+\varepsilon.$$  This is the reason we content ourselves to study the trees $W(\cdot, \cdot)$ in place of $T_\infty(\cdot, \cdot)$.  

We say an ordinal index $I$ depending on $K\geqslant 1$ is \emph{subadditive} provided if for all $C, K\geqslant 1$, $I(CK)\leqslant I(C)+I(K)$, and \emph{submultiplicative} if $I(CK)\leqslant I(C)I(K)$.  Of course, we automatically deduce that in this case we can reverse the order of addition or multiplication.  

\subsection{Submultiplicativity and consequences}

\begin{theorem}

If $X$ is any Banach space (with FDD $E$ in the appropriate cases), each of the following indices is submultiplicative: \begin{enumerate}[(i)]\item $I_1(X, \cdot)$, \item $I_1(X, E, \cdot)$, \item $I_1^a(X, E, \cdot)$, \item $J(X, \cdot)$, \item $J(X, E, \cdot)$, \item $J^a(X, E, \cdot)$. \end{enumerate}

Moreover, $ \ssm_1(X, \cdot)$ and $\ssj(X, \cdot)$ are subadditive. 

\label{submultiplicative} 

\end{theorem}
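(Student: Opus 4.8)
The plan is to prove each of the six submultiplicativity statements by a uniform argument of the James-type dichotomy, and then to deduce the subadditivity of $\ssm_1(X,\cdot)$ and $\ssj(X,\cdot)$ from the corresponding submultiplicative statements by passing through appropriate auxiliary families. I would phrase the key lemma once and apply it six times: if $T$ is one of our trees built with constant $CK$ (so $T_1(X,CK)$, $W(X,CK)$, or their FDD/asymptotic variants), and if $o(T)>\omega^\xi$ with $\xi$ of the form $\zeta\eta$, then one can extract from $T$ a structure of order $\eta$ each of whose branches is a $C$-absolutely-convex block of a branch of a $T$-tree built with the smaller constant $K$. The dichotomy at each node is exactly the James alternative recalled just above the theorem statement: given a branch segment that $CK$-dominates (or is $CK$-dominated by) the relevant unit vector basis, either some sub-block already has the tight estimate with constant $K$, in which case we keep it, or else every sub-block admits a vector of small norm (resp. a special combination) which, after rescaling, again $CK$-dominates but now lives one "level" down. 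This is precisely the situation handled abstractly by Lemma \ref{local coloring} (local case, for $I_1(X,\cdot)$ and $J(X,\cdot)$), Lemma \ref{strong coloring} / Lemma \ref{asymptotic coloring} (asymptotic case, for $I^a_1$, $J^a$, and the FDD-indexed versions), with the function $f$ on segments measuring "the smallest norm of a $1$-absolutely convex (resp. $\infty$-absolutely convex) combination along that segment."

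Concretely, for (i): suppose $I_1(X,CK)=o(T_1(X,CK))$; write this ordinal, which by Proposition \ref{index facts}(iv) is $\omega^\gamma$ or $\infty$, and aim to show $I_1(X,CK)\leqslant I_1(X,C)I_1(X,K)$. It suffices to show: if $I_1(X,C)>\omega^\zeta$ and $I_1(X,K)>\omega^\eta$ fail to bound $I_1(X,CK)$, i.e. $I_1(X,CK)>\omega^{\zeta}\omega^{\eta}=\omega^{\zeta+\eta}$ is too big — so I would instead argue the contrapositive in the form: $I_1(X,CK)>\omega^\zeta\cdot\eta'$ for the right product forces $I_1(X,K)>\eta'$ once $I_1(X,C)>\omega^\zeta$. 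The engine: set $\zeta$ so that $\omega^\zeta<I_1(X,C)$, equivalently $d^{\omega^\zeta}(T_1(X,C))\neq\varnothing$ is witnessed by failure of tight lower $\ell_1$ estimates on every branch of depth-$\omega^\zeta$ derived tree; then define $f$ on $C(T_1(X,CK))$ by $f(c)=\min\{\|x\|: x\in \mathrm{co}_1(x_t:t\in c)\}$, observe (using $\omega^\zeta<I_1(X,C)$ and the James step) that $f$ is $\omega^\zeta$ small — along any order-preserving image of $\ttt_{\omega^\zeta}$ the infimum of $f$ is $0$ because otherwise rescaling gives an $\ell_1^{\,\cdot}$ structure with constant $C$ of order $\omega^\zeta$, contradiction. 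Then Lemma \ref{local coloring} applied with $\zeta:=\omega^\zeta$ and $\xi$ such that $o(T_1(X,CK))\geqslant \omega^\zeta\xi$ yields an order-preserving $j:\ttt_\xi\to C(T_1(X,CK))$ with $f\circ j(t)\leqslant \varepsilon_{|t|}$; choosing $u_t\in\mathrm{co}_1(x_s:s\in j(t))$ with $\|u_t\|\leqslant\varepsilon_{|t|}$ and rescaling $v_t=u_t/\|u_t\|$ (roughly), the collection $(v_t)$ witnesses $(v_{t|_i})_{i=1}^{|t|}\in T_1(X,C\cdot(\text{something}\to CK))$ — one must be slightly careful with $\varepsilon$'s here, which is exactly the place where working with open constants $K>1$ in the definitions of $P^\vee$ pays off. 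Taking $\xi\to I_1(X,K)$ and using $o(\cdot)$ monotonicity gives $I_1(X,CK)\leqslant \omega^\zeta\cdot I_1(X,K)$; optimizing over $\omega^\zeta<I_1(X,C)$ and using that $I_1(X,C)$ is a power of $\omega$ (hence a supremum of such $\omega^\zeta$, with $\omega^\zeta\cdot I_1(X,K)$ having supremum $I_1(X,C)I_1(X,K)$ since multiplication is left-continuous) finishes (i). Cases (ii)–(vi) are identical after replacing: $\mathrm{co}_1$-norm by the analogous quantity, the local Lemma \ref{local coloring} by its FDD-restricted form or by the asymptotic Lemmas \ref{asymptotic coloring}/\ref{mixed coloring}, and the James $\ell_1$-step by the James $c_0$-step (the $W(X,1+\varepsilon)$ computation quoted before the theorem) for (iv)–(vi).

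Finally, for the subadditivity of $\ssm_1(X,\cdot)$ and $\ssj(X,\cdot)$: here the relevant indices are $\iota$-type indices on regular families rather than $o$-type tree orders, and for regular families $\iota((\fff,\gggg))=\iota(\gggg)+\iota(\fff)$ — a sum, not a product. So I would run the same dichotomy but now "concatenating" Schreier-type families additively: if $X$ admits a $CK$-$\ell_1^{\xi}$ spreading model with $\xi=\zeta+\eta$ one splits the index set using the decomposition $\sss_{\zeta+\eta}\supset$ a copy of $(\sss_\eta$ placed after $\sss_\zeta)$ and applies the James step to the $\sss_\zeta$-part to either extract a $C$-$\ell_1^\zeta$ spreading model (contradicting $\ssm_1(X,C)\leqslant\zeta$) or push a rescaled $CK$-$\ell_1^\eta$ spreading model down; iterating and taking suprema gives $\ssm_1(X,CK)\leqslant\ssm_1(X,C)+\ssm_1(X,K)$ — using that $\ssm$ need not be a power of $\omega$ but the argument only needs $+$, and $\iota(\fff[\aaa_2])=2\iota(\fff)$-type absorption for limit ordinals (already noted in the spreading-model discussion) to handle the weakly-null reduction cleanly. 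The $\ssj$ case is the $c_0$-special-sequence analogue with $W(X,\cdot)$ and the James $c_0$-step.

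\textbf{Main obstacle.} The genuinely delicate point is the constant bookkeeping: the James dichotomy does not literally preserve the constant $K$ — the "small norm, rescale" branch produces a sequence that dominates the unit vector basis with a constant that is $K$ only in the limit $\varepsilon\to 0$. Organizing the induction so that the $\varepsilon_n$'s from Lemma \ref{local coloring} (or \ref{asymptotic coloring}/\ref{mixed coloring}) can be chosen small enough at every transfinite stage to keep the accumulated constant below any prescribed $K'>K$ — and verifying that this is exactly why the classes $P^\vee_j(\xi,p)$ are defined with a union over $K>1$ rather than a fixed $K$ — is where the real work lies; everything else is a routine transcription of the James argument through the coloring lemmas of Section 3.
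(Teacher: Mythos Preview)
Your overall plan---James dichotomy fed through Lemma \ref{local coloring}---is exactly what the paper does, but your execution has a genuine gap and an unnecessary complication, both stemming from the same missed observation.

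The gap is in your ``optimize over $\omega^\zeta<I_1(X,C)$'' step. You propose to show $I_1(X,CK)\leqslant \omega^\zeta\cdot I_1(X,K)$ for each $\omega^\zeta<I_1(X,C)$ and then take the supremum, claiming this gives $I_1(X,C)\cdot I_1(X,K)$ ``since multiplication is left-continuous.'' But ordinal multiplication is continuous in the \emph{right} argument, not the left: $\sup_n n\cdot\omega=\omega\neq\omega^2=\omega\cdot\omega$. So this limit argument does not work. Relatedly, your claim that $f$ (the real-valued minimum-norm function) is $\omega^\zeta$ small is not justified: if along some $\ttt_{\omega^\zeta}$-image the infimum is $\varepsilon>0$, you only get $I_1(X,1/\varepsilon)>\omega^\zeta$, which is no contradiction unless $\varepsilon\geqslant 1/C$.

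The paper avoids both problems in one stroke. Set $\zeta=I_1(X,C)$ and $\xi=I_1(X,K)$ directly (no limit), assume $I_1(X,CK)>\zeta\xi$, and define a $\{0,1\}$-valued function on segments: $f(c)=1$ if $\min\{\|x\|:x\in\text{co}_1(x_t:t\in c)\}\geqslant 1/C$, else $f(c)=0$. If some order-preserving $i:\ttt_\zeta\to\ttt_{\zeta\xi}$ has $f\circ i\equiv 1$, then $(x_{i(t)})$ witnesses $I_1(X,C)>\zeta$, contradiction. So $f$ is strictly $\zeta$ small and Lemma \ref{local coloring} gives $j:\ttt_\xi\to C(\ttt_{\zeta\xi})$ with $f\circ j\equiv 0$. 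Pick $u_t\in\text{co}_1(x_s:s\in j(t))$ with $\|u_t\|<1/C$; then $(Cu_t)_{t\in\ttt_\xi}\subset B_X$ and for $(a_i)\in S_{\ell_1}$ one has $\|\sum a_i Cu_{t|_i}\|\geqslant C\cdot(CK)^{-1}=1/K$ and $\leqslant\sum|a_i|\cdot 1=1$, so $(Cu_{t|_i})\in T_1(X,K)$ exactly---no $\varepsilon$'s anywhere. This witnesses $I_1(X,K)>\xi$, contradiction. Your ``Main obstacle'' of constant bookkeeping simply does not arise once you use the sharp threshold $1/C$ rather than trying to drive the norm to zero. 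The same trick (threshold $C$ for the $\infty$-absolutely convex maximum) handles (iv)--(vi), and the spreading-model subadditivity is the identical dichotomy run on a single sequence rather than a tree.
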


\begin{proof} (i) If $I_1(X, C)=\infty$ or $I_1(X, K)=\infty$, there is nothing to prove. Let $\zeta=I_1(X,C)$ and $\xi=I_1(X,K)$.  Assume $I_1(X, CK)>\zeta\xi$.  Then we can choose $(x_t)_{t\in \ttt_{\zeta\xi}}$ so that $(x_{t|_i})_{i=1}^{|t|}\in T_1(X, CK)$ for all $t\in \ttt_{\zeta\xi}$.  Define $f:C(\ttt_{\zeta\xi})\to \{0,1\}$ by $f(c)= 1$ provided $$\min \{\|x\|: x\in \text{co}_1(x_t:t\in c)\}\geqslant 1/C,$$ and $f(c)=0$ otherwise.  If $i:\ttt_\zeta\to \ttt_{\zeta\xi}$ is order preserving so that $f\circ i(c)=1$ for all $c\in C(\ttt_\zeta)$, then $(x_{i(t)})_{t\in \ttt_\zeta}$ witnesses the fact that $I_1(X, C)>\zeta$, a contradiction.  Therefore by Lemma \ref{local coloring}, there exists an order preserving $j:\ttt_\xi\to C(\ttt_{\zeta\xi})$ so that $f\circ j\equiv 0$.  For each $t\in \ttt_\xi$, we choose $u_t\in \text{co}_1(x_s: s\in j(t))$ with $\|u_t\|< 1/C$.  Then since $j$ is order preserving, $(u_{t|_i})_{i=1}^{|t|}$ is a $1$-absolutely convex block of a member of $T_1(X, CK)$, and is therefore also a member of $T_1(X, CK)$.  Then $(Cu_t)_{t\in \ttt_\xi}$ witnesses the fact that $I_1(X, K)>\xi$, another contradiction.  

(ii) The proof is the same, except each $(x_{t|_i})_{i=1}^{|t|}$, and therefore each $(u_{t|_i})_{i=1}^{|t|}$, is a block sequence with respect to some FDD.  

(iii) The proof is essentially the same, with the $B$-trees $\ttt_\xi, \ttt_\zeta, \ttt_{\zeta\xi}$ replaced by regular families $\fff, \gggg, \fff[\gggg]$ with $\iota(\fff)=\xi$ and $\iota(\gggg)=\zeta$.  We remark that in this case, in either of the alternatives above, the sequences of immediate successors do have minima of supports tending to infinity, as required.  This is because Lemma \ref{asymptotic coloring} involves embeddings rather than order preserving maps.  

(iv)-(vi) are essentially the same.  We consider the function $f(c)=1$ if $$\max \{\|x\|: x\in \text{co}_\infty(x_t:t\in c)\}\leqslant C,$$ and $0$ otherwise.

The remark about $\ssm_1$ and $\ssj$ is even easier.  If $(x_n)$ is a $CK$-$\ell_1^{\xi+\zeta}$ spreading model, we can pass to a subsequence which is a $CK$-$\ell_1^{\sss_\zeta[\sss_\xi]}$ spreading model.  Either there exists $N\in \nn$ so that for all $N\leqslant E\in \sss_\xi$, $(x_n)_{n\in E}\in T_1(X, C)$, in which case $(x_n)_{n>N}$ is a $C$-$\sss_\xi$ spreading model, or there exist $E_1<E_2<\ldots$, $E_i\in \sss_\xi$, and scalars $(a_i)$ so that for all $n\in \nn$, $\sum_{i\in E_n}|a_i|=1$ and $\|\sum_{i\in E_n} a_ix_i\|<1/C$.  Then let $y_n=\sum_{i\in E_n}a_ix_i$, so $(Cy_n)$ is a $K$-$\ell_1^\zeta$ spreading model.  The proof for $\ssj$ is similar.

\end{proof}

This yields the following: If $\xi$ is any ordinal, and if $\zeta<\omega^{\omega^\xi}$, then $\zeta^n<\omega^{\omega^\xi}$ for every $n\in \nn$.  If $I_1(X, K)>\omega^{\omega^\xi}$ for some $K\geqslant 1$, and if $C>1$, we can fix $n\in \nn$ so that $K^{1/n}< C$.  Then if $I_1(X, C)=\zeta<\omega^{\omega^\xi}$, $$\omega^{\omega^\xi}< I_1(X,K) \leqslant I_1(X, C^n) \leqslant I_1(X, C)^n = \zeta^n< \omega^{\omega^\xi},$$ a contradiction.  So $I_1(X, C)\geqslant \omega^{\omega^\xi}$ and, since $I_1(X,C)$ is a successor, $I_1(X,C)>\omega^{\omega^\xi}$.  Thus if $X\in P^\vee_1(\omega^\xi, 1)$, $X\in P^\wedge_1(\omega^\xi, 1)$.  We can perform similar arguments for $P^\vee_2(\omega^\xi, 1)$ and $P^\vee_3(\omega^\xi, 1)$.  

Similarly, if $I_\infty(X, K)>\omega^{\omega^\xi}$, $J(X, K)>\omega^{\omega^\xi}$.  As in the previous paragraph, we deduce that for any $\varepsilon\in (0,1)$, $J(X, 1+\varepsilon)>\omega^{\omega^\xi}$, which means $I_\infty(X, (1+\varepsilon)(1-\varepsilon)^{-1})>\omega^{\omega^\xi}$.  Thus we deduce $P^\vee_i(\omega^\xi, \infty)=P^\wedge_i(\omega^\xi, \infty)$ for $i=1,2,3$.  

If $\xi$ is a limit ordinal, building a tree of order $\xi$ is simply a matter of building a tree of order $\zeta$ for every $\zeta<\xi$ and taking a totally incomparable union.  But for spreading models, this is not true.  One must perform the blocking argument in a way which results in a sequence, not a tree.  It was shown in \cite{BCM} that this can be done to obtain the spreading model analogue of the results above. We sketch a proof, since it is an easy application of Lemma \ref{easy coloring}.

\begin{theorem} For any $0\leqslant \xi\leqslant \omega_1$, $P^\vee_4(\omega^\xi, 1)=P^\wedge_4(\omega^\xi, 1)$ and $P^\vee_4(\omega^\xi, \infty)=P^\wedge_4(\omega^\xi, \infty)$.  

\label{BCM theorem 1}

\end{theorem}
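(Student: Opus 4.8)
The plan is to unwind the definitions, reduce the statement to a single ``constant-halving'' step, and feed the right two-valued coloring into Lemma \ref{easy coloring}. Unwinding (and using that a $K$-$\ell_p^\fff$ spreading model over a regular family $\fff$ restricts, after passing to a spread, to a $K$-$\ell_p^\gggg$ spreading model for any regular $\gggg$ with $\iota(\gggg)\leqslant\iota(\fff)$), $X\in P^\vee_4(\omega^\xi,p)$ means that $X$ admits a $K$-$\ell_p^{\omega^\xi}$ spreading model (a $K$-$c_0^{\omega^\xi}$ spreading model if $p=\infty$) for some $K>1$, while $X\in P^\wedge_4(\omega^\xi,p)$ says the same for every $K>1$. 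The inclusion $P^\wedge_4\subset P^\vee_4$ is trivial, so I only need: one such spreading model forces one with constant $C$ for every $C>1$. Iterating (replacing $K$ by $K^{1/2}$ at each stage) reduces this to the implication that a $K^2$-structure produces a $K$-structure. The case $\xi=\omega_1$ is separate: there $\omega^{\omega_1}=\omega_1$ and $\sss_{\omega_1}=\fin$, so the claim is exactly James's theorem that a space containing an isomorphic copy of $\ell_1$ (resp.\ $c_0$) contains almost isometric copies \cite{James}. So from now on $\xi<\omega_1$, whence $\sss_{\omega^\xi}$ is regular and Lemma \ref{easy coloring} is available.

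For $p=1$, given a $K^2$-$\ell_1^{\omega^\xi}$ spreading model $(x_n)$, I would color $E\in\widehat{\sss}_{\omega^\xi}$ by $f(E)=1$ when every $x\in\text{co}_1(x_j:j\in E)$ has $\|x\|\geqslant K^{-1}$, and $f(E)=0$ otherwise, and apply Lemma \ref{easy coloring}. In the first alternative, the subsequence $(x_{m_n})$ indexed by the witnessing $M=(m_n)$ is a $K$-$\ell_1^{\omega^\xi}$ spreading model, since spreads of members of $\sss_{\omega^\xi}$ remain in $\sss_{\omega^\xi}$ and the upper $\ell_1$ estimate is automatic for vectors of norm $\leqslant1$. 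In the second alternative I obtain $E_1<E_2<\cdots$ with $\bigcup_{i\in E}E_i\in\sss_{\omega^\xi}$ for all $E\in\sss_{\omega^\xi}$, together with vectors $y_i=\sum_{j\in E_i}b_{ij}x_j$ with $\sum_{j\in E_i}|b_{ij}|=1$ and $\|y_i\|<K^{-1}$; then $z_i:=Ky_i\in B_X$, and for $F\in\sss_{\omega^\xi}$ and $\sum_{i\in F}|a_i|=1$ the vector $\sum_{i\in F}a_iz_i$ equals $K$ times a norm-one $\ell_1$-combination of $(x_j)_{j\in\bigcup_{i\in F}E_i}$, so the $K^{-2}$ lower estimate of $(x_n)$ forces $\|\sum_{i\in F}a_iz_i\|\geqslant K^{-1}$. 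Hence $(z_i)$ is a $K$-$\ell_1^{\omega^\xi}$ spreading model, and the halving step is done.

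For $p=\infty$ I would route through the special-sequence trees $W(\cdot,\cdot)$, because $c_0$ upper estimates are not automatic. First, a $K$-$c_0^{\omega^\xi}$ spreading model $(x_n)$ has $\|x_n\|\geqslant K^{-1}$, so $(Kx_n)$ is a $K$-$c_0^{\omega^\xi}$ special sequence. Then the coloring $f(E)=1$ iff every $x\in\text{co}_\infty(x_j:j\in E)$ has $\|x\|\leqslant K$, fed into Lemma \ref{easy coloring} exactly as above, upgrades a $K^2$-$c_0^{\omega^\xi}$ special sequence to a $K$-$c_0^{\omega^\xi}$ special sequence: in the first alternative pass to $(x_{m_n})$; in the second, normalize the witnesses to $z_i:=y_i/\|y_i\|$ and observe that every $\ell_\infty$-combination of $(z_i)_{i\in F}$, $F\in\sss_{\omega^\xi}$, is a scalar of modulus $<1/K$ times an element of $\text{co}_\infty(x_j:j\in\bigcup_{i\in F}E_i)$, hence has norm $<K$. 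Iterating produces a $(1+\e)$-$c_0^{\omega^\xi}$ special sequence for every $\e\in(0,1)$, and the elementary estimate of James recalled earlier in this section — that $(x_i)\in W(X,1+\e)$ implies $((1+\e)^{-1}x_i)\in T_\infty(X,\tfrac{1+\e}{1-\e})$ — converts such a sequence into a $\tfrac{1+\e}{1-\e}$-$c_0^{\omega^\xi}$ spreading model; letting $\e\to0$ completes this case.

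The main points requiring care, beyond routine bookkeeping, are: (i) reading $\text{co}_p(x_j:j\in E)$ as combinations indexed exactly by $E$ with $\ell_p$-normalized coefficient vectors, so that in the second alternative of the coloring lemma the resulting coefficient vector has $\ell_p$-norm exactly $1$ and support inside $\bigcup_{i\in F}E_i$ — this is what makes the constant come out sharp rather than merely bounded; and (ii) in the $c_0$ case, ensuring the detour through $W$ is closed off correctly, invoking James's two-sided estimate only at the very end to pass from an almost isometric special sequence to an almost isometric spreading model. I do not expect a serious obstacle once Lemma \ref{easy coloring} is granted.
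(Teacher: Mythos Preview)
Your proposal is correct and follows essentially the same approach as the paper: the case $\xi=\omega_1$ is James's theorem, and for $\xi<\omega_1$ both you and the paper feed the two-valued coloring (threshold $K^{-1/2}$ on $\|\cdot\|$ for $\ell_1$, threshold $K^{1/2}$ for the $c_0$ special-sequence version) into Lemma~\ref{easy coloring} to halve the constant, then iterate. The detour through $c_0^{\omega^\xi}$ special sequences and the final conversion back to a spreading model via James's $(1+\varepsilon)/(1-\varepsilon)$ estimate is exactly what the paper does as well.
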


\begin{proof} For $\xi=\omega_1$, the result is simply a restatement of the result of James.  Suppose $(x_n)$ is a $K$-$\ell_1^{\omega^\xi}$ spreading model.  It is sufficient to show that some block of this sequence is a $K^{1/2}$-$\ell_1^{\omega^\xi}$ spreading model.  We define $f:\widehat{\sss}_{\omega^\xi}\to \{0,1\}$ by letting $f(E)=1$ if $$\min \{\|x\|: x\in \text{co}_1(x_n:n\in E)\}\geqslant K^{-1/2},$$ and $f(E)=0$ otherwise.  Either there exists $M\in \infin$ so that for each $E\in \widehat{\sss}_{\omega^\xi}$, $f(M(E))=1$, in which case $(x_{m_n})$ is a $K^{1/2}$-$\ell_1^{\omega^\xi}$ spreading model, or there exist $E_1<E_2<\ldots$ so that $f(E_i)=0$ and so that for each $E\in \sss_{\omega^\xi}$, $\cup_{i\in E}E_i\in \sss_{\omega^\xi}$.  For each $n\in \nn$, choose $y_n\in \text{co}_1(x_i:i\in E_n)$ with $\|y_n\|<K^{-1/2}$.  Then $(K^{1/2}y_n)$ is a $K^{1/2}$-$\ell_1^{\omega^\xi}$ spreading model.  

For the $c_0$ case, we mimic the argument above with $c_0^{\omega^\xi}$ special sequences in place of $\ell_1^{\omega^\xi}$ spreading models.  As before, a $(1+\varepsilon)$-$c_0^{\omega^\xi}$ special sequence yields a $(1+\varepsilon)(1-\varepsilon)^{-1}$-$c_0^{\omega^\xi}$ spreading model.

\end{proof}

\begin{corollary} For $\xi\in \emph{\ord}$, $p\in \{1, \infty\}$, and $i\in \{1,2,3,4\}$, $P^\vee_i(\omega^\xi, p)=P^\wedge_i(\omega^\xi, p)$.  

\label{reduction}

\end{corollary}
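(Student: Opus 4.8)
The plan is to show that Corollary \ref{reduction} is essentially a bookkeeping consequence of the three results just proved: Theorem \ref{submultiplicative} (submultiplicativity of $I_1(X,\cdot)$, $I_1(X,E,\cdot)$, $I_1^a(X,E,\cdot)$, $J(X,\cdot)$, $J(X,E,\cdot)$, $J^a(X,E,\cdot)$ and subadditivity of $\ssm_1$, $\ssj$) together with the remarks between Theorem \ref{submultiplicative} and Theorem \ref{BCM theorem 1}, and Theorem \ref{BCM theorem 1} itself. For each of the sixteen cases indexed by $(i,p)$ with $i\in\{1,2,3,4\}$ and $p\in\{1,\infty\}$, one direction is trivial: since $P^\wedge_i(\omega^\xi,p)$ is an intersection over $K>1$ of sets each containing $P^\vee_i(\omega^\xi,p)$ — indeed $\bigcap_{K>1}\{\cdot\}\subseteq\bigcup_{K>1}\{\cdot\}$ — we always have $P^\wedge_i(\omega^\xi,p)\subseteq P^\vee_i(\omega^\xi,p)$. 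So the content is the reverse inclusion $P^\vee_i(\omega^\xi,p)\subseteq P^\wedge_i(\omega^\xi,p)$.

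First I would dispose of the case $\xi=\omega_1$, i.e.\ $\omega^{\omega_1}$ — actually the relevant index bound is an ordinal $\le\omega_1$ only for $\ssm$, $\ssj$; for the $I$-indices $\omega^\xi$ ranges over all of $\ord$. For $i=4$ the statement is exactly Theorem \ref{BCM theorem 1}, so nothing remains there. For $i\in\{1,2,3\}$ and $p=1$: suppose $X\in P^\vee_1(\omega^\xi,1)$, so $I_1(X,K)>\omega^{\omega^\xi}$... here I must be careful: $P^\vee_i(\xi,p)$ is defined with the threshold $\omega^\xi$, so $P^\vee_i(\omega^\xi,p)$ has threshold $\omega^{\omega^\xi}$, which is a delta number. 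Fix any $C>1$; choose $n\in\nn$ with $K^{1/n}<C$. If $I_1(X,C)=\zeta<\omega^{\omega^\xi}$, then since $\omega^{\omega^\xi}$ is a delta number, Proposition \ref{ordinals}(iv) gives $\zeta^n<\omega^{\omega^\xi}$ (products of ordinals below a delta number stay below it), so by submultiplicativity $\omega^{\omega^\xi}<I_1(X,K)\le I_1(X,C^n)\le I_1(X,C)^n=\zeta^n<\omega^{\omega^\xi}$, a contradiction; hence $I_1(X,C)\ge\omega^{\omega^\xi}$, and since $I_1(X,C)$ is a successor (Proposition \ref{index facts}, as $X$ is infinite-dimensional — or else $I_1$ is finite and the hypothesis fails) it exceeds $\omega^{\omega^\xi}$. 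As $C>1$ was arbitrary, $X\in P^\wedge_1(\omega^\xi,1)$. The cases $i=2,3$ are identical using submultiplicativity of $I_1(X,E,\cdot)$ and $I_1^a(X,E,\cdot)$.

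For $p=\infty$ and $i\in\{1,2,3\}$ I would route through the $J$-indices exactly as in the remark following Theorem \ref{submultiplicative}: if $I_\infty(X,K)>\omega^{\omega^\xi}$ then (since $W(X,K)\supseteq$ the relevant normalized branches, or rather since a sequence $K$-equivalent to the $c_0^n$ basis lies in $W(X,K)$ after normalization — the inclusion $T_\infty(X,K)\subseteq W(X,K)$ up to constants giving $J(X,K)\ge I_\infty(X,K)$) we get $J(X,K)>\omega^{\omega^\xi}$; submultiplicativity of $J(X,\cdot)$ and the delta-number argument above give $J(X,1+\varepsilon)>\omega^{\omega^\xi}$ for every $\varepsilon\in(0,1)$; then the James estimate recalled in the excerpt, namely $(x_i)\in W(X,1+\varepsilon)\Rightarrow((1+\varepsilon)^{-1}x_i)\in T_\infty(X,(1+\varepsilon)(1-\varepsilon)^{-1})$, upgrades this to $I_\infty(X,(1+\varepsilon)(1-\varepsilon)^{-1})>\omega^{\omega^\xi}$; letting $\varepsilon\downarrow0$ makes $(1+\varepsilon)(1-\varepsilon)^{-1}\downarrow1$, so $X\in P^\wedge_1(\omega^\xi,\infty)$. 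For $i=2,3$ replace $J$ by $J(X,E,\cdot)$ resp.\ $J^a(X,E,\cdot)$ and use the FDD/asymptotic versions of the James estimate in the obvious way. The main obstacle — really the only subtle point — is making sure the ordinal arithmetic is invoked with the correct hypothesis: the argument needs $\omega^{\omega^\xi}$ to absorb $n$-th powers, which is exactly the statement that $\omega^{\omega^\xi}$ is a delta number (Proposition \ref{ordinals}(iv)), and one must also confirm the relevant index is a successor (so that $\ge\omega^{\omega^\xi}$ forces $>\omega^{\omega^\xi}$), which for the infinite-dimensional case is part of Proposition \ref{index facts} and in the trivial finite-dimensional case is vacuous. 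With those two facts in hand the corollary is immediate; no new construction is required.
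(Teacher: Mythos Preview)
Your proposal is correct and follows essentially the same route as the paper: the paper has no separate proof of Corollary \ref{reduction}, but rather derives it in the discussion between Theorem \ref{submultiplicative} and the corollary (delta-number argument from submultiplicativity for $i=1,2,3$, routed through the $J$-indices and James's inequality when $p=\infty$) together with Theorem \ref{BCM theorem 1} for $i=4$. Two cosmetic points: there are eight $(i,p)$ cases, not sixteen, and the inclusion you want for the $p=\infty$ route is that $(x_i)\in T_\infty(X,K)$ implies $(Kx_i)\in W(X,K)$, giving $J(X,K)\geqslant I_\infty(X,K)$ directly.
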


We can now define for $p\in \{1, \infty\}$ and $i\in \{1,2,3,4\}$ $\phi_{p,i}:\ord\to \ord$ by $$\phi_{p,i}(\xi)=\min \{\zeta\in \ord: P^\vee_i(\zeta, p)\subset P^\wedge_i(\xi, p)\}.$$  Corollary \ref{reduction} implies that this is well-defined, and $$\phi_{p,i}(\xi)\leqslant \min \{\omega^\zeta: \omega^\zeta\geqslant \xi\}.$$  We next aim to discuss the sharpness of this estimate for certain values of $i, p$.  

\subsection{Schreier and Tsirelson spaces, the repeated averages hierarchy}

For convenience in this section, we treat members of $\fin$ as projections on $\ell_\infty$ by letting $Ex$ be the pointwise product of $x$ and the sequence $(1_E(n))_n$.  
If $\fff\subset \fin$ is any regular family containing all singletons, the formula $$\|x\|_\fff=\sup_{i\in E}\|Ex\|_{\ell_1}$$ defines a norm making the canonical $c_{00}$ basis normalized and $1$-unconditional.  The completion of $c_{00}$ with respect to this norm is denoted $X_\fff$.  If $\fff=\sss_\xi$, we write $X_\xi$ in place of $X_{\sss_\xi}$ and $\|\cdot\|_\xi$ in place of $\|\cdot\|_{\sss_\xi}$.  This space is called the \emph{Schreier space of order} $\xi$.  This space was considered for $\xi=1$ by Schreier \cite{Schreier}, for finite values of $\xi$ in \cite{AO}, and for $\xi<\omega_1$ in \cite{AA}.  

Additionally, we have Banach spaces of the type of Tsirelson.  Fix $\theta\in (0,1)$. Define the sequence $(|\cdot|_n)_{n\geqslant 0}$ on $c_{00}$ by letting $|\cdot|_0=\|\cdot\|_{c_0}$ and $$|x|_{n+1}= |x|_n \vee \sup \Bigl\{ \theta \sum_{i=1}^s |E_i x|_n: (E_i)_{i=1}^s \text{\ is\ }\fff\text{-admissible}\Bigr\}.$$  Since $|\cdot|_n\leqslant \|\cdot\|_{\ell_1}$, we note that $\lim_n |x|_n$ defines a norm on $c_{00}$ making the canonical $c_{00}$ basis normalized and $1$-unconditional.  We let $T(\theta, \fff)$ denote the completion of $c_{00}$ under this norm.  We note that the norm $\|\cdot\|_{T(\theta, \fff)}$ satisfies the implicit formula $$\|x\|_{T(\theta, \fff)} = \|x\|_{c_0}\vee \sup \Bigl\{\theta\sum_{i=1}^s \|E_i x\|_{T(\theta, \fff)}: (E_i)_{i=1}^s \text{\ is\ }\fff\text{-admissible}\Bigr\}.$$    For $\theta=1/2$ and $\fff=\sss_1$, this is the Figiel-Johnson Tsirelson space \cite{FJ}, which is the dual of Tsirelson's original space \cite{Tsirelson}. As usual, we write $T(\theta, \xi)$ in place of $T(\theta, \sss_\xi)$.    

We next recall the repeated averages hierarchy, introduced in \cite{AMT}. This will be the final tool we will need in order to discuss the sharpness of the results above.  Given $L\in \infin$ and $0\leqslant \xi<\omega_1$, we will define a convex blocking $(x_n^{L, \xi})$ of the canonical $c_{00}$ basis so that \begin{enumerate}[(i)]\item $\supp(x_n^{L, \xi})\in \sss_\xi$, and \item $\cup_n \supp(x_n^{L, \xi})=L$. \end{enumerate} These sequences will allow us precise quantification of the complexity of blockings we will require to measure $I_1(X_\xi, K)$ and $I_1(T(\theta, \omega^\xi), K)$.

We let $L=(l_n)$ and $x_n^{L, 0}=e_{l_n}$.  If $(x_n^{L, \xi})$ has been defined, let $s_0=0$ and $L_0=L$.  Recursively choose $s_n, p_n, L_n$ so that for all $n\in \nn$, \begin{enumerate}[(i)]\item $p_n=\min L_{n-1}$, \item $s_n= p_n+s_{n-1}$, \item $L_n= L_{n-1}\setminus \cup_{i=s_{n-1}+1}^{s_n} \supp(x_i^{L, \xi})$. \end{enumerate} Then let $x_n^{L, \xi+1}=p_n^{-1}\sum_{i=s_{n-1}+1}^{s_n} x_i^{L, \xi}$.  

Suppose that $\xi<\omega_1$ is a limit ordinal and that for every $M\in \infin$ and every $\zeta<\xi$, $x_n^{M, \zeta}$ has been defined.  Let $\xi_n\uparrow \xi$ be such that $\sss_\xi=\{E:\exists n\leqslant E\in \sss_{\xi_n+1}\}$.  Let $L_0=L$ and recursively define $p_n, L_n$ so that for each $n\in \nn$, \begin{enumerate}[(i)]\item $p_n=\min L_{n-1}$, \item $L_n=L_{n-1}\setminus \supp(x_1^{L_{n-1}, \xi_{p_n}+1})$.     \end{enumerate} Let $x_n^{L, \xi}=x_1^{L_{n-1}, \xi_{p_n}+1}$.

For $(m_n)=M\in \infin$, we let $T_M:\ell_\infty\to \ell_\infty$ be the operator defined by $Te_n= e_{m_n}$.  We observe that for $x\in c_{00}$ and $\xi<\omega_1$, $\|T_Mx\|_\xi$ may be much larger than $\|x\|_\xi$, and that $T_M$ does not necessarily map $X_\xi$ into $X_\xi$.  Given $M\in \infin$ and $\varepsilon>0$, we say $L\in \infin$ is $(M, \varepsilon)$ \emph{fast growing} provided $m_{l_n}/l_{n+1} < \varepsilon/(1+2\varepsilon)$ for all $n\in \nn$.  Note that any infinite subset of an $(M, \varepsilon)$ fast growing set is also $(M, \varepsilon)$ fast growing, and any set $N\in \infin$ admits $L\in [N]$ which is $(M, \varepsilon)$ fast growing.

\begin{lemma} If $L$ is $(M, \varepsilon)$ fast growing, then for any $k\in \nn$, $$\Bigl\|T_M\sum_{n=1}^k x_n^{L, \xi}\Bigr\|_\xi \leqslant 1+\varepsilon.$$  

\label{repeated averages}

\end{lemma}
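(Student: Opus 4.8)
The plan is to prove, by induction on $\xi$ with $M$ and $\varepsilon$ (hence $\delta:=\varepsilon/(1+2\varepsilon)$) fixed throughout, the formally more flexible statement that $\bigl\|T_M\sum_{n=a}^b x_n^{L,\xi}\bigr\|_\xi\le 1+\varepsilon$ for every $(M,\varepsilon)$ fast growing $L=(l_n)$ and all $a\le b$. First I would record the standard facts about the hierarchy (from \cite{AMT}): the sets $\supp(x_n^{L,\xi})$ are successive intervals of $L$ with union $L$, each in $\sss_\xi$; each $x_n^{L,\xi}$ is a convex combination of $\{e_l:l\in\supp(x_n^{L,\xi})\}$ with positive coefficients; and the tail of the $\xi$-averaging of $L$ from index $a$ is again a $\xi$-averaging, namely of the tail $L'\in\infin$ of $L$ obtained by deleting $\supp(x_1^{L,\xi}),\dots,\supp(x_{a-1}^{L,\xi})$, so that $x_{a+m-1}^{L,\xi}=x_m^{L',\xi}$ for all $m$. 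Since subsets of $(M,\varepsilon)$ fast growing sets are again $(M,\varepsilon)$ fast growing, this tail-stability makes the flexible statement equivalent to the lemma. Finally, because $m_j\ge j$, the hypothesis $m_{l_n}/l_{n+1}<\delta$ becomes the geometric separation $\max\supp(T_M x_n^{L,\xi})<\delta\min\supp(T_M x_{n+1}^{L,\xi})$ for all $n$, while each $\supp(T_M x_n^{L,\xi})$, being a spread of a member of $\sss_\xi$, still lies in $\sss_\xi$, and each $T_M x_n^{L,\xi}$ is still a positive convex combination.

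The base case $\xi=0$ is immediate since $\|\cdot\|_0=\|\cdot\|_{c_0}$. For $\xi=\zeta+1$, I would write $x_n^{L,\zeta+1}=p_n^{-1}\sum_{i\in B_n}x_i^{L,\zeta}$, so that, after reducing to $a=1$ by tail-stability, $y:=T_M\sum_{n=1}^b x_n^{L,\zeta+1}=\sum_i b_i\,T_M x_i^{L,\zeta}$ with $b_i=p_{n(i)}^{-1}$ non-increasing in $i$ (as $(p_n)$ is non-decreasing). Fix $E\in\sss_{\zeta+1}=\sss[\sss_\zeta]$ and write $E=D_1\cup\dots\cup D_r$ with $D_1<\dots<D_r$ in $\sss_\zeta$ and $r\le\min D_1$. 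For each $j$ let $i_j$ be least with $\supp(T_M x_{i_j}^{L,\zeta})\cap D_j\ne\varnothing$, put $n_j=n(i_j)$, and combine disjointness of supports, monotonicity of $(b_i)$, $D_j\in\sss_\zeta$, tail-stability, and the inductive hypothesis at level $\zeta$ to get
\[
\|D_j y\|_1=\sum_{i\ge i_j} b_i\,\|D_j\,T_M x_i^{L,\zeta}\|_1\le b_{i_j}\Bigl\|T_M\sum_{i\ge i_j}x_i^{L,\zeta}\Bigr\|_\zeta\le (1+\varepsilon)\,p_{n_j}^{-1}.
\]
Summing over $j$ gives $\|Ey\|_1\le(1+\varepsilon)\sum_{j=1}^r p_{n_j}^{-1}$, and the proof reduces to showing $\sum_{j=1}^r p_{n_j}^{-1}\le 1$. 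Here each $D_j$ meets $\bigcup_{i\in B_{n_j}}\supp(T_M x_i^{L,\zeta})$, whose minimum exceeds $p_{n_j}$; since the $D_j$ are increasing, pairwise disjoint, and constrained by $r\le\min D_1$, while the blocks are geometrically $\delta$-separated with rapidly growing "lengths" $p_n$, a careful count shows the weights $p_{n_j}^{-1}$ sum to at most $1$, with $\delta=\varepsilon/(1+2\varepsilon)$ chosen precisely so that the geometric series of boundary errors this count produces sums to $\varepsilon$.

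For limit $\xi$, the diagonal defining $\sss_\xi=\{E:\exists\,n\le E\in\sss_{\xi_n+1}\}$ uses the same sequence $\xi_n\uparrow\xi$ as the construction $x_n^{L,\xi}=x_1^{L_{n-1},\xi_{p_n}+1}$; for $E\in\sss_\xi$ with $\min E=m$ one has $E\in\sss_{\xi_m+1}$, so $E$ interacts with only finitely many of the blocks $x_n^{L,\xi}$, each a $(\xi_{p_n}+1)$-average to which the already-established successor case applies, the geometric separation again absorbing the cross-block boundary contributions. The main obstacle is the combinatorial $\ell_1$-accounting in the successor step — the inequality $\sum_j p_{n_j}^{-1}\le 1$ — since that is the single place where the structure of $\sss_{\zeta+1}$, the fast-growing gaps, and the exact constant $\delta$ all have to be balanced against one another; everything else is bookkeeping built on the inductive hypothesis and the recursion properties of the repeated averages hierarchy.
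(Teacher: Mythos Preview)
Your reduction to the inequality $\sum_{j=1}^r p_{n_j}^{-1}\le 1$ is where the argument breaks: that inequality is \emph{false} in general. Take $\zeta=0$, $M=2\nn$ (so $m_k=2k$), $\varepsilon=1$, $\delta=1/3$, and $L=(10\cdot 7^{\,n-1})_{n\ge 1}$, which is $(M,\varepsilon)$ fast growing. Then $p_1=l_1=10$, block~$1$ under $T_M$ is $\{2l_1,\dots,2l_{10}\}$ with $2l_1=20$, and $p_2=l_{11}=10\cdot 7^{10}$. Let $E=\{2l_1,\dots,2l_{20}\}$; this has $|E|=20=\min E$, so $E\in\sss_1$. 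Each $D_j$ is a singleton, $n_1=\cdots=n_{10}=1$, $n_{11}=\cdots=n_{20}=2$, and
\[
\sum_{j=1}^{20} p_{n_j}^{-1}=\frac{10}{10}+\frac{10}{10\cdot 7^{10}}=1+7^{-10}>1.
\]
Worse, even if you weaken the target to $\sum_j p_{n_j}^{-1}\le 1+\eta$ for some small $\eta$, your bound becomes $(1+\varepsilon)(1+\eta)$, and the extra factor compounds through the induction on $\xi$.

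The paper's proof avoids this by splitting over blocks $n$ rather than over pieces $D_j$. The first block that $E$ meets, say block $r$, is bounded \emph{trivially} by $\|ET_M x_r^{L,\xi+1}\|_{\ell_1}\le\|x_r^{L,\xi+1}\|_{\ell_1}=1$, with no appeal to the inductive hypothesis and hence no $(1+\varepsilon)$ factor. For each later block $n>r$ one writes $\|E T_M x_n^{L,\xi+1}\|_{\ell_1}\le p_n^{-1}\sum_{j=1}^s\|E_j T_M\sum_{i\in B_n}x_i^{L,\xi}\|_{\ell_1}\le (1+\varepsilon)\,s\,p_n^{-1}$, using the inductive hypothesis on each $E_j\in\sss_\xi$. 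The fast-growing condition gives $s/p_{r+n}\le\delta^n$ exactly as in your geometric-separation remark, so the tail sums to $(1+\varepsilon)\sum_{n\ge1}\delta^n=(1+\varepsilon)\cdot\varepsilon/(1+\varepsilon)=\varepsilon$. Thus the total is $1+\varepsilon$ on the nose. The crucial structural point you are missing is that the main mass must be handled \emph{without} invoking the inductive $(1+\varepsilon)$; only the geometrically small remainder can afford that loss. Your piecewise-in-$j$ decomposition applies the inductive bound uniformly to every piece, including those carrying the main mass, and that is why the constants blow up.
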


\begin{proof} For $M, \varepsilon$ fixed, we will prove by induction on $\xi$ that for any $(M, \varepsilon)$ fast growing $L\in \infin$ and any $E\in \sss_\xi$, $$\Bigl\|ET_M\sum_{n=1}^\infty x_n^{L, \xi}\Bigr\|_{\ell_1} \leqslant 1+\varepsilon.$$ For $\xi=0$, since $\|\cdot\|_0=\|\cdot\|_{c_0}$, this case is trivial.  

Write $x^{L, \xi+1}_n= p_n^{-1}\sum_{i=s_{n-1}+1}^{s_n} x_i^{L, \xi}$, where $p_n= \min \supp(x_n^{L, \xi+1})$.  Choose $E\in \sss_{\xi+1}$ and write $E=\cup_{i=1}^s E_i$ with $(E_i)_{i=1}^s$ $\sss_1$-admissible.  If $ET_Mx_n^{L, \xi+1}=0$ for all $n\in \nn$, there is nothing to prove.  So assume not, and let $r=\min (n: ET_Mx_n^{L, \xi+1}\neq 0)$.  Fix $n\in \nn$ and choose $l_{k_0}<\ldots < l_{k_n}$ so that $m_{l_{k_0}}\in E\cap T_Mx_r^{L, \xi+1}$ and $l_{k_i}= p_{r+i}$ for $1\leqslant i\leqslant n$.  Note that $s\leqslant \min E\leqslant m_{l_{k_0}}$, so $$\frac{s}{p_{r+n}}\leqslant \frac{m_{l_{k_0}}}{l_{k_n}} \leqslant \prod_{i=1}^n \frac{m_{l_{k_{i-1}}}}{l_{k_i}}\leqslant (\varepsilon/(1+2\varepsilon))^n.$$  Then \begin{align*} \bigl\|ET_M\sum x_n^{L, \xi+1}\bigr\|_{\ell_1} & \leqslant 1 +\sum_{n=r+1}^\infty p_n^{-1}\sum_{j=1}^s \bigl\|E_jT_M \sum_{j=s_{n-1}+1}^{s_n} x_j^{L, \xi}\bigr\|_{\ell_1} \\ & \leqslant 1+ \sum_{n=r+1}^\infty p_n^{-1}\sum_{j=1}^s (1+\varepsilon) = 1+(1+\varepsilon)\sum_{n=1}^\infty \frac{s}{p_{r+n}} \\ & \leqslant 1+ (1+\varepsilon)\sum_{n=1}^\infty (\varepsilon/(1+2\varepsilon))^n = 1+\varepsilon. \end{align*}

Suppose the result holds for every $(M, \varepsilon)$ fast growing set and every ordinal strictly less than the limit ordinal $\xi$.  Recall that for some sequence $\xi_n\uparrow \xi$, $\sss_\xi=\{E: \exists n\leqslant E\in \sss_{\xi_n+1}\}$, and that for each $n\in \nn$, $\sss_{\xi_n+1}\subset \sss_{\xi_{n+1}}$.  Let $x_n^{L, \xi}=x_1^{L_n, \xi_{p_n}+1}$ for some $L_n\in [L]$ and $p_n=\min \supp(x_n^{L, \xi})$.  Note that each $L_n$ is $(M,\varepsilon)$ fast growing.  Fix $E\in \sss_\xi$ and let $r=\min (n: ET_Mx^{L, \xi}_n\neq 0)$.  As in the previous case, we deduce that $1/p_{r+n}< (\varepsilon/(1+2\varepsilon))^n$ for each $n\in \nn$.  Moreover, if $m=m_{l_{k_0}}\in E\cap \supp(T_Mx_r^{L, \xi})$ and if $l_{k_n}=p_{r+n}$, we deduce that $E\in \sss_{\xi_m+1}$ and that $m< p_{r+n}$.  The second assertion can be seen by noting that $m/l_{k_n}< \varepsilon/(1+2\varepsilon)<1$.  This means that $E\in \sss_{\xi_m+1}\subset \sss_{\xi_{p_n}}$ for each $n>r$.  Then recalling that $x_1^{L_n, \xi_{p_n}+1}= p_n^{-1}\sum_{j=1}^{p_n} x_j^{L_n, \xi_{p_n}}$, we deduce \begin{align*} \bigl\|ET_M \sum x_n^{L, \xi}\bigr\|_{\ell_1} & \leqslant 1+ \sum_{n=r+1}^\infty \|ET_Mx^{L_n, \xi_{p_n+1}}_1\|_{\ell_1} \\ & = 1+ \sum_{n=r+1}^\infty p_n^{-1}\bigl\|ET_M  \sum_{j=1}^{p_n}x_j^{L_n, \xi_{p_n}}\bigr\|_{\ell_1} \\ & \leqslant 1+ (1+\varepsilon)\sum_{n=1}^\infty 1/p_{n+r} \leqslant 1+\varepsilon. \end{align*}

\end{proof}

We remark that even if $M=\nn$, the fast growing condition is required to obtain the $(1+\varepsilon)$-estimate.  To see this, fix $n\in \nn$ and an interval $I$ with $|I|=n$ and minimum $m>n+1$.  Let $J=[m+n, 2(m+n))$.  Then with $x= (n+1)^{-1}1_{(n+1)\verb!^!I}$, $y=|J|^{-1}1_J$, and $E= [m, 2m)\in \sss_1$, $$\|x+y\|_1 \geqslant \|E(x+y)\|_{\ell_1} = \frac{n}{n+1}+\frac{m-n}{m+n},$$ which can be made arbitrarily close to $2$ by auspicious choices of $m$ and $n$.

\subsection{Schreier spaces}

Assume $0<\xi<\omega_1$.  It is clear that if $(x_i)$ is a bimonotone basic sequence which is a $K$-$\ell_1^\xi$ spreading model, then $(x_i)$ is also a $Kn$-$\ell_1^{(\sss_\xi)_n}$ spreading model.  Since any space containing a $K$-$\ell_1^\xi$, contains a sequence equivalent to $\ell_1$ or a $K$-$\ell_1^\xi$ spreading model which is weakly null, we can in either case find a $K$-$\ell_1^\xi$ spreading model which is basic with basis constant almost $1$, and therefore has projection constant not more than $2+\varepsilon$.  Then any space containing a $K$-$\ell_1^\xi$ spreading model contains a $(2+\varepsilon)nK$-$\ell_1^{(\sss_\xi)_n}$ spreading model. Similarly, if $E$ is bimonotone, the proof of Proposition \ref{index facts} implies that if $I_1(X, E, K)>\omega^\xi$, $I_1(X, E, nK)>\omega^\xi n$.  We will use the Schreier spaces to demonstrate that this lower estimate on the growth rate is optimal.  We state this as follows.  

\begin{proposition} Let $0<\xi<\omega_1$ and let $E$ denote the basis of the Schreier space $X_\xi$.  Then $E$ is a $1$-$\ell_1^\xi$ spreading model, but if $1\leqslant K<n\in \nn$, $I_1(X_\xi, E, K)<\omega^\xi n$.  In particular, $X_\xi$ contains a $K$-$\ell_1^{(\sss_\xi)_n}$ spreading model if and only if $K\geqslant n$.  

\label{Schreier sharp}

\end{proposition}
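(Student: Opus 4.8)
The first assertion is a one-line computation. For $F\in\sss_\xi$ and scalars $(a_i)_{i\in F}$, taking the set $F$ itself in the supremum defining $\|\cdot\|_\xi$ gives $\bigl\|\sum_{i\in F}a_ie_i\bigr\|_\xi\geqslant\sum_{i\in F}|a_i|$, while the reverse inequality holds since $\|Gx\|_{\ell_1}\leqslant\|x\|_{\ell_1}$ for every $G$; as $\|e_i\|_\xi=1$, this says $(e_i)_{i\in F}\in T_1(X_\xi,1)$ for each $F\in\sss_\xi$, i.e.\ $E$ is a $1$-$\ell_1^\xi$ spreading model. Since $E$ is bimonotone, the remark preceding the statement shows $E$ is an $n$-$\ell_1^{(\sss_\xi)_n}$ spreading model, hence a $K$-$\ell_1^{(\sss_\xi)_n}$ spreading model for every $K\geqslant n$; this is the ``if'' half of the last assertion. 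The ``only if'' half will follow once we prove $I_1(X_\xi,E,K)<\omega^\xi n$ for $1\leqslant K<n$: a $K$-$\ell_1^{(\sss_\xi)_n}$ spreading model in $X_\xi$ with $K<n$ would, after invoking Rosenthal's theorem and the difference/gliding--hump reductions discussed after the definition of spreading models (legitimate since $\iota((\sss_\xi)_n)=\omega^\xi n$ is a limit ordinal with $\iota\bigl((\sss_\xi)_n[\aaa_2]\bigr)=\omega^\xi n$), produce a normalized block $(K+\delta)$-$\ell_1^{(\sss_\xi)_n}$ spreading model with $K+\delta<n$; as $(\sss_\xi)_n$ is hereditary, this block sequence spans a subtree of $T_1(X_\xi,E,K+\delta)$ order--isomorphic to $(\sss_\xi)_n$, so $I_1(X_\xi,E,K+\delta)\geqslant\iota((\sss_\xi)_n)+1>\omega^\xi n$, contradicting the inequality.

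So everything reduces to $I_1(X_\xi,E,K)<\omega^\xi n$ when $1\leqslant K<n$. As $\omega^\xi n$ is a limit ordinal and $o(T_1(X_\xi,E,K))$ is a successor, it suffices to prove $d^{\omega^\xi n}(T_1(X_\xi,E,K))=\varnothing$. Suppose not; after a harmless perturbation (replacing $K$ by a slightly larger constant still $<n$, and truncating supports) Proposition \ref{minimal trees}(iv) provides $f\colon\ttt_{\omega^\xi n}\to c_{00}(E)$ with $(f(v|_i))_{i=1}^{|v|}\in T_1(X_\xi,E,K)$ for every $v\in\ttt_{\omega^\xi n}$. Fix a small $\e>0$. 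The plan is to build a single vector $z\in X_\xi$ with two incompatible properties: (a) $\|z\|_\xi\geqslant 1/K$, and (b) $\|z\|_\xi\leqslant(1+\e)/n$. Property (a) is easy: if $z=\sum_jc_jf(v|_j)$ is a convex combination of the vectors along a single branch $v|_1\prec\cdots\prec v$ of $\ttt_{\omega^\xi n}$, then $(z)$ is a $1$-absolutely convex block of $(f(v|_1),\dots,f(v))\in T_1(X_\xi,E,K)$, so $(z)\in T_1(X_\xi,E,K)$ because this tree is $1$-absolutely convex; in particular $1/K\leqslant\|z\|_\xi$.

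For (b) the vector $z$ must reproduce the repeated--averages structure. Using the decomposition $\omega^\xi n=\omega^\xi+\dots+\omega^\xi$ ($n$ summands) together with Proposition \ref{derivation prop} --- which allows one to pass from a node $u$ to a node $u'\succ u$ with $o(\ttt_{\omega^\xi n}(u'))$ smaller by exactly $\omega^\xi=\iota(\sss_\xi)$, always keeping arbitrarily long chains available since $\xi\geqslant 1$ --- one selects, along a single branch, vectors $f(v|_1)<\dots<f(v|_l)$ and convex weights $c_j$, namely the weights of $\tfrac1n\,T_M\bigl(\sum_{n'}x^{L,\xi}_{n'}\bigr)$ for a common pair $(L,M)$ with $L$ chosen $(M,\e)$-fast growing and $M$ recording the finite supports $\supp_E(f(v|_j))$, so that $z=\sum_jc_jf(v|_j)$ inherits, coordinate block by coordinate block, the structure of $\tfrac1n\,T_M\bigl(\sum_{n'}x^{L,\xi}_{n'}\bigr)$. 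Lemma \ref{repeated averages} gives $\bigl\|T_M\sum_{n'}x^{L,\xi}_{n'}\bigr\|_\xi\leqslant 1+\e$, whence (modulo the support--fattening correction described below) $\|z\|_\xi\leqslant(1+\e)/n$. Combining with (a), $1/K\leqslant(1+\e)/n$; letting $\e\downarrow 0$ gives $K\geqslant n$, contradicting $K<n$. This yields $d^{\omega^\xi n}(T_1(X_\xi,E,K))=\varnothing$, i.e.\ $I_1(X_\xi,E,K)<\omega^\xi n$.

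The main obstacle is precisely the extraction in (b). Large tree order forces the branches we use to be long, and hence ``slowly supported'', whereas the estimate of Lemma \ref{repeated averages} needs a fast--growing pattern; reconciling the two is exactly what the repeated averages hierarchy was introduced for, but since the $f(v)$ are arbitrary norm--$\leqslant 1$ block vectors rather than basis vectors, one must, when transferring $T_M\sum_{n'}x^{L,\xi}_{n'}$ onto the branch, fatten the one--point supports of the $T_Me_l$ up to the supports $\supp_E(f(v|_j))$ while keeping all gaps large enough that the $\sss_\xi$-arithmetic inside the proof of Lemma \ref{repeated averages} survives up to the additive error $\e$ (the standard pruning machinery of \cite{AJO} being used to arrange the required spacing along the branch). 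Once this bookkeeping is carried out the argument above goes through unchanged.
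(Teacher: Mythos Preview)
Your overall strategy---produce a convex block $z$ along a single branch with $\|z\|_\xi\geqslant 1/K$ but $\|z\|_\xi\leqslant(1+\e)/n$, using the repeated averages hierarchy---is the right one, and your treatment of the first assertion and the ``if'' direction is fine. But step (b) has a genuine gap. You acknowledge it yourself: the vectors $f(v|_j)$ are arbitrary block vectors, not basis vectors, so Lemma~\ref{repeated averages} does not apply to $z$ directly. Your proposed fix (``fatten the one-point supports of $T_Me_l$ up to $\supp_E(f(v|_j))$ while keeping all gaps large enough that the $\sss_\xi$-arithmetic survives'') is not a proof; it amounts to re-deriving the lemma for blocks of varying length, and you do not actually carry this out. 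Worse, your description is circular: you want $M$ to ``record the finite supports $\supp_E(f(v|_j))$'', but $M$ must be an infinite set of \emph{integers}, and the branch $v$ (hence the supports) depends on the weights $c_j$, which come from $x_{n'}^{L,\xi}$, which depend on $L$, which must be $(M,\e)$-fast growing. You have not broken this circle.

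The paper avoids all of this with a single clean observation you are missing: if $(x_i)$ is a normalized block sequence in $X_\xi$ and $m_i=\max\supp(x_i)$, then $(x_i)$ is $1$-dominated by $(e_{m_i})$. (For $G\in\sss_\xi$, $\|G\sum a_ix_i\|_{\ell_1}\leqslant\sum_{i\in I}|a_i|$ where $I=\{i:G\cap\supp(x_i)\neq\varnothing\}$; picking $k_i\in G\cap\supp(x_i)$ gives $(k_i)_{i\in I}\subset G\in\sss_\xi$, and $(m_i)_{i\in I}$ is a spread of $(k_i)_{i\in I}$, so $(m_i)_{i\in I}\in\sss_\xi$.) This reduces the block-tree problem entirely to a basis-vector problem: one defines $\gggg_K=\{(m_i)_{i=1}^k\in\fin:(e_{m_i})_{i=1}^k\in T_1(X_\xi,E,K)\}$, shows it is spreading and hereditary, and checks that $o(T_1(X_\xi,E,K))=\iota(\gggg_K)+1$. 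If $\iota(\gggg_K)\geqslant\omega^\xi n$ there exists $M\in\infin$ with $(\sss_\xi)_n(M)\subset\gggg_K$; now choose $L\in\infin$ which is $(M,\e)$ fast growing and apply Lemma~\ref{repeated averages} directly to $T_M\sum_{i=1}^n x_i^{L,\xi}$ (whose support lies in $(\sss_\xi)_n(M)\subset\gggg_K$) to get the contradiction $(1+\e)\geqslant n/K$. No fattening, no circularity, no re-proving of the lemma.
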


\begin{proof}

It is clear that if $(x_i)$ is a normalized block sequence in $X_\xi$, and if $m_i=\max \supp(x_i)$, then $(x_i)$ is $1$-dominated by $(e_{m_i})$. Moreover, if $m_i\leqslant k_i$ for each $i\in \nn$, $(e_{m_i})$ is $1$-dominated by $(e_{k_i})$.  For $K\geqslant 1$, we let $$\gggg_K=\{(m_i)_{i=1}^n\in \fin: (e_{m_i})_{i=1}^n\in T_1(X_\xi, E, K)\}.$$  By the properties mentioned above, $\gggg_K$ is spreading, and clearly it is hereditary.  Moreover, we claim that for each $\zeta$, $$\{(\max \supp(x_i))_{i=1}^n: (x_i)_{i=1}^n\in d^\zeta(T_1(X_\xi, E, K))\}= d^\zeta(\gggg_K). $$  By definition of $\gggg_K$, $\gggg_K\subset \{(\max \supp(x_i))_{i=1}^n: (x_i)_{i=1}^n\in T_1(X_\xi, E, K)\}$.  But since $(x_i)_{i=1}^n\in T_1(X_\xi, E, K)$ is $1$-dominated by $(e_{\max \supp(x_i)})_{i=1}^n$, we have $\gggg_K\supset\{(\max \supp(x_i))_{i=1}^n: (x_i)_{i=1}^n\in T_1(X_\xi, E, K)\}$. This establishes the $\zeta=0$ case.  The successor and limit ordinal cases of the claim are trivial.  If $\gggg_K$ is not compact, then there must exist some $M\in \infin$ so that $[M]^{<\omega}\subset \gggg_K$.  If $\gggg_K$ is compact, $\gggg_K$ is regular.  In this case, if $\iota(\gggg_K)\geqslant \omega^\xi n$, there would exist some $M\in \infin$ so that $(\sss_\xi)_n(M)\subset \gggg_K$. In either case, there exists $M\in \infin$ so that $(\sss_\xi)_n(M)\subset \gggg_K$.   Fix $\varepsilon>0$ so that $(1+\varepsilon)K< n$ and choose $L\in \infin$ so that $L$ is $(M, \varepsilon)$ fast growing.  Then $E=\cup_{i=1}^n \supp(x_i^{L, \xi})\in (\sss_\xi)_n$, and $\supp(T_M\sum_{i=1}^n x_i^{L, \xi})=M(E)\in \gggg_K$.  But this means $\|T_M\sum_{i=1}^n x_i^{L, \xi}\|_\xi\geqslant n/K>1+\varepsilon$, while Lemma \ref{repeated averages} tells us that $\|T_M\sum_{i=1}^n x_i^{L, \xi}\|_\xi\leqslant 1+\varepsilon$.  This proves that $\gggg_K$ is compact with $\iota(\gggg_K)<\omega^\xi n$.  Then as a consequence of our claim above, $o(T_1(X_\xi, E, K))\leqslant \iota(\gggg_K)+1<\omega^\xi n$.

\end{proof}

Note that the canonical basis $E^*$ of $X_\xi^*$ is a $1$-$c_0^\xi$ spreading model.  But since the basis of $X_\xi$ is bimonotone, $I_\infty(X_\xi^*, E^*, K)<\omega^\xi n$ for $K<n$ by the quantitative versions of Theorem \ref{dualize}. Thus we see the sharpness of this lower estimate on growth rate for the $p=\infty$ structures as well.   Finally, the analogue of this statement for $1<p<\infty$ and the $p$-convexification of $X_\xi$ gives optimality of the lower bound on the growth rate of each of the indices for $1<p<\infty$.

\subsection{Tsirelson space} In this section, we aim to prove that for $i=2,3,4$ and $p=1,\infty$, $\phi_{p,i}(\xi)=\min \{\omega^\zeta: \omega^\zeta\geqslant \xi\}$ for countable $\xi$, yielding that Corollary \ref{reduction} is sharp in some cases.  We note that our methods rely on dealing with block sequences in a basis, so they do not fully elucidate the values $\phi_{1,1}(2)$ or $\phi_{\infty,1}(2)$.  We will prove the following.  

\begin{theorem} For $\xi<\omega_1$ and $\omega^\xi<\zeta<\omega^{\xi+1}$, $T(\theta, \omega^\xi)\in P_4^\vee(\zeta, 1)$, while $(E, T(\theta, \omega^\xi))\notin P^\wedge_2(\omega^\xi+1, 1)$.  Here, $E$ denotes the canonical basis of $T(\theta, \omega^\xi)$.  

\label{tsirelson}

\end{theorem}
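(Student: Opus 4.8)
I treat the two assertions separately; the spreading model statement is short, and the block index statement carries the work.

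\emph{The spreading model assertion.} The plan is to produce, for each $\zeta\in(\omega^\xi,\omega^{\xi+1})$, a subsequence of the canonical basis $E=(e_n)$ of $T(\theta,\omega^\xi)$ that is a $\theta^{-k}$-$\ell_1^\zeta$ spreading model for a suitable $k\in\nn$. The key step is the iterated admissibility estimate: if $(x_i)_{i=1}^l$ is a finite normalized block sequence in $T(\theta,\sss_\eta)$ with $(\min\supp_E(x_i))_{i=1}^l\in[\sss_\eta]^k$, then $\bigl\|\sum_{i=1}^l a_ix_i\bigr\|\geqslant\theta^k\sum_{i=1}^l|a_i|$ for all scalars. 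This follows by induction on $k$ from the implicit formula for $\|\cdot\|_{T(\theta,\sss_\eta)}$: for $k=1$ one applies the formula to the $\sss_\eta$-admissible family $(\ran_E(x_i))_{i=1}^l$, and for the passage to $k+1$ one writes $[\sss_\eta]^{k+1}=\sss_\eta[[\sss_\eta]^k]$, splits the index set into $\sss_\eta$-admissibly many $[\sss_\eta]^k$-admissible blocks, applies the inductive hypothesis inside each block, and applies the implicit formula once more to the resulting block vectors. Since $\iota([\sss_{\omega^\xi}]^k)=\omega^{\omega^\xi k}$ and $\omega^{\xi+1}=\omega^\xi\omega$, one may fix $k\in\nn$ with $\zeta<\omega^\xi k$, whence $\iota(\sss_\zeta)=\omega^\zeta\leqslant\omega^{\omega^\xi k}$, so there is $M=(m_n)\in\infin$ with $\sss_\zeta(M)\subset[\sss_{\omega^\xi}]^k$. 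Then $(e_{m_n})$ is normalized, and for each $F\in\widehat{\sss}_\zeta$ the index set of $(e_{m_n})_{n\in F}$ is $M(F)\in[\sss_{\omega^\xi}]^k$, so by the estimate above together with $\|\cdot\|_{T(\theta,\sss_{\omega^\xi})}\leqslant\|\cdot\|_{\ell_1}$ for the upper bound, $(e_{m_n})_{n\in F}\in T_1(T(\theta,\omega^\xi),\theta^{-k})$. Thus $(e_{m_n})$ is a $\theta^{-k}$-$\ell_1^\zeta$ spreading model, $\ssm_1(T(\theta,\omega^\xi),\theta^{-k})>\zeta$, and $T(\theta,\omega^\xi)\in P_4^\vee(\zeta,1)$.

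\emph{The block index assertion, setup.} I must produce $K>1$ with $I_1(T(\theta,\omega^\xi),E,K)\leqslant\omega^{\omega^\xi+1}$; since the index is a successor and $\omega^{\omega^\xi+1}$ is a limit, it suffices to find $K>1$ with this index strictly below $\omega^{\omega^\xi+1}$. The plan is to fix any $K\in(1,\theta^{-1})$ and mimic Proposition \ref{Schreier sharp}, the new difficulty being that in Tsirelson space an arbitrary normalized block sequence is not dominated (with any constant) by the subsequence of the basis indexed by its minimal supports in a way that gives the exact derived-tree correspondence used there. The substitute is the standard block-domination estimate for $T(\theta,\sss_\eta)$: every normalized block sequence $(x_i)$ is $\theta^{-1}$-dominated by $(e_{\min\supp_E(x_i)})$. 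Hence if $(x_i)_{i=1}^l\in T_1(T(\theta,\omega^\xi),E,K)$ then $(e_{\min\supp_E(x_i)})_{i=1}^l\in T_1(T(\theta,\omega^\xi),\theta^{-1}K)$, so with $C:=\theta^{-1}K$ and $\gggg_C:=\{(n_i)\in\fin:(e_{n_i})\in T_1(T(\theta,\omega^\xi),C)\}$ the map $(x_i)_{i=1}^l\mapsto(\min\supp_E(x_i))_{i=1}^l$ is an order preserving map of the tree $T_1(T(\theta,\omega^\xi),E,K)$ into $\gggg_C$; by Proposition \ref{minimal trees} this gives $o(T_1(T(\theta,\omega^\xi),E,K))\leqslant o(\gggg_C)$. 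Exactly as in Proposition \ref{Schreier sharp}, $\gggg_C$ is spreading and hereditary, and it is compact because $T(\theta,\omega^\xi)$ is reflexive and so contains no $\ell_1$; thus $\gggg_C$ is regular and $o(\gggg_C)=\iota(\gggg_C)+1$.

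\emph{The block index assertion, core estimate.} It remains to bound $\iota(\gggg_C)$ for $C\in(\theta^{-1},\theta^{-2})$; I claim $\iota(\gggg_C)<\omega^{\omega^\xi}\cdot2$. If not, then since $\iota((\sss_{\omega^\xi})_2)=\omega^{\omega^\xi}\cdot2$ there is $N\in\infin$ with $(\sss_{\omega^\xi})_2(N)\subset\gggg_C$. Fix $\e>0$, pass to $M\in[N]$ and then to an $(M,\e)$ fast growing $L\in\infin$, and form the repeated averages $x_1^{L,\omega^\xi},x_2^{L,\omega^\xi}$: these are probability vectors with $\supp_E(x_1^{L,\omega^\xi})<\supp_E(x_2^{L,\omega^\xi})$, each support in $\sss_{\omega^\xi}$, so $G:=\supp_E(x_1^{L,\omega^\xi})\cup\supp_E(x_2^{L,\omega^\xi})\in(\sss_{\omega^\xi})_2$. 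Then $M(G)\in\gggg_C$, i.e.\ $(e_j)_{j\in M(G)}$ is a $C$-$\ell_1$ sequence, which forces $\bigl\|T_M(x_1^{L,\omega^\xi}+x_2^{L,\omega^\xi})\bigr\|_{T(\theta,\sss_{\omega^\xi})}\geqslant2C^{-1}$. On the other hand, a Tsirelson-norm analogue of Lemma \ref{repeated averages} — proved by the same induction on $\omega^\xi$, but estimating $\|\cdot\|_{T(\theta,\sss_{\omega^\xi})}$ rather than $\|\cdot\|_{\omega^\xi}$, the two natural applications of the implicit formula (first into the two blocks, then within each block into singletons) contributing the two powers of $\theta$ and the fast-growing condition absorbing the error — gives $\bigl\|T_M(x_1^{L,\omega^\xi}+x_2^{L,\omega^\xi})\bigr\|_{T(\theta,\sss_{\omega^\xi})}\leqslant2\theta^2(1+\e)$. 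As $C<\theta^{-2}$, these are incompatible once $\e$ is small enough, proving the claim. Hence $o(T_1(T(\theta,\omega^\xi),E,K))\leqslant\iota(\gggg_C)+1\leqslant\omega^{\omega^\xi}\cdot2<\omega^{\omega^\xi+1}$, which yields $(E,T(\theta,\omega^\xi))\notin P^\wedge_2(\omega^\xi+1,1)$.

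\emph{Main obstacle.} The spreading model direction is routine. In the block index direction the two substantive points are the block-domination estimate for $T(\theta,\sss_\eta)$, which is what allows one to reduce to a regular family on $\nn$ without the exact derived-tree matching of Proposition \ref{Schreier sharp}, and, more essentially, the Tsirelson-norm bound on the sum of two level-$\omega^\xi$ repeated averages: this does not follow from Lemma \ref{repeated averages} because $\|\cdot\|_{T(\theta,\sss_{\omega^\xi})}$ is not dominated by the Schreier norm $\|\cdot\|_{\omega^\xi}$, so the relevant case of that lemma must be re-proved for the Tsirelson norm, with the fast-growing hypothesis used to keep the loss beyond $2\theta^2$ inside $\e$.
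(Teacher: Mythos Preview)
Your spreading-model argument is fine and matches the paper's.

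The block-index argument, however, breaks at the ``core estimate.'' The implicit formula
\[
\|x\|_{T(\theta,\sss_{\omega^\xi})}=\|x\|_{c_0}\vee\sup\Bigl\{\theta\sum_j\|E_jx\|_{T(\theta,\sss_{\omega^\xi})}:(E_j)\ \sss_{\omega^\xi}\text{-admissible}\Bigr\}
\]
produces \emph{lower} bounds by exhibiting a particular admissible decomposition; it does not give upper bounds. Your ``two natural applications of the implicit formula'' therefore yield $\|T_M(x_1^{L,\omega^\xi}+x_2^{L,\omega^\xi})\|_{T}\geqslant 2\theta^2$, not $\leqslant 2\theta^2(1+\varepsilon)$. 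In fact the claimed upper bound is false: since $\supp(T_Mx_1^{L,\omega^\xi})\in\sss_{\omega^\xi}$ and $T_Mx_1^{L,\omega^\xi}$ is a probability vector, splitting into singletons gives $\|T_Mx_1^{L,\omega^\xi}\|_T\geqslant\theta$, and by $1$-unconditionality $\|T_M(x_1^{L,\omega^\xi}+x_2^{L,\omega^\xi})\|_T\geqslant\theta$, which already exceeds $2\theta^2(1+\varepsilon)$ for every $\theta<1/2$. A direct computation (e.g.\ $\theta=3/4$, $\xi=0$, $l_1=10$, splitting into nine singletons of $\supp x_1$ plus one block containing the rest) shows the bound fails for $\theta>1/2$ as well. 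Controlling the Tsirelson norm from above requires bounding a supremum over \emph{all} admissible trees of decompositions, and the fast-growing hypothesis alone does not do this. Separately, the ``block-domination estimate'' you invoke---that every normalized block sequence is $\theta^{-1}$-dominated by $(e_{\min\supp x_i})$ in $T(\theta,\fff)$---is neither standard nor proved here.

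The paper avoids both issues. Its key upper bound is Proposition~\ref{technical 1}(ii): for blocks $(x_i)\subset B_{T(\theta,\fff)}$ with $m_i=\max\supp x_i$,
\[
\Bigl\|\sum a_ix_i\Bigr\|_{T(\theta,\fff)}\leqslant\theta\sum|a_i|+\Bigl\|\sum a_ie_{m_i}\Bigr\|_\fff,
\]
with the \emph{Schreier} norm on the right, not the Tsirelson norm. One then works with an asymptotic block tree indexed by $\widehat{\sss}_{\omega^\xi+1}$ (passing from $P_2^\wedge$ to $P_3^\wedge$), sets $m(F)=\max\supp x_F$, and uses Proposition~\ref{tsirelson thing} (a consequence of Lemma~\ref{repeated averages}) to find a branch $F$ and convex coefficients $(a_i)$ with $\|\sum a_ie_{m(F|_i)}\|_{\omega^\xi}<\rho$. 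This gives $\|\sum a_ix_{F|_i}\|_T\leqslant\theta+\rho<K^{-1}$ for $K<\theta^{-1}$, contradicting membership in $T_1(T(\theta,\omega^\xi),E,K)$. The point is that the Schreier norm \emph{can} be driven to zero along repeated averages, whereas the Tsirelson norm of any probability vector supported in $\sss_{\omega^\xi}$ is bounded below by $\theta$.
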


We first recall that if $E$ is an FDD for $X$ and if $X\in P_1^\wedge(1+\zeta, 1)$, then $(E,X)\in P_2^\wedge(\zeta, 1)$ and, if $\zeta\geqslant \omega$, if $X\in P_1^\wedge(\zeta, 1)$, then $(E,X)\in P_2^\wedge(\zeta, 1)$. Thus the argument will show that $T(\theta, 1)\notin P_1^\wedge(3, 1)$ and $T(\theta, \omega^\xi)\notin P_1^\wedge(\omega^\xi+1, 1)$.  This settles the values of $\phi_{1,i}(\xi)$ for  all countable values of $\xi$ if $i=2,3,4$, and for all values of $\xi$ except $2$ when $i=1$.  

By dualization, this argument will settle the values of $\phi_{\infty,i}(\xi)$ for all countable $\xi$ if $i=2,3,4$ and for all countable values of $\xi$ except $2$ when $i=1$. If $T(\theta, \omega^\xi)^*\in P_i^\wedge(\omega^\xi+1, \infty)$ for $i=2,3,4$, it would imply that $T(\theta, \omega^\xi)\in P_i^\wedge(\omega^\xi+1, 1)$.  

Note that the basis $(e_n)$ of $T(\theta, \omega^\xi)$ is $\theta^{-1}$-asymptotic $\ell_1^{\omega^\xi}$, and therefore $\theta^{-k}$-asymptotic $\ell_1^{[\sss_\xi]^k}$ for each $k\in\nn$.  If $\zeta< \omega^{\xi+1}$, there exists $k\in \nn$ so that $\zeta<\omega^\xi k$.  There exists $M\in \infin$ so that $\sss_\zeta(M)\subset [\sss_\xi]^k$, so that $(e_{m_n})$ is a normalized $\theta^{-k}$-asymptotic $\ell_1^\zeta$ basis.  Therefore it is a $\theta^{-k}$-$\ell_1^\zeta$ spreading model, and $T(\theta, \omega^\xi)\in P_4^\vee(\zeta, 1)$.  A similar argument for the basis of $T(\theta, \omega^\xi)^*$, which is $\theta^{-1}$-asymptotic $c_0^{\omega^\xi}$, gives that $T(\theta, \omega^\xi)^*\in P_4^\vee(\zeta, \infty)$ for each $\zeta< \omega^{\xi+1}$.  

To complete the proof that $T(\theta, \omega^\xi)$, we will need to appeal to the following technical propositions.  The first consists of now standard arguments.    

\begin{proposition} \begin{enumerate}[(i)]\item Let $(x_i)_{i=1}^N$ be a block sequence in $B_{T(\theta, \fff)}$ and let $E_1<\ldots < E_n$ be any sets. Then $$\sum_{j=1}^n \bigl\|E_j\sum_{i=1}^N x_i\bigr\|_{T(\theta, \fff)} \leqslant N+2n.$$  \item Let $(x_i)$ be a block sequence in $B_{T(\theta, \fff)}$ and let $m_i=\max \supp(x_i)$.  Then for any $(a_i)\in c_{00}$, $$\Bigl\|\sum a_ix_i\Bigr\|_{T(\theta, \fff)} \leqslant \theta\sum |a_i| + \Bigl\|\sum a_i e_{m_i}\Bigr\|_\fff.$$  
\end{enumerate}

\label{technical 1}
\end{proposition}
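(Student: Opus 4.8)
The plan is to derive both estimates from the implicit formula for $\|\cdot\|_{T(\theta,\fff)}$, used together with the fact that regularity of $\fff$ makes it hereditary and spreading, so that any subset or spread of an $\fff$-admissible family is again $\fff$-admissible.

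\emph{Part (i).} Since $E_jx=E_jF_jx$ for the interval hull $F_j:=[\min E_j,\max E_j]$, and $\|E_jy\|\leqslant\|F_jy\|$ by $1$-unconditionality, it suffices to bound $\sum_{j=1}^n\|F_j(x_1+\cdots+x_N)\|_{T(\theta,\fff)}$, where the $F_j$ are now successive \emph{intervals}. For fixed $j$ the set of indices $i$ with $\ran(x_i)\cap F_j\neq\varnothing$ is an integer interval $[u_j,v_j]$; since the ranges of distinct blocks are disjoint intervals, one checks that $\ran(x_i)\subseteq F_j$ whenever $u_j<i<v_j$, so that $F_jx_i=x_i$ there, and that each $i$ lies strictly between $u_j$ and $v_j$ for at most one $j$. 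Hence $F_j(x_1+\cdots+x_N)=F_jx_{u_j}+(x_{u_j+1}+\cdots+x_{v_j-1})+F_jx_{v_j}$ (with the evident conventions when the ends coincide or are absent), and the triangle inequality with $\|x_i\|\leqslant 1$ gives $\|F_j(x_1+\cdots+x_N)\|\leqslant 2+|\{i:u_j<i<v_j\}|$; summing over $j\leqslant n$ and using the disjointness of the interior index sets yields $2n+N$.

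\emph{Part (ii).} By $1$-unconditionality we may assume $a_i\geqslant 0$, and we induct on $\max\supp(\sum_i a_ix_i)$, unwinding the norm one admissible level at a time. If the $c_0$-term realizes the norm, then $\|\sum a_ix_i\|=\max_i a_i\|x_i\|_{c_0}\leqslant\max_i a_i\leqslant\|\sum a_ie_{m_i}\|_\fff$ (the last step because $\fff$ contains all singletons), and we are done. Otherwise $\|\sum a_ix_i\|=\theta\sum_{k=1}^s\|E_k\sum_i a_ix_i\|$ for some $\fff$-admissible $(E_k)_{k=1}^s$, and I would classify each $E_k$ by how it meets the block-ranges: those essentially contained in a single $\ran(x_i)$; those missing every block; and the rest, which straddle two consecutive blocks at their extreme ends while every block strictly in between has range inside $(\min E_k,\max E_k)$. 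For a fixed block $x_i$ the $E_k$'s contained in $\ran(x_i)$ are, after restriction, an $\fff$-admissible splitting of a subset of $x_i$, so the implicit formula forces $\theta\sum\|E_kx_i\|\leqslant\|x_i\|\leqslant 1$; the ``swallowed'' blocks occupy index sets that are disjoint across $k$ and never include the last block, so $\fff$-admissibility of $(E_k)$ together with the inductive hypothesis controls their total contribution; and the at most two straddling endpoint blocks of each $E_k$ have maxima $m_i$ whose positions are governed by $(\min E_k)_{k=1}^s\in\fff$, which is exactly what lets them be absorbed into $\|\sum a_ie_{m_i}\|_\fff$. Collecting these pieces produces the bound $\theta\sum|a_i|+\|\sum a_ie_{m_i}\|_\fff$.

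I expect the bookkeeping in (ii) to be the main obstacle: ensuring that the three classes of $E_k$'s contribute exactly $\theta\sum|a_i|$ together with a single copy of $\|\sum a_ie_{m_i}\|_\fff$, rather than a larger combination, and in particular that the straddling-endpoint terms genuinely assemble into an $\fff$-norm. It may well be cleanest to phrase the whole induction in terms of the finite ``admissible tree'' computing the Tsirelson norm, exploiting that the restriction of such a tree to the support of a single unit-ball block is again an admissible tree (here one uses that $\fff$ is both hereditary and spreading), hence evaluates to at most $1$; this is the observation that makes the $\theta\sum|a_i|$ term appear with exactly one power of $\theta$.
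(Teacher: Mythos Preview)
Your argument for part (i) is correct and is essentially the paper's proof: reduce to intervals by $1$-unconditionality, then count the at most two ``boundary'' indices per interval and the disjoint ``interior'' indices.

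Part (ii) is where you diverge, and your sketch is not yet a proof. Classifying the \emph{sets} $E_k$ and inducting on $\max\supp$ is the wrong bookkeeping device; the clean argument classifies the \emph{blocks} $x_i$ instead, and needs no induction. After one unwinding $\|\sum a_ix_i\|=\theta\sum_{j=1}^s\|E_j\sum a_ix_i\|$, set
\[
A=\{i:\ E_jx_i\neq 0\text{ for exactly one }j\},\qquad B=\{i:\ E_jx_i\neq 0\text{ for at least two }j\}.
\]
For $i\in A$ the single term contributes at most $\theta|a_i|$. For $i\in B$, the subfamily $(E_j)_{E_jx_i\neq 0}$ is $\fff$-admissible (hereditariness), so the implicit formula gives $\theta\sum_j\|E_jx_i\|\leqslant\|x_i\|\leqslant 1$, and the total contribution of $x_i$ is at most $|a_i|$. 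The one genuine combinatorial point is that $i\mapsto j_i:=\max\{j:E_jx_i\neq 0\}$ is an injection $B\to\{1,\ldots,s\}$ with $\min E_{j_i}\leqslant m_i$; hence $(m_i)_{i\in B}$ is a spread of a subset of $(\min E_j)_{j=1}^s\in\fff$, so $(m_i)_{i\in B}\in\fff$ and $\sum_{i\in B}|a_i|\leqslant\|\sum a_ie_{m_i}\|_\fff$. Summing gives the bound directly.

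The difficulty you anticipated (``ensuring the three classes contribute exactly $\theta\sum|a_i|$ plus one copy of the $\fff$-norm'') is real for your $E_k$-classification: a single block $x_i$ touched by several of your ``type A'' sets already costs $|a_i|$, not $\theta|a_i|$, and you then need to show that the collection of such $i$'s---together with your straddling-endpoint blocks---lands in $\fff$. That is exactly the paper's set $B$ and the injection above, but your partition by $E_k$ obscures it. The induction on $\max\supp$ is a red herring; once you switch to classifying by $i$ the argument is a one-liner.
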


\begin{proof} (i) By $1$-unconditionality, we can assume that for each $j$, $E_j$ is an interval and that $\supp(x_i)\subset \cup_{j=1}^n E_j$ for each $i$.  For each $1\leqslant j\leqslant n$, let $A_j=(i: E_jx_i\neq 0)$ and $B_j=(i: E_jx_i=x_i)$.  Then $\sum_{j=1}^n |B_j|\leqslant N$ and $|A_j|\leqslant 2+|B_j|$.  Then $$\sum_{j=1}^n \|E_j \sum_{i=1}^N x_i\|_{T(\theta, \omega^\xi)}\leqslant\sum_{j=1}^n \|\sum_{i\in A_j}x_i\|_{T(\theta, \omega^\xi)} \leqslant \sum_{j=1}^n |A_j|\leqslant 2n +\sum_{j=1}^n |B_j|\leqslant N+2n.$$

Of course, we could replace $T(\theta, \fff)$ with any space having a bimonotone basis if we required that the sets $E_j$ are intervals.

(ii) If $\|\sum a_ix_i\|_{T(\theta, \fff)}=\|\sum a_ix_i\|_{c_0}\leqslant \|\sum a_ie_{m_i}\|_\fff$, we are done.  If $\|\sum a_ix_i\|_{T(\theta, \fff)}> \|\sum a_ix_i\|_{c_0}$, there exists an $\fff$-admissible sequence $(E_j)_{j=1}^s$ so that $$\Bigl\|\sum a_ix_i\Bigr\|_{T(\theta, \fff)} = \theta\sum_{j=1}^s \bigl\|E_j \sum a_ix_i\bigr\|_{T(\theta, \fff)}.$$   

Let $$A=\{i\in \nn: E_jx_i\neq 0 \text{\ for exactly one value of\ }j\}$$ and $$B=\{i\in \nn: E_jx_i\neq 0 \text{\ for at least two values of\ }j\}.$$  For $i\in A$, let $j_i$ be the unique value of $j$ so that $E_jx_i\neq 0$.  For $i\in B$, let $j_i$ be the maximum value of $j$ so that $E_jx_i\neq 0$.  Then $i\mapsto j_i$ defines an injection from $B$ into $\{1, \ldots, s\}$.  Moreover, $\min E_{j_i}\leqslant m_i$ for $i\in B$.  Thus $(m_i)_{i\in B}$ is a spread of $(\min E_{j_i})_{i\in B}$.  We deduce that $(m_i)_{i\in B}\in \fff$.  Then \begin{align*} \Bigl\|\sum a_ix_i\Bigr\|_{T(\theta, \fff)} & = \theta \sum_{j=1}^s \bigl\|E_j \sum a_ix_i\bigr\|_{T(\theta, \fff)} \\ & = \theta\sum_{i\in A} |a_i|\|E_{j_i}x_i\|_{T(\theta, \fff)} + \sum_{i\in B} |a_i|\theta\sum_{j=1}^s \|E_j x_i\|_{T(\theta, \fff)} \\ & \leqslant \theta\sum |a_i| + \sum_{i\in B} |a_i| \leqslant \theta \sum |a_i|+ \bigl\|\sum a_ie_{m_i}\bigr\|_\fff.\end{align*}

\end{proof}

We are now ready to sketch the proof that $I_1(T(\theta,1), E, K)<\omega^2$ for any $K<\theta^{-1}$.  This proof uses rapidly increasing sequences of $\ell_1$ averages, which we will see again in the next section. It is based on E. Odell's proof that $T(1/2, 1)$ is $(2-\varepsilon)$-distortable.   If $I_1(T(\theta,1), E, K)>\omega^2$, for any $N\in \nn$,  we could recursively choose $t_n=(x_i^n)_{i=1}^{l_n}$ so that for each $1\leqslant n\leqslant N$, $t_1\verb!^!\ldots \verb!^!t_n\in d^{\omega(N-n)}(T_1(T(\theta, 1), E, K))$ and so that $\max \supp(x_{l_n}^n)/l_{n+1}< \varepsilon$ for each $1\leqslant n<N$.  Let $x_n=l_n^{-1}\sum_{i=1}^{l_n} x_i^n$ and $x=N^{-1}\sum_{n=1}^N x_n$.  Then $\|x\|_{T(\theta, 1)}\geqslant 1/K$, since it is a $1$-absolutely convex combination of a member of $T_1(T(\theta, 1), E, K)$.  Note that $\|x\|_{c_0}\leqslant 1/N$.  If $\|x\|_{T(\theta, 1)}>\|x\|_{c_0}$, we can fix $(E_j)_{j=1}^s$ $\sss_1$-admissible so that $$\|x\|_{T(\theta, 1)}= \theta\sum_{j=1}^s \|E_jx\|_{T(\theta, 1)}.$$  Let $r=\min (n: E_1x_n\neq 0)$.  Then $s\leqslant \min E_1\leqslant \max \supp(x_r)$.  Therefore \begin{align*} \|x\|_{T(\theta, 1)} & \leqslant \theta N^{-1}\sum_{n=r}^N l_n^{-1}\sum_{j=1}^s \bigl\|E_j\sum_{i=1}^{l_n} x_i^n\bigr\|_{T(\theta, 1)} \\ & \leqslant N^{-1}\|x_r\|_{T(\theta, 1)}+\theta \sum_{n=r+1}^N l_n^{-1}(l_n + 2s) \\ & \leqslant N^{-1} + \theta(1+2\varepsilon) (N-1)/N. \end{align*} Since we can do this for any $N\in \nn$ and any $\varepsilon>0$, this yields a contradiction when $K<\theta^{-1}$.  

This settles the $\xi=0$ case of Theorem \ref{tsirelson}.  For $0<\xi$, by the proof of Proposition \ref{index facts}, it is sufficient to compute the $I^a_1$ index of $T(\theta, \omega^\xi)$.  For this we will use the following, which is an easy consequence of Lemma \ref{repeated averages}

\begin{proposition} Suppose $m:\widehat{\sss}_{\omega_1}\to \nn$ is such that for $\varnothing \prec E\prec F$, $m(E)<m(F)$, and if $E<k<l$, $m(E\verb!^!k)<m(E\verb!^!l)$.  Then for any $\rho>0$ and $\xi<\omega_1$, there exist $F\in \sss_{\xi+1}$ and scalars $(a_n)_{n=1}^{|F|}$ with $\sum_{n=1}^{|F|}|a_n|=1$ and $\|\sum_{n=1}^{|F|} a_n e_{m(F|_n)}\|_\xi< \rho$.  

\label{tsirelson thing}

\end{proposition}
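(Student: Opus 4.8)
The plan is to reduce the statement directly to Lemma \ref{repeated averages}, using one elementary observation: since $\sss_\xi$ is spreading and hereditary, moving a vector's support ``to the right'' can only increase its $\|\cdot\|_\xi$-norm. Precisely, if $n_1<\cdots<n_r$ and $n_1'<\cdots<n_r'$ satisfy $n_i\leqslant n_i'$ for each $i$, then $\|\sum_i a_i e_{n_i}\|_\xi\leqslant\|\sum_i a_i e_{n_i'}\|_\xi$ for all scalars: choosing $E\in\sss_\xi$ almost realizing the left-hand norm, one may take $E=\{n_i:i\in A\}$ for some index set $A$, and then $\{n_i':i\in A\}$ is a spread of $E\in\sss_\xi$, hence in $\sss_\xi$, witnessing the right-hand norm is at least as large. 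The idea is then to take $F$ to be the support of a level-$(\xi{+}1)$ repeated average, and to compare the pushed-forward averaging vector $\sum_n a_n e_{m(F|_n)}$ with $T_M$ applied to the repeated average itself, where $M$ is chosen to dominate the labels $m(F|_n)$ while keeping $L$ fast growing relative to $M$.

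First I would fix $\varepsilon\in(0,1)$ and an integer $l_1>(1+\varepsilon)/\rho$, and then build $L=(l_n)\in\infin$ recursively: given $l_1,\ldots,l_n$, set $\sigma_n=m(\{l_1,\ldots,l_n\})$ --- a strictly increasing sequence of positive integers, since $\{l_1,\ldots,l_n\}\prec\{l_1,\ldots,l_{n+1}\}$ and by the first hypothesis on $m$ --- and choose $l_{n+1}>(1+2\varepsilon)(\sigma_n+l_n)/\varepsilon$. Next I would define $M=(m_n)\in\infin$ by putting $m_{l_n}:=\sigma_n+l_n$ for every $n$ and filling in the remaining entries so that $M$ is strictly increasing; this is possible because $m_{l_{n+1}}-m_{l_n}=(\sigma_{n+1}-\sigma_n)+(l_{n+1}-l_n)\geqslant l_{n+1}-l_n$, leaving enough room between consecutive indices $l_n$, and $m_{l_1}=\sigma_1+l_1\geqslant l_1$ leaves room below $l_1$. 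By the choice of $l_{n+1}$ we have $m_{l_n}/l_{n+1}=(\sigma_n+l_n)/l_{n+1}<\varepsilon/(1+2\varepsilon)$ for all $n$, so $L$ is $(M,\varepsilon)$ fast growing.

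Now let $F:=\supp(x_1^{L,\xi+1})$. By property (i) in the definition of the repeated averages hierarchy, $F\in\sss_{\xi+1}$; and since $x_1^{L,\xi+1}=l_1^{-1}\sum_{i=1}^{l_1}x_i^{L,\xi}$, $F$ is a union of initial blocks of the block sequence $(x_n^{L,\xi})_n$, hence an initial segment of $L$, say $F=\{l_1<\cdots<l_r\}$ with $r=|F|$. Write $x_1^{L,\xi+1}=\sum_{n=1}^r a_n e_{l_n}$ with $a_n>0$ and $\sum_{n=1}^r a_n=1$. Since $F|_n=\{l_1,\ldots,l_n\}$ we get $m(F|_n)=\sigma_n\leqslant m_{l_n}$ for each $n$, so the spreading observation yields
\[ \Bigl\|\sum_{n=1}^{|F|}a_n e_{m(F|_n)}\Bigr\|_\xi=\Bigl\|\sum_{n=1}^r a_n e_{\sigma_n}\Bigr\|_\xi\leqslant\Bigl\|\sum_{n=1}^r a_n e_{m_{l_n}}\Bigr\|_\xi=\bigl\|T_M x_1^{L,\xi+1}\bigr\|_\xi. \]
Finally, $\sum_{n=1}^{l_1}x_n^{L,\xi}=l_1\,x_1^{L,\xi+1}$ by definition of the hierarchy, so Lemma \ref{repeated averages} gives $\|T_M x_1^{L,\xi+1}\|_\xi=l_1^{-1}\|T_M\sum_{n=1}^{l_1}x_n^{L,\xi}\|_\xi\leqslant(1+\varepsilon)/l_1<\rho$, which together with $\sum_{n=1}^{|F|}|a_n|=1$ completes the proof. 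The only delicate point is the simultaneous construction of $L$ and $M$: one needs $L$ to outgrow $M$ (the fast-growing condition) while retaining $m_{l_n}\geqslant m(F|_n)$, and this is exactly why the recursion defines $\sigma_n$ from $l_1,\ldots,l_n$ (already fixed) before selecting $l_{n+1}$ and before selecting the value $m_{l_n}$.
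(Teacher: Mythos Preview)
Your proof is correct and follows essentially the same route as the paper's: construct $L$ and $M$ simultaneously so that $L$ is $(M,\varepsilon)$ fast growing while $m_{l_n}\geqslant m((l_1,\ldots,l_n))$, take $F=\supp(x_1^{L,\xi+1})$, and use the spreading property of $\sss_\xi$ together with Lemma \ref{repeated averages}. Your version is slightly more explicit in specifying $m_{l_n}=\sigma_n+l_n$ and in verifying that the remaining entries of $M$ can be filled in, but the argument is the same.
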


\begin{proof} Fix $\varepsilon>0$ and choose recursively $l_1<l_2< l_3< \ldots$ and $m_1< m_2< \ldots$ so that for each $n\in \nn$,  \begin{enumerate}[(i)]\item $l_1> (1+\varepsilon)\rho^{-1}$, \item $m((l_1, \ldots, l_n))\leqslant m_{l_n}$, \item $m_{l_n}< \varepsilon l_{n+1}/ (1+2\varepsilon)$. \end{enumerate}

Then with $M=(m_n)$ and $L=(l_n)$, $L$ is $(M, \varepsilon)$ fast growing.  Write $x^{L, \xi+1}_1=\sum_{i=1}^N a_i e_{l_i}$, and recall that $x^{L, \xi+1}_1=l_1^{-1}\sum_{i=1}^{l_1} x_i^{L, \xi}$.  Then $F=(l_i)_{i=1}^N=\supp(x^{L, \xi+1}_1)\in \sss_{\xi+1}$.  Moreover, since $$m(F|_n)= m((l_1, \ldots, l_n))\leqslant m_{l_n},$$  $$\Bigl\|\sum_{n=1}^N a_n e_{m(F|_n)}\Bigr\|_\xi \leqslant \Bigl\|\sum_{n=1}^N a_n e_{m_{l_n}}\Bigr\|_\xi= \|T_M x_1^{L, \xi+1}\|_\xi = l_1^{-1}\Bigl\|T_M\sum_{n=1}^{l_1} x_n^{L, \xi}\Bigr\|_\xi \leqslant \frac{1+\varepsilon}{l_1}<\rho.$$

\end{proof}

\begin{proof}[Proof of Theorem \ref{tsirelson}] We have already treated the $\xi=0$ case.  If $(E,T(\theta, \omega^\xi))\in P_2^\wedge(\omega^\xi+1, 1)$, then $(E,T(\theta, \omega^\xi))\in P^\wedge_3(\omega^\xi+1, 1)$.  This means that for $K< \theta^{-1}$, we can find an asymptotic block tree $(x_F)_{F\in \widehat{\sss}_{\omega^\xi+1}}$ so that for each $F\in \widehat{\sss}_{\omega^\xi+1}$, $(x_{F|_i})_{i=1}^{|F|}\in T_1(T(\theta, \omega^\xi), E, K)$.   Let $m(F)=\max \supp(x_F)$.  Extend $m:\widehat{\sss}_{\omega^\xi+1}\to \nn$ to any function $m:\widehat{\sss}_{\omega_1}\to \nn$ which still satisfies the hypotheses of Proposition \ref{tsirelson thing}.  Choose $\rho>0$ so that $\theta+ \rho < K^{-1}$ and $F\in \sss_{\omega^\xi+1}$, $(a_i)_{i=1}^{|F|}$ so that $\sum_{i=1}^{|F|}|a_i|=1$ and $\|\sum_{i=1}^{|F|} a_i e_{m(F|_i)}\|_{\omega^\xi}< \rho.$  Then $$\bigl\|\sum_{i=1}^N a_ix_{F|_i}\bigr\|_{T(\theta, \omega^\xi)} \leqslant \theta + \bigl\|\sum_{i=1}^{|F|} a_i e_{m(F|_i)}\bigr\|_{\omega^\xi} < \theta+\rho < K^{-1}.$$  This contradiction yields the result.

\end{proof}

Note that we have actually shown that $I_1(T(\theta, \omega^\xi), K)<\omega^{\xi+1}$ when $K<\theta^{-1}$.

\section{Constant reduction for reflexive $\ell_p$}

\subsection{Positive results from Krivine's theorem}

As was noted in \cite{L} and \cite{Rosenthal Krivine}, Krivine's theorem \cite{K} implies the following result.    

\begin{theorem} If $E$ is any Schauder basis, there exists $1\leqslant p\leqslant \infty$ so that for all $n\in \nn$ and $\varepsilon>0$, there exists a normalized block sequence $(x_i)_{i=1}^n$ in $E$ which is $(1+\varepsilon)$-equivalent to the unit vector basis of $\ell_p^n$.  

\label{Krivine}

\end{theorem}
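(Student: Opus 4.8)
The plan is to derive this statement from Krivine's theorem in its standard ``spreading model'' formulation, which asserts that for any seminormalized basic sequence $(e_i)$ there is some $1\leqslant p\leqslant\infty$ such that the unit vector basis of $\ell_p$ (or $c_0$ when $p=\infty$) is \emph{block finitely representable} in $(e_i)$: for every $n$ and every $\varepsilon>0$ there is a normalized block sequence $(x_i)_{i=1}^n$ of $(e_i)$ which is $(1+\varepsilon)$-equivalent to the $\ell_p^n$ basis. Since the displayed statement is almost verbatim this assertion, the ``proof'' is really a reduction: first I would normalize, replacing $E=(e_i)$ by $(e_i/\|e_i\|)$, which changes nothing about block representability of $\ell_p^n$ up to absorbing constants into $\varepsilon$; then invoke Krivine's theorem \cite{K} applied to this normalized basic sequence to extract the exponent $p$ and the block sequences. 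I would state Krivine's theorem precisely in the form I need, cite \cite{L} and \cite{Rosenthal Krivine} for this block-finite-representability packaging (as the excerpt already does), and note that the block sequences produced can be taken normalized by a further trivial rescaling.

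The key steps, in order: (1) reduce to a normalized Schauder basis; (2) recall/quote Krivine's theorem, in the Lemberg \cite{L} / Rosenthal \cite{Rosenthal Krivine} block form, which directly yields a single $p\in[1,\infty]$ and, for each $n$ and $\delta>0$, a block sequence $(y_i)_{i=1}^n$ of $E$ that is $(1+\delta)$-equivalent to the $\ell_p^n$ basis; (3) normalize each $y_i$, observing that if $(y_i)_{i=1}^n$ is $(1+\delta)$-equivalent to the $\ell_p^n$ basis then so is $(y_i/\|y_i\|)_{i=1}^n$ after adjusting $\delta$ to a comparable $\varepsilon$ (the normalizing constants $\|y_i\|$ lie in $[(1+\delta)^{-1},(1+\delta)]$, so this costs at most a factor $(1+\delta)^2$); (4) given the target $\varepsilon>0$, choose $\delta$ small enough that $(1+\delta)^3<1+\varepsilon$ and output $(x_i)_{i=1}^n=(y_i/\|y_i\|)_{i=1}^n$.

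The main obstacle here is not in the reduction, which is routine, but in the fact that the real content is Krivine's theorem itself, which is a deep result I am allowed to assume; the only genuine care needed is to make sure I am citing the correct refinement (block finite representability in a prescribed Schauder basis, not merely finite representability in the space) — this is exactly the point flagged in \cite{L} and \cite{Rosenthal Krivine}, and the excerpt signals that this is the intended route by citing precisely those papers. So the ``proof'' will essentially be one paragraph: normalize, cite Krivine via \cite{L}, \cite{Rosenthal Krivine}, renormalize the resulting blocks, and absorb constants. I do not expect any serious technical difficulty beyond bookkeeping with the $(1+\varepsilon)$ factors.
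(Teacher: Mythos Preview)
Your proposal is correct and matches the paper's approach exactly: the paper does not prove this theorem at all but simply cites it as a consequence of Krivine's theorem \cite{K} in the block form noted by Lemberg \cite{L} and Rosenthal \cite{Rosenthal Krivine}. Your additional normalization bookkeeping is harmless but unnecessary, since those references already produce normalized blocks.
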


It is clear that if the basis $E$ is equivalent to the $\ell_q$ (resp. $c_0$) unit vector basis, then the $p$ from Krivine's theorem can only be $q$ (resp. $\infty$).  These facts together are easily seen to imply the following quantitative version: For $K\geqslant 1$, $n\in \nn$, $\varepsilon>0$, and $1\leqslant p\leqslant \infty$, there exists $N=N(n, K, \varepsilon,p)$ so that if $(e_i)_{i=1}^N$ is $K$-equivalent to the unit vector basis of $\ell_p^N$, then there exists a normalized block $(x_i)_{i=1}^n$ of $(e_i)_{i=1}^N$ which is $(1+\varepsilon)$-equivalent to the $\ell_p^n$ basis.  Indeed, if it were not so, we could find $K\geqslant 1$, $n\in \nn$, and $\varepsilon>0$ so that for every $N\in \nn$, there exists $(e_i^N)_{i=1}^N$ $K$-equivalent to the unit vector basis of $\ell_p^N$ with no normalized block of length $n$ being $(1+\varepsilon)$-equivalent to the $\ell_p^n$ basis.  If $\uuu$ is a free ultrafilter on $\nn$, we define a norm on $c_{00}$ by $$\Bigl\|\sum a_i e_i\Bigr\|_\uuu = \underset{N\in \uuu}{\lim} \Bigl\|\sum a_i e_i^N\Bigr\|.$$  Then this norm is $K$-equivalent to the $\ell_p$ norm on $c_{00}$.  By Theorem \ref{Krivine}, there is a normalized block $(x_i)_{i=1}^n$ of the $c_{00}$ basis which is $(1+\varepsilon)^{1/2}$-equivalent to the $\ell_p^n$ basis.  But by standard arguments, this block is $(1+\varepsilon)^{1/2}$ equivalent to a block of $(e_i^N)_{i=1}^N$ for some $N\in \nn$.  This contradiction gives the quantitative version.  

This quantitative version of Krivine's theorem yields immediately that $P^\vee_1(1, p)=P^\wedge_1(1, p)$ and $P^\vee_2(1, p)=P^\wedge_2(1, p)$ for all $1\leqslant p\leqslant \infty$.  That is, crude finite representability of $\ell_p$ in a Banach space $X$ is equivalent to finite representability of $\ell_p$ in $X$, and crude block finite representability is equivalent to block finite representability.  This string of positive results continues with $P^\vee_3(1, p)=P^\wedge_3(1, p)$.  To see this, suppose $I_p^a(X, E, K)>\omega$.  Fix $\varepsilon>0$, $n\in \nn$, and let $N=N(n, K, \varepsilon,p)$ be as above.  Suppose $(x_F)_{F\in \widehat{\aaa}_N}$ is such that for each $k_1<\ldots <k_N$, $(x_{(k_1, \ldots, k_i)})_{i=1}^N$ is a block in $F$ which is $K$-equivalent to the $\ell_p^N$ basis, and that for each $F\in \aaa_N'$, $k\leqslant \supp_E(x_{F\verb!^!k})$.  Then define a norm on $(e_i)_{i=1}^N$ by $$\Bigl\|\sum_{i=1}^N a_i e_i\Bigr\| = \lim_{k_1\in \uuu}\ldots \lim_{k_N\in \uuu} \Bigl\|\sum_{i=1}^N a_i x_{(k_1, \ldots, k_i)}\Bigr\|.$$  By standard pruning arguments, we can pass to a pruned subtree and assume that every branch of the tree $(x_F)_{F\in \widehat{\aaa}_N}$ is $(1+\varepsilon)$-equivalent to $(e_i)_{i=1}^N$.  Moreover, $(e_i)_{i=1}^N$ is $K$-equivalent to the $\ell_p^N$ basis, so there exist $0=k_0<\ldots <k_n=N$ and scalars $(b_j)_{j=1}^N$ so that with $f_i=\sum_{j=k_{i-1}+1}^{k_i} b_je_j$, $(f_i)_{i=1}^n$ is $(1+\varepsilon)$-equivalent to the unit vector basis of $\ell_p^n$.  We can then define a tree $(y_F)_{F\in \widehat{\aaa}_n}$ the blocks of which are blocks of branches of $(x_F)_{F\in \widehat{\aaa}_N}$, which we deduce to be $(1+\varepsilon)^2$-equivalent to the $\ell_p^n$ basis. To see this, define $G_{(i)}=(i+1, \ldots, i+k_1)$ and, if for each $1\leqslant j\leqslant l<n$, $G_{(i_1, \ldots, i_j)}$ has been defined so that $(G_{(i_1, \ldots, i_j)})_{j=1}^l$ is successive and $|G_{(i_1, \ldots,i_j)}|=k_j-k_{j-1}$, we let $m=\max G_{(i_1, \ldots, i_l)}$ and for $i>i_l$, let $$G_{(i_1, \ldots, i_l, i)}= (m+i+1, \ldots, m+i+k_{l+1}-k_l).$$  Then for $F\in \widehat{\aaa}_n$, with $\cup_{i=1}^{|F|}G_{F|_i}=(s_i)_{i=1}^{k_{|F|}}$, we let $$y_F=\sum_{i=k_{|F|-1}+1}^{k_{|F|}} b_ix_{(s_1, \ldots, s_i)}. $$ Then by our construction, $(y_{F|_i})_{i=1}^n$ is $(1+\varepsilon)$-equivalent to $(f_i)_{i=1}^n$, and therefore $(1+\varepsilon)^2$-equivalent to the $\ell_p^n$ basis. With this, we deduce $P^\wedge_3(1, p)=P^\vee_3(1, p)$.

Modern proofs of Krivine's theorem use spreading models, and actually prove more than is stated in Theorem \ref{Krivine}. Let us say that a block $(y_i)_{i=1}^n$ of $(x_i)$ is a \emph{constant distribution block} if there exists a sequence $(a_j)_{j=1}^N$ of scalars and natural numbers $m_1<m_2<\ldots< m_{nN}$ so that for each $1\leqslant i\leqslant n$, $y_i=\sum_{j=1}^{N} a_j x_{m_{(i-1)N+j}}$.  We have a similar definition for infinite blocks $(y_i)$ of $(x_i)$.  We say that the Schauder basis $(e_i)$ is \emph{constant distribution block finitely representable in} $(x_i)$ if for every $n\in \nn$ and every $\varepsilon>0$, there exists a constant distribution block $(y_i)_{i=1}^n$ of $(x_i)$ which is $(1+\varepsilon)$-equivalent to $(e_i)_{i=1}^n$.  We note that constant distribution finite block representability is transitive.  Moreover, if $\uuu$ is a free ultrafilter on $\nn$, and if for each $N\in \nn$, $(y_i^N)$ is a normalized, constant distribution block of $(x_i)$, then the norm defined on $c_{00}$ by $$\Bigl\|\sum a_i e_i\Bigr\| = \lim_{N\in \uuu} \Bigl\|\sum a_i y_i^N\Bigr\|$$ defines a norm on $c_{00}$ which is constant distribution block finitely representable in $(x_i)$.  Moreover, it is clear that constant distribution blocks and spreading models of $(x_i)$ are themselves constant distribution block finitely representable in $(x_i)$.  Then modern proofs of Krivine's theorem for a normalized Schauder basis $(x_i)$ such that neither $\ell_1$ nor $c_0$ is block finitely representable in $(x_i)$ involve first passing to a weakly Cauchy subsequence of $(x_i)$ and then the seminormalized weakly null constant distribution block $(x_{2i}-x_{2i-1})$.  Then one passes to a spreading model and then uses the method above involving free ultrafilters several times to build further sequences, so that eventually one of these sequences is isometrically equivalent to the $\ell_p$ basis for some $1<p<\infty$. At each step, each constructed basis was constant distribution block finitely representable in the previous basis, so that by transitivity we see that $\ell_p$ is constant distribution block finitely representable in $X$.  

As before, we deduce the quantitative version:  For $\varepsilon>0$, $n\in \nn$, $K\geqslant 1$, and $1<p<\infty$, there exists $N=N(n, K, \varepsilon,p)$ so that if $(x_i)_{i=1}^N$ is any sequence $K$-equivalent to the unit vector basis of $\ell_p^N$, there exists a constant distibution block $(y_i)_{i=1}^n$ of $(x_i)_{i=1}^N$ which is $(1+\varepsilon)$-equivalent to $\ell_p^n$.  Again, the proof is by contradiction.  The only modification to the proof above we need is that when we construct the norm on $c_{00}$ using a free ultrafilter, any finite, constant distribution block in $(e_i)$ is $(1+\varepsilon)$-equivalent to a constant distribution block of $(x_i^N)_{i=1}^N$ for some $N$.  We see the following application.  Suppose that $X$ is a Banach space and $K\geqslant 1$ is such that for some $1<p<\infty$, $X$ admits a $K$-$\ell_p^{\aaa_N}$ spreading model.   That is, for each $N\in \nn$, there exists $(x_i)\subset X$ so that for every $E$ with $|E|\leqslant N$, $(x_i)_{i\in E}\in T_p(X, K)$.  Then for $n\in \nn$ and $\varepsilon>0$, we can choose $N=N(n, K, \varepsilon, p)$ and a $K$-$\ell_p^{\aaa_N}$ spreading model $(x_i)$ in $X$.  By passing to a subsequence, we can assume that for every sequence of scalars $(a_i)_{i=1}^N$ and every $m_1<\ldots <m_N$ and $l_1<\ldots <l_N$, $$\Bigl\|\sum_{i=1}^N a_ix_{m_i}\Bigr\|\leqslant (1+\varepsilon)\Bigl\|\sum_{i=1}^N a_i x_{l_i}\Bigr\|.$$  Then there exists a constant distribution block of $(x_i)_{i=1}^N$, say $y_i'=\sum_{j=1}^m b_j x_{k_j^i}$, which is $(1+\varepsilon)$-equivalent to the $\ell_p^n$ basis, where the sets $E_i=(k^i_j)_{j=1}^m$ are successive subsets of $(1, \ldots, N)$.  But then if $y_i=\sum_{j=1}^m b_j x_{im+j}$, $(y_i)$ is a constant distribution block of $(x_i)$ so that for any $E$ with $|E|\leqslant n$, $(y_i)$ is $(1+\varepsilon)^2$-equivalent to $(y'_i)_{i=1}^n$.  Therefore if we can find $\ell_p^{\aaa_N}$ spreading models with a uniform constant, we can find them with constants arbitrarily close to $1$.  It follows from the fact that these block copies of $\ell_p$ in Krivine's theorem are found spreading models that if $X$ is any infinite dimensional Banach space, either $X$ admits $c_0^{\aaa_N}$ spreading models uniformly (and therefore almost isometrically) or there exists $1\leqslant p<\infty$ so that $X$ admits $\ell_p^{\aaa_N}$ spreading models uniformly (and therefore almost isometrically).  

The previous paragraph emphasizes the difference between structures determined by trees and strcutures determined by sequences.  Odell and Schlumprecht \cite{OS1} demonstrated the existence of a Banach space $X_{OS}$ admitting no $c_0^1$ or $\ell_p^1$ spreading model.  But $X_{OS}$ admits uniform $\ell_1^{\aaa_n}$ spreading models for all $n\in \nn$.  Moreover, it is easy to see that $X_{OS}^*$ admits uniform $c_0^{\aaa_n}$ spreading models and the $p$-convexification of $X_{OS}$ admits uniform $\ell_p^{\aaa_n}$ spreading models for all $n\in \nn$.

\subsection{Distortability and consequences}

In \cite{JO}, the following observation was made:  For $1<p<\infty$, there does not exist a function $\phi_p:\ord\to \ord$ so that $P^\vee_1(\phi_p(\xi), p)\subset P^\wedge_1(\xi, p)$ for all $\xi\in \ord$.  This is because the spaces $\ell_p$, $1<p<\infty$, are distortable.  If $X$ is $K$-isomorphic to $\ell_p$, $I_p(X, K)=\infty$.  If such a $\phi_p$ existed, this would mean $I_p(X, 1+\varepsilon)=\infty$, and $X$ must admit a $(1+\varepsilon)$-isomorphic copy of $\ell_p$.  But this would imply that $\ell_p$ is not distortable.  This leads to the following definitions.  For $1<p<\infty$ and $i\in \{1,2,3,4\}$, let $M_i(p)$ denote the supremum of ordinals $\xi$ so that there exists a function $\phi_{p,i}:[0, \xi]\to \ord$ so that for each $1\leqslant \zeta\leqslant \xi$, $P^\vee_i(\phi_{p,i}(\zeta), p)\subset P^\vee_i(\zeta, p)$. The observation above from \cite{JO} shows that $M_i(p)$ is well-defined.  In \cite{JO}, the problem of finding explicit estimates for $M_1(p)$ was raised.  We will show the following.

\begin{theorem} Fix $1<p<\infty$.  \begin{enumerate}[(i)]\item $1\leqslant M_1(p)\leqslant 2$. \item $1=M_2(p)=M_3(p)$. \item $0\leqslant M_4(p)\leqslant 1$. \end{enumerate}

\label{neg constant}

\end{theorem}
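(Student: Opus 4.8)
The plan is to obtain the three lower bounds directly from the positive results already established via Krivine's theorem, and the three upper bounds from a single space: $\ell_p$ renormed with a strictly distorting Odell--Schlumprecht norm. For the lower bounds, the quantitative form of Krivine's theorem (Theorem \ref{Krivine} and the discussion following it) gives $P^\vee_i(1,p)=P^\wedge_i(1,p)$ for $i\in\{1,2,3\}$ and every $1<p<\infty$; hence the identity on $[0,1]$ is an admissible $\phi_{p,i}$ and $M_i(p)\geqslant 1$ for $i=1,2,3$, while for $i=4$ the condition is vacuous at level $0$, so $M_4(p)\geqslant 0$. (The indeterminacy left in (i) and (iii) is exactly the open question of whether $M_1(p)=2$ and whether $P^\vee_4(1,p)=P^\wedge_4(1,p)$ for $1<p<\infty$.)

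For the upper bounds, fix $1<p<\infty$ and let $Y_p$ be $\ell_p$ equipped with an Odell--Schlumprecht distorting norm: a Tsirelson-type norm given by an implicit equation built from the Schreier family $\sss_1$ and a slowly growing distorting function applied along rapidly increasing sequences of $\ell_1$-averages, with parameters tuned so that $Y_p$ is $C_p$-isomorphic to $\ell_p$ for some $C_p\geqslant 1$ but is strictly distorted. Let $E$ denote the canonical $1$-unconditional FDD of $Y_p$. Since $Y_p$ is $C_p$-isomorphic to $\ell_p$, every normalized block sequence of $E$ is $C_p^2$-equivalent to the unit vector basis of $\ell_p$, so each of $I_p(Y_p,C_p^2)$, $I_p(Y_p,E,C_p^2)$, $I^a_p(Y_p,E,C_p^2)$ and $\ssm_p(Y_p,C_p^2)$ equals $\infty$; in particular $Y_p\in P^\vee_1(\gamma,p)\cap P^\vee_4(\gamma,p)$ and $(E,Y_p)\in P^\vee_2(\gamma,p)\cap P^\vee_3(\gamma,p)$ for every $\gamma\in\ord$.

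The crux, and the one genuinely hard step, is to prove that for a fixed $\varepsilon_p>0$ and some $0<\varepsilon'_p\leqslant \varepsilon_p$ one has (a) $I_p(Y_p,E,1+\varepsilon_p)\leqslant\omega^2$, (b) $\ssm_p(Y_p,1+\varepsilon_p)\leqslant 2$, and (c) $I_p(Y_p,1+\varepsilon'_p)\leqslant\omega^3$. For (a) I would argue as in the proof of Theorem \ref{tsirelson}: by the reduction used in Proposition \ref{index facts}(v) it suffices to bound the asymptotic index $I^a_p(Y_p,E,\cdot)$, and an asymptotic block tree of order $\omega^2$ at constant $1+\varepsilon_p$ would, via Propositions \ref{technical 1} and \ref{tsirelson thing} and Lemma \ref{repeated averages} transported to the present norm, produce along a suitable $\sss_2$-branch of iterated $\ell_1$-averages a vector whose norm is driven by the distorting function strictly below $(1+\varepsilon_p)^{-1}$, contradicting membership in $T_p(Y_p,E,1+\varepsilon_p)$. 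For (b), the same repeated-averages mechanism is run on $\sss_2$-admissible averages to exclude a $(1+\varepsilon_p)$-$\ell_p^2$ spreading model. For (c), one notes that (a), valid on an interval of constants near $1$, forces $I^a_p(Y_p,E,\cdot)\leqslant\omega^2$ there, and then the coloring argument behind Proposition \ref{index facts}(v) (Lemma \ref{mixed coloring}) upgrades this to $I_p(Y_p,1+\varepsilon'_p)\leqslant\omega\cdot\omega^2=\omega^3$ after slightly shrinking the constant. Making (a)--(c) rigorous is precisely the quantification of the Odell--Schlumprecht distortion of $\ell_p$ announced in the introduction, and that is where essentially all the effort goes.

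Granting (a)--(c): (a) yields $(E,Y_p)\notin P^\wedge_2(2,p)$, and since $I^a_p(\cdot,E,\cdot)\leqslant I_p(\cdot,E,\cdot)$ it yields $(E,Y_p)\notin P^\wedge_3(2,p)$; (b) yields $Y_p\notin P^\wedge_4(2,p)$; (c) yields $Y_p\notin P^\wedge_1(3,p)$. Combined with the membership statements of the previous paragraph, $P^\vee_i(\gamma,p)\not\subset P^\wedge_i(2,p)$ for $i=2,3,4$ and $P^\vee_1(\gamma,p)\not\subset P^\wedge_1(3,p)$, for every $\gamma\in\ord$. Consequently no admissible reduction function $\phi_{p,i}$ exists on $[0,2]$ for $i=2,3,4$ and none exists for $\phi_{p,1}$ on $[0,3]$; thus $M_i(p)\leqslant 1$ for $i=2,3,4$ and $M_1(p)\leqslant 2$. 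With the lower bounds from the first paragraph this gives $1\leqslant M_1(p)\leqslant 2$, $M_2(p)=M_3(p)=1$, and $0\leqslant M_4(p)\leqslant 1$, as claimed.
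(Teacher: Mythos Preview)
Your overall architecture matches the paper: lower bounds from Krivine, upper bounds from a single distorted copy of $\ell_p$, and the chain of deductions $M_2(p)\leqslant 1\Rightarrow M_3(p),M_4(p)\leqslant 1$ and $M_1(p)\leqslant 1+M_2(p)$ via Proposition~\ref{index facts}(v). That skeleton is correct and is exactly what the paper does in its short proof of Theorem~\ref{neg constant}.

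The genuine gap is in your description of $Y_p$ and your proposed proof of (a). You describe $Y_p$ as carrying ``a Tsirelson-type norm given by an implicit equation built from the Schreier family $\sss_1$'' and propose to bound $I_p(Y_p,E,1+\varepsilon_p)$ via the repeated-averages machinery of Propositions~\ref{technical 1}, \ref{tsirelson thing} and Lemma~\ref{repeated averages}. This is not the Odell--Schlumprecht construction, and those tools do not apply to it. A Tsirelson-type implicit norm does not produce a space isomorphic to $\ell_p$ for $1<p<\infty$, so your $Y_p$ as described does not exist. The paper's space (Lemma~\ref{andy daly}) is the explicit renorming
\[
\|x\|=\max\Bigl\{\varepsilon_k'\|x\|_{\ell_p},\ \sup_{x^*\in\kkk_k^*} x^*(x)\Bigr\},
\]
where $\kkk_k^*=M_q(\jjj_k)$ is obtained from RIS vectors in Schlumprecht space transported through the Mazur map (Lemma~\ref{prime lemma}); there is no Schreier family and no implicit equation. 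The proof that $I_p(X_k,E,K)<\omega^2$ for $K<(\varepsilon_k')^{-1}$ does not use repeated averages at all: one takes a block tree of order $\omega^2$, uses Corollary~\ref{approximate monochromatic} to stabilize the $\ell_p$-norms at some $\theta$, and then invokes Lemma~\ref{prime lemma}(iii) twice to find $p$-absolutely convex combinations close to elements of $\kkk_k$ (norm $\approx 1$) and of $\kkk_j$ for $j\neq k$ (norm $\leqslant\varepsilon_k'$), forcing $K^{-1}\leqslant\varepsilon_k'$. Your claim that Tsirelson-space estimates ``transport to the present norm'' is where the argument would fail; the mechanism producing the distortion here is the almost-biorthogonality of the sets $\kkk_j,\kkk_k^*$, not a Schreier-admissibility bound. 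Once Lemma~\ref{andy daly} is in hand, your deductions for (b), (c) and the final assembly are fine and coincide with the paper's.
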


We devote the remainder of this section to a quantification of the Odell-Schlumprecht distortion of $\ell_p$.  The argument is rather technical, so we give the outline first. Here, $S$ will denote Schlumprecht space, which we define below.  The equivalent norm on $\ell_p$ is defined by selecting a sequence $\varepsilon_k\downarrow 0$ and a sequence of sets $\iii_k\subset c_{00}\cap B_S$ and $\iii_k^*\subset c_{00}\cap B_{S^*}$ so that for each $k\in \nn$ and each $x\in \iii_k$, there exists $x^*\in \iii_k^*$ so that $x^*(x)>1-\varepsilon_k$ and so that for $k,l\in \nn$, $k\neq l$, $x\in \iii_k$ and $x^*\in \iii_l^*$, $|x^*(x)|< \varepsilon_{\min \{k,l\}}$.  Then one uses these sets to construct a set $\jjj_k\subset S_{\ell_1}$ which almost intersects any block subspace of $\ell_1$, and then moves this set via the Mazur map into two sets $\kkk_k\subset S_{\ell_p}$ and $\kkk^*_k\subset B_{\ell_q}$, where $q$ is the conjugate exponent to $p$.  These sets will retain the property that any block subspace of $\ell_p$ almost intersects $\kkk_k$ and so that if $x^*\in \kkk_k$ and $x\in \kkk_l$, with $k\neq l$, $|x^*(x)|$ will be small.

We let $\theta_k=1/\log_2(k+1)$, and we let $W$ be the minimal subset of $c_{00}$ so that for each $n\in \nn$ and each $E\in \fin$, $\pm e_n\in W$ and if $f_1<\ldots< f_n$, $f_i\in W$, then $\theta_n\sum_{i=1}^n E f_i\in W$.  If $f_1<\ldots <f_n$, $f_i\in W$ and $f=\theta_n\sum_{i=1}^n f_i$, we say $f$ has \emph{weight} $n$.  We can then define the Schlumprecht norm to be $$\|x\|_S=\sup_{f\in W}|f(x)|.$$  From this, we can deduce that if $x_1<\ldots<x_n$, $\|\sum_{i=1}^n x_i\|_S \geqslant \theta_n\sum_{i=1}^n \|x_i\|_S$.  From this, we deduce that $\ell_1$ is block finitely representable in any block subspace of $S$, since $1$ is the only possible value of $p$ to satisfy the conclusion of Krivine's theorem in any block subpsace of $S$.   Then for $C>1$ and $n\in \nn$, we say $x\in S_S$ is a $C$- $\ell_1^n+$ average provided there exist $x_1<\ldots <x_n$ so that $\|x_i\|\leqslant C$ for each $1\leqslant i\leqslant n$ and $x=n^{-1}\sum_{i=1}^n x_i$. By giving another quantitative version of Krivine's theorem, we deduce that for any $C>1$ and any $n\in \nn$, there exists $N=N(C,\varepsilon)$ so that for any block sequence of the basis of $S$ of length $N$, the span of this block sequence contains a $C$-$\ell_1^n+$ average.  For $\varepsilon>0$ and $n\in \nn$, we say a sequence $(x_i)_{i=1}^k$ is an RIS sequence of length $k$ with constant $(1+\varepsilon)$ if $x_i$ is a $(1+\varepsilon)$-$\ell_1^{k_i}+$ average and $k_i$ is much larger than $|\supp(x_j)|$ for $1\leqslant j<i\leqslant k$. Of course, precise quantifications must be made, for which we refer the reader to \cite{OS}. We say $x$ is a $(1+\varepsilon)$ RIS average of length $k$ if $x$ is the normalization of $\sum_{i=1}^k x_i$, where $(x_i)_{i=1}^k$ is an RIS sequence of length $k$ with constant $(1+\varepsilon)$.  The importance of such vectors is the following: If $n\in \nn$ and $f_1<\ldots <f_n$, $f_i\in W$, and if $x\approx k^{-1}\sum_{i=1}^k x_i$ is a $(1+\varepsilon)$-$\ell_1^k$ average, $$\theta_n\bigl(\sum_{j=1}^nf_j\bigr)\bigl(k^{-1}\sum_{i=1}^k x_i\bigr) \lesssim \theta_n(1+\varepsilon)(1+2n/k).$$  This value is small if $k$ is much larger than $n$.  Thus long $\ell_1+$ averages cannot take large values under functionals with small weights.  But if $n$ is much larger than $|\supp(x)|$, $$\theta_n\bigl(\sum_{j=1}^n f_j\bigr)\bigl(\sum_{i=1}^n x_i\bigr)\leqslant \theta_n|\supp(x)|,$$ which is small.  Thus we can deduce that functionals with weights which are too small or too large (depending on $k$ and $|\supp(x)|$, respectively) will give $x$ a small value.  Thus if we construct a sequence $(x_i)_{i=1}^k$ so that $k_i$ is much larger than $ | \supp(x_j)|$ for each $1\leqslant j<i\leqslant n$, then any functional in $W$ must have a weight which is either too large or too small for all but one of the members of the sequence $(x_i)_{i=1}^k$. Moreover, since $\ell_1$ is block finitely representable in every block subspace of $S$, for any $k\in \nn$ and any $\varepsilon>0$, we can find in any block subspace an RIS sequence of length $k$ with constant $(1+\varepsilon)$.  This process naturally suggests that we require averages of averages to witness the distortion with this method.  We will see below that we can find $(1+\varepsilon)$-$\ell_1^k+$ vectors in any sufficiently long normalized block in $\ell_p$, and then we must average a sufficiently long sequence of these averages to obtain RIS sequences.

For any $\varepsilon_k\downarrow 0$, we can fix $s_k\in \nn$, $s_k\uparrow \infty$, and $\delta_k>0$ so that if $\iii_k\subset S_S$ consists of all $(1+\delta_k)$ RIS averages of RIS sequences of legnth $s_k$, and if $\iii_k^*\subset B_{X^*}$ consists of vectors of the form $\theta_{s_k}\sum_{j=1}^{s_k}x^*_j$, where $(x_j)_{i=1}^{s_k}$ is a block sequence in $B_{X^*}$, then \begin{enumerate}[(i)]\item for each $k\in \nn$ and each $x\in \iii_k$, there exists $x^*\in \iii_k^*$ so that $x^*(x)\geqslant 1-\varepsilon_k$, \item for each $j\neq k$, each $x\in \iii_k$, and each $x^*\in \iii_j^*$, $|x^*(x)|\leqslant\varepsilon_{\min \{j,k\}}$, and \item for each $k\in \nn$ and any block subspace $X$ of $S$, $X\cap \iii_k\neq \varnothing$. 

\end{enumerate}

If we are interested in quantification, however, we can state the following quantified version of (iii) above: If $(x_t)_{t\in \ttt_{\omega^2}}\subset S_S$, then for any $k\in \nn$, there exists $t\in \ttt_{\omega^2}$ and $x\in \iii_k\cap \text{span}(x_{t|_i}:1\leqslant i\leqslant |t|)$.  We choose recursively $t_1, \ldots, t_k$ so that $t_1\verb!^!\ldots \verb!^!t_i\in d^{\omega(s_k-i)}(\ttt_{\omega^2})$ so that $|t_i|$ is sufficiently long to guarantee the existence of a $(1+\varepsilon)$-$\ell_1^{k_i}+$ average in $\text{span}(x_{t_i|j}:1\leqslant j\leqslant |t_i|)$, where $k_i$ depends on the choices of $(1+\varepsilon)$-$\ell_1^{k_j}+$ averages for $1\leqslant j<i$.  In what follows, if $x=(a_n)$ and $y=(b_n)$ are scalar sequences, we will let $xy=(a_nb_n)$, $|x|=(|x_n|)$, and $x^r=(\sgn(x_n) |x_n|^r)$ for $r>0$.  Recall the definition of the Mazur map, $M_p:S_{\ell_1}\to S_{\ell_p}$, $M_px= x^{1/p}$, and that this is a uniform homeomorphism \cite{R}.  We define the sets $\jjj_k\subset S_{\ell_1}$, $\kkk_k\subset S_{\ell_p}$, and $\kkk_k^*\subset S_{\ell_q}$ by $$\jjj_k=\Bigl\{\frac{u^*u}{\|u^*u\|_{\ell_1}}: u\in \iii_k, u^*\in \iii^*_k, \|u^*u\|_{\ell_1}\geqslant 1-\varepsilon_k\Bigr\},$$ $$\kkk_k= M_p(\jjj_k),$$ $$\kkk^*_k= M_q(\jjj_k).$$  We remark that since the basis of Schlumprecht space is $1$-unconditional, the sets $\kkk_k$ and $\kkk_k^*$ are $1$-unconditional.  By this, we mean that $a\in \kkk_k$ if and only if $|a|\in \kkk_k$.  

If $2\leqslant p$, let $\varepsilon_k'=\varepsilon_k^{2/p}/(1-\varepsilon_1)$, and if $1<p<2$, let $\varepsilon_k'=\varepsilon_k^{2/q}/(1-\varepsilon_1)$.  Note that if $x= M_p(s)\in \kkk_k=M_p(\jjj_k)$, then $x^*=M_q(s)= \kkk^*_k=M_q(\jjj_k)$, and $x^*(x)= 1$.  Thus for each $k\in \nn$ and $x\in \kkk_k$, there exists $x^*\in \kkk_k^*$ so that $x^*(x)=1$.  It was shown in \cite{OS} that if $k\neq j$ and if $x^*\in \kkk_k^*$, $x\in \kkk_j$, then $|x^*(x)|\leqslant \varepsilon_{\min \{j,k\}}'$.  Thus we have the $\ell_p$ analogues of (i) and (ii) above.  What remains is a quantified version of (iii).  The facts in the next lemma are contained in \cite{OS}, without the quantification of the supports of the blocking required.

\begin{lemma} \begin{enumerate}[(i)]\item For $n\in \nn$ and $\varepsilon>0$, there exists $N=N(\varepsilon, k)$ so that if $(y_i)_{i=1}^N\subset S_{\ell_1}$ is a block sequence, there exist $y\in \emph{\text{co}}_1(y_i:1\leqslant i\leqslant N)$, a $(1+\varepsilon)$ $\ell_1^n+$ average $u\in S_S$, and $u^*\in B_{S^*}$ so that $\|y - u^*u\|_{\ell_1}< \varepsilon$ and $\supp(u)\subset \supp(y)$. \item If $(y_t)_{t\in \ttt_{\omega^2}}\subset S_{\ell_1}$ is a block tree, $\varepsilon>0$, and $j\in \nn$, there exists $t\in \ttt_{\omega^2}$, $y\in \emph{\text{co}}_1(y_{t|_i}:1\leqslant i\leqslant |t|)$ and $y_0\in \jjj_j$ so that $\|y- y_0\|_{\ell_1}<\varepsilon$. \item  If $(x_F)_{t\in \ttt_{\omega^2}}\subset S_{\ell_1}$ is a block tree, $\varepsilon>0$, and $j\in \nn$, there exists $t\in \ttt_{\omega^2}$, $x\in \emph{\text{co}}_p(x_{t|_i}:1\leqslant i\leqslant |t|)$ and $x_0\in \jjj_j$ so that $\|x- x_0\|_{\ell_p}<\varepsilon$.

\end{enumerate}

\label{prime lemma}
\end{lemma}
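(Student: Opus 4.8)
The plan is to establish (i) as a single averaging step, obtain (ii) by iterating (i) down the derivation structure of $\ttt_{\omega^2}$, and deduce (iii) from (ii) by transporting everything through the Mazur map. Part (i) is the standard construction of $(1+\ee)$-$\ell_1^n+$ averages in Schlumprecht space (cf.\ \cite{OS}), so I would only indicate the support bookkeeping. Given a block sequence $(y_i)_{i=1}^N\subset S_{\ell_1}$ with $N=N(\ee,n)$ large, one applies the usual James-type averaging argument inside $(S,\|\cdot\|_S)$ to the block sequence $(y_i)$ (which, after rescaling in the $S$-norm, is again a block sequence in $S$) to produce a $(1+\ee)$-$\ell_1^n+$ average $u\in S_S$ with $\supp(u)\subset\bigcup_i\supp(y_i)$; by $1$-unconditionality of the basis of $S$ we may take $u\geqslant 0$. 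Picking $u^*\in S_{S^*}$ with $u^*(u)=1$ and replacing it by its absolute value and then by its restriction to $\supp(u)$ (neither step decreases $u^*(u)$ nor increases $\|u^*\|_{S^*}$), we may assume $0\leqslant u^*\leqslant\mathbf 1$ coordinatewise and $\supp(u^*)\subset\supp(u)$, so that $u^*u\geqslant 0$ and $\|u^*u\|_{\ell_1}=u^*(u)=1$. Reassembling $u^*u$ along the disjointly supported $\ell_1$-unit vectors $y_i$ and normalising yields $y\in\text{co}_1(y_i:1\leqslant i\leqslant N)$ with $\|y-u^*u\|_{\ell_1}<\ee$; the slack $\ee$ is precisely what \cite{OS} controls.

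For (ii), fix $j$ and let $s_j,\delta_j$, together with the accompanying rapidly-increasing-sequence parameters, be as in the definition of $\iii_j,\iii_j^*$; fix a small $\delta<\delta_j$ to be specified at the end. Since $\omega s_j\leqslant\omega^2$, Proposition \ref{minimal trees}(iii) provides an order preserving map $\ttt_{\omega s_j}\to\ttt_{\omega^2}$, and the image of a branch is a branch, so composing we may assume the block tree is indexed by $\ttt_{\omega s_j}$. We now descend exactly as in the proof of Lemma \ref{local coloring}, with the role of the small function replaced by part (i). At stage $i=1,\dots,s_j$ we sit at a node in $d^{\omega(i-1)}(\ttt_{\omega s_j})$, whose subtree below has order at least $\omega(s_j-i+1)$ (Propositions \ref{derivation prop} and \ref{new from old}); inside its derived subtree of order $\omega$ there are branch segments of every finite length. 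We first choose $k_i$ so large (depending on $\max\supp(y^{(i-1)})$ and on the RIS tolerance for $\delta_j$) that the averages produced so far will form an RIS sequence with constant $1+\delta_j$, then pick a branch segment of length $N(\delta,k_i)$ in that order-$\omega$ piece, apply (i) to the corresponding block sequence to get a $(1+\delta)$-$\ell_1^{k_i}+$ average $v_i\in S_S$, a convex combination $y^{(i)}$ of that branch segment, and $v_i^*\in B_{S^*}$ with $\supp(v_i^*)\subset\supp(v_i)\subset\supp(y^{(i)})$ and $\|y^{(i)}-v_i^*v_i\|_{\ell_1}<\delta$, and finally drop below the endpoint of the segment into a subtree whose order still suffices for the remaining $s_j-i$ stages. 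Because the block tree increases supports along branches, automatically $y^{(1)}<\cdots<y^{(s_j)}$, hence $v_1<\cdots<v_{s_j}$ and $v_1^*<\cdots<v_{s_j}^*$, and each $y^{(i)}$ is a convex combination of pairwise successive branch segments.

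Having run the descent, set $u:=\|\sum_{i=1}^{s_j}v_i\|_S^{-1}\sum_{i=1}^{s_j}v_i$, a $(1+\delta_j)$ RIS average of length $s_j$, so $u\in\iii_j$, and $u^*:=\theta_{s_j}\sum_{i=1}^{s_j}v_i^*$; since the $v_i^*$ are successive members of $B_{S^*}$, the defining inequality $\|\sum F_ix\|_S\geqslant\theta_{s_j}\sum\|F_ix\|_S$ of $S$ gives $\|u^*\|_{S^*}\leqslant 1$, so $u^*\in\iii_j^*$. Disjointness of supports kills the cross terms $v_i^*(v_l)$ for $i\neq l$, and since $\|\sum_iv_i\|_S$ is, for an RIS sequence, within a small factor of $\theta_{s_j}s_j$, one gets $u^*(u)\geqslant 1-\varepsilon_j$ and $\|u^*u\|_{\ell_1}\geqslant 1-\varepsilon_j$, whence $y_0:=u^*u/\|u^*u\|_{\ell_1}\in\jjj_j$. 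Again by disjointness, $u^*u=c\sum_iv_i^*v_i$ for a positive constant $c$, so $y_0=\sum_i\mu_iz_i$ with $z_i=v_i^*v_i/\|v_i^*v_i\|_{\ell_1}$ and convex weights $\mu_i$ each within a controllable amount of $1/s_j$. Setting $y:=s_j^{-1}\sum_{i=1}^{s_j}y^{(i)}$, which is a convex combination of the whole branch, we obtain $\|y-y_0\|_{\ell_1}\leqslant\sum_i\|s_j^{-1}y^{(i)}-\mu_iz_i\|_{\ell_1}$, and each summand is small because $\|y^{(i)}-v_i^*v_i\|_{\ell_1}<\delta$, $\|v_i^*v_i\|_{\ell_1}\approx 1$, and $\mu_i\approx s_j^{-1}$; choosing $\delta$ and the internal RIS tolerances small enough at the outset makes the total $<\ee$.

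For (iii) — where the block tree should be taken in $S_{\ell_p}$ and the target set is $\kkk_j=M_p(\jjj_j)$ — I would apply the inverse Mazur map coordinatewise to $(x_t)$, getting a block tree $(\tilde x_t)$ in $S_{\ell_1}$, apply (ii) to it with a parameter $\delta$ dictated by the modulus of uniform continuity of $M_p\colon S_{\ell_1}\to S_{\ell_p}$ \cite{R}, and obtain a branch, a $\tilde y\in\text{co}_1(\tilde x_{t|_i})$ and $\tilde y_0\in\jjj_j$ with $\|\tilde y-\tilde y_0\|_{\ell_1}<\delta$. Since $\tilde y=\sum_i\lambda_i\tilde x_{t|_i}$ has disjointly supported summands, $M_p(\tilde y)=\tilde y^{1/p}=\sum_i\lambda_i^{1/p}x_{t|_i}$ with $\sum_i(\lambda_i^{1/p})^p=1$, so $x:=M_p(\tilde y)\in\text{co}_p(x_{t|_i})$, while $\|x-M_p(\tilde y_0)\|_{\ell_p}<\ee$ with $x_0:=M_p(\tilde y_0)\in\kkk_j$. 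The main obstacle is part (ii): orchestrating the layer-by-layer descent through $\ttt_{\omega^2}$ so that the lengths $k_i$, which can only be fixed after the earlier averages are in hand, still fit inside the available order-$\omega$ pieces while enough order remains below for the $s_j-i$ later stages, and then checking that the assembled pair $(u,u^*)$ genuinely lands in $\jjj_j$ and that the single convex combination $y=s_j^{-1}\sum_iy^{(i)}$ is $\ell_1$-close to it. This support accounting is exactly the quantification of the supports of the blocking that \cite{OS} leaves implicit; everything else is routine bookkeeping layered on top of the arguments in \cite{OS} and the tree lemmas of the earlier sections.
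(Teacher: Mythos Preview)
Your proposal is correct and follows essentially the same route as the paper: (i) is deferred to the argument implicit in \cite{OS}, (ii) is the same recursive descent through $s_j$ successive order-$\omega$ pieces of $\ttt_{\omega^2}$ applying (i) at each stage to build the RIS average $u\in\iii_j$ and the functional $u^*\in\iii_j^*$, and (iii) is the same transport through the Mazur map. One small slip: in your descent, the node reached before stage $i$ should lie in $d^{\omega(s_j-i+1)}(\ttt_{\omega s_j})$, not $d^{\omega(i-1)}$ --- membership in $d^\alpha$ records how much order remains \emph{below}, not how far you have descended --- but your accompanying claim that the remaining subtree has order at least $\omega(s_j-i+1)$ is the correct one and is what the argument actually uses.
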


\begin{proof}(i)  We note that the statement here is stronger than the statement given in \cite{OS}, but the statement here is implicit in the proof of Lemma 3.5 from \cite{OS}.  

(ii) As outlined above, we recursively choose $t_1<\ldots < t_{s_j}$, $t_1\verb!^!\ldots \verb!^!t_i\in d^{\omega(s_j-i)}(\ttt_{\omega^2})$, $y_i\in \text{co}_1(y_{t_i|_k}:1\leqslant k\leqslant |t_i|)$, $u_i\in S_S$, and $u_i^*\in B_{S^*}$ according to (i) so that the normalization $u$ of $\sum_{i=1}^{s_j} u_i$ is a member of $\iii_j$, $u^*=(1+\rho)^{-1}\theta_{s_j}\sum_{i=1}^{s_j} u_i^*\in \iii^*_j$, where $\rho>0$ is chosen small depending on $\varepsilon$, $\|u^*_iu_i- y_i\|_{\ell_1}< \varepsilon/2$, and $\|u^*u\|_{\ell_1}> 1-\varepsilon_j$.  Then $y_0=\frac{u^*u}{\|u^*u\|_{\ell_1}}\in \jjj_j$ and, with $y=s_j^{-1}\sum_{i=1}^{s_j} y_i$, $$\|y- y_0\|_{\ell_1}\leqslant \|y - u^*u\|_{\ell_1} + \|u^*u - y_0\|_{\ell_1} < 2(\varepsilon/2)= \varepsilon.$$

(iii) This follows from (ii) and the fact that the Mazur map $M_p:S_{\ell_1}\to S_{\ell_p}$ is a uniform homeomorphism which preserves supports and takes $1$-absolutely convex combinations of block sequences to $p$-absolutely convex combinations of block sequences.

\end{proof}

\begin{lemma} Fix $k\in \nn$ and let $X_k$ be the completion of $c_{00}$ under the norm $$\|x\|=\max \{\varepsilon_k' \|x\|_{\ell_p}, \sup_{x^*\in \kkk_k^*} x^*(x)\}.$$  Then if $E$ is the $c_{00}$ basis, $I_p(X_k,E, K)< \omega^2$ for any $1\leqslant K<(\varepsilon_k')^{-1}$, and $I_p(X_k, E, (\varepsilon_k')^{-1})=\infty$.

\label{andy daly}
\end{lemma}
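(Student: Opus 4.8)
The plan is to prove the two assertions about the index $I_p(X_k, E, K)$ separately. The easier one is the claim that $I_p(X_k, E, (\varepsilon_k')^{-1}) = \infty$, i.e. that $X_k$ contains a block sequence $(\varepsilon_k')^{-1}$-equivalent to the $\ell_p$ basis. For this I would recall that for every $k$ and every $x \in \kkk_k$ there is an $x^* \in \kkk_k^*$ with $x^*(x) = 1$, and that if $k \neq j$, $x^* \in \kkk_k^*$, $x \in \kkk_j$, then $|x^*(x)| \leqslant \varepsilon_{\min\{j,k\}}'$ (both facts are recorded in the excerpt from \cite{OS}). But one must first produce an \emph{infinite block sequence} in $\kkk_k$; since the members of $\kkk_k$ have finite support, and since a block tree of order $\omega^2$ certainly contains branches of any finite length, one iterates Lemma \ref{prime lemma}(iii): starting from the successive block sequence $(e_n)$ (or any block sequence of $E$ with arbitrarily long branches), repeatedly extract $p$-absolutely convex blocks lying within $\varepsilon$ of elements of $\jjj_k$, then apply the Mazur map $M_p$ to land inside $\kkk_k$. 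One obtains a normalized block sequence $(z_n)$ in $X_k$ each of which is within $\varepsilon$ of a vector in $\kkk_k$, hence with dual functionals $z_n^* \in \kkk_k^*$ almost biorthogonal to it, and with $|z_n^*(z_m)|$ small for $n \ne m$ by a tail-of-supports argument (shrinking $\varepsilon$ geometrically). A standard almost-biorthogonality estimate then shows $(z_n)$ is $(\varepsilon_k')^{-1}(1+\delta)$-equivalent to the $\ell_p$ basis in the first coordinate of the norm (the $\sup_{x^* \in \kkk_k^*}$ part), and the upper $\ell_p$ estimate $\varepsilon_k' \|\cdot\|_{\ell_p}$ part of the norm is automatic. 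Since $\delta > 0$ is arbitrary and the tree $T_p(X_k, E, \cdot)$ is a tree, this gives $I_p(X_k, E, (\varepsilon_k')^{-1}) = \infty$.

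The substantive half is the upper bound $I_p(X_k, E, K) < \omega^2$ for $K < (\varepsilon_k')^{-1}$. I would argue by contradiction: suppose $I_p(X_k, E, K) > \omega^2$ for some $K < (\varepsilon_k')^{-1}$. Then there is a block tree $(x_t)_{t \in \ttt_{\omega^2}}$ with each branch $(x_{t|i})_{i=1}^{|t|} \in T_p(X_k, E, K)$, i.e. each branch $K$-dominates (and is $1$-dominated by) the $\ell_p^{|t|}$ basis, with all vectors in $B_{X_k}$ and $\|x_t\| \geqslant K^{-1}$. Normalize and view this as a block tree in $S_{\ell_p}$ up to a harmless constant. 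By Lemma \ref{prime lemma}(iii), applied to the index $j$ chosen with $\varepsilon_j' $ small — concretely, choose $\varepsilon > 0$ and $j$ so that $K \varepsilon_k' (1 + \text{error}(\varepsilon, \varepsilon_j')) < 1$, using that $\varepsilon_m' \downarrow 0$ and $k$ is fixed — there exist $t \in \ttt_{\omega^2}$, a $p$-absolutely convex combination $x \in \text{co}_p(x_{t|i} : 1 \leqslant i \leqslant |t|)$, and $x_0 \in \jjj_j$ with $\|x - x_0\|_{\ell_p} < \varepsilon$, hence $M_p$ applied gives (after renormalizing) $x' = M_p$-image close to a vector of $\kkk_j$; one works directly with $x$ being $\ell_p$-close to $\kkk_j$. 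Since $x$ is a $p$-absolutely convex block of a branch lying in $T_p(X_k, E, K)$, the lower estimate forces $\|x\|_{X_k} \geqslant K^{-1}$.

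Now I estimate $\|x\|_{X_k}$ from above using the explicit norm. The $\ell_p$-part gives $\varepsilon_k' \|x\|_{\ell_p} \leqslant \varepsilon_k'(1 + \varepsilon)$, which I need to be $< K^{-1}$; this is where $K < (\varepsilon_k')^{-1}$ is used, taking $\varepsilon$ small enough. For the functional part, $\sup_{x^* \in \kkk_k^*} x^*(x)$: since $x$ is within $\varepsilon$ (in $\ell_p$) of $x_0 \in \jjj_j$ with $j \neq k$, for any $x^* \in \kkk_k^*$ I have $x^*(x) \leqslant x^*(x_0) + \|x^*\|_{\ell_q}\|x - x_0\|_{\ell_p} \leqslant \varepsilon_{\min\{j,k\}}' + \varepsilon$ by the cross-orthogonality estimate from \cite{OS}. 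By choosing $j$ large (so $\varepsilon_j' < \varepsilon_k'$ is tiny, recalling $j \ne k$ and using $\varepsilon_m' \downarrow 0$; if $j < k$ this would be $\varepsilon_j'$ which could be large, so one must take $j > k$) and $\varepsilon$ small, this is $< K^{-1}$ as well. Hence $\|x\|_{X_k} < K^{-1}$, contradicting $\|x\|_{X_k} \geqslant K^{-1}$. Therefore $I_p(X_k, E, K) < \omega^2$.

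The main obstacle I anticipate is bookkeeping the order of quantifiers and constants: one must fix $k$, then $K < (\varepsilon_k')^{-1}$, then choose $\varepsilon > 0$ and $j > k$ (so that the cross term $\varepsilon_j'$ from the Odell--Schlumprecht estimate is small, which requires $j$ large since the estimate gives $\varepsilon_{\min\{j,k\}}'$ and we need $\min\{j,k\} = k$ fixed but the relevant smallness actually comes from choosing $\varepsilon$ small together with noting $\varepsilon_k'$ is a fixed constant strictly less than $(1/K) \cdot$ something — so really the slack is entirely in $K < (\varepsilon_k')^{-1}$ and in $\varepsilon$), and finally apply Lemma \ref{prime lemma}(iii) with these parameters to locate $x$ and $x_0$. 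A secondary technical point is verifying that a $p$-absolutely convex block of a branch of $(x_t)$ still satisfies the lower $\ell_p$ estimate with constant $K$ — this is exactly the $p$-absolute convexity of the tree $T_p(X_k, E, K)$ noted in Section 4, so it is immediate. I would also need to double check that the block tree structure survives passing through $M_p$ and taking $p$-absolutely convex combinations, but Lemma \ref{prime lemma}(iii) is stated precisely to package this, so I would simply cite it.
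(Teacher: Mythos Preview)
Your argument for the main inequality has a genuine gap at the phrase ``normalize and view this as a block tree in $S_{\ell_p}$ up to a harmless constant.'' Lemma \ref{prime lemma}(iii) requires the block tree to lie in $S_{\ell_p}$, so you must pass from $(g_t)_{t\in\ttt_{\omega^2}}\subset B_{X_k}$ to $x_t=g_t/\|g_t\|_{\ell_p}$. But then the vector $x=\sum a_ix_{t|_i}$ you extract (with $(a_i)\in S_{\ell_p}$) is a $p$-absolutely convex combination of the \emph{normalized} vectors, not of the $g_{t|_i}$, so it is \emph{not} true that $\|x\|_{X_k}\geqslant K^{-1}$. A priori the $\ell_p$-norms $\|g_t\|_{\ell_p}$ range over $[K^{-1},(\varepsilon_k')^{-1}]$, and if you track the rescaling you only get $\|x\|_{X_k}\geqslant \varepsilon_k'/K$, which is far too weak to contradict your upper bound $\|x\|_{X_k}\leqslant\varepsilon_k'+\varepsilon$.

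The paper repairs this in two steps you are missing. First, it applies Corollary \ref{approximate monochromatic} to pass to a sub-tree on which $\theta\leqslant\|g_t\|_{\ell_p}\leqslant\theta(1+\delta)$ for a single $\theta>0$; this makes the passage between $g=\sum a_ig_{t|_i}$ and $x=\sum a_ix_{t|_i}$ controlled by $\theta$, via $1$-unconditionality. Second, and crucially, it invokes Lemma \ref{prime lemma}(iii) \emph{twice}: once with $j=k$ (not $j>k$) to find $x$ close to $\kkk_k$, so that $\|x\|_{X_k}>1-\delta$ and hence $1\geqslant\|g\|\geqslant\theta(1-\delta)$, pinning $\theta\leqslant(1-\delta)^{-1}$; and then a second time with $j>k$ as you do, to find $x'$ with $\|x'\|_{X_k}\leqslant\varepsilon_k'+\delta$, yielding $K^{-1}\leqslant\|g'\|\leqslant\theta(1+\delta)(\varepsilon_k'+\delta)$. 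The first application is what forces $\theta\approx 1$ and makes the normalization genuinely harmless; without it your argument does not close.

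For the second assertion $I_p(X_k,E,(\varepsilon_k')^{-1})=\infty$, you are working far too hard. The inequality $\varepsilon_k'\|x\|_{\ell_p}\leqslant\|x\|\leqslant\|x\|_{\ell_p}$ holds for every $x$, so the canonical basis $(e_n)$ itself is $(\varepsilon_k')^{-1}$-equivalent to the $\ell_p$ basis; there is no need to produce vectors near $\kkk_k$.
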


\begin{proof} Note that $\varepsilon_k'\|x\|_{\ell_p}\leqslant \|x\|\leqslant \|x\|_{\ell_p}$, which gives the last statement.

Since $I_p(X, E, K)$ is a successor, it is sufficient to show that if $I_p(X, E, K)>\omega^2$, $K\geqslant (\varepsilon_k')^{-1}$. If $I_p(X, E, K)>\omega^2$, we can find a block tree $(g_t)_{t\in \ttt_{\omega^2}}$ so that for each $t\in \ttt_{\omega^2}$, $(g_{t|_i})_{i=1}^{|t|}\in T_p(X, E, K)$.   

Fix $\delta>0$.  We can apply Corollary \ref{approximate monochromatic} and assume there exists $\theta>0$ so that for every $t\in \ttt_{\omega^2}$, $\theta\leqslant \|g_t\|_{\ell_p} \leqslant \theta(1+\delta)$.  Let $x_t= g_t/\|g_t\|_{\ell_p}$.  Choose $t\in \ttt_{\omega^2}$, $x\in \text{co}_p(x_{t|_i}: 1\leqslant i\leqslant |t|)$, and $x_0\in \kkk_k$ so that $\|x- x_0\|_{\ell_p}<\delta$.  Then as we noted earlier, $\|x_0\|=1$ and therefore $\|x\|\geqslant 1- \|x-x_0\|\geqslant 1- \|x-x_0\|_{\ell_p}>1-\delta$.  Let $(a_i)_{i=1}^{|t|}\in S_{\ell_p^{|t|}}$ be such that $x= \sum_{i=1}^{|t|} a_i x_{t|_i}$, and let $g= \sum_{i=1}^{|t|} a_i g_{t|_i}$.  Then \begin{align*} 1 & \geqslant \|g\| = \Bigl\|\sum_{i=1}^{|t|} a_i g_{t|_i}\Bigr\| \\ & = \Bigl\|\sum_{i=1}^{|t|} a_i \|g_{t|_i}\|_{\ell_p} x_{t|_i}\Bigr\| \geqslant \theta \Bigl\|\sum_{i=1}^{|t|} a_i  x_{t|_i}\Bigr\| \\ & = \theta \|x\| \geqslant \theta (1- \delta), \end{align*} and $\theta\leqslant (1-\delta)^{-1}$.  Here we have used $1$-unconditionality of the $c_{00}$ basis with respect to $\|\cdot\|$.    

Next, choose any $j>k$.  Choose $s\in \ttt_{\omega^2}$, $x'\in \text{co}_p(x_{s|_i}: 1\leqslant i\leqslant |s|)$, and $x_0'\in \kkk_j$ so that $\|x'- x_0'\|_{\ell_p}< \delta$.  Then $\|x_0'\|\leqslant \varepsilon_k'$, and $\|x'\|\leqslant \varepsilon_k'+\delta$.  Choose $(b_i)_{i=1}^{|s|}\in S_{\ell_p^{|s|}}$ so that $x'=\sum_{i=1}^{|s|} b_i x_{s|_i}$ and let $g'=\sum_{i=1}^{|s|}b_i g_{s|_i}$.  Then \begin{align*} K^{-1} & \leqslant \|g'\| = \Bigl\|\sum_{i=1}^{|s|} b_i g_{s|_i}\Bigr\|   \leqslant \theta(1+\delta)\Bigl\|\sum_{i=1}^{|s|} b_i x_{s|_i}\Bigr\| \\ & \leqslant \theta(1+\delta) (\varepsilon'_k+\delta) \leqslant \frac{(1+\delta) (\varepsilon_k'+\delta)}{1-\delta}, \end{align*} where again we have used $1$-unconditionality.  Since $\delta>0$ was arbitrary, we deduce that $K^{-1}\leqslant \varepsilon_k'$.

\end{proof}

\begin{proof}[Proof of Theorem \ref{neg constant}] We have already shown that $M_1(p), M_2(p), M_3(p)\geqslant 1$.   We have just shown that $M_2(p)=1$, so that $M_4(p),M_3(p)\leqslant 1$.  We have shown in the proof of Proposition \ref{index facts} (v) that for any Banach space $X$ with FDD $E$ and any $K\geqslant 1$ and $\varepsilon>0$, $I_p(X,K)\leqslant \omega I_p^a(X,E, K+\varepsilon)$, from which we deduce $M_1(p)\leqslant 1+M_2(p)=2$.

\end{proof}

We note here that this is the stongest possible result about the negative results concerning existence of uniform structures given the existence of crude structures.  That is, for each $\xi<\omega^2$, there exists a constant $K$ such that if $X$ is any Banach space with $I_p(X)>\omega$, then $I_p(X, K)>\xi$.  It follows from the proof of Proposition \ref{index facts} (iv) where if $\xi<\omega n$, $n\in \nn$, then we can take $K=2n+\varepsilon$ for any $\varepsilon>0$.

\section{Subspace and quotient estimates}

As we have seen already, there is a strong dichotomy between the $\ell_p$ and $c_0$ cases and the $\ell_p$ cases for $1<p<\infty$ because we must only be concerned with one-sided estimates when $p=1$ or $\infty$.  We invesgiate here another instance when we deduce positive results due to only needing to verify a one-sided estimate: The case of passing to a quotient.  If $X$ is a Banach space and $Y$ is a closed subspace, any sequence $1$-dominated by the $\ell_p$ basis in $X$ is sent by the quotient map $Q:X\to X/Y$ to a sequence also $1$-dominated by the $\ell_p$ basis.  Thus we have the following dichotomy: Given an $\ell_p$ structure in $X$, the image of this $\ell_p$ structure under the quotient map must also be an $\ell_p$ structure, or $p$-absolutely convex blocks of the branches must have small quotient norm.  In the first case, we find an $\ell_p$ structure in $X/Y$, and in the second case, we find an $\ell_p$ structure which is only a small perturbation from being in $Y$.  Assuming the perturbation is small enough, we will find an $\ell_p$ structure in $Y$.  We note that again, $\ell_1$ plays a special role for two reasons: The first is that $\ell_1$ structures in $X/Y$ imply $\ell_1$ structures in $X$.  The second is that small, uniform perturbations of sequences behaving like the $\ell_1$ basis also behave like the $\ell_1$ basis, but this is not true for the $\ell_p$, $1<p<\infty$, or $c_0$ bases.  This is easily seen by considering $(\theta e_n+f_n)\subset \ell_1\oplus \ell_p$, where $\theta>0$ is small, $(e_n)$ is the $\ell_1$ basis, and $(f_n)$ is the $\ell_p$ basis.  

If $X$ is a Banach space containing a copy of $\ell_p$, and if $Y$ is any closed subspace, then either $Y$ or $X/Y$ contains a copy of $\ell_p$.  To see this, suppose $(x_n)\subset X$ is equivalent to the $\ell_p$ basis.  Let $Q:X\to X/Y$ denote the quotient map.  We consider two cases.  In the first case, there exist $N\in \nn$ and  $\varepsilon>0$ so that for all $(a_i)_{i=N+1}^\infty \in c_{00}\cap S_{\ell_p}$, $\|\sum_{i=N+1}^\infty a_iQx_i\|\geqslant \varepsilon$.  In this case, $(x_i)_{i>N}$, $(Qx_i)_{i>N}$, and the $\ell_p$ basis are all equivalent.  In the second case, for any $\varepsilon_n\downarrow 0$, we can find a $p$-absolutely convex block $(u_n)$ of $(x_n)$ so that for each $n\in \nn$, $\|Qu_n\|<\varepsilon_n$.  If we choose $(y_n)\subset Y$ so that $\|u_n-y_n\|<\varepsilon_n$, and if $\varepsilon_n$ is chosen tending to $0$ rapidly enough, depending on the constant of equivalence between $(x_n)$ and the $\ell_p$ basis, $(y_n)$ will be equivalent to the $\ell_p$ basis.  More precisely, in the second case, if $(x_n)$ is $K$-equivalent to the $\ell_p$ basis, then $(u_n)$ is $K$-equivalent to the $\ell_p$ basis. For any $C>0$, by choosing $\varepsilon_n\downarrow 0$ appropriately we can guarantee that the equivalence constant between $(y_n)$ and the $\ell_p$ basis is at most $C$.   

As a consequence of this, we deduce that the class of Banach spaces not containing $\ell_p$ is closed under taking direct sums.  To restate, if $I_p(Y), I_p(Z)<\infty$, then $I_p(Y\oplus Z)<\infty$.  Dodos has shown that this result is actually uniform on $\textbf{SB}$ \cite{D}.  That is, there exists a function $\psi_p:[0, \omega_1)^2\to [0, \omega_1)$ so that for $Y,Z\in \textbf{SB}$, $I_p(Y\oplus Z)\leqslant \psi_p(I_p(Y), I_p(Z))$.  This was shown using descriptive set theoretic techniques, and was actually shown for a larger class of spaces than the $\ell_p$ spaces.  The methods used do not provide explicit estimates of the function $\psi_p$, and in \cite{D}, Dodos asks for some explicit estimate.  We provide the first explicit estimate for this function below.  Note that we do not require separability, nor do we assume countability of any ordinals.

\begin{theorem} Fix $1\leqslant p\leqslant \infty$.  For any Banach space $X$, any closed subspace $Y$, and $1\leqslant K<C$, \begin{enumerate}[(i)]\item $I_p(X, K)\leqslant I_p(X/Y) I_p(Y, C)$. \item $I_1(X, K)\leqslant I_1(X/Y, 3K)I_1(Y, 3K)$. \item $\ssm_p(X, K)\leqslant \ssm_p(X/Y)+\ssm_p(Y, C)$. \item $\ssm_1(X, K)\leqslant \ssm_1(X/Y, 3K)+\ssm_1(Y, 3K)$. \item $\ssm_\infty(X)\leqslant \max\{\ssm_\infty(X/Y), \ssm_\infty(Y)\}$. \end{enumerate}

\label{three space}

\end{theorem}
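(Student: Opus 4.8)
The plan is to prove Theorem \ref{three space} with emphasis on part (v), the $c_0$ case, which is the cleanest because, as emphasized in the introduction to this section, $c_0$ structures require only a one-sided estimate and moreover there is no loss in the equivalence constant: a sequence $1$-dominated by the $c_0$ basis (i.e.\ in $W(\cdot,1)$ up to normalization) is sent by any quotient map to a sequence with the same property, and small perturbations of a $c_0$ special sequence are again $c_0$ special sequences only after adjusting the constant. Actually the sharpest formulation uses the $\sup$-free index $\ssm_\infty$, and the point is that $\ssm_\infty(X)>\xi$ means $X$ admits a $K$-$c_0^\xi$ special sequence for \emph{some} $K$, and we must produce one of \emph{some} constant in $X/Y$ or in $Y$. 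So a loss in the constant is harmless, and part (v) has no $K$ decorations.

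First I would set up the dichotomy. Suppose $\ssm_\infty(X)>\xi$, so there is $K\geqslant 1$ and a seminormalized basic sequence $(x_n)\subset X$ which is a $K$-$c_0^\xi$ special sequence; by passing to a subsequence we may assume it is weakly null (as noted after the definition of special sequences, when $\iota(\sss_\xi)\geqslant \omega$ any $c_0^\fff$ special sequence is seminormalized and weakly null). Let $Q:X\to X/Y$ be the quotient map. The two cases are: (a) there exist $N\in\nn$ and $\varepsilon>0$ so that for every $N\leqslant E\in\sss_\xi$ and every $(a_i)_{i\in E}\in S_{\ell_\infty^E}$, $\|Q\sum_{i\in E} a_i x_i\|\geqslant\varepsilon$; (b) for every $\varepsilon_n\downarrow 0$ and every $N$ there is a finite ``$\infty$-absolutely convex block'' $u=\sum_{i\in E}a_i x_i$ with $E\in\sss_\xi$, $N\leqslant E$, $\|u\|\geqslant$ (a fixed seminormalization constant), and $\|Qu\|<\varepsilon$. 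In case (a), $(Qx_n)_{n\geqslant N}$ is seminormalized and, being $1$-dominated by the $c_0$ basis via $Q$ and bounded below by $\varepsilon$ on $\sss_\xi$-sets after rescaling, witnesses $\ssm_\infty(X/Y)>\xi$: the upper $c_0$ estimate is inherited because $\|Q\sum a_i x_i\|\leqslant K$ for $E\in\sss_\xi$, $\|a\|_\infty\leqslant 1$, and the lower normalization is $\varepsilon$, so $(\varepsilon^{-1}Qx_n)_{n\geqslant N}$ is an $\varepsilon^{-1}K$-$c_0^\xi$ special sequence (after checking $\|Qx_n\|\geqslant\varepsilon$). In case (b), using a pruning/diagonalization argument one iterates to produce $E_1<E_2<\cdots$ in $\sss_\xi$ and $\infty$-convex blocks $u_j=\sum_{i\in E_j}a_i^j x_i$ with $\|u_j\|$ bounded below and above and $\|Qu_j\|<\varepsilon_j$ for $\varepsilon_j\downarrow 0$ summing small; choosing $y_j\in Y$ with $\|u_j-y_j\|<\varepsilon_j$, the sequence $(y_j)$ is, up to normalization, a $c_0$ special sequence for sets in $\sss_\xi[\sss_\xi]$ hence (after a spreading-family identification $\sss_\xi[\sss_\xi](M)\subset\sss_\xi$, since $\iota$ is multiplicative and when $\xi=\omega^\zeta$, $\omega^\zeta\cdot\omega^\zeta$ absorbs — more carefully one uses that $\iota(\sss_\xi)$ need not be a power of $\omega$, so one should instead only claim a $c_0^{\sss_\xi}$ special sequence directly from the $u_j$'s indexed along the original $\sss_\xi$, which suffices) a $c_0^\xi$ special sequence in $Y$ of some finite constant, giving $\ssm_\infty(Y)>\xi$. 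Either way $\max\{\ssm_\infty(X/Y),\ssm_\infty(Y)\}>\xi$, proving (v) after taking the supremum over $\xi$.

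The key quantitative point making (b) work for $c_0$ but forcing the constant $C$ in (i)/(iii)/(iv) for $\ell_p$, $p<\infty$: for $c_0$ special sequences, once $\|Qu_j\|$ is tiny the perturbed $y_j$ satisfies $\|y_j\|\approx\|u_j\|\in[c,K]$ and the upper estimate $\|\sum b_j y_j\|\leqslant\|\sum b_j u_j\|+\sum\varepsilon_j\leqslant K+\varepsilon$ for $\sss_\xi$-admissible index sets of the $E_j$'s, with the crucial feature that there is \emph{no} required lower bound to preserve (the definition of $W(\cdot,\cdot)$ only asks $\|y_j\|\geqslant 1$ and an \emph{upper} estimate), so small perturbations preserve membership in $W(Y,K+\varepsilon)$ after normalizing. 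This is exactly the asymmetry stressed in the section introduction.

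For parts (i)--(iv) the scheme is the same but run on trees rather than sequences and with the constant bookkeeping. For (i): assume $I_p(X,K)>o(X/Y)\cdot I_p(Y,C)=:\zeta\xi$ where $\zeta=I_p(X/Y)$; fix a block tree of branches in $T_p(X,K)$ of order $\zeta\xi$, and define $f:C(\ttt_{\zeta\xi})\to\rr$ by $f(c)=\min\{\|Qx\|: x\in\text{co}_p(x_t:t\in c)\}$. This $f$ is $\zeta$-small: any order-preserving copy of $\ttt_\zeta$ whose branches all mapped to points bounded below under $Q$ would furnish, via $Q$, a tree witnessing $I_p(X/Y,1/\varepsilon)>\zeta=I_p(X/Y)$, impossible (here one uses that $Q$ is a quotient so $Q(\text{co}_p)\subset\text{co}_p$ and $\|Q\cdot\|\leqslant\|\cdot\|$, so the $c_0$/$\ell_p$ upper estimate with constant $\leqslant K\leqslant 1\cdot$ something survives; strictly, after rescaling by $\varepsilon^{-1}$ we still get a tree of the required order with \emph{some} finite constant, contradicting $I_p(X/Y)<\infty$ at that level). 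Then by Lemma \ref{local coloring} there is an order-preserving $j:\ttt_\xi\to C(\ttt_{\zeta\xi})$ with $f\circ j$ as small as we like; choosing $u_t\in\text{co}_p(x_s:s\in j(t))$ with $\|Qu_t\|$ summably small along branches and $v_t\in Y$ with $\|u_t-v_t\|$ matching, the tree $(v_t)$ has branches in $T_p(Y,C)$ provided the perturbations were taken small enough relative to $C/K$ — this is where $K<C$ is needed and where the $\ell_p$ (as opposed to $\ell_1$) perturbation lemma costs us, forcing $C>K$ rather than $C=K$. This contradicts $I_p(Y,C)$ being the order at which the tree $T_p(Y,C)$ dies. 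Part (iii) is the spreading-model analogue via Lemma \ref{easy coloring} (the $p=1$, $\infty$ cases of the family $\sss_{\omega^\xi}$) exactly as in the proof of Theorem \ref{BCM theorem 1}; parts (ii) and (iv) are the $p=1$ special cases where, because lower $\ell_1$ estimates survive uniform small perturbations with a multiplicative loss of at most $3$ (split the perturbation bound: $\|\sum a_i y_i\|\geqslant\|\sum a_i u_i\|-\sum|a_i|\varepsilon_i\geqslant K^{-1}-1/3\cdot$, etc., and also $\ell_1$ structures lift from the quotient so only one of the two alternatives even needs the perturbation argument), we may take $C=3K$. I expect the main obstacle to be the bookkeeping in case (b): choosing the $\varepsilon_n\downarrow 0$ correctly as a function of $K$, $C$, and the basis constant so that the perturbed vectors genuinely inherit the $\ell_p$ lower estimate, and arranging the diagonalization so that the perturbed tree or sequence is indexed by a regular family of the correct Cantor--Bendixson index $\iota$ (using $\iota(\fff[\gggg])=\iota(\gggg)\iota(\fff)$ and the multiplicativity of the derivation order from Proposition \ref{derivation prop}(ii)); for (v) specifically the obstacle essentially vanishes because there is no lower estimate to preserve, which is why the $c_0$ statement is clean and constant-free.
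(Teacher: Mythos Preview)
Your treatment of parts (i)--(iv) is essentially the paper's: the tree-coloring via Lemma \ref{local coloring} for (i)--(ii), and the $\sss_\xi[\sss_\zeta]$ blocking for (iii)--(iv), with the $\ell_1$ specialization to the fixed threshold $1/3K$. A couple of minor wobbles (you invoke Lemma \ref{easy coloring} for (iii) where the paper argues directly, and the aside that ``$\ell_1$ structures lift from the quotient'' is neither used nor needed) do not affect correctness.

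Part (v), however, has a real gap. Your dichotomy is blockwise: case (b) produces vectors $u_j=\sum_{i\in E_j}a_i^j x_i$ with $E_j\in\sss_\xi$ and $\|Qu_j\|$ small, and you then want the perturbations $(y_j)\subset Y$ to be a $c_0^\xi$ special sequence. But for $F\in\sss_\xi$ and $(b_j)\in S_{\ell_\infty^F}$, the quantity $\sum_{j\in F}b_j u_j$ is an $\infty$-absolutely convex combination of $(x_i)_{i\in\cup_{j\in F}E_j}$, and $\cup_{j\in F}E_j$ lies only in $\sss_\xi[\sss_\xi]$, not in $\sss_\xi$. Since $(x_n)$ is merely a $c_0^\xi$ special sequence, you have no upper bound on such combinations, so $(y_j)$ need not be a $c_0^\xi$ special sequence. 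Your attempted fix via ``$\sss_\xi[\sss_\xi](M)\subset\sss_\xi$'' is false: $\iota(\sss_\xi[\sss_\xi])=\omega^{2\xi}>\omega^\xi$ for $\xi>0$, so no such $M$ exists, and the parenthetical retreat does not repair this.

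The paper avoids the problem with a much simpler \emph{termwise} dichotomy. Since for $\xi>0$ a $c_0^\xi$ spreading model $(x_n)$ is weakly null, so is $(Qx_n)$. Either $(Qx_n)_{n>N}$ is bounded away from zero for some $N$, in which case a basic subsequence of $(Qx_n)$ is itself a $c_0^\xi$ spreading model (the upper $c_0^\xi$ estimate is inherited from $(x_n)$ via $\|Q\|\leqslant 1$, and the lower $c_0$ estimate is automatic for seminormalized basic sequences); or else some subsequence satisfies $\|Qx_n\|\to 0$, and one perturbs that subsequence \emph{term by term} into $Y$ with no blocking at all, so the resulting sequence is a $c_0^\xi$ spreading model in $Y$ for an only slightly worse constant. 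This sidesteps the $\sss_\xi[\sss_\xi]$ indexing issue entirely and is why (v) is constant-free.
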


\begin{corollary} For any ordinal $\xi$, $I_1(\cdot)<\omega^{\omega^\xi}$ and $I_1(\cdot)\leqslant \omega^{\omega^\xi}$ are three-space properties on the class \emph{\textbf{Ban}}.   \end{corollary}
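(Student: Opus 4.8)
The plan is to deduce both statements directly from Theorem \ref{three space}(ii) together with the arithmetic of the delta number $\omega^{\omega^\xi}$ recorded in Proposition \ref{ordinals}. Recall first what must be verified: a property $P$ is a three-space property on $\textbf{Ban}$ precisely when, for every Banach space $X$ and closed subspace $Y$ such that both $Y$ and $X/Y$ have $P$, the space $X$ also has $P$. So fix an ordinal $\xi$, a Banach space $X$, and a closed subspace $Y$.

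For the statement about $I_1(\cdot)\leqslant\omega^{\omega^\xi}$, assume $I_1(Y)\leqslant\omega^{\omega^\xi}$ and $I_1(X/Y)\leqslant\omega^{\omega^\xi}$ (so neither index is $\infty$). Fix $K\geqslant 1$. I would first observe that both $I_1(X/Y,3K)$ and $I_1(Y,3K)$ are \emph{strictly} below $\omega^{\omega^\xi}$: for $Z\in\{X/Y,Y\}$, if $I_1(Z)<\omega^{\omega^\xi}$ this is immediate since $I_1(Z,3K)\leqslant I_1(Z)$; and if $I_1(Z)=\omega^{\omega^\xi}$ then $Z$ must be infinite dimensional (a finite dimensional space has finite $\ell_1$ index), so Proposition \ref{index facts}(vi) gives $I_1(Z,3K)<I_1(Z)=\omega^{\omega^\xi}$. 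Now Theorem \ref{three space}(ii) and Proposition \ref{ordinals}(iv) yield
$$I_1(X,K)\leqslant I_1(X/Y,3K)\cdot I_1(Y,3K)<\omega^{\omega^\xi}.$$
As $K\geqslant 1$ was arbitrary, $I_1(X)=\sup_{K\geqslant 1}I_1(X,K)\leqslant\omega^{\omega^\xi}$, which is what is needed.

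For the statement about $I_1(\cdot)<\omega^{\omega^\xi}$, assume $\vartheta:=I_1(X/Y)<\omega^{\omega^\xi}$ and $\eta:=I_1(Y)<\omega^{\omega^\xi}$. The point is to extract a bound on $I_1(X,K)$ that is \emph{uniform in $K$}: since $I_1(X/Y,3K)\leqslant\vartheta$ and $I_1(Y,3K)\leqslant\eta$, monotonicity of ordinal multiplication (Proposition \ref{ordinals}(v)) together with Theorem \ref{three space}(ii) gives $I_1(X,K)\leqslant\vartheta\cdot\eta$ for every $K\geqslant 1$, and Proposition \ref{ordinals}(iv) gives $\vartheta\cdot\eta<\omega^{\omega^\xi}$. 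Hence $I_1(X)=\sup_{K\geqslant 1}I_1(X,K)\leqslant\vartheta\cdot\eta<\omega^{\omega^\xi}$.

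There is essentially no obstacle here beyond bookkeeping; the only subtlety worth flagging is that a supremum of ordinals each $<\omega^{\omega^\xi}$ need not be $<\omega^{\omega^\xi}$, which is exactly why the strict case requires the $K$-uniform bound $\vartheta\cdot\eta$ rather than a pointwise-in-$K$ argument, and why the appeal to Proposition \ref{index facts}(vi) is needed in the non-strict case to keep the individual factors below $\omega^{\omega^\xi}$. The degenerate finite dimensional cases require no extra work, since then the relevant index is finite and hence $<\omega\leqslant\omega^{\omega^\xi}$.
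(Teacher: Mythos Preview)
Your proof is correct and follows the same approach as the paper: both rely on Theorem \ref{three space}(ii) together with the multiplicative closure of the delta number $\omega^{\omega^\xi}$ from Proposition \ref{ordinals}(iv). The paper's argument is a one-line remark that leaves the $\leqslant$ case implicit, whereas you have carefully separated the two cases and correctly identified that the $\leqslant$ case requires the extra input from Proposition \ref{index facts}(vi) to force each factor strictly below $\omega^{\omega^\xi}$ before multiplying.
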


This is because if $\alpha ,\beta<\omega^{\omega^\xi}$, $\alpha\beta<\omega^{\omega^\xi}$.  This observation gives the following corollary as well.  

\begin{corollary} Fix $1\leqslant p\leqslant \infty$.  If $X\in\emph{\textbf{Ban}}$ and $Y$ is a closed subpace of $X$ so that $I_p(X/Y), I_p(Y)<\omega^{\omega^\xi}$, then $I_p(X)< \omega^{\omega^\xi}$.  

\end{corollary}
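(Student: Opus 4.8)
The plan is to deduce this corollary directly from Theorem \ref{three space}(i) together with the arithmetic of delta numbers recorded in Proposition \ref{ordinals}. The one subtlety is that Theorem \ref{three space}(i) is phrased in terms of $I_p(X,K)$ for a single constant $K$, so the argument must be carried out at the level of these constant-dependent indices before passing to the supremum.

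First I would dispose of the degenerate cases. If $I_p(X)=\infty$, then $\ell_p$ (resp.\ $c_0$) embeds into $X$; applying the dichotomy discussed just before the statement (if $X$ contains a copy of $\ell_p$ and $Y$ is a closed subspace, then either $Y$ or $X/Y$ contains a copy of $\ell_p$), we get $I_p(Y)=\infty$ or $I_p(X/Y)=\infty$, contradicting the hypothesis. So we may assume $I_p(X)<\infty$, i.e.\ $I_p(X)$ is an ordinal, and by Proposition \ref{index facts}(iv) there is $\gamma$ with $I_p(X)=\omega^\gamma$; it suffices to bound $I_p(X)$.

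Next, fix an arbitrary $K\geqslant 1$ and choose any $C>K$. By Theorem \ref{three space}(i), $I_p(X,K)\leqslant I_p(X/Y)\,I_p(Y,C)$. Since $I_p(X/Y)<\omega^{\omega^\xi}$ and, by Proposition \ref{index facts}(vi), $I_p(Y,C)<I_p(Y)<\omega^{\omega^\xi}$, we have two ordinals $\alpha=I_p(X/Y)$ and $\beta=I_p(Y,C)$ both strictly below the delta number $\omega^{\omega^\xi}$. By Proposition \ref{ordinals}(iv), $\alpha\beta<\omega^{\omega^\xi}$ (treating the trivial case where one of them is $0$ separately, though here both are infinite since $X/Y$ and $Y$ are either trivial or infinite dimensional, and if either quotient or subspace is finite the conclusion is immediate). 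Hence $I_p(X,K)<\omega^{\omega^\xi}$ for every $K\geqslant 1$.

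Finally, taking the supremum over $K\geqslant 1$, we get $I_p(X)=\sup_{K\geqslant 1}I_p(X,K)\leqslant\omega^{\omega^\xi}$. But $\omega^{\omega^\xi}$ is a limit ordinal, so if $I_p(X)=\omega^{\omega^\xi}$ then there would be some $K$ with $I_p(X,K)>\omega^{\omega^\xi}\cdot 0$ arbitrarily large; more precisely, since $\omega^{\omega^\xi}$ is a limit of successors and each $I_p(X,K)$ is a successor strictly below $\omega^{\omega^\xi}$, the supremum $I_p(X)$ is at most $\omega^{\omega^\xi}$ and, being itself a gamma number by Proposition \ref{index facts}(iv), cannot equal $\omega^{\omega^\xi}$ unless some $I_p(X,K)$ reaches it, which we have excluded; alternatively one simply invokes the preceding Corollary, which states precisely that $I_p(\cdot)\leqslant\omega^{\omega^\xi}$ is a three-space property, and upgrades $\leqslant$ to $<$ by the same observation that a strict-supremum of ordinals below a gamma number, each a successor, stays below. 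I expect the only mild obstacle to be this last bookkeeping step of converting the $\leqslant$ bound into the strict $<$ bound; it is routine given that $I_p(X)$ is a gamma number and each $I_p(X,K)$ is a successor lying strictly below the delta number $\omega^{\omega^\xi}$.
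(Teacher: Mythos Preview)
Your approach is the same as the paper's: apply Theorem \ref{three space}(i) and use that delta numbers are closed under ordinal multiplication (Proposition \ref{ordinals}(iv)). However, your final paragraph contains a genuine error, and the fix is simpler than what you attempt.

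The claim that $I_p(X)$, being a gamma number, ``cannot equal $\omega^{\omega^\xi}$ unless some $I_p(X,K)$ reaches it'' is false. A supremum of successors each strictly below a limit ordinal can perfectly well equal that limit ordinal: for instance $\sup_n(\omega^n+1)=\omega^\omega$. Nothing about $I_p(X)$ being a gamma number rules this out, and the alternative you offer (invoking the preceding corollary) is circular for general $p$, since that corollary is stated only for $p=1$.

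The correct way to get the strict inequality is already implicit in what you wrote. You observed $I_p(Y,C)<I_p(Y)$; use instead the weaker but uniform bound $I_p(Y,C)\leqslant I_p(Y)$. Then for every $K\geqslant 1$ and any $C>K$,
\[
I_p(X,K)\leqslant I_p(X/Y)\,I_p(Y,C)\leqslant I_p(X/Y)\,I_p(Y).
\]
The right-hand side is a \emph{fixed} ordinal, independent of $K$, and by Proposition \ref{ordinals}(iv) it is strictly less than $\omega^{\omega^\xi}$. Taking the supremum over $K$ gives $I_p(X)\leqslant I_p(X/Y)\,I_p(Y)<\omega^{\omega^\xi}$ directly, with no need to analyse whether the supremum is attained. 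This is exactly the paper's one-line argument.
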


In the case that $Y$ is complemented, we can reverse the order of multiplication.  In this case, we obtain the slightly stronger result that if $I_p(Y), I_p(Z)\leqslant \omega^{\omega^\xi}$ and at least one of these inequalities is strict, then $I_p(Y\oplus Z)\leqslant \omega^{\omega^\xi}$.  

Of course, $I_p(\cdot)<\omega^{\omega^\xi}$ is not a three-space property for $1<p$, since $\ell_p$ and $c_0$ are quotients of $\ell_1$.  Thus for $1<p$, $I_p(X/Y)$ cannot be controlled by $I_p(X)$.  

As we will see below, the $\ssm_1(X)= \sup_{K\geqslant 1} \ssm_1(X, K)$ may be attained. This fact has been hinted at by the existence of Banach spaces admitting uniform $\ell_1^{\aaa_n}$ spreading models but no $\ell_1^1$ spreading model.  Thus the estimate (iv) of Theorem \ref{three space} does not imply the following, which was proven in \cite{BCM}.  

\begin{theorem} For $\xi<\omega_1$, admitting no $\ell_1^{\omega^\xi}$ spreading model is a three space property on the class of Banach spaces.  That is, if $X\in \emph{\textbf{Ban}}$ and $Y\leqslant X$, $\ssm_1(Y), \ssm_1(X/Y)\leqslant \omega^\xi$ implies $\ssm_1(X)\leqslant \omega^\xi$.  

\label{BCM theorem 2}

\end{theorem}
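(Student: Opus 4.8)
The plan is to combine the subadditivity estimate (iv) of Theorem \ref{three space} with a limiting/constant-reduction argument to deal with the fact that $\ssm_1$ is only a supremum of the constant-dependent indices $\ssm_1(\cdot,K)$, which need not be attained. The key point is that $\omega^\xi$ is a gamma number, so by Proposition \ref{ordinals}(ii) the set $[0,\omega^\xi)$ is closed under ordinary addition; hence if $\ssm_1(Y,3K)\leqslant\omega^\xi$ and $\ssm_1(X/Y,3K)\leqslant\omega^\xi$ with \emph{strict} inequalities in the relevant range, then $\ssm_1(Y,3K)+\ssm_1(X/Y,3K)<\omega^\xi$, and (iv) gives $\ssm_1(X,K)<\omega^\xi$ for every $K$, whence $\ssm_1(X)\leqslant\omega^\xi$. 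So the real work is reducing to the case of strict inequalities, i.e.\ showing that if $\ssm_1(Y)\leqslant\omega^\xi$ and $\ssm_1(X/Y)\leqslant\omega^\xi$, then in fact for \emph{each fixed} $K$ we have $\ssm_1(Y,3K)<\omega^\xi$ and $\ssm_1(X/Y,3K)<\omega^\xi$.

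First I would record the elementary monotonicity facts: $\ssm_1(\cdot,K)$ is nondecreasing in $K$, and by the subadditivity of $\ssm_1(\cdot,\cdot)$ established in Theorem \ref{submultiplicative}, for every $n\in\nn$ one has $\ssm_1(Z,K^n)\leqslant n\,\ssm_1(Z,K)$ (reading the inequality in the convenient order). Now suppose $\ssm_1(Z)\leqslant\omega^\xi$, where $Z$ is $Y$ or $X/Y$. Fix $K\geqslant 1$; I claim $\ssm_1(Z,K)<\omega^\xi$. If not, $\ssm_1(Z,K)\geqslant\omega^\xi$, and since $\ssm_1(Z,K)$ is always a successor-type minimum (or $=\omega_1$ or $\infty$)—more precisely, since $\ssm_1(Z)=\sup_{L}\ssm_1(Z,L)\leqslant\omega^\xi$ forces $\ssm_1(Z,L)\leqslant\omega^\xi$ for all $L$—we get $\ssm_1(Z,K)=\omega^\xi$. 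But then for every $n$, $\ssm_1(Z,K^n)\leqslant n\,\ssm_1(Z,K)$; this is not yet a contradiction, so instead I run the argument the other way: choose $C>K$ arbitrary, pick $n$ with $K^{1/n}\leqslant$ nothing useful directly. The cleaner route, mirroring the paragraph after Theorem \ref{submultiplicative} and Theorem \ref{BCM theorem 1}: since $\omega^\xi$ is a gamma number and $\ssm_1(\cdot,\cdot)$ is subadditive, the argument of Theorem \ref{BCM theorem 1} shows $P_4^\vee(\omega^\xi,1)=P_4^\wedge(\omega^\xi,1)$, i.e.\ if some $K$ has $\ssm_1(Z,K)>\omega^\xi$ then $\ssm_1(Z,1+\varepsilon)>\omega^\xi$ for all $\varepsilon>0$, forcing $\ssm_1(Z)>\omega^\xi$. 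Contrapositively, $\ssm_1(Z)\leqslant\omega^\xi$ implies $\ssm_1(Z,K)\leqslant\omega^\xi$ for all $K$; to upgrade to strict, note that if $\ssm_1(Z,K)=\omega^\xi$ exactly, then applying subadditivity with a splitting $K=C\cdot(K/C)$ and the already-known bound $\ssm_1(Z,C)\leqslant\omega^\xi$, $\ssm_1(Z,K/C)\leqslant\omega^\xi$ gives nothing new, so one instead invokes directly that $\ssm_1(Z,K)$, being the min of $\zeta$ with no $K$-$\ell_1^\zeta$ spreading model, satisfies: if it equals the limit ordinal $\omega^\xi$ then $Z$ admits $K$-$\ell_1^\zeta$ spreading models for all $\zeta<\omega^\xi$, and a diagonalization over a sequence $\zeta_n\uparrow\omega^\xi$ together with the constant-reduction machinery of Theorem \ref{BCM theorem 1} produces, for each fixed $C>1$, a $C$-$\ell_1^{\omega^\xi}$ spreading model—contradicting $\ssm_1(Z)\leqslant\omega^\xi$. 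Hence $\ssm_1(Z,K)<\omega^\xi$.

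Then I finish: fix $K\geqslant 1$. By the previous paragraph applied to $Z=Y$ and $Z=X/Y$ with constant $3K$, both $\ssm_1(Y,3K)$ and $\ssm_1(X/Y,3K)$ are ordinals strictly below $\omega^\xi$. Since $\omega^\xi$ is a gamma number, Proposition \ref{ordinals}(ii) gives $\ssm_1(Y,3K)+\ssm_1(X/Y,3K)<\omega^\xi$. By Theorem \ref{three space}(iv), $\ssm_1(X,K)\leqslant\ssm_1(X/Y,3K)+\ssm_1(Y,3K)<\omega^\xi$. As $K$ was arbitrary, $\ssm_1(X)=\sup_{K\geqslant 1}\ssm_1(X,K)\leqslant\omega^\xi$, which is the assertion.

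The main obstacle I expect is the second paragraph: cleanly passing from ``$\ssm_1(Z)\leqslant\omega^\xi$'' to the \emph{strict, constant-uniform} bound ``$\ssm_1(Z,K)<\omega^\xi$ for every $K$''. This is exactly the content of the constant-reduction statement $P_4^\vee(\omega^\xi,1)=P_4^\wedge(\omega^\xi,1)$ from Theorem \ref{BCM theorem 1} (Corollary \ref{reduction}), combined with the observation that if $\ssm_1(Z,K)$ were exactly the limit ordinal $\omega^\xi$ then $Z$ would admit $K$-$\ell_1^\zeta$ spreading models for all $\zeta<\omega^\xi$; feeding these through the splitting argument of Theorem \ref{BCM theorem 1} (which shows a $CK$-$\ell_1^{\zeta+\eta}$ spreading model yields either a $C$-$\ell_1^\zeta$ or a $K$-$\ell_1^\eta$ one) and the fact that $\omega^\xi$ is a gamma number lets one drive the constant down to $1+\varepsilon$ while keeping the index at least $\omega^\xi$, contradicting $\ssm_1(Z)\leqslant\omega^\xi$. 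Once that reduction is in hand, the rest is just the gamma-number closure of $[0,\omega^\xi)$ under addition plus Theorem \ref{three space}(iv).
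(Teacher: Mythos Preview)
Your approach has a genuine gap at exactly the point you flagged as the main obstacle, and it cannot be repaired along the lines you sketch. The implication ``$\ssm_1(Z)\leqslant\omega^\xi \Rightarrow \ssm_1(Z,K)<\omega^\xi$ for every $K$'' is \emph{false} in general. The paper itself provides counterexamples in its final section: the $\ell_2$-sum $\bigl(\oplus X_{\xi_n}\bigr)_{\ell_2}$ of Schreier spaces with $\xi_n\uparrow\omega^\xi$ satisfies $\ssm_1(Z)=\ssm_1(Z,1)=\omega^\xi$ exactly, so $\ssm_1(Z,K)=\omega^\xi$ for every $K\geqslant 1$ with no strict inequality. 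The diagonalization you propose---from $K$-$\ell_1^\zeta$ spreading models for all $\zeta<\omega^\xi$ to a $C$-$\ell_1^{\omega^\xi}$ spreading model---is precisely what fails here: unlike trees, sequences cannot be glued across different $\zeta$'s, and Theorem \ref{BCM theorem 1} begins with a \emph{single} $\ell_1^{\omega^\xi}$ spreading model and blocks inside it; it gives you nothing if no such sequence exists. Indeed, the paper explicitly remarks just before the statement of Theorem \ref{BCM theorem 2} that ``the estimate (iv) of Theorem \ref{three space} does not imply'' the theorem, for exactly this reason.

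The paper's proof avoids this by never passing through the additive estimate. It works directly at level $\omega^\xi$ via Lemma \ref{easy coloring}: given a $K$-$\ell_1^{\omega^\xi}$ spreading model $(x_n)$ in $X$, color $E\in\widehat{\sss}_{\omega^\xi}$ according to whether $\min\{\|Qx\|:x\in\text{co}_1(x_n:n\in E)\}\geqslant 1/3K$. One alternative of the lemma gives $M\in\infin$ making $(Qx_{m_n})$ a $3K$-$\ell_1^{\omega^\xi}$ spreading model in $X/Y$; the other gives $E_1<E_2<\ldots$ with $\cup_{i\in E}E_i\in\sss_{\omega^\xi}$ for all $E\in\sss_{\omega^\xi}$, so the perturbed block lands in $Y$ as a $3K$-$\ell_1^{\omega^\xi}$ spreading model. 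The crucial feature---that the blocking in the second alternative preserves the \emph{same} family $\sss_{\omega^\xi}$ rather than dropping to a smaller one---is exactly the multiplicative closure property of $\sss_{\omega^\xi}$ built into Lemma \ref{easy coloring}, and it is what your additive route via Theorem \ref{three space}(iv) cannot recover.
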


This is a simple consequence of Lemma \ref{easy coloring}.  If $(x_n)$ is a $K$-$\ell_1^{\omega^\xi}$ spreading model in $X$, define the function $f$ on $\widehat{\sss}_{\omega^\xi}$ by $f(E)=0$ if $\min \{\|Qx\|:x\in \text{co}_1(x_n:n\in E)\}\geqslant 1/3K$ and $f(E)=1$ otherwise.  If there exists $M\in \infin$ so that $f|_{\widehat{\sss}_{\omega^\xi}(M)}\equiv 1$, then the image $(Qx_{m_n})$ of the subsequence $(x_{m_n})$ is a $3K$-$\ell_1^{\omega^\xi}$ spreading model.  Otherwise there exist $E_1<E_2<\ldots$ so that $f(E_i)=0$ and for each $E\in \sss_{\omega^\xi}$, $\cup_{n\in E}E_n\in \sss_{\omega^\xi}$.  For each $n\in \nn$, pick $u_n\in \text{co}_1(x_i:i\in E_n)$ so that $\|Qu_n\|< 1/3K$ and $y_n\in Y$ so that $\|u_n-y_n\|< 1/3K$.  Then $(u_n)$ is a $K$-$\ell_1^{\omega^\xi}$ spreading model.  This implies $(2^{-1}y_n)$ is a $3K$-$\ell_1^{\omega^\xi}$ spreading model.

\begin{proof}[Proof of Theorem \ref{three space}]

(i) If $I_p(X, K)=\infty$, then $X$ contains a $K$-isomorphic copy of $\ell_p$. We have already treated this case above, so assume $I_p(X, K)<\infty$.  If $I_p(X/Y)=\infty$ or $I_p(Y, C)=\infty$, there is nothing to prove.   Therefore we can assume $\zeta= I_p(X/Y)<\infty$ and $\xi= I_p(Y, C)<\infty$.  Assume $I_p(X, K)>\zeta\xi$ and choose $(x_t)_{t\in \ttt_{\zeta\xi}}$ so that for each $t\in \ttt_{\zeta\xi}$, $(x_{t|_i})_{i=1}^{|t|}\in T_p(X, K)$.  Define $f:C(\ttt_{\zeta\xi})\to \rr$ by $$f(c)=\inf \{\|Qx\|: x\in \text{co}_p(x_t: t\in c)\}.$$  By Lemma \ref{local coloring}, either there exists an order preserving $i:\ttt_\zeta\to \ttt_{\zeta\xi}$ so that $$\inf \{f\circ i(c): c\in C(\ttt_\zeta)\}=\varepsilon>0,$$ or there exists an order preserving $j:\ttt_\xi\to C(\ttt_{\zeta\xi})$ so that for each $t\in \ttt_\xi$, $f\circ j(t)<\varepsilon_{|t|}$, where $(\varepsilon_n)\subset (0,1)$ is to be determined.  

In the first case, $(Qx_{i(t)})_{t\in \ttt_\zeta}$ witnesses the fact that $I_p(X/Y)>\zeta$, a contradiction. In the second case, we can choose for each $t\in \ttt_\xi$ some $u_t\in \text{co}_p(x_s: s\in j(t))$ so that $\|Qu_t\|< \varepsilon_{|t|}$. Then since $j$ is order preserving, $(u_{t|_i})_{i=1}^{|t|}$ is a $p$-absolutely convex block of a member of $T_p(X, K)$, and is itself a member of $T_p(X, K)$.  For each $t\in \ttt_\xi$, we can choose $y_t\in Y$ so that $\|u_t-y_t\|<\varepsilon_{|t|}$.  Then for each $t\in \ttt_\xi$ and $(a_i)_{i=1}^{|t|}\in S_{\ell_p^{|t|}}$, \begin{align*} \Bigl\|\sum_{i=1}^{|t|} a_i y_{t|_i}\Bigr\| & \leqslant \Bigl\|\sum_{i=1}^{|t|} a_i u_{t|_i}\Bigr\| + \sum_{i=1}^{|t|}|a_i|\|u_{t|_i}- y_{t|_i}\| \\ & \leqslant 1+ \sum_{i=1}^{|t|} \varepsilon_{t|_i} \leqslant 1+\sum\varepsilon_n.\end{align*}  Similarly, $\|\sum_{i=1}^{|t|} a_i y_{t|_i}\|\geqslant 1/K- \sum \varepsilon_n$.  If $\varepsilon= \sum \varepsilon_n$ is chosen so that $(1+\varepsilon)(K^{-1}-\varepsilon)^{-1}<C$, we deduce that $((1+\varepsilon)^{-1}y_{t|_i})_{i=1}^{|t|}\in T_p(Y, C)$.  This means $I_p(Y,C)>\xi$, another contradiction.

(ii) If $X$ contains a $K$-isomorphic copy of $\ell_1$, then either $X/Y$ or $Y$ contains a $(1+\varepsilon)$-isomorphic copy of $\ell_1$ for all $\varepsilon>0$.   Assume $I_1(X, K)<\infty$, which implies that $\zeta= I_1(X/Y, 3K)$, $\xi=I_1(Y, 3K)<\infty$.  The proof is similar to the previous case.  If $I_1(X, K)>\zeta\xi$, choose $(x_t)_{t\in \ttt_{\zeta\xi}}$ so that $(x_{t|_i})_{i=1}^{|t|}\in T_1(X,K)$.  Define a function $f:C(\ttt_{\zeta\xi})\to \{0,1\}$ by letting $f(c)=1$ if $$\inf \{\|Qx\|: x\in \text{co}_1(x_t: t\in c)\}\geqslant 1/3K,$$ and $f(c)=0$ otherwise.  Either there exists an order preserving $i:\ttt_\zeta\to \ttt_{\zeta\xi}$ so that $f\circ i(c)=1$ for all $c\in C(\ttt_\zeta)$, in which case $I_1(X/K, 3K)>\zeta$, or there exists an order preserving $j:\ttt_\xi\to C(\ttt_{\zeta\xi})$ so that $f\circ j\equiv 0$.  The first case immediately yields a contradiction.  In the second case, we choose for each $t\in \ttt_\xi$ some $u_t\in \text{co}_1(x_s:s\in j(t))$ so that $\|Qu_t\|<1/3K$ and some $y_t\in Y$ so that $\|u_t-y_t\|<1/3K$.   Then for any $t\in \ttt_\xi$ and $(a_i)_{i=1}^{|t|}\in S_{\ell_1^{|t|}}$, $$\Bigl\|\sum_{i=1}^{|t|} a_i y_{t|_i}\Bigr\|\geqslant 1/K- \sum_{i=1}^{|t|}|a_i| \|u_{t|_i}- y_{t|_i}\| \geqslant 2/3K.$$  But $\|y_t\|\leqslant \|u_t\|+1/3K <2$, so $(2^{-1}y_t)_{t\in \ttt_\xi}\subset B_Y$ witnesses the fact that $I_1(Y, 3K)>\xi$, another contradiction.

(iii) If any of the three spaces contains $\ell_p$, we finish as in (i).  Assume each index does not exceed $\omega_1$.  In this case, if either $\ssm_p(X/Y)=\omega_1$ or $\ssm_p(Y,C)=\omega_1$, we have the result.  So assume $\zeta= \ssm_p(X/Y)<\omega_1$ and $\xi=\ssm_p(Y, C)<\omega_1$.  If $\ssm_p(X, K)>\zeta+\xi$, then there must exist some $(x_n)\subset X$ which is a $K$-$\ell_p^{\sss_\xi[\sss_\zeta]}$ (resp. $c_0^{\sss_\xi[\sss_\zeta]}$ if $p=\infty$) spreading model.  Suppose that for some $N\in \nn$ and some $\varepsilon>0$ it is true that for every $E\in \sss_\zeta\cap (N, \infty)^{<\omega}$ and every $(a_n)_{n\in E}\in S_{\ell_p^{|E|}}$, $\varepsilon \leqslant \|\sum_{n\in E} a_n Qx_n\|$.  Then $(Qx_n)_{n>N}$ is a $\varepsilon^{-1}$-$\ell_p^{\sss_\zeta\cap (N, \infty)^{<\omega}}$ spreading model, a contradiction.  Thus we can find $E_1<E_2<\ldots$, $E_i\in \sss_\zeta$, and $(a_j)_{j\in E_i}\in S_{\ell_p^{|E_i|}}$ so that with $u_i=\sum_{j\in E_i} a_j x_j$, $\|Qu_i\|<\varepsilon_i$, where $\varepsilon_i\downarrow 0$ rapidly.  Then $(u_i)$ is a $K$-$\ell_p^\xi$ spreading model and, if $y_i\in Y$ is such that $\|u_i-y_i\|<\varepsilon_i$ for all $i\in \nn$, $((1+\varepsilon)^{-1}y_i)\subset Y$ will be a $C$-$\ell_p^\xi$ spreading model for an appropriate choice of $(\varepsilon_i)$.

(iv) The proof is the obvious combination of the methods of (ii) and (iii).

(v) Again, we need only consider the case in which $X$ does not contain $c_0$.  Fix $\xi<\omega_1$ and assume $(x_n)\subset X$ is a $c_0^\xi$ spreading model.  If $\xi=0$, there is nothing to prove.  Otherwise, $(x_n)$ is weakly null.  If there exists $N\in \nn$ so that $(Qx_n)_{n>N}$ is bounded away from zero, then some subsequence is a $c_0^\xi$ spreading model.  This is because some subsequence is basic, so the lower $c_0$ estimates come automatically.  The upper $c_0$ estimates come from comparison to $(x_n)_{n>N}$.  Otherwise, there exists a subsequence of $(x_n)$, which we can assume is the whole sequence, so that $\|Qx_n\|\to 0$.  Then some subsequence is an arbitrarily small perturbation of a sequence in $Y$.  By taking the perturbations small enough, we guarantee the existence of a $c_0^\xi$ spreading model in $Y$.

\end{proof}

There is a natural exception to the remark above about uniform perturbations of $\ell_p$ sequences not necessarily exhibiting $\ell_p$ behavior.  This exception is uniform perturbations of sequences with uniformly bounded length. If $I_p(X/Y), I_p(Y)\leqslant \omega$, the estimate above only yields $I_p(X)\leqslant \omega^2$.    In this case one can perform the finite version of the arguments above to easily see that if $I_p(X)>\omega$, $\max\{I_p(X/Y), I_p(Y)\}>\omega$.   This is because if $I_p(X)>\omega$ while $I_p(X/Y), I_p(Y)$, then for any $\varepsilon>0$ and $N\in \nn$, we can find $(x_i)_{i=1}^N\in T_p(X, 1-\varepsilon)$. Fix $n\in \nn$ arbitrary.  With $m= I_p(X/Y, n/\varepsilon)$ and $N=mn$, we deduce that for each $1\leqslant j\leqslant n$, there exists $(a_i)_{i=(j-1)m+1}^{jm}\in S_{\ell_p^m}$ so that $\|Q\sum_{i=(j-1)m+1}^{jm} a_ix_i\|<\varepsilon/n$.  With $u_i=\sum_{i=(j-1)m+1}^{jm}a_ix_i$ and $y_i\in Y$ so that $\|u_i- y_i\|<\varepsilon/n$, $$1-2\varepsilon \leqslant \Bigl\|\sum_{i=1}^n b_iy_i\Bigr\|\leqslant 1+\varepsilon$$ for any $(b_i)_{i=1}^n \in S_{\ell_p^n}$.  Since we can do this for any $n\in \nn$, we deduce $I_p(Y)>\omega$.

We now deduce a few natural consequences of our coloring lemmas which help us estimate the $I_p$ index of infinite direct sums.  Recall that if $U$ is a Banach space with $1$-unconditional (not necessarily ordered) basis $E$, and if for each $e\in E$, $X_e$ is a Banach space, $$\bigl(\oplus X_e\bigr)_{e\in E}=\{(x_e)_{e\in E}: x_e\in X_e, \sum_{e\in E} \|x_e\|e\in U\}$$ is a Banach space with the norm $\|(x_e)\|= \|\sum_{e\in E}\|x_e\|e\|_U$.  We denote this direct sum by $X_E$.  Analogously to the case of an FDD, we let $\supp_E((x_e)_{e\in E})=\{e\in E: x_e\neq 0\}$ and we let $c_{00}(E)$ denote the vectors in $X_E$ which have finite support.  We note that such vectors are dense in $X_E$.  For $S\subset E$ finite, we let $P_S:X_E\to X_E$ be the projection given by $P_S(x_e)= (1_S(e) x_e)$.  We let $\pi_{e_0}:\bigl(\oplus X_e\bigr)_{e\in E}\to X_{e_0}$ be the coordinate projection into the summand $X_{e_0}$ defined by $\pi_{e_0}((x_e)_{e\in E})= y_{e_0}$.  Last, we let $\Pi:X_E\to U$ be the map defined by $\Pi(x)=\sum_{e\in E} \|\pi_e(y)\|e$.  We observe that if $(x_i)$ is a sequence of vectors with pairwise disjoint supports in $X_E$, then $(x_i)$ is isometrically equivalent to the sequence $(\Pi x_i)$ in $U$.  

\begin{proposition}

Fix $1\leqslant p\leqslant \infty$.  Suppose that $U$ is a Banach space with $1$-unconditional basis $E$ and $\alpha\in \emph{\ord}$ is such that for each $e\in E$, $X_e$ is a Banach space with $I_p(X_e)< \omega^{\omega^\alpha}$ (or $I_p(X_e)\leqslant \omega^{\omega^\alpha}$ in the case that $p=1$).  Then for any $1\leqslant K<C$, $I_p(X_E, K) \leqslant \omega^{\omega^\alpha}I_p(U, C).$  

\label{infinite direct sum}

\end{proposition}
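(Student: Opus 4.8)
## Proof Proposal for Proposition \ref{infinite direct sum}

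The plan is to adapt the two-space argument of Theorem \ref{three space}(i) to this infinite setting, using the coloring lemma (Lemma \ref{local coloring}) as the engine. The key observation is that in the direct sum $X_E$, a vector $x$ can be decomposed according to where its ``mass'' in the sense of the $U$-norm is concentrated: either a bounded number of coordinates $e \in E$ carry essentially all the mass, in which case $x$ looks like a vector in a finite $\ell_p$-type sum of the $X_e$'s (whose $I_p$ index is controlled by the individual $I_p(X_e)$ via the argument in Proposition \ref{index facts}(iv)), or the mass is genuinely spread out, in which case $x$ is close to a disjointly supported vector and hence $\Pi x$ records an honest vector in $U$. This dichotomy is exactly the kind of thing Lemma \ref{local coloring} is built to exploit.

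First I would set $\zeta = I_p(U, C)$ and let $\xi$ be an ordinal with $\omega^{\omega^\alpha} \geq I_p(X_e, C')$ for all $e$ (using that $\sup_e I_p(X_e) \leq \omega^{\omega^\alpha}$ and that $\beta_1 \cdots \beta_n < \omega^{\omega^\alpha}$ whenever each $\beta_i < \omega^{\omega^\alpha}$, plus Proposition \ref{index facts}(iv) to handle finite direct sums of the summands). Assuming for contradiction that $I_p(X_E, K) > \omega^{\omega^\alpha} \zeta$, fix a tree $(x_t)_{t \in \ttt_{\omega^{\omega^\alpha}\zeta}}$ with $(x_{t|_i})_{i=1}^{|t|} \in T_p(X_E, K)$, and by enlarging $K$ slightly assume each $x_t \in c_{00}(E)$. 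For each segment $c$, define $f(c)$ to measure how close $\text{co}_p(x_t : t \in c)$ comes to being disjointly supported over a fixed finite ``obstruction set'' $S \subset E$; more precisely, I would use a countable family of functions $f_n$ indexed as in Lemma \ref{mixed coloring} or use the strong version Lemma \ref{strong coloring}, since the obstruction set must grow as we descend the tree — each $j(s)$ chosen fixes the supports already used, and later choices must avoid them. Applying the coloring lemma with threshold $\zeta$: either we find an order-preserving copy of $\ttt_\zeta$ on which all the relevant quantities stay bounded below, or we find an order-preserving $j : \ttt_{\omega^{\omega^\alpha}} \to C(\ttt_{\omega^{\omega^\alpha}\zeta})$ along which they vanish in the limit.

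In the first case, the branches of the resulting tree, after composing with $\Pi$, yield a tree in $U$ witnessing $I_p(U, C) > \zeta$, a contradiction. In the second case, along the copy of $\ttt_{\omega^{\omega^\alpha}}$ we obtain vectors $u_t \in \text{co}_p(x_s : s \in j(t))$ each of which, up to a small perturbation controlled by the $\varepsilon_{|t|}$'s, is supported on a bounded number of coordinates $e \in E$ — but then the total tree structure, via a second application of a coloring argument (a pigeonhole on which coordinate carries the mass) or directly by noting these vectors live in a finite sub-sum $\bigl(\oplus_{e \in S} X_e\bigr)$ of index less than $\omega^{\omega^\alpha}$, forces $I_p$ of that finite sub-sum to exceed $\omega^{\omega^\alpha}$ — impossible. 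The bookkeeping for ``bounded number of coordinates'' is where I expect the main friction: one needs to argue that collapsing a $p$-absolutely convex combination whose $U$-mass is $(1-\varepsilon)$-concentrated on $\leq N$ coordinates really does give something $C$-equivalent to a vector in the corresponding finite direct sum, and that $N$ can be taken uniform. This is routine but fiddly, and is the analogue of the perturbation estimate $(1+\varepsilon)(K^{-1}-\varepsilon)^{-1} < C$ in Theorem \ref{three space}(i); I would isolate it as a preliminary lemma about $X_E$ before running the tree induction.
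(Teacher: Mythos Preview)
You have assembled the right ingredients --- the strong coloring lemma (Lemma~\ref{strong coloring}), growing obstruction sets $S_c = \bigcup_{s\preceq \max c}\supp_E(x_s)$, and the three-space estimate for finite sub-sums --- but you have the two alternatives of the dichotomy reversed, and consequently the roles of $\omega^{\omega^\alpha}$ and $I_p(U,C)$ swapped.

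Set $\zeta = \omega^{\omega^\alpha}$ and $\xi = I_p(U,C)$, not the other way around.  Define $f_c(c') = \min\{\|P_{S_c}x\| : x\in \text{co}_p(x_s:s\in c')\}$.  The point is that each $f_c$ is $\zeta$-small, and this is not one of two live cases but the only possible case: if some $f_c$ failed to be $\omega^{\omega^\alpha}$-small, there would be an order-preserving $i:\ttt_{\omega^{\omega^\alpha}}\to\ttt$ with $f_c\circ i \geqslant \varepsilon > 0$, and then $(P_{S_c}x_{i(t)})_t$ would witness $I_p(X_{S_c},\varepsilon^{-1}) > \omega^{\omega^\alpha}$ for the \emph{finite} direct sum $X_{S_c}$, contradicting Theorem~\ref{three space}.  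So there is no ``first case'' to analyze --- it is ruled out immediately.

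What Lemma~\ref{strong coloring} then produces is $j:\mt_\xi\to C(\ttt_{\zeta\xi})\cup\{\varnothing\}$ with $f_{j(s)}(j(t))<\varepsilon_{|t|}$.  Your own sentence ``each $j(s)$ chosen fixes the supports already used, and later choices must avoid them'' is exactly right, but \emph{avoiding} the obstruction means the chosen combinations $y_t\in\text{co}_p(x_r:r\in j(t))$ have small mass \emph{on} $S_{j(s)}$, hence are essentially supported on \emph{new} coordinates.  The truncations $z_t = P_{\supp_E(y_t)\setminus S_{j(s)}}y_t$ are then pairwise disjointly supported along branches, and it is $(\Pi z_{t|_i})_{i=1}^{|t|}$ that lands in $T_p(U,C)$ for each $t\in\ttt_\xi$ --- the contradiction with $\xi = I_p(U,C)$.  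Your ``second case'' description (vectors concentrated on a bounded number of coordinates) is precisely the opposite of what the strong coloring delivers.  There is no pigeonhole on coordinates and no uniformity of $N$ to worry about; the perturbation is just $\|z_t-y_t\| = \|P_{S_{j(s)}}y_t\|<\varepsilon_{|t|}$.
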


\begin{proof}  (i) Let $\zeta=\omega^{\omega^\alpha}$.  If $I_p(U, C)=\infty$, there is nothing to prove.  So assume $\xi=I_p(U,C)<\infty$ and $I_p(Y, K)>\zeta\xi$.  By replacing any $K$ with any strictly larger number which is still less than $C$, we deduce there exists $(x_t)_{t\in \ttt_{\zeta\xi}}$ so that for each $t\in \ttt_{\zeta\xi}$, $(x_{t|_i})_{i=1}^{|t|}\in T_p(X, K)\cap c_{00}(E)^{<\omega}$.  For $c\in C(\ttt_{\zeta\xi})$, let $S_c=\cup_{s\preceq \max c}\supp_E(x_s)$, and note that this is a finite subset of $E$. Let $S_\varnothing=\varnothing$. For each $c\in C(\ttt_{\zeta\xi})\cup \{\varnothing\}$, define $f_c:C(\ttt_{\zeta\xi})\to \rr$ by $$f_{c}(c')= \min \{\|P_{S_{c}} x\|: x\in \text{co}_p(x_s:s\in c')\}.$$

 Note that for any order preserving $i:\ttt_\zeta\to \ttt_{\zeta\xi}$ and for any $c\in C(\ttt_{\zeta\xi})\cup \{\varnothing\}$, $$\inf \{f_c\circ i(c'): c'\in C(\ttt_\zeta)\}=0.$$ If this were not so, then with $0<\varepsilon= \inf \{f_c\circ i(c'):c'\in C(\ttt_\zeta)\}$, we deduce $(P_{S_c}x_{i(s)})_{s\in \ttt_\zeta}\subset X_{S_c}$ witnesses the fact that $I_p(X_{S_c}, \varepsilon^{-1})>\zeta$.  But since $X_{S_c}$ is a finite direct sum, this contradicts Theorem \ref{three space}.  

Fix $(\varepsilon_n)$ rapidly decreasing to zero and fix an order preserving $j:\mt_\xi\to C(\ttt_{\zeta\xi})\cup \{\varnothing\}$ so that for each $s\prec t\in \ttt_\xi$, $f_{j(s)}(j(t))<\varepsilon_{|t|}$, which we can do by Lemma \ref{strong coloring}.  Fix $t\in \ttt_\xi$ and let $s$ be the immediate predecessor of $t$ in $\mt_\xi$.  Choose $y_t\in \text{co}_p(x_r: r\in j(t))$ so that $\|P_{S_{j(s)}} y_t\|<\varepsilon_{|t|}$, which we can do since $f_{j(s)}(j(t))<\varepsilon_{|t|}$.  Let $A_t=\supp_E(y_t)\setminus S_{j(s)}$.  Let $z_t= P_{A_t} y_t$ and note that $\|z_t- y_t\| = \|P_{S_{j(s)}}y_t\|<\varepsilon_{|t|}.$ Thus for an appropriate choice of $\varepsilon_n\downarrow 0$ and an appropriate scaling of $(z_t)$, we deduce that $(z_t)_{t\in \ttt_\xi}\in T_p(X_E, C)$. But for any $s'\preceq s\prec t$, $\supp_E(z_{s'})\subset \supp_E(y_{s'})\subset S_{j(s)}$, so that $z_{s'}$ and $z_t$ have disjoint supports.  Thus for each $t\in \ttt_\xi$, $(\Pi z_{t|_i})_{i=1}^{|t|}\in T_p(U, C)$, since it is isometrically equivalent to $(z_{t|_i})_{i=1}^{|t|}$.  This contradiction finishes the proof.

\end{proof}

\begin{proposition} If $U$ is a reflexive Banach space with $1$-unconditional basis $E$ and if $\alpha\leqslant \omega_1$ is such that for each $e\in E$, $X_e$ is a Banach space with $\ssm_p(X_e)< \omega^\alpha$ (or $\ssm_p(X_e)\leqslant \omega^\alpha$ if $p=1$), then for any $1\leqslant K<C$, $$\ssm_p(X_E, K) \leqslant \omega^\alpha \ssm_p(U, C).$$

\end{proposition}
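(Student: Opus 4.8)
The plan is to transcribe the proof of Proposition~\ref{infinite direct sum}, with three changes forced by the passage from the Bourgain index to spreading models: regular families play the role of the trees $\ttt_\xi$, so that ``order $\mu$'' becomes Cantor--Bendixson index $\omega^\mu$; the gamma number $\omega^\alpha$ replaces the delta number $\omega^{\omega^\alpha}$, since $\omega^\alpha$ is the threshold below which $\ssm_p$ is stable under finite direct sums; and the tree-colouring Lemma~\ref{strong coloring} is replaced by an elementary coordinatewise ``escape'' which, crucially, outputs an honest \emph{sequence} in $U$. We may assume $\xi:=\ssm_p(U,C)<\omega_1$: if $\xi=\omega_1$ or $\xi=\infty$ the inequality is trivial or follows from the conventions $\omega^\alpha\infty=\infty$ (for $1<p\leqslant\infty$ one also uses that an $\ell_p$- or $c_0$-copy in $X_E$ cannot be supported on finitely many coordinates, hence is inherited by $U$).

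The one structural fact about the summands that we need is a \emph{finite} direct sum estimate. For $1<p\leqslant\infty$: iterating Theorem~\ref{three space}(iii) (resp.\ (v) for $p=\infty$) and using that $\omega^\alpha$ is additively indecomposable (Proposition~\ref{ordinals}(i),(ii)) gives $\ssm_p(X_S)<\omega^\alpha$ for every finite $S\subseteq E$. For $p=1$, where Theorem~\ref{three space}(iv) only yields a sum of two ordinals, one instead invokes Theorem~\ref{BCM theorem 2}: ``$\ssm_1(\,\cdot\,)\leqslant\omega^\alpha$'' is a three-space property and a finite direct sum is an iterated extension, so $\ssm_1(X_S)\leqslant\omega^\alpha$. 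In either case, \emph{no} space $X_S$ with $S$ finite admits an $\ell_p^{\omega^\alpha}$ (resp.\ $c_0^{\omega^\alpha}$) spreading model, with any constant.

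Now assume toward a contradiction that $\ssm_p(X_E,K)>\omega^\alpha\xi$, and suppose first $\xi\geqslant2$ (the cases $\xi\in\{0,1\}$ are treated below). Inspecting Cantor normal forms gives $\omega^\alpha+\xi\leqslant\omega^\alpha\xi$, hence $\iota(\sss_\xi[\sss_{\omega^\alpha}])=\omega^{\omega^\alpha}\cdot\omega^\xi=\omega^{\omega^\alpha+\xi}\leqslant\omega^{\omega^\alpha\xi}=\iota(\sss_{\omega^\alpha\xi})$; passing to a subsequence, $X_E$ carries a $K$-$\ell_p^{\sss_\xi[\sss_{\omega^\alpha}]}$ (resp.\ $c_0^{\sss_\xi[\sss_{\omega^\alpha}]}$) spreading model $(x_n)$, and after a small perturbation we may assume $x_n\in c_{00}(E)$. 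Build successive blocks $u_1,u_2,\dots$ of $(x_n)$ recursively, fixing $(\varepsilon_m)\downarrow0$ fast in advance, where $u_m$ is a $p$-absolutely convex combination over a set $G_m\in\sss_{\omega^\alpha}$ with $G_1<G_2<\cdots$: with $T_{m-1}:=\bigcup_{l<m}\supp_E(u_l)$ (a finite set), either there exist such $G_m$ with $\min G_m\geqslant m$ and $\min G_m$ past $T_{m-1}$ and a corresponding $u_m$ with $\|P_{T_{m-1}}u_m\|<\varepsilon_m$; or no such choice exists, and then --- using $\|P_{T_{m-1}}\|\leqslant1$ (as $E$ is $1$-unconditional), $\sss_{\omega^\alpha}\subseteq\sss_\xi[\sss_{\omega^\alpha}]$, and hence $(x_n)_{n\in G}\in T_p(X_E,K)$ for $G\in\sss_{\omega^\alpha}$ --- the sequence $(P_{T_{m-1}}x_n)_n$ furnishes, after reindexing, an $\ell_p^{\sss_{\omega^\alpha}}$ (resp.\ $c_0^{\sss_{\omega^\alpha}}$) spreading model in the finite direct sum $X_{T_{m-1}}$, contradicting the previous paragraph. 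So the first alternative always holds. Set $z_m:=P_{E\setminus T_{m-1}}u_m$; then $\|z_m-u_m\|<\varepsilon_m$ and the $z_m$ are pairwise disjointly supported in $X_E$. Since $\min G_m\geqslant m$ and $G_1<G_2<\cdots$, one has $\bigcup_{m\in E_0}G_m\in\sss_\xi[\sss_{\omega^\alpha}]$ for every $E_0\in\sss_\xi$, so $(u_m)$ --- and, for $(\varepsilon_m)$ small enough and after the routine rescaling, $(z_m)$ --- is a $C$-$\ell_p^{\sss_\xi}$ (resp.\ $c_0^{\sss_\xi}$) spreading model in $X_E$. Finally, disjointly supported vectors in $X_E$ are isometrically equivalent to their images under $\Pi$ in $U$, so $(\Pi z_m)$ is a $C$-$\ell_p^{\sss_\xi}$ (resp.\ $c_0^{\sss_\xi}$) spreading model in $U$, contradicting $\xi=\ssm_p(U,C)$.

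The case $\xi=0$ is vacuous. For $\xi=1$ one cannot reduce to $\sss_1[\sss_{\omega^\alpha}]$, whose index $\omega^{\omega^\alpha+1}$ already exceeds $\omega^{\omega^\alpha}$; instead one runs the same escape directly on a $K$-$\ell_p^{\sss_{\omega^\alpha}}$ spreading model, blocking at step $m$ over a set in $\sss_{\beta_m}$ with $\ssm_p(X_{T_{m-1}})\leqslant\beta_m<\omega^\alpha$. Additive indecomposability of $\omega^\alpha$ (Proposition~\ref{ordinals}(i)) forces $\sss_{\omega^\alpha}$ to be equivalent to $\fff[\sss_{\beta_m}]$ for an $\fff$ of index $\omega^{\omega^\alpha}$, so the escaped blocks still carry a spreading model of order $\omega^\alpha\geqslant1$ in $U$, again contradicting $\ssm_p(U,C)=1$. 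I expect this last case to be the only real obstacle: coordinating the growing families $\sss_{\beta_m}$ with fast-growing minima so that the unions $\bigcup_{m\in E_0}G_m$ stay inside $\sss_{\omega^\alpha}$ of the original index set, together with the customary constant bookkeeping needed to get from $K$ to $C$ through the perturbations. The main case $\xi\geqslant2$ is, in contrast, a routine transcription of Proposition~\ref{infinite direct sum}.
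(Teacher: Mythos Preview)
Your core argument for the main case is exactly the paper's: block a $K$-$\ell_p^{\sss_\xi[\sss_{\omega^\alpha}]}$ spreading model over $\sss_{\omega^\alpha}$-sets to escape the growing finite supports, pass to disjointly supported perturbations $(z_m)$, and push to $U$ via $\Pi$. You are in fact more careful than the paper on one point. The paper writes ``choose $M$ so that $\sss_\xi[\sss_\zeta](M)\subset\sss_{\zeta+\xi}$,'' but what the argument actually needs is $\sss_\xi[\sss_\zeta](M)\subset\sss_{\zeta\xi}$ (the hypothesised spreading model lives on $\sss_{\zeta\xi}$, not $\sss_{\zeta+\xi}$), and the existence of such an $M$ requires $\omega^\alpha+\xi\leqslant\omega^\alpha\xi$ --- precisely the inequality you check. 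The paper does not isolate the case $\xi=1$, where this inequality fails; you do.

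That said, your treatment of $\xi=1$ is, as you concede, only a sketch, and the obstacle is genuine. Blocking at step $m$ over $G_m\in\sss_{\beta_m}$ with \emph{variable} $\beta_m<\omega^\alpha$ does not immediately yield a sequence whose $\sss_1$-admissible unions $\bigcup_{m\in E_0}G_m$ remain in $\sss_{\omega^\alpha}$: the $\beta_m$ are dictated by the growing supports $T_{m-1}$ and cannot be held uniformly below a fixed $\beta<\omega^\alpha$, so the single embedding you gesture at would have to be built simultaneously with the recursion (choosing $\min G_m$ large enough at each stage to absorb the accumulated index). This is plausible but is more than one sentence of work. In summary, your main case coincides with the paper's proof, and your caveat about $\xi=1$ correctly flags a point that the paper's written argument also leaves unresolved.
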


\begin{proof} First, suppose that $(x_n)$ $K$-dominates and is $1$-dominated by the $\ell_p$ basis.  By replacing $K$ with a strictly larger number, we can assume that for each $n\in \nn$, $S_n= \cup_{i=1}^n\supp_E(x_i)$ is finite.   Note that for any finite $S\subset E$, any $\varepsilon>0$,  and any $(m_n)=M\in \infin$, there exists $F\in \fin$ and $(a_n)_{n\in F}\in S_{\ell_p^{|F|}}$ so that $\|P_S\sum_{n\in F}a_n x_{m_n}\|< \varepsilon$.  If it were not so, then $(P_S x_n)_{n\in M}$ would be equivalent to the $\ell_p$ basis in the finite direct sum $X_S$.  But since no space $X_e$ contains $\ell_p$, the finite direct sum $X_S$ cannot.  Thus for any $\varepsilon_n\downarrow 0$, we can recursively choose a $p$-absolutely convex block $y_n=\sum_{i\in F_n} a_i x_i$ so that for each $n\in \nn$, $\|P_{S_{\max F_{n-1}}}y_n\|< \varepsilon_n$.  Then with $z_n= y_n- P_{S_{\max F_{n-1}}}y_n$, we obtain a small perturbation of $(y_n)$ which is disjointly supported in $X_E$ and $(K+\varepsilon)$-equivalent to the $\ell_p$ basis.  Then $(\Pi z_n)\subset U$ is $(K+\varepsilon)$-equivalent to the $\ell_p$ basis in $U$.  Therefore we deduce that $\ssm_p(U, C)=\infty$.  

We must consider the case in which $\ssm_p(X_E, K)\leqslant \omega_1$. To obtain a contradiction, assume $\xi= \ssm_p(U, C)$ and $\zeta= \omega^\alpha$ are such that $\zeta\xi<\ssm_p(X_E, K)$, and in particular, $\zeta, \xi$ are countable.  The argument in this case proceeds as in the previous paragraph: For any finite $S\subset E$, any $\varepsilon>0$, and any $M\in \infin$, there exists $F\in \sss_\zeta$ and $(a_n)_{n\in F}\in S_{\ell_p^{|F|}}$ so that $\|P_S\sum_{n\in F}a_n x_{m_n}\|< \varepsilon$.  If this were not so, then $(P_Sx_n)_{n\in M}$ would be an $\ell_p^\zeta$ spreading model in the finite direct sum $X_S$.  But since $\ssm_p(X_e)<\omega^\alpha= \zeta$, for any finite $S\subset E$, $\ssm_p(X_S) \leqslant \sum_{e\in S}\ssm_p(X_e)<\omega^\alpha$.  Therefore if we choose $M\in \infin$ so that $\sss_\xi[\sss_\zeta](M)\subset \sss_{\zeta+\xi}$, we can choose a $p$-absolutely convex block $(y_n)$ of $(x_{m_n})$ and disjointly supported vectors $(z_n)$ in $X_E$ so that $\|y_n-z_n\|<\varepsilon_n$ for all $n\in \nn$ and so that $y_n=\sum_{i\in F_n} a_i x_{m_i}$, where $F_n\in \sss_\zeta$.  Thus the sequence $(y_n)$ is a $K$-$\ell_p^\xi$ spreading model, and $(z_n)$ is a $C$-$\ell_p^\xi$ spreading model for an auspicious choice of $\varepsilon_n$.  Therefore $(\Pi z_n)$ is a $C$-$\ell_p^\xi$ spreading model in $U$, a contradiction.

To treat the $p=1$ case with the assumption that $\ssm_1(X_e)\leqslant \omega^\alpha$ for each $e\in E$, we replace the subadditivity with the fact that admitting no $\ell_1^{\omega^\alpha}$ spreading model is a three space property.  Thus any finite direct sum of spaces admitting no $\ell_1^{\omega^\alpha}$ admits no $\ell_1^{\omega^\alpha}$ spreading model.

\end{proof}

We come to one final application of these methods.  

\begin{proposition} Fix $\xi\in \emph{\ord}$.  If for each $k\in \nn$, $X_k$ is a Banach space with $\ssm_1(X_k)\leqslant \omega^\xi$, then $\ssm_p\bigl((\oplus X_k)_{\ell_2}\bigr)\leqslant\omega^\xi$.  

\end{proposition}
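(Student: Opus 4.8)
The statement to prove is that if $\ssm_1(X_k)\leqslant\omega^\xi$ for all $k\in\nn$, then $\ssm_p\bigl((\oplus X_k)_{\ell_2}\bigr)\leqslant\omega^\xi$. (Here I read the conclusion as $\ssm_1$ of the $\ell_2$-sum, the exponent $p$ being a typo for $1$, since the hypothesis only controls the $\ell_1$ spreading model indices of the summands; an $\ell_p$-spreading-model bound for $p\ne 1$ would not follow.) The plan is to run the argument of the preceding proposition with $U=\ell_2$, $E$ its unit vector basis, exploiting two special features of this situation: first, $\ell_2$ admits \emph{no} $\ell_1^1$ spreading model, so $\ssm_1(\ell_2,C)=1$ for every $C\geqslant 1$; second, admitting no $\ell_1^{\omega^\xi}$ spreading model is a three-space property (Theorem \ref{BCM theorem 2}), hence stable under finite direct sums, so every finite sub-sum $X_S=(\oplus_{k\in S}X_k)$ again satisfies $\ssm_1(X_S)\leqslant\omega^\xi$.

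First I would set $Y=(\oplus X_k)_{\ell_2}$ and suppose toward a contradiction that $Y$ admits a $K$-$\ell_1^{\omega^\xi+1}$ spreading model $(x_n)$ — equivalently (since $\ssm_1(\ell_2,C)=1$) that $\ssm_1(Y,K)>\omega^\xi\cdot 1=\omega^\xi$, so it suffices to rule out a $K$-$\ell_1^{\omega^\xi}$ spreading model being convertible via $\ell_2$-structure into something impossible. Concretely: passing to a subsequence and replacing $K$ by a slightly larger constant, assume each $x_n\in c_{00}(E)$ and set $S_n=\bigcup_{i\leqslant n}\supp_E(x_i)$, a finite subset of $\nn$. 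The key dichotomy, exactly as in the previous proof: for any finite $S\subset\nn$, any $\varepsilon>0$, and any $M\in\infin$, there exist $F\in\widehat{\sss}_{\omega^\xi}$ (note $\iota(\sss_{\omega^\xi})=\omega^{\omega^\xi}$, but one only needs $F\in\sss_{\omega^\xi}$ here; to be safe use that admitting no $\ell_1^{\omega^\xi}$ s.m. in $X_S$ means no $1$-$\ell_1^{\sss_{\omega^\xi}}$-type sequence survives) and scalars $(a_n)_{n\in F}\in S_{\ell_1^{|F|}}$ with $\|P_S\sum_{n\in F}a_nx_{m_n}\|<\varepsilon$ — otherwise $(P_Sx_n)_{n\in M}$ would be a $K'$-$\ell_1^{\omega^\xi}$ spreading model in $X_S$, contradicting $\ssm_1(X_S)\leqslant\omega^\xi$ (three-space property plus finite induction on $|S|$).

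Then I would iterate this dichotomy: for $\varepsilon_n\downarrow 0$ chosen rapidly, recursively build a $1$-absolutely convex block $y_n=\sum_{i\in F_n}a_ix_i$ with $F_1<F_2<\cdots$, $F_n\in\sss_{\omega^\xi}$, and $\|P_{S_{\max F_{n-1}}}y_n\|<\varepsilon_n$; set $z_n=y_n-P_{S_{\max F_{n-1}}}y_n$, so $(z_n)$ is disjointly supported in $Y$ and, after normalization, is $(K+\delta)$-equivalent to the $\ell_1$ basis (using that $(y_n)$ is a $K$-$\ell_1^{\fff}$-type sequence for a suitable regular $\fff$ — arrange $\sss_{\omega_1}$-indexing, or just observe $(y_n)$ $K$-dominates the $\ell_1$ basis on arbitrarily long initial segments while being $1$-dominated automatically). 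Since $(z_n)$ is disjointly supported, $(\Pi z_n)$ is \emph{isometrically} equivalent to $(z_n)$, so $(\Pi z_n)\subset\ell_2$ is $(K+\delta)$-equivalent to the $\ell_1$ basis — but $\ell_2$ contains no sequence equivalent to the $\ell_1$ basis, a contradiction. This also handles the genuine "spreading model of order $\omega^\xi$" bookkeeping: choosing $M$ with $\sss_1[\sss_{\omega^\xi}](M)\subset\sss_{\omega^\xi}$ ensures the block sequence inherits the $\ell_1^{\omega^\xi}$ property, so $(\Pi z_n)$ is at least a $C$-$\ell_1^{\omega^\xi}$, indeed $C$-$\ell_1^1$ spreading model in $\ell_2$, again absurd.

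The main obstacle is making the three-space step uniform across \emph{all} finite $S$: Theorem \ref{BCM theorem 2} gives that $X_S$ admits no $\ell_1^{\omega^\xi}$ spreading model, but the $K$-quantified version is not literally stated, so I must either invoke the quantitative form implicit in its proof (a $K$-$\ell_1^{\omega^\xi}$ spreading model in $X/Y$ or in $Y$ with constant $3K$) and induct on $|S|$ carrying the constant, or — cleaner — argue by contradiction purely at the level of "admits no $\ell_1^{\omega^\xi}$ spreading model," noting that if $(P_Sx_n)_{n\in M}$ were a bounded-constant $\ell_1^{\omega^\xi}$ spreading model for every choice of $S,\varepsilon,M$ one could diagonalize to produce an actual $\ell_1^{\omega^\xi}$ spreading model in some $X_S$. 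The $\ell_2$-ideal features (reflexivity for the disjointification, $1$-unconditionality for the projections $P_S$, and the Schur-type failure "$\ell_1\not\hookrightarrow\ell_2$") make everything else routine, so I expect no other difficulty.
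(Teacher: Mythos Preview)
Your approach follows the template of the preceding proposition, but it breaks at the disjointification step. The recursive construction --- choosing each $F_n\in\sss_{\omega^\xi}$ so that $\|P_{S_{\max F_{n-1}}}y_n\|<\varepsilon_n$ --- does produce nearly disjointly supported $(z_n)$, but it gives no reason for $(y_n)$ to retain any $\ell_1$ lower estimate: for $(y_i)_{i\in E}$ to be $K$-equivalent to the $\ell_1^{|E|}$ basis you need $\bigcup_{i\in E}F_i\in\sss_{\omega^\xi}$, and the recursion provides no control over such unions. Your proposed fix, ``choose $M$ with $\sss_1[\sss_{\omega^\xi}](M)\subset\sss_{\omega^\xi}$'', is impossible, since $\iota(\sss_1[\sss_{\omega^\xi}])=\omega^{\omega^\xi}\cdot\omega=\omega^{\omega^\xi+1}>\omega^{\omega^\xi}=\iota(\sss_{\omega^\xi})$. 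This is precisely why the preceding proposition, applied with $U=\ell_2$, only yields $\ssm_1(X_E,K)\leqslant\omega^\xi+1$, not $\omega^\xi$. A single application of Lemma~\ref{easy coloring} with a \emph{fixed} $m$ does preserve the $\ell_1^{\omega^\xi}$ spreading model property of the block sequence (that is its point), but then the blocks are not disjointly supported --- only their $P_{[1,m]}$-parts are small --- so you cannot push the estimate into $\ell_2$ via $\Pi$. The obstacle you flag (uniformity of constants across finite $S$) is not the real one.

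The paper's argument avoids disjointification entirely by combining constant reduction with $\ell_2$ geometry. By Theorem~\ref{BCM theorem 1}, if $Y=(\oplus X_k)_{\ell_2}$ admits any $\ell_1^{\omega^\xi}$ spreading model, it admits one $(x_n)$ with constant $K<\eta^{-1}$ for $\eta$ as close to $1$ as desired. Setting $v_n=\sum_k\|\pi_{e_k}(x_n)\|e_k\in B_{\ell_2}$ and passing to a subsequence with $v_n\approx v+b_n$ for a block sequence $(b_n)$, an iterated-limit computation forces $\|v\|\geqslant K^{-1}>\eta$; fix $m$ with $\|P_{[1,m]}v\|>\eta$. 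One application of Lemma~\ref{easy coloring} with this fixed $m$ either produces an $\ell_1^{\omega^\xi}$ spreading model in the finite sum $(\oplus_{k\leqslant m}X_k)$ (ruled out by the three-space property), or yields $F_1<F_2<\cdots$ with $\bigcup_{i\in E}F_i\in\sss_{\omega^\xi}$ for all $E\in\sss_{\omega^\xi}$ and a convex combination $x$ over $F_2$ with $\|P_{[1,m]}x\|<\mu=(1-\eta^2)^{1/2}$. For any $n_i\in F_i$, $i>2$, one has $F_2\cup\{n_i\}\in\sss_{\omega^\xi}$, hence $\|x+x_{n_i}\|\geqslant 2/K>2\eta$; but $\|P_{[1,m]}x_{n_i}\|\to\|P_{[1,m]}v\|>\eta$, and a two-vector angle estimate in the orthogonal decomposition $Y=P_{[1,m]}Y\oplus_2 P_{(m,\infty)}Y$ then gives $\|x+x_{n_i}\|^2\leqslant 2+2\cos\theta<4\eta^2$, a contradiction. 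The point is that only a \emph{pair} is needed, not a whole $\ell_1^1$ block sequence; the near-isometric constant (from Theorem~\ref{BCM theorem 1}) together with the Hilbert-space parallelogram geometry is what makes the pair suffice.
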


\begin{proof} Choose $\eta, \mu \in  (0,1)$ so that $\eta^2+\mu^2=1$ and $2\eta^2-1>\cos \theta,$ where $\theta$ is the angle between $(\eta, \mu)$ and $(\mu, \eta)$. If $(\oplus X_k)_{\ell_2}$ admits an $\ell_1^{\omega^\xi}$ spreading model, it admits one with constant $1\leqslant K<\eta^{-1}$.  Assume that $(x_n)$ is a $K$-$\ell_1^{\omega^\xi}$ spreading model in $(\oplus X_k)_{\ell_2}$.  Let $v_n=\sum_k \|\pi_{e_k}(x_n)\|e_k\in B_{\ell_2}$.  By passing to a subsequence, we can assume that there exists $v\in B_{\ell_2}$ and a bounded block sequence $(b_n)$ in $\ell_2$ so that $\|v_n- (v+b_n)\|\to 0$.  Then for any free ultrafilter $\uuu$ on $\nn$ and any $l\in \nn$, \begin{align*} l^2 K^{-2} & \leqslant \underset{n_1\in \uuu}{\lim}\ldots \underset{n_l\in \uuu}{\lim}\hspace{2mm} \Bigl\|\sum_{i=1}^l x_{n_i}\Bigr\|^2 \\ &  = \underset{n_1\in \uuu}{\lim}\ldots \underset{n_l\in \uuu}{\lim} \sum_k \Bigl\|\sum_{i=1}^l \pi_{e_k}(x_{n_i})\Bigr\|^2 \\ & \leqslant \underset{n_1\in \uuu}{\lim}\ldots \underset{n_l\in \uuu}{\lim} \sum_k \Bigl(\sum_{i=1}^l \|\pi_{e_k}(x_{n_i})\|\Bigr)^2\\ & = \underset{n_1\in \uuu}{\lim}\ldots \underset{n_l\in \uuu}{\lim}\hspace{2mm} \Bigl\|\sum_{i=1}^l v_{n_i}\Bigr\|_{\ell_2}^2 \\ & = l^2\|v\|^2+l \hspace{2mm} \underset{n\in \uuu}{\lim}\|b_n\|^2. \end{align*} Since this holds for any $l\in \nn$, $\eta <K^{-1}\leqslant  \|v\|$.  Choose $m\in \nn$ so that with $v=\sum c_ke_k$, $\eta^2< \sum_{k=1}^m c_k^2$.  

Let $f:\widehat{\sss}_{\omega^\xi}\to \{0,1\}$ be defined by letting $f(E)=1$ if $\min \{\|P_{[1,m]}x\|: x\in \text{co}_1(x_n:n\in E)\}\geqslant \mu$ and $f(E)=0$ otherwise.  If there exists $M\in \infin$ so that $f|_{\widehat{\sss}_{\omega^\xi}(M)}\equiv 1$, then $(P_{[1,m]}x_n)_{n\in M}$ is a $\mu^{-1}$-$\ell_1^{\omega^\xi}$ spreading model in the finite direct sum $\bigl(\oplus_{i=1}^m X_k\bigr)_{\ell_2^m}$, a contradiction.  Therefore by Lemma \ref{easy coloring} there exists $F_1<F_2<\ldots$ so that $f(F_i)=0$ for all $i\in \nn$ and so that for each $F\in \sss_{\omega^\xi}$, $\cup_{i\in F}F_i\in \sss_{\omega^\xi}$.  Choose $x\in \text{co}_1(x_n:n\in F_2)$ so that $\|P_{[1,m]}x\|<\mu$ and for each $i>2$, choose $n_i\in F_i$.  Then since $(2, i)\in \sss_{\omega^\xi}$, $F_2\verb!^!n_i\subset F_2\verb!^!F_i\in \sss_{\omega^\xi}$ for all $i>2$.  This means that for all $i>2$, \begin{align*} 4\eta^2 & \leqslant \|x+x_{n_i}\|^2 = \|P_{[1,m]}(x+x_{n_i})\|^2+ \|P_{(m, \infty)}(x+x_{n_i})\|^2 \\ & \leqslant \|P_{[1,m]}x\|^2+ 2\|P_{[1,m]}x\|\|P_{[1,m]}x_{n_i}\|+\|P_{[1,m]}x_{n_i}\|^2 \\ & + \|P_{(m, \infty)}x\|^2+2\|P_{(m, \infty)}x\|\|P_{(m, \infty)}x_{n_i}\|+\|P_{(m, \infty)}x_{n_i}\|^2 \\ & \leqslant 2 + 2 u\cdot v.  \end{align*}
where $$u= \bigl( \|P_{[1,m]}x\|, (1- \|P_{[1,m]}x\|)^2\bigr), \hspace{2mm} v=  \bigl( \|P_{[1,m]}x_{n_i}\|,(1-\|P_{[1, m]}x_{n_i}\|^2)^{1/2}\bigr)\in \rr^2.$$ But since $\|P_{[1,m]}x\|<\mu $ and for sufficiently large $i$, $\|P_{[1,m]}x_{n_i}\|\approx \bigl(\sum_{k=1}^m c_k^2\bigr)^{1/2}>\eta$, the angle $\phi$ between $u$ and $v$ is greater than the angle $\theta$ between $(\mu, \eta)$ and $(\eta, \mu)$.  Then we deduce $$2\eta^2-1 \leqslant u\cdot v = \cos \phi < \cos \theta < 2\eta^2-1,$$ and this contradiction finishes the proof.

\end{proof}

With this, we deduce that for any $0<\xi<\omega_1$ and $\xi_n\uparrow \omega^\xi$, the direct sum $\bigl(\oplus X_{\xi_n}\bigr)_{\ell_2}$ of the Schreier spaces $X_{\xi_n}$ admits $1$-$\ell_1^\zeta$ spreading models for all $\zeta<\omega^\xi$, but does not admit an $\ell_1^{\omega^\xi}$ spreading model.  As usual, we can deduce that the dual admits $1$-$c_0^\zeta$ spreading models for all $\zeta<\omega^\xi$ with no $c_0^{\omega^\xi}$ spreading model, and the $p$-convexification admits a $1$-$\ell_p^\zeta$ spreading model for each $\zeta<\omega^\xi$ but no $\ell_p^{\omega^\xi}$ spreading model.  Thus we see that the supremum $\ssm_p(X)=\sup_{K\geqslant 1}\ssm_p(X, K)$ may be attained, and in fact the examples here give for each $1\leqslant p\leqslant \infty$ an example of some $X$ so that $\omega^\xi = \ssm_p(X)=\ssm_p(X, 1)$.

\end{document}